\documentclass[12pt]{article}

\usepackage{amsmath,amssymb,amsthm,mathtools,mathrsfs}
\usepackage{graphicx}
\usepackage{xcolor}
\usepackage{tikz}
\usepackage{newclude}
\usepackage{imakeidx}
\usepackage{bm}
\usepackage{cancel}

\usepackage[colorlinks=true,
            linkcolor=blue,
            citecolor=blue,
            urlcolor=red,
            bookmarksopen=true,
            pdfborder={0 0 0}
           ]{hyperref}
\makeindex

\allowdisplaybreaks

 \numberwithin{equation}{section}

\newtheorem{thm}{Theorem}[section]
\newtheorem{corl}{Corollary}[section]
\newtheorem{lema}{Lemma}[section]
\newtheorem{prop}{Proposition}[section]
\newtheorem{defi}{Definition}[section]
\newtheorem{rmk}{Remark}[section]

\renewcommand{\Pi}{\mathcal{P}}

\renewcommand{\Xi}{\mathcal{X}}

\let\div\relax
\DeclareMathOperator*{\div}{div}

\DeclarePairedDelimiter\abs{\lvert}{\rvert}
\DeclarePairedDelimiter\norm{\lVert}{\rVert}
\makeatletter
\let\oldabs\abs
\def\abs{\@ifstar{\oldabs}{\oldabs*}}
\let\oldnorm\norm
\def\norm{\@ifstar{\oldnorm}{\oldnorm*}}
\makeatother

\usepackage{scalerel}
\usepackage[usestackEOL]{stackengine}
\def\dashint{\,\ThisStyle{\ensurestackMath{
\stackinset{c}{.2\LMpt}{c}{.5\LMpt}{\SavedStyle-}{\SavedStyle\phantom{\int}}
}
\setbox0=\hbox{$\SavedStyle\int\,$}\kern-\wd0}\int}

\usepackage{adjustbox}
\usepackage{wrapfig}
\usepackage{enumitem}
\usepackage{protosem}

\title{Global existence of classical solutions for the multi-dimensional compressible Navier-Stokes-Poisson equations on solid balls for arbitrary spherically symmetric large initial data}
\date{}
\author{
\bf\large  Jie Fan$^{a}$, Xiangdi Huang$^{a,*}$, Muxi Lei$^{a}$\\
\small a. State Key Laboratory of Mathematical Sciences, Academy of Mathematics and Systems Science\\
\small Chinese Academy of Sciences, Beijing 100910, P.R.China;\\
\footnote{*Email addresses: fj0828@outlook.com\,\, (J. Fan); xdhuang@amss.ac.cn\,\, (X. Huang);  leimuxi25@mails.ucas.ac.cn\,\,  (M. X. Lei).  } }
\begin{document}
\maketitle
\begin{abstract}
Whether the 3D compressible Navier–Stokes–Poisson equations admit global classical solutions for general large initial data has long been a challenging open problem. In this paper, we provide an affirmative answer to this question under spherical symmetry on solid balls . Specifically, we consider the initial-boundary value problem for the multi-dimensional compressible equations with density-dependent viscosity coefficients satisfying the BD-type entropy equality, namely, assuming $\mu=\rho^{\alpha},\ \lambda=(\alpha-1)\rho^{\alpha}$ with $N=2, \alpha\in (\frac{1}{2},1]$ and $N=3, \alpha\in (\frac{5}{6},1]$, we establish the global existence of spherically symmetric classical solutions to the compressible Navier–Stokes–Poisson equations for both gaseous stars and plasmas with arbitrarily large initial data on solid balls. Our key observation lies in successfully handling the singularity at the center of the ball. By controlling the growth orders of the density and the gravitational potential at the central singularity, leveraging the structural advantages of the BD entropy and spherical symmetry, and fully exploiting the coupling between the effective velocity and the velocity, we establish $L^\infty$ estimates for the key quantities, which in turn yield upper and lower bound estimates for the density. This can be regarded as the first result on the existence of global classical solutions for arbitrarily large initial data to the compressible Navier-Stokes-Poisson equations in a truly multi-dimensional domain with high-dimensional features.
 \\[4mm]
{\bf Keywords:}Navier–Stokes–Poisson equations; global classical solutions; large initial data;\\[4mm]
{\bf Mathematics Subject Classifications (2010):} 76N10, 35B45.\\[4mm]
\end{abstract}

\tableofcontents
\vspace{1cm}

\section{Introduction}
The compressible Navier--Stokes--Poisson (NSP) equations in $\mathbb{R}^N$ with $N=2,3$ govern the motion of compressible gaseous stars or plasmas in a self-consistent gravitational or electric field, given by
\begin{equation} 
\label{eq:NSP-original}
\left\{
\begin{array}{ll}
\rho_t + \mathrm{div}(\rho \mathbf{u}) = 0, \\[4pt]
\rho (\mathbf{u}_t + \mathbf{u}\cdot \nabla \mathbf{u}) + \nabla P(\rho) = -\rho \nabla \Phi + \mathrm{div}(\mu(\rho)\mathcal{D}\mathbf{u}) +\nabla(\lambda(\rho)\mathrm{div}\mathbf{u}), \\[4pt]
\Delta \Phi =\kappa( \rho - \bar{\rho}).
\end{array}
\right.
\end{equation}
Here, $\rho$, $\overline{\rho}$, $P$, and $\Phi$ denote the density, the background state density, the pressure, and a potential, respectively. The potential $\Phi$ represents the gravitational potential for gaseous stars when $\kappa>0$, and the electric field potential for plasmas when $\kappa<0$. $\mathcal{D}\mathbf{u}=\frac{1}{2}\left(\nabla\mathbf{u}+(\nabla\mathbf{u})^{T}\right)$ is the deformation tensor. The density-dependent Lam\'e viscosity coefficients, namely the shear coefficient $\mu(\rho)$ and the bulk coefficient $\lambda(\rho)$, they satisfy
\[
\mu(\rho) \ge 0, \qquad \mu(\rho) + N\,\lambda(\rho) \ge 0.
\]
The system is supplemented with the prescribed initial data
\begin{equation}
(\rho,\rho\mathbf{u},\Phi)|_{t=0}=(\rho_0,\rho\mathbf{u}_0,\Phi_0),
\end{equation}
and the boundary condition
\begin{equation}
\rho \mathbf{u}|_{\partial \Omega} = 0,\,\, \frac{\partial\Phi}{\partial n}|_{\partial\Omega}=0.
\end{equation}
The uniqueness condition of \(\Phi\) is given by
\begin{equation}\label{uniqueness}
\int_{\Omega} \Phi \, d\mathbf{x} = 0. 
\end{equation}
Under small perturbations, Zhang and Fang \cite{Zhang2009} established the global existence and uniqueness of weak solutions for the spherically symmetric vacuum free boundary problem without a hard core, with $\gamma > 1$ and the initial data being a small perturbation of some steady state. Moreover, they also demonstrated the stability of such a system. For the three-dimensional problem with $\kappa = 1$ and $\gamma \in (\frac{6}{5}, \frac{4}{3}]$, Duan and Li \cite{Duan2015} proved the global existence of spherically symmetric weak solutions under the stress-free boundary condition, with nonzero initial density and arbitrarily large initial data. Luo-Xin-Zeng \cite{LuoXinZeng2016a,LuoXinZeng2016b} established the existence and long-time stability of spherically symmetric smooth solutions for the three-dimensional viscous problem (with $\kappa = 1$) when $\gamma \in (\frac{4}{3}, 2)$, under small perturbations of the Lane-Emden solution.

Another important variable-viscosity model is the BD entropy model, which satisfies $\lambda(\rho) = \rho\mu'(\rho) - \mu(\rho)$. This entropy estimate was first introduced by Bresch-Desjardins-Lin \cite{Bresch2003a} and Bresch-Desjardins \cite{Bresch2003b}, and later used by Mellet-Vasseur \cite{Mellet2007} to study stability via new a priori bounds. A key breakthrough was made independently by Li-Xin \cite{Li2015} and Vasseur-Yu \cite{Vasseur2016}, who constructed global weak solutions for the degenerate case $\mu(\rho) = \rho$, $\lambda(\rho) = 0$, allowing vacuum and arbitrarily large data. Their results cover $N = 2$, $\gamma > 1$ and $N = 3$, $1 < \gamma < 3$. Subsequently, Bresch-Vasseur-Yu \cite{Bresch2022} generalized the result to three dimensions with a physically symmetric viscous stress tensor. For spherically symmetric flows, Huang-Meng-Zhang \cite{huangmengzhang} proved the global existence of strong solutions with large initial data and without vacuum, under the BD relation $\mu(\rho) = \rho^{\alpha}$, $\lambda(\rho) = (\alpha - 1)\rho^{\alpha}$.

In this paper, we consider the shear and bulk viscosity coefficients satisfying the BD relation:
\[
\mu(\rho)=\rho^\alpha,\quad \lambda(\rho)=(\alpha-1)\rho^\alpha.
\]
Under this assumption, we investigate the global existence of large classical solutions away from the vacuum. In this paper, we consider radially symmetric solutions to the initial-boundary value problem \eqref{eq:NSP-original}-\eqref{uniqueness} with large radially symmetric initial data. On a bounded domain $\Omega =B_{R}\subset \mathbb{R}^N$ with $N = 2, 3$, the radially symmetric formulation of \eqref{eq:NSP-original}-\eqref{uniqueness} reads:
\begin{equation} \label{eq:radial-ansatz}
\rho(\mathbf{x}, t) = \rho(r,t), \quad 
\mathbf{u}(\mathbf{x},t) = u(r,t) \frac{\mathbf{x}}{r}, \quad 
\Phi(\mathbf{x},t) = \Phi(r,t) \quad \text{for } r = |\mathbf{x}|.
\end{equation}
In spherically symmetric coordinates, \eqref{eq:NSP-original} becomes
\begin{equation} \label{eq:NSP-radial-eulerian}
\left\{
\begin{array}{ll}
\rho_t + (\rho u)_r + \dfrac{N-1}{r}\rho u = 0, \\[8pt]
\rho (u_t + u u_r) + (\rho^{\gamma})_r = { \left(\alpha\rho^{\alpha}(u_r+\frac{N-1}{r}u)\right)_r }- \frac{N-1}{r}u{(\rho^{\alpha})_r}-\rho\Phi_r , \\[8pt]
\Phi_{rr} + \dfrac{N-1}{r}\Phi_r = \kappa(\rho - \bar{\rho}),
\end{array}
\right.
\end{equation}
with initial data
\begin{equation} 
\rho(r,0)=\rho_0(r),\,\, \rho u(r,0)=m_0(r),
\end{equation}
and the boundary conditions
\begin{equation} \label{eq:radial-bc}
\rho u(0,t) =\rho u(R,t)= 0, \quad \Phi_r(0,t)=\Phi_r(R,t) = 0, \quad t \geq 0.
\end{equation}

\section{Main results}
\begin{defi}[Global classical solution]A radially symmetric
 triple $(\rho, \mathbf{u}, \Phi)$ with $\rho > 0$ is said to be a global classical solution to the initial-boundary-value problem \eqref{eq:NSP-original}-\eqref{uniqueness} provided that, for any $0 < \tau < T$, the following conditions are satisfied:
\begin{equation} \label{eq:global-regularity}
    \begin{cases}
        \rho \, \in C([0, T]; H^3(\Omega)), \quad \rho_t \,  \in C([0, T]; H^2(\Omega)), \\
        \rho_{tt}\, \in L^\infty(0, T; L^2(\Omega)) \cap L^2(0, T; H^1(\Omega)),\,\nabla \Phi \in C([0, T]; H^4(\Omega)),\\
        \mathbf{u} \in C([0, T]; H_0^1(\Omega) \cap H^3(\Omega)) \cap L^2(0, T; H^4(\Omega)) \cap L^\infty(\tau, T; H^4(\Omega)), \\
        \mathbf{u}_t \in L^\infty(0, T; H_0^1(\Omega)) \cap L^2(0, T; H^2(\Omega)) \cap L^\infty(\tau, T; H^2(\Omega)), \\
        \mathbf{u}_{tt} \in L^2(0, T; L^2(\Omega)) \cap L^\infty(\tau, T; L^2(\Omega)) \cap L^2(\tau, T; H^1(\Omega)),\\
        \nabla\Phi_t\in L^{\infty}(0,T;H^3(\Omega)),\,\,\nabla\Phi_{tt}\in L^{\infty}(0,T;H^1(\Omega)).
    \end{cases}
\end{equation}
\end{defi}

\begin{defi}
 Define the admissible set $A_{set}$ as follows
\[
A_{\text{set}} := \left\{1 + \frac{s}{2k+1} \;\middle|\; s\in \mathbb{N}^+,\; k\in \mathbb{N}\right\}.
\]
One can easily verify that $A_{\text{set}}$ is dense in $(1,\infty)$.

\end{defi}

\begin{defi}
For any $n>1$, we introduce two functions
\[
\begin{aligned}
\alpha_{2,-}(n) &:= 1 - \frac{n\sqrt{2n-1} - 2n + 1}{n^2 - 2n + 1},\\[4pt]
\alpha_{2,+}(n) &:= 1 + \frac{n\sqrt{2n-1} + 2n - 1}{n^2 - 2n + 1}.
\end{aligned}
\]
The function $\alpha_{2,-}(\cdot)$ is strictly increasing on $(1,\infty)$ with
\[
\lim_{n\to1^+}\alpha_{2,-}(n)=\frac12,\qquad \lim_{n\to\infty}\alpha_{2,-}(n)=1,
\]
while $\alpha_{2,+}(\cdot)$ is strictly decreasing on $(1,\infty)$ with
\[
\lim_{n\to1^+}\alpha_{2,+}(n)=\infty,\qquad \lim_{n\to\infty}\alpha_{2,+}(n)=1.
\]
\end{defi}

\begin{defi}
We define the function $n_2(\cdot):(\tfrac12,\infty)\to(1,\infty]$ as follows:
\begin{itemize}
  \item If $\alpha\in(\tfrac12,1)$, let $n_2(\alpha)$ be the unique number satisfying $\alpha_{2,-}\bigl(n_2(\alpha)\bigr)=\alpha$;
  \item If $\alpha=1$, set $n_2(1):=+\infty$;
  \item If $\alpha\in(1,\infty)$, let $n_2(\alpha)$ be the unique number satisfying $\alpha_{2,+}\bigl(n_2(\alpha)\bigr)=\alpha$.
\end{itemize}
\end{defi}

\begin{defi}
For $n>1$ we define
\[
\begin{aligned}
\alpha_{3,-}(n) &:= 1 - \frac{\sqrt{4n(4n^2-n-1)+1} - 6n + 3}{4n^2 - 8n + 4},\\[4pt]
\alpha_{3,+}(n) &:= 1 + \frac{\sqrt{4n(4n^2-n-1)+1} + 6n - 3}{4n^2 - 8n + 4}.
\end{aligned}
\]
The function $\alpha_{3,-}(\cdot)$ is strictly increasing on $(1,\infty)$ with
\[
\lim_{n\to1^+}\alpha_{3,-}(n)=\frac23,\qquad \lim_{n\to\infty}\alpha_{3,-}(n)=1,
\]
whereas $\alpha_{3,+}(\cdot)$ is strictly decreasing on $(1,\infty)$ with
\[
\lim_{n\to1^+}\alpha_{3,+}(n)=\infty,\qquad \lim_{n\to\infty}\alpha_{3,+}(n)=1.
\]
\end{defi}

\begin{defi}
We define $n_3(\cdot):(\tfrac23,\infty)\to(1,\infty]$ by the following rules:
\begin{itemize}
  \item For $\alpha\in(\tfrac23,1)$, $n_3(\alpha)$ is the unique solution of $\alpha_{3,-}\bigl(n_3(\alpha)\bigr)=\alpha$;
  \item For $\alpha=1$, set $n_3(1):=+\infty$;
  \item For $\alpha\in(1,\infty)$, $n_3(\alpha)$ is the unique solution of $\alpha_{3,+}\bigl(n_3(\alpha)\bigr)=\alpha$.
\end{itemize}
\end{defi}
\subsection{Global classical solutions away from vacuum $\frac{N-1}{N}<\alpha<1$}
The first result regarding the global existence of classical solutions for large initial data far from vacuum can be stated as follows.
\begin{thm}\label{thm:global-classical-2dalpha1}
   (Global classical solutions for \(N = 2\))
Assume that
\begin{equation}\label{n2laphasmaal1}
\frac{1}{2}<\alpha<1,\,\,\gamma>2-\alpha,
\end{equation}
and the spherically symmetric initial data $(\rho_0, \mathbf{u}_0, \Phi_0)$ satisfies
\begin{equation}
0 < {\rho_{*}} \leq \rho_0 \leq \rho^{*}, 
\end{equation}
and
\begin{equation}
(\rho_0, \mathbf{u}_0) \in H^3(\Omega),
\mathbf{u}_0|_{\partial\Omega} = 0,
\end{equation}
where $\rho_{*}$ and $\rho^{*}$ are two constants. Then, there exists a unique global spherically symmetric classical solution to the initial-boundary value problem \eqref{eq:NSP-original}-\eqref{uniqueness} such that, for any \((\mathbf{x},t) \in \Omega \times [0,T]\),
\begin{equation}
(C(T))^{-1} \leq \rho(\mathbf{x},t) \leq C(T), \end{equation}
where $C(T)>0$ is a constant depending on the initial data and $T$.
\end{thm}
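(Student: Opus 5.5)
The plan is the standard one: establish local existence of a classical solution, derive a priori estimates that depend only on $T$ and the initial data, and then extend globally by continuation. Local existence and uniqueness for data with $\rho_0\ge\rho_*>0$ in $H^3$ are classical. The whole task therefore reduces to proving the two-sided density bound $C(T)^{-1}\le\rho\le C(T)$ together with higher-order estimates for a spherically symmetric solution on $B_R\subset\mathbb{R}^2$.

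I would work in the radial variables of \eqref{eq:NSP-radial-eulerian}, using the weight $r\,dr$.

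\emph{Basic estimates.} The first step is the basic energy estimate. Testing the momentum equation with $u$ and the Poisson equation with $\Phi_t$ gives
\[
\int \tfrac12\rho u^2+\tfrac{\rho^\gamma}{\gamma-1}+\tfrac{1}{2\kappa}|\Phi_r|^2 .
\]
For $\kappa>0$ the gravitational term has the wrong sign. On a bounded ball I would control it with $\|\Phi_r\|_{L^2}\lesssim\|\rho-\bar\rho\|_{L^{p}}$ for some $p>1$ in 2D, which the pressure term absorbs for $\gamma>1$ by interpolation plus Gronwall.

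The second step is the BD entropy. For $\mu=\rho^\alpha$, define the effective velocity $v=u+\frac{\alpha}{\alpha-1}\cdot(\rho^{\alpha-1})_r\cdot\frac{\alpha-1}{\alpha}=u+\rho^{-1}(\rho^\alpha)_r$. Testing with $v$ yields bounds on $\sup_t\int\rho v^2$ and $\int_0^T\!\int \rho^{\alpha+\gamma-3}|\rho_r|^2$, i.e.\ $\sup_t\|(\rho^{\alpha-1/2})_r\|_{L^2}$. The potential again enters only as the lower-order term $\int\rho\Phi_r\,\rho^{-1}(\rho^\alpha)_r=-\int \rho^\alpha\Delta\Phi$, which the Poisson equation turns into $\kappa\int\rho^\alpha(\rho-\bar\rho)$, controlled by the energy since $\alpha<1<\gamma$.

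\emph{Upper bound away from the center.} Away from $r=0$, the one-dimensional embedding applied to $(\rho^{\alpha-1/2})_r\in L^2(r\,dr)$ gives $\rho$ bounded above and below on $[\delta,R]$. The restriction $\alpha>1/2$ is used precisely here.

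\emph{The center: the main obstacle.} Near $r=0$ the weight $r$ degenerates and the above only yields a growth bound of the form $\rho^{\alpha-1/2}(r)\lesssim |\log r|^{1/2}$. To handle this I would exploit the coupling between $u$ and $v$. In Lagrangian-type form, $v$ satisfies a damped transport equation
\[
\rho(v_t+uv_r)+\frac{\gamma}{\alpha}\rho^{\gamma-\alpha+1}(v-u)=-\rho\Phi_r ,
\]
with no viscous term. Then $L^p$ estimates on $v$ in $\rho\,dx$, followed by $p\to\infty$, give $\|v\|_{L^\infty}\le C(T)$, provided $\|u\|_{L^\infty}$ and $\|\Phi_r\|_{L^\infty}$ are controlled. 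From the Poisson equation, $\Phi_r=\frac{\kappa}{r}\int_0^r(\rho-\bar\rho)s\,ds$, which is bounded once $\rho\in L^p$ for some $p>2$. That integrability would come from the logarithmic growth estimate.

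For $u$, I would test the momentum equation with $|u|^{p-2}u$, i.e.\ run an $L^p$ ladder whose exponents are chosen in the admissible set $A_{set}$ so that odd powers keep signs. The window $\gamma>2-\alpha$ is needed to absorb the pressure and cross terms $\rho^{\alpha}\frac{u^2}{r^2}|u|^{p-2}$ via Hardy-type inequalities in 2D. Once $u$ and $v$ are bounded in $L^\infty$, the identity $(\rho^{\alpha-1})_r\cdot\frac{\alpha}{\alpha-1}=v-u$ gives $|(\rho^{\alpha-1})_r|\le C$. Integrating from a point $r_0\ge\delta$ where $\rho$ is already controlled then yields both upper and lower bounds for $\rho$ up to $r=0$. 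This closing of the $L^\infty$ bounds of $u$ and $v$ simultaneously near the center is the step I expect to be hardest.

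\emph{Higher regularity and continuation.} With $C^{-1}\le\rho\le C$ the system is uniformly parabolic in $u$. Standard estimates then follow in order: $\|\nabla u\|_{L^2}$ by testing with $u_t$; $\|\rho\|_{H^1}$ and $\|\nabla u\|_{L^\infty}$ via the effective viscous flux; and then $H^2$ and $H^3$ bounds, including time-weighted estimates for $u_t$ and $u_{tt}$, with the Poisson regularity $\nabla\Phi\in H^4$ coming from elliptic theory. These estimates give the regularity class \eqref{eq:global-regularity} on $[0,T]$, and the continuation argument concludes the global existence and uniqueness claimed in the statement.
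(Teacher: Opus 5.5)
\textbf{Gap: the $L^\infty$ bound for $u$.} Your plan breaks down at the step you flag as hardest, and the obstruction is structural. For $\alpha<1$ the $L^{p}$ ladder for $u$ cannot be pushed to $p\to\infty$. Testing the momentum equation with $u^{2n-1}$ (Lagrangian variables) produces the viscous form
\[
\alpha\rho^{\alpha-1}\frac{u^{2n}}{r^2}+(2n-1)\alpha\rho^{1+\alpha}r^2u_x^2u^{2n-2}+2n(\alpha-1)\rho^{\alpha}u^{2n-1}u_x .
\]
Its cross term comes from $\lambda=(\alpha-1)\rho^\alpha$ and has coefficient $2n(1-\alpha)$. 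The Cauchy--Schwarz budget of the diagonal terms is only $2\alpha\sqrt{2n-1}$. The form is therefore coercive only if $n(1-\alpha)<\alpha\sqrt{2n-1}$, i.e.\ $\alpha>\alpha_{2,-}(n)=\frac{n}{n+\sqrt{2n-1}}$, i.e.\ $n<n_2(\alpha)<\infty$. This is exactly the restriction in Lemma~\ref{2duhighnot}.

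Choosing exponents in $A_{\text{set}}$ does not touch this. That set only makes fractional odd powers sign-preserving, and the paper uses it for the effective velocity through Lemma~\ref{mdense}, not for $u$. Without $\sup_t\|u\|_{L^\infty}$ the rest of your closing argument also fails. The maximum principle for your damped transport equation only gives $\|v\|_{L^\infty}\le\max\{\|v_0\|_{L^\infty},\sup_t\|u\|_{L^\infty}\}+\int_0^t\|\Phi_r\|_{L^\infty}ds$, and your Lipschitz bound on $\rho^{\alpha-1}$ needs both $u$ and $v$ bounded.

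Two smaller slips. First, $\nabla\rho^{\alpha-1/2}\in L^2$ with $\alpha>\frac12$ bounds $\rho$ only from above on $[\delta,R]$, not from below. Second, $\kappa\int\rho^{1+\alpha}$ is not controlled by $\int\rho^\gamma$ when $2-\alpha<\gamma<1+\alpha$. It has to be absorbed by the BD dissipation $\|\nabla\rho^{(\gamma+\alpha-1)/2}\|_{L^2}^2$ via the 2D Gagliardo--Nirenberg inequality, which does work for $\gamma>1$.

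\textbf{What the paper does instead.} It replaces the $L^\infty$ bounds by a single finite exponent in a two-sided window. Choose $k\in A_{\text{set}}$ with $\alpha_{2,-}(k)<\alpha<1-\frac{1}{2k}$; this is possible for every $\alpha\in(\frac12,1)$ by density of $A_{\text{set}}$. Then prove $\sup_t\int_0^1(u^{2k}+w^{2k})\,dx\le C(T)$. Near the center, the pressure and Poisson sources are handled by the weighted bound $\|\rho^{\alpha-1/2}r^{\xi}\|_{L^\infty(0,R)}\le C(T)$ from the BD entropy and by the relation $x\le C(T,q)\,r^{2(q-1)/q}$. Since $w-u=r(\rho^\alpha)_x$, this gives $\sup_t\int_0^R|(\rho^{\alpha-1+\frac{1}{2k}})_r|^{2k}r\,dr\le C(T)$.

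The upper bound then follows from $W^{1,1}(0,R)\hookrightarrow L^\infty$ applied to $\rho^\alpha$, together with H\"older. The lower bound uses the same embedding in $x$ applied to $\rho^{-1}$, whose integral over $(0,1)$ is conserved:
\[
\rho^{-1}\le C+C\int_0^1|(\rho^\alpha)_x|\rho^{-1-\alpha}dx\le C+C(T)V_T^{\alpha+\frac{1}{2k}} .
\]
This closes exactly because $\alpha+\frac{1}{2k}<1$. The lower edge of the window supplies the coercivity your ladder lacks, and the upper edge is what makes the lower density bound close.
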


\begin{thm}\label{thm:global-classical-3dalpha1}
   (Global classical solutions for $N = 3$) Assume that
   \begin{equation}
    \begin{cases}\label{plasmas3dalphagamma}
        \frac{5}{6}<\alpha<1,\\
        \frac{4}{3}<\gamma<4\alpha-1-\alpha^2;
    \end{cases}\end{equation}
and the spherically symmetric initial data $(\rho_0, \mathbf{u}_0, \Phi_0)$ satisfy
\begin{equation}
0 < \rho_{*} \leq \rho_0 \leq \rho^{*}, 
\end{equation}
and
\begin{equation}
(\rho_0, \mathbf{u}_0) \in H^3(\Omega), \quad 
\mathbf{u}_0|_{\partial\Omega} = 0.
\end{equation}
Then, there exists a unique global spherically symmetric classical solution to the initial-boundary value problem \eqref{eq:NSP-original}-\eqref{uniqueness} such that, for any \((\mathbf{x},t) \in \Omega \times [0,T]\),
\begin{equation}
(C(T))^{-1} \leq \rho(\mathbf{x},t) \leq C(T), \end{equation}
where $C(T)>0$ is a constant depending on the initial data and $T$.
\end{thm}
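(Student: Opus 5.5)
The proof follows the standard scheme: local well-posedness, time-dependent a priori bounds, and continuation. Local existence and uniqueness of a classical solution with $\rho>0$ and the regularity class \eqref{eq:global-regularity} follow from the usual linearization/iteration argument, since the data are in $H^3$, bounded away from vacuum, and the viscosity $\rho^\alpha$ is nondegenerate. The whole task is therefore to show that on any $[0,T]$ we have $C(T)^{-1}\le\rho\le C(T)$ together with the higher-order bounds.

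\textbf{Basic energy and BD entropy estimates.} I will work in the radial Eulerian form \eqref{eq:NSP-radial-eulerian}, and where convenient in Lagrangian mass coordinates $x=\int_0^r \rho s^{N-1}\,ds$. The first step is the basic energy identity
\[
\frac{d}{dt}\int\Bigl(\tfrac12\rho u^2+\tfrac{\rho^\gamma}{\gamma-1}+\tfrac{1}{2\kappa}|\Phi_r|^2\Bigr)r^{N-1}dr+\text{dissipation}=0,
\]
where the sign of $\kappa$ is handled using \eqref{uniqueness} and Poisson's equation to bound $\|\Phi_r\|_{L^2}$ in terms of $\|\rho-\bar\rho\|$. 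The second step is the BD entropy estimate for the effective velocity $v=u+\frac{\alpha}{\alpha-1}\cdots$, namely $v=u+\rho^{-1}(\rho^\alpha)_r$ up to normalization. This controls $\int \rho^{2\alpha-3}\rho_r^2 r^{N-1}$ and $\int\rho^{\alpha+\gamma-3}\rho_r^2 r^{N-1}$. The potential term $\int \rho^{\alpha-1}\rho_r\Phi_r$ is absorbed after integrating by parts, giving $\int\rho^\alpha(\rho-\bar\rho)$, which is controlled because $\gamma>\alpha$.

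\textbf{Pointwise density bounds away from and near the center.} Away from the origin, the one-dimensional embedding applied to $\rho^{\alpha-1/2}$ (with the $r^{N-1}$ weight harmless) gives $\rho\le C$ and $\rho\ge C^{-1}$ on $r\ge\delta$. The main obstacle is the center $r=0$, where the weight degenerates. My plan there is the following.
\begin{enumerate}
\item Derive higher $L^p$ estimates for $v$. The $v$-equation is a damped transport equation, $\rho(v_t+uv_r)+\frac{\gamma}{\alpha}\rho^{\gamma-\alpha+1}(v-u)=-\rho\Phi_r$, and multiplying it by $|v|^{2n-2}v\,r^{N-1}$ gives $L^{2n}$ control of $\rho^{1/2n}v$.
\item Couple this with $L^{2n}$ estimates for $u$ obtained from the momentum equation. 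The cross terms from $\frac{N-1}{r}u(\rho^\alpha)_r$ produce exactly the quadratic-form conditions defining $\alpha_{N,\pm}(n)$. Choosing $n\in A_{set}$ close to $n_N(\alpha)$ keeps the form positive; the odd denominator $2k+1$ allows signed powers.
\item Handle the gravitational force through $\Phi_r(r)=\kappa r^{1-N}\int_0^r(\rho-\bar\rho)s^{N-1}ds$. This gives $|\Phi_r|\lesssim r\|\rho\|_{L^\infty(B_r)}$, a growth rate at the center that is compatible with the weights.
\end{enumerate}
Then $(\rho^\alpha)_r=\rho(v-u)$, integrated from the boundary inward with Hölder's inequality in the weighted $L^{2n}$ norms, yields an $L^\infty$ bound on $\rho^{\alpha-\theta}$ for some $\theta$. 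The restrictions $\alpha>\frac{N-1}{N}$ (indeed $\alpha>5/6$ when $N=3$) and $\gamma<4\alpha-1-\alpha^2$ are what close this bootstrap. Finally, a maximum-principle or ODE argument along particle paths for the $v$-equation written in terms of $\log\rho$ gives the lower bound.

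\textbf{Higher regularity and continuation.} Once $C(T)^{-1}\le\rho\le C(T)$ holds, the system is uniformly parabolic in $u$. Standard estimates in Hoff's style then give $\|\nabla u\|_{L^2}$, $\sqrt\rho u_t$, $\|\rho\|_{H^1\cap W^{1,q}}$, and then the $H^2$ and $H^3$ bounds with time weights. In Cartesian coordinates the field $\mathbf u=u\,x/r$ is regular at the origin, and elliptic regularity for $\Delta\Phi$ gives $\nabla\Phi\in H^4$. These bounds let us extend the local solution to $[0,T]$ for every $T$. Uniqueness follows from an energy estimate on the difference of two solutions.
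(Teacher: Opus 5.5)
Your skeleton (local theory, energy and BD entropy, coupled $L^{2n}$ estimates for $u$ and the effective velocity with the coercivity condition defining $\alpha_{3,-}$, a one-dimensional Sobolev step, Hoff-type higher-order estimates, continuation) is the paper's. But the step that actually carries the theorem, the density bound at $r=0$, is missing, and what you offer for it would not close.

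\textbf{No control of $\rho$ at the center.} You never quantify how fast $\rho$ may grow at the origin; the embedding ``away from the origin'' says nothing there. The paper's key tool is the weighted bound $\sup_t\|\rho^{\alpha-\frac12}r^{\frac12+\xi}\|_{L^\infty(0,R)}\le C(T)$. It comes from $\nabla\rho^{\alpha-\frac12}\in L^2(\Omega)$ by a Sobolev--H\"older argument carried all the way down to $r=0$.

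Without this bound, the pressure term in the $L^{2k}$ estimate for $u$ is uncontrolled. After Young's inequality that term is $\int_0^T\!\int_0^R\rho^{2k(\gamma-\alpha)+\alpha}r^{2k}\,dr\,dt$. The paper peels off $\rho^{2k\sigma}\le R_T^{2k\sigma}$ and bounds the remaining power of $\rho$ by the weighted estimate. Integrability at the origin of the resulting power of $r$ is exactly $\gamma<3\alpha-1+\frac{\alpha-1}{2k}+\sigma$ with $\sigma<\frac1{2k}$. This gives $\int_0^R|\partial_r\rho^{\alpha-1+\frac1{2k}}|^{2k}r^2\,dr\le C(T)R_T^{2k\sigma}$, and finally $R_T^\beta\le C(T)+C(T)R_T^\sigma$ with $\sigma<\beta<2\alpha-1$.

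The hypothesis $\gamma<4\alpha-1-\alpha^2$ is this condition in the limit $k\downarrow k_0=\frac{1}{2(1-\alpha)}$. So $k$ must be taken just above $k_0$, not near $n_3(\alpha)$ as you suggest. Near $n_3(\alpha)$ the admissible $\gamma$-range shrinks and the Poisson constraint below fails. Your statement that the hypotheses ``close this bootstrap'' has no bootstrap quantity behind it.

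\textbf{The Poisson bound is circular.} Your estimate $|\Phi_r|\lesssim r\|\rho\|_{L^\infty(B_r)}$ spends the unknown $R_T$ at full power. The $L^{2n}$ estimates then give at best $\|\partial_r\rho^{s}\|_{L^{2n}(r^2dr)}\lesssim R_T$, which cannot be absorbed by $R_T^\beta$ with $\beta<1$. The paper uses only $\rho\in L^{6\alpha-3}(\Omega)$, a consequence of $\rho^{\alpha-\frac12}\in H^1$, to get $x\le C(T)r^{3(1-\frac{1}{6\alpha-3})}$. This yields an $R_T$-independent, though singular, bound $|\Phi_r|\lesssim r^{1-\frac{1}{2\alpha-1}}+r$. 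Integrability of $(x/r^2)^{2k}$ against the weights then forces $\frac{4k+3}{4k+6}<\alpha$. Compatibility with $\alpha<1-\frac1{2k}$ needs $k>3$, i.e.\ $\alpha>\frac56$.

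\textbf{The lower bound also fails as written.} $H^1$ control of $\rho^{\alpha-\frac12}$ bounds $\rho$ only from above, because the exponent is positive. So $\rho\ge C^{-1}$ on $r\ge\delta$ does not follow. Nor is the effective-velocity equation an ODE for $\log\rho$ along particle paths. The quantity $w-u=r^2(\rho^\alpha)_x$ is a spatial derivative, and recovering $\rho$ from it means integrating $(w-u)/r^2$ toward the singular center.

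The paper works instead with the specific volume $v=1/\rho$. Since $v_\tau=(r^2u)_x$, the integral $\int_0^1v\,dx$ is conserved. H\"older and the $L^{2k}$ bound on $w-u$ then give $v\le C+C\int_0^1|(\rho^\alpha)_x|v^{1+\alpha}\,dx\le C+C(T)V_T^{\alpha+\frac{1}{2k}}$. This closes only because $\alpha+\frac1{2k}<1$, i.e.\ $k>k_0$, which is the second reason for the choice of $k$.

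\textbf{A smaller point in the BD entropy.} For $\kappa=1$ the Poisson contribution $\kappa\int\rho^\alpha(\rho-\bar\rho)$ contains $\int\rho^{1+\alpha}$. The hypotheses force $\gamma<4\alpha-1-\alpha^2<1+\alpha$, so the energy does not control this term, and ``$\gamma>\alpha$'' is not the reason. It must be absorbed into the dissipation $\|\nabla\rho^{\frac{\gamma+\alpha-1}{2}}\|_{L^2}^2$ through Sobolev and interpolation. This is where $\gamma>\frac43$ is needed a second time.
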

The aim of this paper is to establish the existence of global classical solutions. With the local well-posedness of classical solutions and the blow-up criterion established, the key to extending the local solution globally lies in establishing uniform upper and lower bounds for the density. Consequently, deriving these bounds constitutes crucial step of our proof, the main ideas of which are outlined below.

\begin{itemize}
    \item \textbf{The Two-Dimensional Case}
    
    In two dimensions, the derivation of the upper bound for the density relies on establishing higher integrability estimates for the effective velocity $w = u + r(\rho^{\alpha})_x$ and the velocity $u$, with the final step relying on the Sobolev embedding inequality. The proof is outlined step by step in the following.

    First, we establish the basic energy estimate and the BD entropy estimate, which yield the $L^2$ estimates for $u$ and $w$. In this process, a singularity arises at the origin, reflected by the term $\frac{x}{r}$ in the momentum equation (corresponding to the Poisson term in Eulerian coordinates). To address this singularity, we make full use of the weighted estimates for the density near the origin derived from the BD entropy and the basic energy estimate, namely the control of $\|\rho^{\alpha-\frac{1}{2}} r^\xi\|_{L^\infty(0,R)}$ in two dimensions, which allows us to convert singular integrals such as $\int_0^1 \frac{x^2}{r^2} dx$ into quantities controlled by $\|u\|^2_{L^2(0,1)}$ and $\|w\|^2_{L^2(0,1)}$. This weighted estimate plays a key role in all subsequent estimates.
    
    Next, after obtaining the $L^2$ integrability estimates for the effective velocity $w$ and the velocity $u$, we return to the weighted estimates for the density near the origin and rigorously establish its boundedness. This boundedness was previously used as a tool to handle the singularity. Meanwhile, in the two-dimensional case, we also obtain higher integrability estimates for the density. Building upon the higher integrability estimates for the density, we establish a relationship between $x$ and $r$, which plays a crucial role in the following  higher integrability estimates for $w$ and $u$---it eliminates the singularity near the origin caused by the Poisson term.
    
    Third, for any $\alpha$ and $\gamma$ satisfying \eqref{n2laphasmaal1}, we can choose a constant $1 < k \in A_{\text{set}}$ depending on $\alpha$ such that

\begin{equation}\nonumber
1-\frac{k\sqrt{2k-1}-2k+1}{k^2-2k+1}<\alpha<1-\frac{1}{2k}. 
\end{equation}
The choice of $k$ above ensures the high integrability of $u$ and $w$, which further yields the upper and lower bounds of the density.
\end{itemize}
\begin{itemize}
\item \textbf{The Three-Dimensional Case}

In the three-dimensional case, the Poisson term $\frac{x}{r^2}$ exhibits a stronger singularity near the origin due to the higher power of $r^2$. The weighted estimates for the density near the origin derived from the basic energy estimate and the BD entropy estimate are therefore insufficient to close the estimates within the range $\gamma>1$; indeed, closing the estimates would require $\gamma>\frac{4}{3}$, which is a more restrictive condition.
To overcome this obstacle, we carry out the following estimates.

\textbf{Step 1:} In carrying out the basic energy estimate, careful treatment is required for the term $\int_0^1 \frac{x}{r} dx$. In the case $\kappa = 1$ (gaseous star), when this integral appears on the right-hand side, it can ultimately be controlled by $\|\rho-1\|^2_{L^{\frac{6}{5}}(\Omega)}$, and absorbing it into the $\|\rho\|^{\gamma}_{L^\gamma(\Omega)}$ requires $\gamma > \frac{4}{3}$. In the case $\kappa = -1$ (plasma), when it appears on the left-hand side, no additional treatment is needed. When performing the BD entropy estimate, the Poisson term yields a term of the form $\int_{0}^{\tau}\|\rho\|^{1+\alpha}_{L^{1+\alpha}(\Omega)}ds$, which must be absorbed by the dissipation term $\int_{0}^{\tau}\|\nabla \rho^{\frac{\gamma+\alpha-1}{2}}\|^2_{L^2(\Omega)}ds$ to close the $L^2$ estimate for the effective velocity $w$.

\begin{sloppypar}\textbf{Step 2:} The weighted estimate for the density near the origin takes the form \(
\|\rho^{\alpha-\frac{1}{2}} r^{\frac{1}{2}+\xi}\|_{L^\infty(0,R)}
\), which differs from the two-dimensional case in the power of the weight. Similar to the two-dimensional case, the basic energy estimate and the BD entropy estimate allow us to establish a relationship between $x$ and $r$, which  is the key to the subsequent higher integrability estimates by eliminating the singularity near the origin caused by the Poisson term. 
\end{sloppypar}

\textbf{Step 3:} Similar to the two-dimensional case, when $\alpha$ and $\gamma$ satisfy \eqref{plasmas3dalphagamma}, we can choose a constant $1 < k \in A_{\text{set}}$ depending on $\alpha$ and $\gamma$ such that
\begin{equation}\label{3dalphagamma-1}
\frac{4k+3}{4k+6}< \alpha < 1 - \frac{1}{2k} ,
\end{equation}
and
\begin{equation}\label{3dalphagamma1-11}
\frac{4}{3} < \gamma < 3\alpha -1 + \frac{\alpha}{2k}. 
\end{equation}

\eqref{3dalphagamma-1},  \eqref{3dalphagamma1-11} guarantee the high integrability of the velocity $u$. However, due to the stronger singularity arising from the Poisson term, namely the appearance of the integral $\int_0^1 (\frac{x}{r^2})^{2k} dx$, this leads to a more restrictive condition on $\alpha$. On the other hand, the construction of the set $A_{\text{set}}$ still ensures that the effective velocity $w$ also possesses high integrability.

\textbf{Step 4:} Applying the one-dimensional  Sobolev embedding inequality yields an inequality for the density: $$R_T^\beta \leq C(T) R_T^{\sigma} + C(T).$$ By choosing appropriate parameters $\beta$ and $\sigma$ satisfying $\beta > \sigma$, this inequality can be closed, leading to a uniform upper bound for the density. 
\end{itemize}

\subsection{Global classical solutions away from vacuum $\alpha=1$}
\begin{thm}\label{thm:global-classical-2d}
   (Global classical solutions for \(N = 2\))
Assume that
\begin{equation}\label{gamma1alpha=1}
\gamma>1,
\end{equation}
and the spherically symmetric initial data $(\rho_0, \mathbf{u}_0, \Phi_0)$ satisfy
\begin{equation}
0 < {\rho}_* \leq \rho_0 \leq {\rho}^{*}, 
\end{equation}

\begin{equation}
(\rho_0, \mathbf{u}_0) \in H^3(\Omega),
\mathbf{u}_0|_{\partial\Omega} = 0.
\end{equation}
Then, there exists a unique global spherically symmetric classical solution to the initial-boundary value problem \eqref{eq:NSP-original}-\eqref{uniqueness} such that, for any \((\mathbf{x},t) \in \Omega \times [0,T]\),
\begin{equation}
(C(T))^{-1} \leq \rho(\mathbf{x},t) \leq C(T), \end{equation}
where $C(T)>0$ is a constant depending on the initial data and $T$.
\end{thm}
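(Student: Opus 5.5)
The plan is to reduce Theorem \ref{thm:global-classical-2d} to uniform upper and lower bounds on the density. Local well-posedness and the blow-up criterion for classical solutions, which the paper takes as known, then allow us to extend the local solution globally.

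We first pass to Lagrangian mass coordinates $x=\int_0^r \rho s^{N-1}\,ds$ on $[0,1]$, with $r_t=u$ and $\rho r^{N-1}r_x=1$. We then derive the two basic estimates:
\begin{itemize}
\item the energy estimate, giving $\sup_t\big(\|u\|_{L^2}^2+\int \rho^{\gamma-1}dx\big)+\int_0^T\!\!\int \rho\,(r^{N-1}u)_x^2\,dx\,dt\le C$;
\item the BD entropy estimate for the effective velocity $w=u+r^{N-1}(\rho^\alpha)_x$ with $\alpha=1$.
\end{itemize}
The BD estimate gives $\sup_t\|w\|_{L^2}\le C(T)$ and the dissipation $\int_0^T\!\!\int r^{2(N-1)}\rho^{\gamma-1}|(\rho^{\gamma/2})_x|^2\le C(T)$.

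In these estimates the Poisson force $-\rho\Phi_r$ becomes $-\kappa\bigl(x-\bar\rho r^N/N\bigr)/r^{N-1}$. In the energy estimate it is handled through the potential energy $\int|\nabla\Phi|^2$. For $\kappa>0$ in two dimensions this requires only $\gamma>1$: the relevant quantity is bounded by $\|\rho-\bar\rho\|_{L^{1}}$-type norms in 2D via logarithmic potential bounds. For $\kappa<0$ the term has a good sign. In the BD estimate the force pairs with $r^{N-1}(\rho^\alpha)_x=r^{N-1}\rho_x$ and yields terms like $\int x\rho_x\,dx$. After integration by parts these are bounded by $\int\rho$ and $\int\rho^2 r^2$, i.e.\ by $\|\rho\|_{L^2(\Omega)}^2$, which the pressure dissipation absorbs for any $\gamma>1$ in 2D.

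Next comes the weighted control near the center, where $\alpha=1$ helps. Since $w-u=r(\rho)_x=\rho^{-1}r\rho_x\cdot\rho$, in Eulerian form $(\log\rho)_r=(w-u)/(r\rho r)\cdot$(Jacobian). Concretely, $\|(\log\rho)_r\|$ in the weighted space $L^2(r\,dr)$ is bounded by $\|w-u\|_{L^2(dx)}$. In two dimensions $H^1$ radial functions, estimated away from the origin, give $|\log\rho(r)|\le C+C|\log r|^{1/2}$. Hence $\rho\le C\exp(C|\log r|^{1/2})$, which lies below any power $r^{-\epsilon}$. This yields the relation $x\le C r^{2-\epsilon}$ and also $x\ge c\,r^{2+\epsilon}$, which tames the singular factor $x/r$ in the Poisson term.

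With this in hand we run $L^{2k}$ estimates for $w$ and $u$ for arbitrarily large $k\in A_{set}$. There is no upper restriction on $k$ because $\alpha=1$ makes $n_2(1)=+\infty$, and the condition $\alpha<1-\frac1{2k}$ used for $\alpha<1$ is replaced by the exact cancellation in the $w$-equation $w_t+\gamma r^{N-1}(\rho^\gamma)_x/\rho\ldots=$ (damping) $-$ Poisson. Multiplying the $w$-equation by $w|w|^{2k-2}$ produces a damping term $\rho^{\gamma}$-weighted with a good sign, and the Poisson contribution is bounded by $\int (x/r)^{2k}$, which is finite by the previous step. The $u$ estimate is then closed similarly.

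Finally we write $r\rho_x=w-u$ and integrate in $x$ using one-dimensional Sobolev embedding with the weight $r^{N-1}$. Away from the origin this gives $\sup\rho\le C(T)$ directly. Near the origin we obtain an inequality $R_T^\beta\le C(T)R_T^\sigma+C(T)$ with $\beta>\sigma$, where $R_T=\sup\rho$. The lower bound follows symmetrically from bounding $\log\rho$. Higher-order estimates are then standard.

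\textbf{Main obstacle.} I expect the main obstacle to be the central estimate: obtaining a pointwise (not merely logarithmic) bound at $r=0$. I would attempt it via the $L^{2k}$ control of $w-u$ with $k$ large, using $|(\log\rho)(x)-(\log\rho)(x_0)|\le\int|w-u|\rho^{-1}r^{-1}\ldots$ together with the $x$–$r$ relation, so that the required integrability of $r^{-1}\rho^{-1/2}$ holds once $2k>$ some threshold.
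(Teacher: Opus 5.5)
There is a genuine gap, and it is in your treatment of the center. With $\alpha=1$, $N=2$ you have pointwise $w-u=r\rho_x=\partial_r\log\rho$. But $dx=\rho\,r\,dr$, so the BD bound gives
\[
\int_0^1 (w-u)^2\,dx=\int_0^R (\partial_r\log\rho)^2\,\rho\,r\,dr=4\int_0^R |\partial_r\sqrt{\rho}|^2\,r\,dr .
\]
This is control of $\nabla\sqrt{\rho}$ in $L^2(\Omega)$, not of $\partial_r\log\rho$ in $L^2(r\,dr)$; you dropped the factor $\rho$.

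Only a one-sided consequence survives. A radial $H^1$ function in 2D satisfies $\sqrt{\rho}(r)\le C+C|\log r|^{1/2}$, so $\rho\le C(1+|\log r|)$. This implies the paper's weighted bound $\|\rho^{1/2}r^{\xi}\|_{L^\infty}\le C(T)$ and $x\le C r^{2-\epsilon}$. That is all your Poisson bounds, your $L^{2k}$ estimates and your upper-bound step actually need, so that part of your plan goes through. It is even simpler than an $R_T^\beta\le C R_T^\sigma$ bootstrap: the $L^4$ bound on $w-u$ gives $\rho^{1/4}\in W^{1,4}(\Omega)\hookrightarrow L^\infty$, which is how the paper gets the upper bound.

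However, the two-sided bound $|\log\rho|\le C+C|\log r|^{1/2}$ and the relation $x\ge c\,r^{2+\epsilon}$ are unjustified. They encode a lower bound on $\rho$ near the center, which is exactly what is unknown.

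Consequently your lower bound is circular. You start from $|\log\rho(x)-\log\rho(x_0)|\le\int_0^1|w-u|(r\rho)^{-1}dx$ with only $L^{2k}$ control of $w-u$. H\"older then leaves $\int_0^1 (r\rho)^{-2k/(2k-1)}dx=\int_0^R (r\rho)^{-1/(2k-1)}dr$, a negative power of $\rho$ that nothing controls; the conserved $\int_0^1 v\,dx$ is just the volume $R^2/2$. Bounding this factor by $V_T$ gives $\log V_T\le C+C V_T^{1/(2k)}$, which does not close for any finite $k$.

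The missing ingredient is an $L^\infty$ bound on $w-u$. The paper obtains it after the upper bound. With $\rho$ bounded, the $L^{2n}$ inequalities for $u$ and $w$ close jointly, with Gronwall constants whose $2n$-th roots stay bounded. Letting $n\to\infty$ gives $\|u\|_{L^\infty},\|w\|_{L^\infty}\le C(T)$, hence $\|r\rho_x\|_{L^\infty}\le C(T)$. Then for $v=1/\rho$ and small $\beta$, using $\int_0^1 v\,r^{-3/2}dx=\int_0^R r^{-1/2}dr<\infty$, one gets $V_T^\beta\le C+C(T)\beta V_T^\beta$. Equivalently, integrate $|\partial_r\log\rho|\le C(T)$ from a point where mass conservation forces $\rho$ to be bounded below. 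Your remark about arbitrarily large $k$ points in this direction, but you need uniformity in $k$ and the limit $k\to\infty$, not a finite threshold.

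A smaller issue is in your BD step. Integrating $\int_0^1 x\rho_x\,dx$ by parts produces the boundary term $-\kappa\rho(1,\tau)$, which has the wrong sign for plasmas and needs separate control. The paper sidesteps this in 2D with $\int_0^1\frac{x}{r}w\,dx\le\int_0^1 w^2dx+C\int_0^1\frac{x^2}{r^2}dx$ together with the weighted bound.
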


\begin{thm}\label{thm:global-classical-3d}
   (Global classical solutions for $N = 3$)
Assume that 
\begin{equation}\label{assumptiongamma1}
    \frac{4}{3}<\gamma<3,
\end{equation}
the spherically symmetric initial data $(\rho_0, \mathbf{u}_0, \Phi_0)$ satisfy
\begin{equation}
0 < {\rho}_* \leq \rho_0 \leq {\rho}^{*}, 
\end{equation}
and
\begin{equation}
(\rho_0, \mathbf{u}_0) \in H^3(\Omega), \quad 
\mathbf{u}_0|_{\partial\Omega} = 0.
\end{equation}
Then the initial-boundary value problem \eqref{eq:NSP-original}-\eqref{uniqueness} admits a unique global spherically symmetric classical solution. 
Moreover, for every $(\mathbf{x},t) \in \Omega \times [0,T]$,
\begin{equation}
(C(T))^{-1} \leq \rho(\mathbf{x},t) \leq C(T),
\end{equation}
where $C(T)>0$ is a constant depending on the initial data and $T$.
\end{thm}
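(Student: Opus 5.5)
The proof reduces, through the local well-posedness theory and the blow-up criterion for classical solutions, to showing that on any interval $[0,T]$ the density has an upper and a lower bound depending only on $T$ and the data. I would work in Lagrangian mass coordinates. Set $x=\int_0^r \rho s^{2}\,ds$, normalized to $x\in[0,1]$, with $r_t=u$ and $\rho_t=-\rho^2(r^2u)_x$. The Poisson term then becomes explicit: $\Phi_r=\kappa\big(x-\bar\rho r^3/3\big)/r^2$, up to normalisation. The plan follows the four-step outline in the introduction, specialised to $\alpha=1$, i.e.\ $\mu=\rho$ and $\lambda=0$, where the weight $\rho^{\alpha-1/2}r^{1/2+\xi}$ becomes $\rho^{1/2}r^{1/2+\xi}$.

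\textbf{Step 1: energy and BD entropy.} I first derive the basic energy identity. For $\kappa=1$ the gravitational term $-\int|\nabla\Phi|^2$ is controlled through $\|\rho-\bar\rho\|_{L^{6/5}}^2$, using Hardy--Littlewood--Sobolev, or directly in radial form via $\int_0^1 x/r\,dx$. I then absorb it into $\int\rho^\gamma$ by interpolating $L^{6/5}$ between $L^1$ and $L^\gamma$; this step needs $\gamma>4/3$. For $\kappa=-1$ the term has a good sign. Next I take the BD entropy for the effective velocity $w=u+r^2(\rho)_x$, which in Eulerian form is $u+\nabla\log\rho$. It gives $\sup_t\|w\|_{L^2}$ together with the dissipation $\int_0^T\|\nabla\rho^{\gamma/2}\|_{L^2}^2$. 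The Poisson contribution $\int\rho\nabla\Phi\cdot\nabla\log\rho=\kappa\int\rho(\rho-\bar\rho)$ yields $\int_0^T\|\rho\|_{L^2}^2$. This has a good sign when $\kappa=1$. When $\kappa=-1$ I absorb it using the dissipation plus the $L^\gamma$ bound, via Gagliardo--Nirenberg in 3D radial form. The restriction $\gamma<3$ enters in the same way as in Li--Xin and Vasseur--Yu, through the integrability needed for $\rho^{\gamma/2}$ in $H^1$ and $\rho^{2}$ in $L^1$.

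\textbf{Step 2: weighted pointwise bound and the $x$--$r$ relation.} From $\nabla\sqrt\rho\in L^\infty L^2$ and the radial Strauss-type inequality I obtain $\|\rho^{1/2}r^{1/2+\xi}\|_{L^\infty}\le C$ for small $\xi>0$. Substituting this into $x=\int_0^r\rho s^2ds$ gives $x\le C r^{2-2\xi}$. This bound tames $\int_0^1(x/r^2)^{p}dx$ for every finite $p$, which is why $\alpha=1$ should allow any $k$ without the extra constraint $(4k+3)/(4k+6)<\alpha$.

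\textbf{Step 3: $L^{2k}$ bounds for $u$ and $w$.} Multiplying the $u$ equation by $u^{2k-1}$ and the $w$ equation by $w^{2k-1}$, I exploit the coupling $w-u=r^2\rho_x$. The viscous term produces the good quantity $\int\rho r^{4}u_x^2u^{2k-2}$ together with cross terms in $w$. Pressure terms are handled with Step 1, and the gravity term is bounded using the $x$--$r$ relation by $\|x/r^2\|_{L^{2k}}\|u\|_{L^{2k}}^{2k-1}$. Gronwall then gives $\sup_t(\|u\|_{L^{2k}}+\|w\|_{L^{2k}})\le C(T)$ for large $k$.

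\textbf{Step 4: density bounds.} Since $r^2(\log\rho)_x=w-u$ is bounded in $L^{2k}$ away from the center, while near the center the $L^{2k}$ control of $\rho^{1/2}r\,\nabla\log\rho$ has to be used, integrating in $x$ gives a relation $R_T^\beta\le C(T)R_T^\sigma+C(T)$ with $\beta>\sigma$, where $R_T=\sup\rho$. Similarly $\log\rho$ is bounded below. Once the density is bounded above and below, the higher regularity estimates are standard and the blow-up criterion closes the argument. I expect the main obstacle to be the center $r=0$ in Step 4, namely converting the weighted integrability of $w-u$ into a genuine $L^\infty$ bound for $\rho$ when $r^2$ degenerates. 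My first attempt would be to integrate from $x$ toward $1$, using the Step 2 relation $r\ge cx^{1/(2-2\xi)}$ so that $\int_0^1 r^{-2q'}dx<\infty$ for suitable $q'$; if that fails, I would fall back on a separate maximum-principle argument for $\rho$ near the center.
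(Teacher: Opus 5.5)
Your outline follows the paper's sequence: energy and BD entropy with $\gamma>4/3$, the weight $\rho^{1/2}r^{1/2+\xi}$, an $x$--$r$ relation, $L^{2k}$ bounds for $u,w$, then a one-dimensional Sobolev argument. However, Step 4 does not work as described, and the lower bound has a genuine gap.

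Since $\int_0^1 w^{2k}dx=\int_0^R\rho\,w^{2k}r^2dr$, fixed-$k$ bounds control $w-u=\partial_r\log\rho$ only with the weight $\rho$. That weight degenerates exactly where a lower bound is needed. H\"older gives $|\log\rho(r)-\log\rho(r')|\le C\big(\int_0^R\rho^{-\frac{1}{2k-1}}r^{-\frac{2}{2k-1}}dr\big)^{\frac{2k-1}{2k}}\le CV_T^{\frac{1}{2k}}$. The same computation for $v=1/\rho$ gives $V_T\le C+CV_T^{1+\frac{1}{2k}}$. Neither closes; the paper's $\alpha<1$ bound $V_T\le C+CV_T^{\alpha+\frac{1}{2k}}$ closes only because $\alpha<1-\frac{1}{2k}$.

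What is missing is an $L^\infty$ bound on $u$ and $w$. Once $R_T\le C(T)$ is known, the paper reruns the $u^{2n-1}$ and $w^{2n-1}$ estimates with $n$-independent constants. The pressure coupling is bounded by $CR_T^{\gamma-1}(|u|^{2n}+|w|^{2n})$. The gravity term uses $x/r^2\le C$, which comes from $x\le Cr^2$ (the $L^3$ bound); your relation $x\le Cr^{2-2\xi}$ does not provide this. Letting $n\to\infty$ gives $\|r^2\rho_x\|_{L^\infty(0,1)}\le C(T)$, and the paper then bounds $v^\beta$ for small $\beta$.

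Your first attempt at the upper bound also fails. H\"older against $\|w-u\|_{L^{2k}}$ requires $\int_0^1 r^{-2q'}dx=\int_0^R\rho\,r^{2-2q'}dr<\infty$ with $q'=\frac{2k}{2k-1}>1$. But $\rho\lesssim r^{-1-2\xi}$ only allows $q'<1-\xi$, and even $x\lesssim r^2$ only gives $r^{-2}\lesssim x^{-1}$. You must spend a power of $R_T$ here, e.g. $\int_0^R\rho\,r^{2-2q'}dr\le CR_T$, which gives $R_T\le C+CR_T^{1-\frac{1}{2k}}$. Alternatively, as the paper does, estimate $\rho^\beta$ with $\frac{1}{2n}<\beta<1-\frac{1}{n}$ so that the weighted bound absorbs the center.

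Step 3 also passes over the real difficulty. If the pressure term is integrated by parts, the remainder is $\int_0^R\rho^{2k(\gamma-1)+1}r^{2k}dr$, and $\rho\lesssim r^{-1-2\xi}$ controls this only for $\gamma<2$. That is why the paper proves only $\int_0^R\rho u^{2n}r^2dr\le C(T)R_T^{2n\sigma}$ with $\gamma<2+\sigma$, and then needs $\beta>\sigma$.

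Uniform bounds are reachable through the coupling you mention, but only if you do the following:
\begin{itemize}
\item keep the pressure in the form $-\gamma\int_0^1\rho^{\gamma-1}(w-u)u^{2k-1}dx$;
\item use the damping $\gamma\int_0^1\rho^{\gamma-1}w^{2k}dx$ in the $w$-equation and the dissipation $2\int_0^1u^{2k}r^{-2}dx$ in the $u$-equation;
\item split at a small radius $r_0$.
\end{itemize}
Since $\rho^{\gamma-1}r^2\lesssim r^{2-(1+2\xi)(\gamma-1)}\to0$ exactly when $\gamma<3$, this yields $\int_0^1\rho^{\gamma-1}u^{2k}dx\le\varepsilon\int_0^1u^{2k}r^{-2}dx+C_\varepsilon\int_0^1u^{2k}dx$, and Gronwall closes. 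This is where $\gamma<3$ enters; the paper uses it the same way, through $\xi<\frac{3-\gamma}{2(\gamma-1)}$. It does not enter in Step 1, which needs only $\gamma>4/3$.

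Finally, your sign is reversed. The Poisson term contributes $+\kappa\int\rho(\rho-\bar\rho)\,d\mathbf{x}$ to the right-hand side of the BD entropy identity. So it is the gaseous-star case $\kappa=1$ that must be absorbed into $\|\nabla\rho^{\gamma/2}\|_{L^2}^2$, again using $\gamma>4/3$. For plasmas the term has a favorable sign.
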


\begin{rmk}
By fully utilizing the weighted estimates near the center established through the BD entropy, this paper investigates the global existence of spherically symmetric classical solutions with large initial data for the compressible Navier–Stokes–Poisson equations with density-dependent viscosity coefficients satisfying the BD entropy relation on a bounded domain containing the origin.
\end{rmk}

\begin{rmk}
The singularity near the origin is stronger in three dimensions than in two dimensions. Therefore, the two cases $\kappa = 1$ and $\kappa = -1$ must be treated separately. For $\kappa = 1$, which corresponds to gaseous stars, a careful treatment is required to close the basic energy estimate, which necessitates the condition $\gamma > \frac{4}{3}$.
\end{rmk}
The analysis for $\alpha = 1$ follows the same lines as the case $\alpha < 1$. For the two-dimensional problem, after obtaining $L^2$ estimates for both the velocity $u$ and the effective velocity $w$, we further derive a weighted estimate for the density near the origin. This estimate supports the subsequent derivation of $L^4$ bounds for $u$ and $w$. By the two-dimensional Sobolev embedding inequality, we obtain the $L^\infty$ estimate for the density. In the crucial step of establishing the lower bound for the density, an estimate for $\|\nabla\rho^{\frac{1}{2n}}\|_{L^\infty(\Omega)}$ is required, which in turn depends on $L^\infty$ estimates for both $u$ and $w$. To this end, we derive a coupled Gronwall-type inequality governing the $L^\infty$ norms of $w$ and $u$. When estimating the Poisson term, $\int_{0}^{1}\frac{x}{r}dx$ in the 2D case ensures that no singularity arises. For $\gamma > 1$, we are ultimately able to close the a priori estimates and thereby obtain a lower bound for the density.

In the three-dimensional spherically symmetric case, the analysis of the Navier-Stokes-Poisson system is substantially complicated by the strong singularity of the Poisson term $\frac{x}{r^2}$ near the origin, unlike in two dimensions, so a more delicate treatment of estimates is needed. The basic energy estimate behaves differently depending on the sign of the coupling parameter $\kappa$. When $\kappa = 1$ (gaseous star), the Poisson term appears on the right-hand side. The singular integral $\int_0^1 \frac{x}{r} dx$ is handled by the $L^{6/5}$-norm of the density and then absorbed by the potential energy $\|\rho\|_{L^\gamma(\Omega)}^\gamma$, which forces the condition $\gamma > 4/3$. When $\kappa = -1$ (plasma), this term moves to the left-hand side and actually helps the estimate, so no restriction on $\gamma$ arises from this step. The BD entropy estimate gives a weighted bound of the form $\|\rho^{1/2} r^{1/2+\xi}\|_{L^\infty(0,R)}$, where the power of the radial weight is different from the two-dimensional case. It also provides a key relation between $x$ and $r$, which relies on the uniform bound $\sup_{0\leq t\leq T} \|\rho(t)\|_{L^3(\Omega)} \leq C$ and is essential for removing the singularity caused by the Poisson term. Owing to the specific structure of these weighted estimates, the standard Sobolev embedding theorems fail to yield direct $L^4$ bounds for the velocities $u$ and $w$ when the adiabatic exponent lies within the intermediate range $1 < \gamma < 2$; instead, a refined higher-order integrability scheme is employed to establish that the quantities $\int_0^R \rho u^{2n} r^2 dr$ and $\int_0^R |\partial_r \rho^{1/(2n)}|^{2n} r^2 dr$ are bounded by $R_T^{2n\sigma}$, where $R_T = \sup_{0\leq t\leq T} \|\rho(t)\|_{L^\infty} + 1.$  In this process, the Poisson term gives rise to a singular term $\int_0^1 (\frac{x}{r^2})^{2n} dx$; by means of the density integrability established above, a proper relation between $x$ and $r$ is derived, which allows this term to be controlled. Applying a one-dimensional Sobolev embedding inequality subsequently yields an estimate of the form $R_T^\beta \leq C(T) R_T^\sigma + C(T)$; by  suitably selecting exponents satisfying $\beta > \sigma$, this inequality is closed to provide a uniform upper bound for the density, a method that remains valid for the full range $1 < \gamma < 3$. Finally, once uniform $L^\infty$ estimates for the velocity fields $w$ and $u$ are obtained, by introducing the specific volume $v = 1/\rho$ and applying a one-dimensional Sobolev embedding to the quantity $v^\beta$, together with the established bound on $\|r^2 \rho_x\|_{L^\infty}$, an upper bound for $v$ is derived, which yields a strict positive lower bound for the density.

\section{Preliminary}
\label{sec-2}

The local existence theory for classical solutions stated below can be established using standard methods similar to those in \cite{tanzhang}, so we omit the proof.

\begin{lema} \label{lem:local-existence}
    Assume that the spherically symmetric initial data $(\rho_0,\mathbf{u}_0,\Phi_0)$ and $\gamma$ satisfy the conditions of Theorem \eqref{thm:global-classical-2d} and Theorem \eqref{thm:global-classical-3d}. Then there exists a small time $T_0 > 0$ such that the initial-boundary value problem \eqref{eq:NSP-original}-\eqref{uniqueness} has a unique symmetric classical solution $(\rho,\mathbf{u},\Phi)$ on $\Omega \times [0,T_0]$ with $\rho,s > 0$, and for every $0 < \tau < T_0$, satisfying
    
    \begin{equation} \label{eq:local-regularity}
    \begin{cases}
        \rho \, \in C([0, T_0]; H^3(\Omega)), \quad \rho_t \,  \in C([0, T_0]; H^2(\Omega)), \\
        \rho_{tt}\, \in L^\infty(0, T_0; L^2(\Omega)) \cap L^2(0, T_0; H^1(\Omega)),\,\nabla \Phi \in C([0, T_0]; H^4(\Omega)),\\
        \mathbf{u} \in C([0, T_0]; H_0^1(\Omega) \cap H^3(\Omega)) \cap L^2(0, T_0; H^4(\Omega)) \cap L^\infty(\tau, T_0; H^4(\Omega)), \\
        \mathbf{u}_t \in L^\infty(0, T_0; H_0^1(\Omega)) \cap L^2(0, T_0; H^2(\Omega)) \cap L^\infty(\tau, T_0; H^2(\Omega)), \\
        \mathbf{u}_{tt} \in L^2(0, T_0; L^2(\Omega)) \cap L^\infty(\tau, T_0; L^2(\Omega)) \cap L^2(\tau, T_0; H^1(\Omega)).
    \end{cases}
\end{equation}
\end{lema}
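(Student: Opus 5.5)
The plan is to prove Lemma \ref{lem:local-existence} by the standard iteration used in \cite{tanzhang}. Since $\rho_0\ge\rho_*>0$, the viscosity coefficients $\mu(\rho)=\rho^\alpha$ and $\lambda(\rho)=(\alpha-1)\rho^\alpha$ are smooth and uniformly elliptic as long as $\rho$ stays near the initial range. Hence the momentum equation is a uniformly parabolic Lamé-type system in $\mathbf{u}$ with variable coefficients, coupled to a transport equation for $\rho$ and an elliptic Neumann problem for $\Phi$.

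\textbf{Step 1: linearized problem.} Given $\mathbf{v}$ in the class \eqref{eq:local-regularity} with $\mathbf{v}(0)=\mathbf{u}_0$:
\begin{itemize}
\item solve the linear transport equation $\rho_t+\div(\rho\mathbf{v})=0$ along the characteristics of $\mathbf{v}$; since $\mathbf{v}|_{\partial\Omega}=0$, no boundary data are needed;
\item solve $\Delta\Phi=\kappa(\rho-\bar\rho)$ with $\partial_n\Phi=0$ and $\int_\Omega\Phi=0$. Compatibility requires $\int_\Omega(\rho-\bar\rho)=0$. This is preserved in time by mass conservation, so I would take $\bar\rho$ to be the mean of $\rho_0$;
\item solve the linear parabolic system $\rho\mathbf{u}_t-\div(\rho^\alpha\mathcal{D}\mathbf{u})-\nabla((\alpha-1)\rho^\alpha\div\mathbf{u})=-\rho\mathbf{v}\cdot\nabla\mathbf{v}-\nabla\rho^\gamma-\rho\nabla\Phi$ with Dirichlet data.
\end{itemize}

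\textbf{Step 2: estimates for the linear problem.}
\begin{itemize}
\item From the characteristic representation, $\rho$ stays in $[\rho_*/2,2\rho^*]$ for short time, with $H^3$ bounds in terms of $\int_0^t\|\nabla\mathbf{v}\|_{H^3}$.
\item Elliptic regularity gives $\nabla\Phi\in H^4$ from $\rho\in H^3$, and time derivatives of the Poisson equation give the bounds on $\nabla\Phi_t$ and $\nabla\Phi_{tt}$.
\item Standard parabolic energy estimates give the $H^3$ and $H^4$ bounds on $\mathbf{u}$, obtained by testing with $\mathbf{u}_t$, differentiating in time, and using elliptic regularity for the Lamé operator. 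The $\tau$-weighted bounds (those in $L^\infty(\tau,T_0;\cdot)$) come from multiplying by $t$, since no compatibility condition on $\mathbf{u}_{tt}(0)$ is assumed.
\end{itemize}
All bounds are uniform on a time interval $T_0$ depending only on the initial norms.

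\textbf{Step 3: contraction.} Show that the map $\mathbf{v}\mapsto\mathbf{u}$ sends a ball of the solution space into itself and is a contraction in a lower norm, such as $L^\infty(0,T_0;L^2)\cap L^2(0,T_0;H^1)$, for $T_0$ small. The fixed point is the solution, and uniqueness follows from the same difference estimate. Spherical symmetry is preserved because the problem is rotation invariant and the solution is unique, so the rotated solution must coincide with the original. This yields \eqref{eq:local-regularity}.

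\textbf{Main obstacle.} The step I expect to be hardest is the higher-order parabolic estimate in a bounded domain with variable, density-dependent Lamé coefficients. Obtaining $\mathbf{u}\in L^2H^4$ requires commuting derivatives with $\rho^\alpha$ and using $\rho\in H^3$, which demands careful bookkeeping of $\nabla^3\rho$ terms. The origin causes no difficulty at this stage, because we work in Eulerian Cartesian coordinates where the system is regular and the $r^{-1}$ singularities of \eqref{eq:NSP-radial-eulerian} never appear.
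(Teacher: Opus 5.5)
Your iteration scheme (linearize, estimate, contract) is the standard argument the paper has in mind; the paper gives no proof and only points to standard methods. There is, however, a genuine gap. The hypotheses you start from, $(\rho_0,\mathbf{u}_0)\in H^3$ and $\mathbf{u}_0|_{\partial\Omega}=0$, cannot produce the unweighted regularity in \eqref{eq:local-regularity}. What is missing is a first-order compatibility condition on $\partial\Omega$; it is not a condition at the level of $\mathbf{u}_{tt}(0)$, as you suggest.

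Here is why. Suppose $\mathbf{u}\in C([0,T_0];H^3)$, $\rho\in C([0,T_0];H^3)$ and $\nabla\Phi\in C([0,T_0];H^4)$. Then the momentum equation gives $\mathbf{u}_t\in C([0,T_0];H^1)$. Each difference quotient $(\mathbf{u}(h)-\mathbf{u}(0))/h$ lies in the closed subspace $H^1_0$, so $\mathbf{u}_t(0)\in H^1_0$. Now evaluate the equation at $t=0$. Use $\mathbf{u}_0|_{\partial\Omega}=0$, and note that $\nabla\Phi_0=\Phi_{0,r}(R)\,\mathbf{x}/R=0$ on $\partial\Omega$ by radial symmetry and the Neumann condition. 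The requirement becomes
\[
\Big[\div\big(\mu(\rho_0)\mathcal{D}\mathbf{u}_0\big)+\nabla\big(\lambda(\rho_0)\div\mathbf{u}_0\big)-\nabla P(\rho_0)\Big]\Big|_{\partial\Omega}=0 .
\]
This fails for generic data, for example $\mathbf{u}_0\equiv 0$ with any radial $\rho_0$ satisfying $\rho_0'(R)\neq 0$. The same obstruction rules out the other unweighted claims:
\begin{itemize}
\item $\mathbf{u}_t\in L^\infty(0,T_0;H^1_0)$: a family bounded in $H^1_0$ and converging to $\mathbf{u}_t(0)$ forces $\mathbf{u}_t(0)\in H^1_0$.
\item $\mathbf{u}_t\in L^2(0,T_0;H^2)$ together with $\mathbf{u}_{tt}\in L^2(0,T_0;L^2)$: the time-trace space of this class is $H^1_0$.
\item $\mathbf{u}\in L^2(0,T_0;H^4)$: this is obtained from the previous item through the elliptic estimate, so it inherits the same obstruction.
\end{itemize}

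In your Step 2 this is exactly where the argument fails. The estimate obtained by differentiating in time and testing with $\mathbf{u}_{tt}$ must start from $\|\nabla\mathbf{u}_t(0)\|_{L^2}$ for a solution whose $\mathbf{u}_t(0)$ has zero trace. You cannot reach incompatible data by approximation. The trace of $\mathbf{u}_t(0)$ depends continuously on $(\rho_0,\mathbf{u}_0)$ in $H^3$, so compatible data form a closed subset of $H^3$. Your time weights only compensate for $\mathbf{u}_t(0)\notin H^2$. They do nothing for this boundary mismatch, and without compatibility $\|\nabla\mathbf{u}_t(t)\|_{L^2}$ genuinely blows up as $t\to 0^+$, as it already does for the Dirichlet heat equation.

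To close the argument you have two options:
\begin{itemize}
\item add the displayed compatibility condition to the hypotheses of the lemma, and hence of the theorems; or
\item replace the unweighted statements ($\mathbf{u}\in C([0,T_0];H^3)$, $\mathbf{u}\in L^2(0,T_0;H^4)$, and the unweighted bounds on $\mathbf{u}_t$ and $\mathbf{u}_{tt}$) by time-weighted or $(\tau,T_0)$ versions.
\end{itemize}
The continuation argument used later in the paper is unaffected. Data taken at a positive time from a classical solution are automatically compatible, so only $t=0$ needs the extra hypothesis.
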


As mentioned in \cite{Bourguignon}, we now recall certain estimates for elliptic equations.
\begin{lema}
      \label{lem:poisson-neumann}
Given an $f \in H^k(\Omega)$  $(k \in \mathbb{N})$ and a $g \in H^{k+1-1/2}(\partial\Omega)$ such that
\begin{equation}
\int_{\Omega} f \, d\mathbf{x} = \int_{\partial\Omega} g \, dS,
\end{equation}
then there exists a $v \in H^{k+2}(\Omega)$  satisfying
\begin{equation}
\begin{cases} 
\Delta v = f, & \text{in } \Omega, \\[4pt]
\displaystyle \frac{\partial v}{\partial n} = g, & \text{on } \partial\Omega,
\end{cases}
\end{equation}
and
\begin{equation}\label{psiestimate}
\|\nabla v\|_{H^{k+1}} \leq C\bigl(\|f\|_{H^k} + \|g\|_{H^{k+1-1/2}}\bigr).
\end{equation}    
\end{lema}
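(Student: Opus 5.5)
The plan is the classical variational route for the Neumann problem, followed by elliptic regularity. First, the compatibility condition $\int_\Omega f\,d\mathbf{x}=\int_{\partial\Omega} g\,dS$ is exactly what the solvability theory needs: the operator $-\Delta$ with Neumann data has the constants as kernel. We therefore work in
\[
\dot H^1(\Omega):=\Bigl\{v\in H^1(\Omega):\int_\Omega v\,d\mathbf{x}=0\Bigr\}
\]
and seek $v$ satisfying
\[
\int_\Omega \nabla v\cdot\nabla\varphi\,d\mathbf{x}=-\int_\Omega f\varphi\,d\mathbf{x}+\int_{\partial\Omega} g\varphi\,dS
\]
for all $\varphi\in H^1(\Omega)$. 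By the Poincar\'e–Wirtinger inequality the left side is coercive on $\dot H^1$. The right side is a bounded functional, using the trace theorem $H^1(\Omega)\to H^{1/2}(\partial\Omega)$ and $g\in H^{1/2}(\partial\Omega)$ (the case $k=0$ is the weakest). Lax–Milgram then gives a unique $v\in\dot H^1$. The compatibility condition ensures that the identity also holds for constant test functions, hence for all of $H^1$. This yields
\[
\|\nabla v\|_{L^2}\le C\bigl(\|f\|_{L^2}+\|g\|_{H^{1/2}(\partial\Omega)}\bigr).
\]
This is also where the normalization \eqref{uniqueness} enters for $\Phi$.

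Second, we upgrade regularity. Since $\Omega=B_R$ has a smooth boundary, the standard $H^{k+2}$ regularity for the Neumann problem applies, either from Agmon–Douglis–Nirenberg or from the Nirenberg difference-quotient method. Concretely, we flatten the boundary locally with a partition of unity, take tangential difference quotients in the weak formulation to control tangential derivatives of $\nabla v$, and recover the normal second derivative from the equation $\partial_{nn}v=f-(\text{tangential terms})$. We then induct on $k$ by differentiating tangentially. This gives
\[
\|v\|_{H^{k+2}}\le C\bigl(\|f\|_{H^k}+\|g\|_{H^{k+1/2}(\partial\Omega)}+\|v\|_{L^2}\bigr).
\]

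Finally, we remove the lower-order term. Applying Poincar\'e–Wirtinger to the mean-zero $v$ and using the $H^1$ bound gives $\|v\|_{L^2}\le C\|\nabla v\|_{L^2}\le C(\|f\|_{L^2}+\|g\|_{H^{1/2}})$, which yields \eqref{psiestimate}. The only delicate point is the boundary regularity step, namely the flattening and the handling of commutators with the coefficients. Since this is textbook material for smooth domains, as in \cite{Bourguignon}, I would simply cite it rather than reproduce it. In the radial setting of this paper one could alternatively integrate $(r^{N-1}\Phi_r)_r=\kappa r^{N-1}(\rho-\bar\rho)$ explicitly, but the general statement is what is invoked.
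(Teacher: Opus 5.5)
Your proposal is correct and takes the same approach as the paper. The paper gives no proof of this lemma and only recalls it as standard Neumann-problem regularity on the smooth domain $\Omega=B_R$, quoted from the elliptic literature. Your argument (Lax--Milgram on the mean-zero subspace using the compatibility condition, then boundary regularity with the $\|v\|_{L^2}$ term removed by Poincar\'e--Wirtinger) is exactly the standard proof behind that citation, with the sign of the weak formulation and the extension to constant test functions handled correctly.
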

Next, we recall the following lemma from \cite{huangmengzhang}.
\begin{lema}\label{mdense}
    Let \(k \in \mathbb{N}\) and \(s \in \mathbb{N}^{+}\). Then there exists \(\varepsilon > 0\) such that
\[
a(a + b)^{1 + \frac{2s}{2k + 1}} \geq \varepsilon |a|^{2 + \frac{2s}{2k + 1}} - |b|^{2 + \frac{2s}{2k + 1}}, \quad \forall a, b \in \mathbb{R}.
\]
\end{lema}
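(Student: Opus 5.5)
The inequality is homogeneous of degree $p:=2+\frac{2s}{2k+1}$, so I would reduce it to a one-variable statement and then use continuity and compactness. Set $q:=1+\frac{2s}{2k+1}=\frac{2k+1+2s}{2k+1}$, so that $p=q+1$. The numerator $2k+1+2s$ and the denominator $2k+1$ are both odd. Hence $x\mapsto x^{q}$ is a well-defined, continuous, odd real function on $\mathbb{R}$, namely $x^q=\operatorname{sgn}(x)\abs{x}^q$. This is the only place the special form of the exponent (the admissible set $A_{\text{set}}$) is used.

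\textbf{Reduction.} If $a=0$, the left-hand side is $0$ and the right-hand side is $-\abs{b}^p\le 0$, so the inequality holds for any $\varepsilon>0$. For $a\neq0$, put $t=b/a$. By oddness, $(a+b)^q=a^q(1+t)^q$, and therefore
\[
a(a+b)^q=a^{q+1}(1+t)^q=\abs{a}^{p}(1+t)^q .
\]
Here we used that $a\cdot a^q=\operatorname{sgn}(a)^2\abs{a}^{q+1}$. Since also $\abs{b}^p=\abs{a}^p\abs{t}^p$, dividing by $\abs{a}^p$ shows that the lemma is equivalent to
\[
f(t):=(1+t)^q+\abs{t}^p\ \ge\ \varepsilon\qquad\text{for all } t\in\mathbb{R}.
\]

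\textbf{Positivity of $f$.} I would show $f(t)>0$ for every $t$, splitting into two cases.
\begin{itemize}
\item For $t\ge -1$, both terms are nonnegative. The first vanishes only at $t=-1$ and the second only at $t=0$, so they cannot vanish simultaneously, and $f(t)>0$.
\item For $t<-1$, we have $(1+t)^q=-\abs{1+t}^q$. Since $\abs{t}>1$ and $\abs{t}>\abs{1+t}$, we get $\abs{t}^{p}=\abs{t}\,\abs{t}^{q}>\abs{t}^q>\abs{1+t}^q$, so again $f(t)>0$.
\end{itemize}

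\textbf{Uniform lower bound.} As $\abs{t}\to\infty$, we have $\abs{1+t}^q\le C(1+\abs{t})^{q}=o(\abs{t}^{p})$ because $p>q$. Thus $f(t)\to+\infty$, and in particular $f\ge 1$ outside some compact interval $[-M,M]$. On $[-M,M]$ the continuous positive function $f$ attains a positive minimum. Taking $\varepsilon$ to be the smaller of this minimum and $1$ gives the claim.

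The only delicate point is the sign analysis for $t<-1$, where the first term of $f$ is negative. It is handled by comparing $\abs{t}^{q+1}$ with $\abs{1+t}^q$ as above. Everything else is routine.
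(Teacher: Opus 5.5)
Your proof is correct and complete. The homogeneity reduction to $f(t)=(1+t)^q+|t|^p$ works, and so does the positivity check, including the case $t<-1$ via $|t|^{q+1}>|t|^q>|1+t|^q$. Together with coercivity as $|t|\to\infty$ and compactness on $[-M,M]$, this gives the inequality with coefficient exactly $1$ in front of $|b|^p$, as stated.

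The paper does not prove this lemma; it only quotes it from \cite{huangmengzhang}. So there is no in-paper argument to compare with, and yours serves as a self-contained justification.

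One small wording fix: the identity $(a+b)^q=a^q(1+t)^q$ comes from the multiplicativity of $x\mapsto \operatorname{sgn}(x)|x|^q$, not from its oddness. The odd numerator and denominator are what make this function well defined and multiplicative on all of $\mathbb{R}$.

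Your compactness argument yields a non-explicit $\varepsilon$. A Young's-inequality route starting from $a(a+b)^q=|a+b|^{p}-b(a+b)^q$ would give explicit constants, but generally with some constant $C>1$ in front of $|b|^p$ instead of $1$. That weaker form would still suffice for every use in the paper, since the $|b|^p$ term is always absorbed there with a generic constant.
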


\section{Global classical solutions away from vacuum ($\alpha<1$)}
Lemma \ref{lem:local-existence} guarantees the existence of a unique local classical solution to the initial-boundary-value problem \eqref{eq:NSP-original}-\eqref{uniqueness} on $\Omega \times [0, T_0]$.
Let $T^*$ denote the maximal existence time of this classical solution.
We now fix any $T$ with $0 < T < T^*$.
For classical solutions, it is advantageous to rewrite the IBVP \eqref{eq:NSP-original}-\eqref{uniqueness} in Lagrangian coordinates.
For $r \in [0, R]$ and $t \in [0, T]$, we define the Lagrangian coordinates $ (x,\tau)$  as
\[
x(r, t) = \int_0^r \rho(s,t)s^{N-1} ds, \quad \tau(r,t) = t,
\]
which translates domain \([0, R] \times [0, T]\) into a fixed domain \([0, 1] \times [0, T]\). A direct calculation shows that
\[
\nabla_{(r,t)}x = (\rho r^{N-1},-\rho ur^{N-1}), \quad \nabla_{(r,t)}\tau = (0, 1),
\]
\[
\nabla_{(x,\tau)}r = (\rho^{-1}r^{1-N}, u), \quad \nabla_{(x,\tau)}t = (0,1).
\]
Without loss of generality, we assume $\bar{\rho} = 1$ in the following. In Lagrangian coordinates, \eqref{eq:NSP-radial-eulerian}-\eqref{eq:radial-bc} transforms into
\begin{equation} \label{eq:NSP-lagrangian-rhoalpha}
\left\{
\begin{array}{ll}
\rho_{\tau} + \rho^2 (r^{N-1}u)_x = 0, \\[6pt]
u_{\tau} + r^{N-1} (\rho^{\gamma})_x =  \alpha r^{N-1} \left( \rho^{1+\alpha} (r^{N-1}u)_x \right)_x - (N-1)u r^{N-2} (\rho^{\alpha})_x - {\kappa}\dfrac{x}{r^{N-1}}+{\kappa}\dfrac{r}{N}, \\[10pt]
\left( \rho r^{2N-2} \Phi_x \right)_x =\kappa (1-\rho^{-1}),
\end{array}
\right.
\end{equation}
with the initial-boundary conditions 
\begin{equation} \label{eq:lagrangian-ic-bc}
\left\{
\begin{array}{ll}
\rho(x,0) = \rho_0(x), \quad u(x,0) = u_0(x), &\quad x \in [0, 1], \\[4pt]
u(0,\tau)=u(1,\tau) = 0, \quad \Phi_x(0,\tau)=\Phi_x(1,\tau) = 0, &\quad \tau \geq 0.
\end{array}
\right.
\end{equation}
Let the effective velocity be defined as \begin{equation}
    w = u + r^{N-1}(\rho^\alpha)_x.
\end{equation}
By the definition of the effective velocity and the momentum equation $\eqref{eq:NSP-lagrangian-rhoalpha}_1,$ we deduce that
\begin{equation}\label{bd2_alpha<1}
w_\tau + (\rho^\gamma)_x r^{N-1} = -\kappa\frac{x}{r^{N-1}}+{\kappa}\frac{r}{N}.
\end{equation}
\subsection{A priori estimates for 
$N=2$: lower-order estimates}

We first provide a basic energy estimate, but due to the appearance of the Poisson term, there is a constraint on $\gamma$.
\begin{lema}\label{2d-u2}
Under the assumption of \eqref{n2laphasmaal1}, there exists a constant $C(T)>0$ such that
\begin{equation}\label{basic2_alpha<1}
       \sup_{0\leq t\leq T}\int_{0}^{1} \left( u^{2}+\rho^{\gamma-1}  \right) dx + \int_{0}^{T}  \int_{0}^{1} \left( \rho^{\alpha-1}\frac{u^{2}}{r^{2}} +\rho^{1+\alpha}  r^2u_x^2\right) dx dt \le C(T) .
   \end{equation} 
   
\end{lema}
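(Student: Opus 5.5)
The plan is the standard energy identity in Lagrangian coordinates with $N=2$. I multiply the momentum equation $\eqref{eq:NSP-lagrangian-rhoalpha}_2$ by $u$ and integrate over $[0,1]$. The time-derivative term gives $\frac12\frac{d}{dt}\int_0^1 u^2dx$.

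\textbf{Pressure term.} Integrating by parts (using $u(0)=u(1)=0$) and then the mass equation $\rho_\tau=-\rho^2(ru)_x$, the pressure term becomes
\[
-\int_0^1\rho^\gamma (ru)_x\,dx=\frac{1}{\gamma-1}\frac{d}{dt}\int_0^1\rho^{\gamma-1}dx .
\]

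\textbf{Viscous terms.} The term $\alpha r(\rho^{1+\alpha}(ru)_x)_x u$ combines with $-u(\rho^\alpha)_x u$ after integration by parts. The two pieces are
\[
-\alpha\int_0^1\rho^{1+\alpha}(ru)_x^2\,dx,\qquad \int_0^1\rho^\alpha (u^2)_x\,dx .
\]
Here I use $r_x=\rho^{-1}r^{-1}$, so that $(ru)_x=u\rho^{-1}r^{-1}+ru_x$. Writing $a=ru_x\rho^{(1+\alpha)/2}$ and $b=\rho^{(\alpha-1)/2}u/r$, the integrand is the quadratic form
\[
\alpha(a+b)^2-2ab=\alpha a^2+\alpha b^2+2(\alpha-1)ab .
\]
This form is positive definite exactly when $|\alpha-1|<\alpha$, i.e.\ $\alpha>\frac12$. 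Hence the dissipation dominates $c\int(\rho^{1+\alpha}r^2u_x^2+\rho^{\alpha-1}u^2/r^2)\,dx$. This is the dissipation claimed in \eqref{basic2_alpha<1}, and it is where $\alpha>\frac12$ enters.

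\textbf{Poisson terms.} These are $-\kappa\int_0^1 \frac{x}{r}u\,dx+\frac{\kappa}{2}\int_0^1 ru\,dx$. Since $\partial_t r=u$, we have $ru=\frac12\partial_t r^2$, so $\frac{\kappa}{2}\int_0^1 ru\,dx=\frac{\kappa}{4}\frac{d}{dt}\int_0^1 r^2dx$. Also $r^2\le R^2$, so this part is harmless. Similarly, $\frac{x}{r}u=x\,\partial_t\log r$, which gives $-\kappa\frac{d}{dt}\int_0^1 x\log r\,dx$. I could alternatively recover the Eulerian potential energy $\frac{1}{2}\int|\nabla\Phi|^2$; in 2D this is the same quantity.

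\textbf{Main obstacle.} The delicate point is the sign $\kappa=1$, where $\int_0^1 x\log r\,dx$ could be very negative when mass concentrates at the origin, since $\log r\to-\infty$. I will bound it using $x\le C\,r^{2}\sup\rho$ near the center. More robustly, I will use $\int_0^1 x|\log r|\,dx=\int_0^R x(r)|\log r|\rho r\,dr$ together with $x(r)\le \|\rho\|_{L^{p}}r^{2-2/p}$ (Hölder in 2D), for some $p\le\gamma-1$ or $p$ controlled by $\int\rho^{\gamma-1}dx=\int\rho^\gamma r\,dr$. Combining $\log$-weighted integrability with Young's inequality, I aim to show
\[
\int_0^1 x|\log r|\,dx\le \varepsilon\int_0^1\rho^{\gamma-1}dx+C_\varepsilon .
\]
This requires only $\gamma>1$; in 2D the $\log$ is integrable against any positive power, which is why no $\frac43$-type restriction appears.

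If that absorption is too tight, the fallback is to keep the term on the right-hand side. I would bound $|\int_0^1\frac{x}{r}u\,dx|\le \int_0^1 u^2dx+\int_0^1 x^2r^{-2}dx$, use $x\le C\|\rho\|_{L^\gamma}r^{2-2/\gamma}$ so that $x^2/r^2$ is integrable, and close with Gronwall.

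Integrating in time then gives \eqref{basic2_alpha<1} with $C(T)$. The condition $\gamma>2-\alpha$ is not needed here; it is only used later with the BD entropy.
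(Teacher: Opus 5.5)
Your argument is correct, but you handle the Poisson terms differently from the paper.

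\textbf{The paper's route.} The paper keeps $-\kappa\int_0^1\frac{x}{r}u\,dx+\frac{\kappa}{2}\int_0^1 ru\,dx$ as source terms. It writes $\frac{x}{r}u=\bigl(\rho^{\frac{\alpha-1}{2}}\frac{u}{r}\bigr)\bigl(\rho^{\frac{1-\alpha}{2}}x\bigr)$, and splits $ru$ similarly. Half is absorbed into the weighted dissipation $(2\alpha-1)\int_0^1\rho^{\alpha-1}\frac{u^2}{r^2}dx$. What remains is $C\int_0^1\rho^{1-\alpha}dx$, and this is bounded by $C\int_0^1\rho^{\gamma-1}dx+C$ exactly because $\gamma>2-\alpha$. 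Gronwall then gives $C(T)$.

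\textbf{Your route.} You use $r_\tau=u$ to write both terms as an exact derivative, $\frac{d}{d\tau}\bigl(-\kappa\int_0^1 x\log r\,dx+\frac{\kappa}{4}\int_0^1 r^2dx\bigr)$. This is the two-dimensional analogue of what the paper itself does in 3D with $\frac{d}{d\tau}\int_0^1\frac{x}{r}dx$. You then only need a lower bound on the logarithmic potential energy when $\kappa=1$. That bound is available for every $\gamma>1$, and since no Gronwall step is needed, your estimate is even uniform in $T$. So your route is more general at this step. The paper's is shorter, and its extra hypothesis $\gamma>2-\alpha$ costs nothing overall, since the BD estimate needs it anyway.

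\textbf{Two points to tighten.} First, do not bound the log term through $x\le Cr^2\sup\rho$: $\sup\rho$ is not controlled at this stage. Nor should you use $x(r)\le C\|\rho\|_{L^\gamma(r\,dr)}r^{2-2/\gamma}$. Inserting that into $\int_0^R x|\log r|\rho r\,dr$ produces $\|\rho\|_{L^\gamma}^2$, which $\int_0^1\rho^{\gamma-1}dx=\int_0^R\rho^\gamma r\,dr$ can absorb only when $\gamma>2$. Just use the mass normalization $x\le1$:
$\int_0^1 x|\log r|\,dx\le\int_0^R|\log r|\rho r\,dr\le\|\log r\|_{L^{\gamma'}(r\,dr)}\bigl(\int_0^R\rho^\gamma r\,dr\bigr)^{1/\gamma}\le\varepsilon\int_0^1\rho^{\gamma-1}dx+C_\varepsilon$.

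Second, your fallback does not close as stated. Estimating $\int_0^1\frac{x^2}{r^2}dx$ with the H\"older bound on $x$ gives $\|\rho\|_{L^\gamma}^2\int_0^R\rho\, r^{3-4/\gamma}dr$. This is finite only for $\gamma>\frac32$, and it is superlinear in the energy when $\gamma<3$. If you want a fallback, use the paper's Young splitting against $\rho^{\alpha-1}u^2/r^2$.
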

\begin{proof}
Setting $N=2$ in $\eqref{eq:NSP-lagrangian-rhoalpha}_1$, we take the $L^2$ inner product of $\eqref{eq:NSP-lagrangian-rhoalpha}_2$ with $u$ on $[0,1]$. Then, integration by parts and the use of $\eqref{eq:NSP-lagrangian-rhoalpha}_1$ give
 \begin{equation}
 \begin{aligned}
     \int^{1}_{0}u _{\tau}udx+ \int^{1}_{0} r  \left(\rho^{\gamma}\right) _{x}udx =& \alpha\int^{1}_{0}ru\left(\rho^{1+\alpha} \left(r  u\right) _{x}\right) _{x}dx \\
     &-\int^{1}_{0}(\rho^{\alpha})_{x}u^{2} dx-\kappa\int^{1}_{0} \frac {x}{r }udx+\frac{\kappa}{2}\int^{1}_{0}urdx.
    \end{aligned}\end{equation}
    That is
\begin{equation}\begin{aligned}
    \label{u1}
 &\frac{d}{d\tau} \int_0^1 \left( \frac{1}{2}u^2 + \frac{1}{\gamma-1} \rho^{\gamma-1} \right) dx +  \int_0^1 \left[ \alpha\rho^{\alpha-1} \frac{u^2}{r^2} + {2(\alpha-1)\rho^\alpha u u_x} + \alpha \rho^{1+\alpha} r^2 u_x^2 \right] dx\\
=&-\kappa\int^{1}_{0} \frac {x}{r }udx+\frac{\kappa}{2}\int^{1}_{0}urdx.\\
\end{aligned}\end{equation}
Note that 
\begin{equation}
   \int_0^1 \left[ \alpha \rho^{\alpha-1}\frac{u^2}{r^2} + {2(\alpha-1)\rho^\alpha u u_x} + \alpha \rho^{1+\alpha} r^2 u_x^2 \right] dx\ge (2\alpha-1) \int_0^1 \left( \rho^{\alpha-1}\frac{u^2}{r^2} + \rho^{1+\alpha} r^2 u_x^2 \right) dx,
\end{equation}
where we have used the fact that $2\alpha - 1 > 0$ due to assumtion \eqref{n2laphasmaal1}.

By Young's inequality, we have 
\begin{equation}\begin{aligned}
    \label{u1_alpha<1}
 &\frac{d}{d\tau} \int_0^1 \left( \frac{1}{2}u^2 + \frac{1}{\gamma-1} \rho^{\gamma-1} \right) dx 
+ (2\alpha-1)\int_0^1 \Big(\rho^{\alpha-1}\frac{u^2}{r^2} +\rho^{1+\alpha}  r^2u_x^2\Big) dx\\
\leq &\frac{2\alpha-1}{2}\int_{0}^{1}\rho^{\alpha-1}\frac{u^2}{r^2}dx+C\int_{0}^{1}\rho^{1-\alpha}x^2dx+C\int_{0}^{1}\rho^{1-\alpha}r^4dx\\
\leq &\frac{2\alpha-1}{2}\int_{0}^{1}\rho^{\alpha-1}\frac{u^2}{r^2}dx+C\int_{0}^{1}\rho^{1-\alpha}dx\\
\leq &\frac{2\alpha-1}{2}\int_{0}^{1}\rho^{\alpha-1}\frac{u^2}{r^2}dx+C\int_{0}^{1}\rho^{\gamma-1}dx+C,\\
\end{aligned}\end{equation}
where we have used the assumption that $\gamma>2-\alpha.$
Using Gronwall's inequality, we complete the proof of the Lemma \ref{2d-u2}.
\end{proof}
Handling the Poisson term requires the relationship between $x$ and $r$. We need to establish this relationship based on Lemma~\ref{2d-u2} for further estimation.
\begin{lema}   
Under the assumption of \eqref{n2laphasmaal1}, there exists a constant $C(T)>0$ such that
\begin{equation}\label{xrRelation1_alpha<1}
x \leq C(T)\, r(x,\tau)^{\frac{2(\gamma-1)}{\gamma}}, \quad (x,\tau) \in [0,1] \times [0,T].
\end{equation}
\end{lema}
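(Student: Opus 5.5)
The plan is to use the definition of the Lagrangian mass coordinate together with the uniform bound on $\int_0^1 \rho^{\gamma-1}\,dx$ from Lemma \ref{2d-u2}, and to pass back to Eulerian variables.

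\textbf{Step 1: an Eulerian $L^\gamma$ bound.} Since $dx = \rho r\,dr$ for $N=2$, we have
\[
\int_0^1 \rho^{\gamma-1}\,dx=\int_0^{r(x,\tau)\,\text{range}}\rho^{\gamma} r\,dr ,
\]
so that $\int_0^R \rho^\gamma r\,dr\le C(T)$ uniformly in $\tau\in[0,T]$. This is exactly $\|\rho\|_{L^\gamma(\Omega)}^\gamma\le C(T)$ up to the factor $2\pi$.

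\textbf{Step 2: Hölder on the small ball.} Fix $(x,\tau)$ and let $r=r(x,\tau)$. By definition of the mass coordinate,
\[
x=\int_0^{r}\rho(s,\tau)\,s\,ds .
\]
Hölder's inequality with exponents $\gamma$ and $\gamma/(\gamma-1)$, against the measure $s\,ds$, gives
\[
x\le \Big(\int_0^r\rho^\gamma s\,ds\Big)^{1/\gamma}\Big(\int_0^r s\,ds\Big)^{(\gamma-1)/\gamma}
\le C(T)^{1/\gamma}\Big(\frac{r^2}{2}\Big)^{\frac{\gamma-1}{\gamma}}
\le C(T)\,r^{\frac{2(\gamma-1)}{\gamma}} .
\]
This is the claimed relation \eqref{xrRelation1_alpha<1}, valid on $[0,1]\times[0,T]$. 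Note that $\gamma>2-\alpha>1$, so the exponent is positive.

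\textbf{Potential obstacle.} The only point needing care is justifying the change of variables. For a classical solution with $\rho>0$, the map $r\mapsto x$ is a $C^1$ increasing bijection $[0,R]\to[0,1]$, with $x=1$ corresponding to $r=R$ because the total mass is normalized. Hence the identity $\int_0^1\rho^{\gamma-1}dx=\int_0^R\rho^\gamma r\,dr$ is legitimate, and the rest is routine.
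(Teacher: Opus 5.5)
Your proposal is correct and follows the paper's own proof. You use the bound on $\int_0^1\rho^{\gamma-1}\,dx$ from Lemma \ref{2d-u2}, which via $dx=\rho r\,dr$ equals $\int_0^R\rho^\gamma r\,dr$. You then apply H\"older with exponents $\gamma$ and $\gamma/(\gamma-1)$ against the measure $s\,ds$ on $[0,r(x,\tau)]$. Your justification of the change of variables ($r\mapsto x$ is a monotone bijection $[0,R]\to[0,1]$ when $\rho>0$) is a detail the paper leaves implicit.
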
 
\begin{proof}
For any $x\in[0,1]$,  together with \eqref{basic2_alpha<1}
 we have,
\begin{equation}
\begin{aligned}
x = \int_{0}^{r(x,\tau)} \rho s\, ds \leq& C\left( \int_{0}^{r(x,\tau)} \rho^\gamma s\, ds \right)^{\frac{1}{\gamma}} \left( \int_{0}^{r(x,\tau)} s \,ds \right)^{\frac{\gamma-1}{\gamma}}\\
\leq& C(T) r^{\frac{2(\gamma-1)}{\gamma}}.
\end{aligned}\end{equation}
This completes the proof of the lemma.
\end{proof}
Next, the $r$-weighted $L^\infty-$ estimate of the density is controlled by the velocity $u$ and the effective velocity $w$. With the BD entropy estimate at hand, this further yields an $r$-weighted $L^\infty$ estimate of the density. Moreover, such a weighted estimate is also crucial for obtaining the upper and lower bounds of the density.
\begin{lema}
For any sufficiently small $\xi > 0$, there exists a positive constant $C$ independent of $T$ such that
\begin{equation}\label{weighted2_alpha<1}
\sup_{0\leq t\leq T}\left\|\rho^{\alpha-\frac{1}{2}}r^{\xi}\right\|_{L^\infty(0,R)}\leq C(1+\left\|\nabla\rho^{\alpha-\frac{1}{2}}\right\|_{L^2(\Omega)}).
\end{equation}
\end{lema}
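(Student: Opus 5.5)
Set $f:=\rho^{\alpha-\frac12}$, viewed as a radial function on $B_R\subset\mathbb{R}^2$. We prove a weighted one-dimensional Sobolev inequality, using the fundamental theorem of calculus in $r$ together with the radial measure $s\,ds$. The weight $r^{\xi}$ pays for the logarithmic failure of $H^1\subset L^\infty$ in two dimensions. We also need a lower-order term that costs only a constant. This comes from the mass identity $\int_\Omega \rho\,d\mathbf{x} = 2\pi\int_0^1dx$, which is conserved and equals a fixed constant.

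\textbf{Step 1: a good point.} For each $t$, find a point $r_0\in[R/2,R]$ at which $f(r_0)$ is bounded by a constant depending only on the data.
\begin{itemize}
\item If $\alpha-\frac12\le 1$, Jensen/Hölder gives $\int_{R/2}^R f\,s\,ds\le C\left(\int_0^R\rho s\,ds\right)^{\alpha-\frac12}\le C$.
\item If $\alpha-\frac12>1$, we would instead use $\int f^{1/(\alpha-1/2)}s\,ds$. That case does not occur here, since $\alpha<1$.
\end{itemize}
By the mean value theorem there is $r_0\in[R/2,R]$ with $f(r_0)\le C$, where $C$ is independent of $T$ and of $t$.

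\textbf{Step 2: integrate from the good point.} For any $r\in(0,R]$,
\[
r^{\xi}f(r)\le R^{\xi}f(r_0)+\Big|\int_r^{r_0}\partial_s\big(s^{\xi}f\big)\,ds\Big|
\le C+\int_r^{R}s^{\xi}|f_s|\,ds+\xi\int_r^R s^{\xi-1}f\,ds .
\]
Apply Cauchy–Schwarz with the measure $s\,ds$ to the second term:
\[
\int_0^R s^{\xi}|f_s|\,ds\le\Big(\int_0^R s^{2\xi-1}ds\Big)^{1/2}\Big(\int_0^R |f_s|^2 s\,ds\Big)^{1/2}\le C\xi^{-1/2}\|\nabla f\|_{L^2(\Omega)} .
\]
This is finite precisely because $\xi>0$.

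\textbf{Step 3: the lower-order term.} The third term is the main obstacle, because $\int s^{\xi-1}f\,ds$ is a singular integral of $f$ itself and $f$ may be large near the origin. I would try to absorb it using $f\le s^{-\xi}\sup(s^{\xi}f)$. This gives
\[
\xi\int_r^R s^{\xi-1}f\,ds\le \xi\log(R/r)\,\sup(s^\xi f),
\]
which is not absorbable uniformly in $r$. So instead I would use a Hölder splitting against the mass:
\[
\int_0^R s^{\xi-1}f\,ds\le\Big(\int_0^R f^{p}s\,ds\Big)^{1/p}\Big(\int_0^R s^{(\xi-1)p'-p'/p}ds\Big)^{1/p'} .
\]
Take $p=1/(\alpha-\frac12)>1$, so that $f^p=\rho$ has bounded integral. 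The second factor is finite provided $(\xi-1)p'-p'/p>-1$, i.e.\ $\xi-1-\frac1p>-\frac1{p'}$, which reduces to $\xi>\frac2p-1=2\alpha-2$. This holds automatically, since $\alpha<1$ and $\xi>0$.

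Thus the third term is bounded by a constant $C$ independent of $T$. Taking the supremum in $r$ and in $t\in[0,T]$ yields \eqref{weighted2_alpha<1}.

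As a fallback, if the Hölder step had failed, one could bound $\int s^{\xi-1}f$ by a Hardy inequality in terms of $\|\nabla f\|_{L^2}$ plus the value at $r_0$. The only delicate points are this singular lower-order term and checking the exponent conditions; everything else is routine.
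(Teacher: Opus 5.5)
\textbf{There is a genuine gap in Step 3, and it sits in the exponent check.} With $p=1/(\alpha-\frac12)$ and $\frac1{p'}=1-\frac1p$, the finiteness condition $p'(\xi-1-\frac1p)>-1$ is equivalent to $\xi-1-\frac1p>-1+\frac1p$, that is, $\xi>\frac2p=2\alpha-1$. It is not $\xi>2\alpha-2$. The condition you wrote is the one for the weight $r^{\xi}$, i.e.\ for the zeroth-order term $\int_0^R\rho^{\alpha-\frac12}r^{\xi}\,dr$, and not for $r^{\xi-1}$.

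Since $\alpha>\frac12$, we have $2\alpha-1>0$. So your Hölder step excludes exactly the small $\xi$ the lemma asserts, which is also the regime the later estimates use.

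No other choice of exponent rescues the step. Mass conservation alone cannot bound $\int_0^R s^{\xi-1}\rho^{\alpha-\frac12}\,ds$ when $\xi<2\alpha-1$. A density $\approx\varepsilon^{-2}$ on $\{s<\varepsilon\}$ has mass about $\frac12$, yet contributes $\xi^{-1}\varepsilon^{\xi-(2\alpha-1)}\to\infty$. So this singular lower-order term must be paid for with $\|\nabla f\|_{L^2(\Omega)}$, and your claim that it is bounded by a constant is false.

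\textbf{How the paper handles it.} It applies Hölder with the large exponent $4/\xi$ in the measure $r\,dr$:
\[
\int_0^R\rho^{\alpha-\frac12}r^{\xi-1}\,dr\le\Big(\int_0^R f^{4/\xi}\,r\,dr\Big)^{\frac{\xi}{4}}\Big(\int_0^R r^{\frac{3\xi-4}{4-\xi}}\,dr\Big)^{\frac{4-\xi}{4}},
\]
where the last integral is finite for every $\xi>0$. It then uses the two-dimensional embedding $H^1(\Omega)\hookrightarrow L^{4/\xi}(\Omega)$, together with $\|f\|_{L^2(\Omega)}^2=\int_\Omega\rho^{2\alpha-1}\,d\mathbf{x}\le C$, which holds because $2\alpha-1<1$ and the mass is fixed.

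\textbf{How to repair your own argument.} The fallback you mention is the right argument and should replace the Hölder step. From the good point $r_0$, Cauchy--Schwarz against $\sigma\,d\sigma$ gives $f(s)\le C+(2\pi)^{-1/2}\|\nabla f\|_{L^2(\Omega)}\big(\log(R/s)\big)^{1/2}$ for $0<s\le r_0$, and $f\le C+C\|\nabla f\|_{L^2(\Omega)}$ on $[r_0,R]$. This yields $\int_0^R s^{\xi-1}f\,ds\le C\xi^{-1}+C\xi^{-3/2}\|\nabla f\|_{L^2(\Omega)}$.

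More simply, multiply the pointwise bound by $r^{\xi}$ and use $\sup_{0<r\le R}r^{\xi}\big(\log(R/r)\big)^{1/2}<\infty$. This proves the lemma with no product rule at all, and the constant is still independent of $T$.

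Steps 1 and 2 are otherwise fine. In Step 2 the integrals should run over $[\min(r,r_0),R]$.
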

\begin{proof}
Combining the one-dimensional Sobolev embedding inequality with H\"{o}lder's inequality, \eqref{basic2_alpha<1}, we obtain
\begin{equation}
\begin{aligned}
\left\| \rho^{\alpha-\frac{1}{2}}r^{\xi}\right\|_{L^{\infty}(0,R)} 
&\leq C\int_{0}^{R}\rho^{\alpha-\frac{1}{2}}r^{\xi}\,dr + C\int_{0}^{R}\left|\partial_{r}(\rho^{\alpha-\frac{1}{2}}r^{\xi})\right|\,dr \\
&\leq C\int_{0}^{R}\rho^{\alpha-\frac{1}{2}}r^{\xi}\,dr + C\int_{0}^{R}\left|\partial_{r}\rho^{\alpha-\frac{1}{2}}\right|r^{\xi}\,dr+C\int_{0}^{R}\rho^{\alpha-\frac{1}{2}}r^{\xi-1}\,dr \\
&\leq C\Big(\int_{0}^{R}\rho rdr\Big)^{\alpha-\frac{1}{2}}\Big(\int_{0}^{R}r^{\frac{2\xi-2\alpha+1}{3-2\alpha}}dr\Big)^{\frac{3}{2}-\alpha} \\
&\quad+ C\left(\int_{0}^{R}|\partial_{r} \rho^{\alpha-\frac{1}{2}}|^{2}r\,dr\right)^{\frac{1}{2}}\left(\int_{0}^{R}r^{2\xi -1}\,dr\right)^{\frac{1}{2}} \\
&\quad +C\left(\int_{0}^{R}\rho^{(\alpha-\frac{1}{2})\frac{4}{\xi}}r\,dr\right)^{\frac{\xi}{4}}\left(\int_{0}^{R}r^{\frac{3\xi-4}{4-\xi}}\,dr\right)^{\frac{4-\xi}{4}} \\
&\leq C\left(1 + \left\| \nabla \rho^{\alpha-\frac{1}{2}}\right\|_{L^{2}(\Omega)}+\left\| \rho^{\alpha-\frac{1}{2}}\right\|_{L^{2}(\Omega)}\right)\\
&\leq C\left(1 + \left\| \nabla \rho^{\alpha-\frac{1}{2}}\right\|_{L^{2}(\Omega)}\right),\\
\end{aligned}
\end{equation}
where we have used the fact that $\gamma>\alpha-\frac{1}{2}.$
Thus the proof of the lemma is completed.
\end{proof}
In the following theorem, we obtain an $L^2$ estimate of the effective velocity $w$, which in turn yields an $r$-weighted $L^\infty$ estimate of the density near the origin.
\begin{lema}
Under the assumption of \eqref{n2laphasmaal1}, there exists a constant $C(T)>0$ such that
    \begin{equation}\label{2w2_alpha<1}
\sup_{0\leq t\leq T}\int^{1}_{0}w^{2}dx+\int^{T}_{0}\int^{1}_{0}\left( \left( \rho^{{\frac{\gamma+\alpha}{2}} }\right)_{x}r \right)^{2}dxdt\leq C(T).
    \end{equation}
\end{lema}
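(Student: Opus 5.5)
Multiply the effective-velocity equation \eqref{bd2_alpha<1} (with $N=2$) by $w$ and integrate over $[0,1]$. This gives
\[
\frac12\frac{d}{d\tau}\int_0^1 w^2\,dx+\int_0^1 r(\rho^\gamma)_x w\,dx=-\kappa\int_0^1\frac{x}{r}w\,dx+\frac{\kappa}{2}\int_0^1 rw\,dx .
\]
Split $w=u+r(\rho^\alpha)_x$ in the pressure term. The piece $\int_0^1 r(\rho^\gamma)_x u\,dx$ is handled exactly as in the energy identity \eqref{u1}: it equals $\frac{d}{d\tau}\int_0^1\frac{1}{\gamma-1}\rho^{\gamma-1}dx$ via $\eqref{eq:NSP-lagrangian-rhoalpha}_1$. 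Alternatively, we simply add \eqref{u1} to this identity, so the $u$-part of the pressure is already controlled by \eqref{basic2_alpha<1}. The remaining piece is the good term
\[
\int_0^1 r^2(\rho^\gamma)_x(\rho^\alpha)_x\,dx=\gamma\alpha\int_0^1 \rho^{\gamma+\alpha-2}\rho_x^2 r^2\,dx=\frac{4\gamma\alpha}{(\gamma+\alpha)^2}\int_0^1\Big(\big(\rho^{\frac{\gamma+\alpha}{2}}\big)_x r\Big)^2dx ,
\]
which produces exactly the dissipation in \eqref{2w2_alpha<1}. So the plan is: BD-type identity $\Rightarrow$ $\frac{d}{d\tau}\frac12\|w\|_{L^2}^2+c\|(\rho^{\frac{\gamma+\alpha}{2}})_xr\|^2_{L^2}\le |\text{pressure}\cdot u\text{ part}|+|\text{Poisson terms}|$, followed by Gronwall.

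The Poisson terms are the main issue. The term $\frac{\kappa}{2}\int rw$ is harmless because $r\le R$, giving a bound $C+C\int w^2$. For $\int_0^1\frac{x}{r}w\,dx$ we use $|w|$ and Young:
\[
\Big|\int_0^1\frac{x}{r}w\,dx\Big|\le \int_0^1 w^2dx+\int_0^1\frac{x^2}{r^2}dx .
\]
To see that $\int_0^1 x^2/r^2\,dx$ is bounded, note that $x=\int_0^r\rho s\,ds$ and $dx=\rho r\,dr$. Hence
\[
\int_0^1\frac{x^2}{r^2}dx=\int_0^R \frac{x^2}{r}\rho\,dr .
\]
By \eqref{xrRelation1_alpha<1}, $x^2/r\le C r^{\frac{4(\gamma-1)}{\gamma}-1}\,x^{0}$, and more crudely, Hölder as in that lemma gives $x\le C\|\rho\|_{L^\gamma(B_r)} r^{2(\gamma-1)/\gamma}$. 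This leads to $\int_0^R \rho\, r^{\frac{3\gamma-4}{\gamma}}dr$ (times powers). That is bounded by Hölder using $\int\rho^\gamma r\,dr\le C$, provided the exponent makes $r^{\cdot}$ integrable. This is fine since $\gamma>1$; any failure near $r=0$ can be offset by using $x\le C r^2\sup\rho$ only through the weighted bound \eqref{weighted2_alpha<1}. Note that in 2D the singularity is only $x/r$, and since $x\lesssim r^{2(\gamma-1)/\gamma}$, we get $x/r\lesssim r^{1-2/\gamma}\cdot$(stuff). Here $\gamma>2-\alpha>1$. If needed, I would instead bound $x^2/r^2\le x\cdot\frac{1}{r^2}\int_0^r\rho s\,ds\le x\sup_{s\le r}\rho\le x\|\rho\|_{L^\infty}$ weighted, using $\rho^{\alpha-1/2}r^\xi$ control, which itself is bounded by $1+\|\nabla\rho^{\alpha-\frac12}\|_{L^2}\lesssim 1+\|w-u\|_{L^2}$-type quantities. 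This can then be absorbed in Gronwall. I expect making this Poisson estimate uniform near the origin to be the delicate step; the first attempt will be the direct Hölder/\eqref{xrRelation1_alpha<1} route.

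Finally, the pressure--velocity cross term, if not cancelled through the energy identity, is bounded using $\rho^{\gamma}$ integrability and $\int\rho^{\alpha-1}u^2/r^2\in L^1_t$ from \eqref{basic2_alpha<1}. Gronwall together with $\int w_0^2\,dx<\infty$ (by the $H^3$ data) then yields \eqref{2w2_alpha<1}.
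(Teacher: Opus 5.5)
Your skeleton matches the paper's. You test \eqref{bd2_alpha<1} with $w$; the $u$-part of the pressure term is $\frac{d}{d\tau}\frac{1}{\gamma-1}\int_0^1\rho^{\gamma-1}dx$; the $r(\rho^\alpha)_x$-part is the dissipation; you apply Young to $\int_0^1\frac{x}{r}w\,dx$ and close with Gronwall. But the one step that needs an idea, bounding $\int_0^1\frac{x^2}{r^2}dx$, has a genuine gap.

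Your primary route is not ``fine since $\gamma>1$''. By \eqref{xrRelation1_alpha<1}, $\int_0^1\frac{x^2}{r^2}dx\le C\int_0^R\rho\,r^{3-\frac4\gamma}dr$. Hölder against $\int_0^R\rho^\gamma r\,dr$ then leaves $\int_0^R r^{\frac{3\gamma-5}{\gamma-1}}dr$, which is finite only if $\gamma>\frac32$. The energy alone cannot fix this: $\rho=\varepsilon^{-2/\gamma}$ on $B_\varepsilon$ keeps $\int_0^R\rho^\gamma r\,dr$ bounded, while $\int_0^1\frac{x^2}{r^2}dx\gtrsim\varepsilon^{4-6/\gamma}\to\infty$ when $\gamma<\frac32$. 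Since \eqref{n2laphasmaal1} permits $\gamma\in(2-\alpha,\frac32)$, the BD quantity itself must enter, and it must enter linearly in $\|w\|_{L^2}^2$ for Gronwall to close on an arbitrary $[0,T]$.

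Your fallback does not meet that requirement. Bounding $\rho$ (or the ball average $x/r^2$) through $\|\rho^{\alpha-\frac12}r^\xi\|_{L^\infty}$ costs the power $\frac{1}{\alpha-1/2}$. That gives a factor $(1+\|u\|_{L^2}^2+\|w\|_{L^2}^2)^{\frac{1}{2\alpha-1}}$ with $\frac{1}{2\alpha-1}>1$ for $\alpha<1$, and a superlinear differential inequality only gives a bound for short time. Also, $\sup_{s\le r}\rho$ is not controlled near the origin at all, since the weighted bound allows $\rho\sim r^{-\xi/(\alpha-\frac12)}$.

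The missing step is the splitting the paper uses in $\int_0^R\rho\,r^{3-\frac4\gamma}dr$: write $\rho=\rho^{2\alpha-1}\rho^{2-2\alpha}$.
\begin{itemize}
\item For the first factor, $\rho^{2\alpha-1}\le r^{-2\xi}\|\rho^{\alpha-\frac12}r^\xi\|_{L^\infty}^2\le Cr^{-2\xi}(1+\|u\|_{L^2}^2+\|w\|_{L^2}^2)$. This is exactly the square, hence linear, by \eqref{weighted2_alpha<1} and $\|\nabla\rho^{\alpha-\frac12}\|_{L^2(\Omega)}^2\le C\|w-u\|_{L^2(0,1)}^2$.
\item The second factor $\rho^{2-2\alpha}$ is handled by Hölder against $\int_0^R\rho^\gamma r\,dr$ with exponent $\frac{\gamma}{2-2\alpha}$.
\end{itemize}
The leftover power of $r$ is integrable at $r=0$ if and only if $\xi<\frac{2(\gamma+\alpha-2)}{\gamma}$. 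That is exactly where the hypothesis $\gamma>2-\alpha$ is used, and your argument never invokes it, which is a symptom of the gap.
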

\begin{proof}
    
Multiplying \eqref{bd2_alpha<1} by $w$ and integrating the resulting equation over $[0, 1]$, followed by an integration by parts, yields
\begin{equation}
\begin{aligned}\label{2dwbdestimate}
\frac{d}{d\tau} \left(\frac{1}{2}\int_0^1 w^2 dx + \frac{1}{\gamma - 1} \int_0^1 \rho^{\gamma - 1} dx \right) 
&+ \frac{4\alpha\gamma}{(\gamma + \alpha)^2} \int_0^1 \left( \bigl(\rho^{\frac{\gamma + \alpha}{2}}\bigr)_x r \right)^2 dx \\
&= -\kappa \int_0^1 \frac{x}{r} w \, dx + \frac{\kappa}{2} \int_0^1 r w \, dx \\
&\triangleq I_1 + I_2.
\end{aligned}
\end{equation}
Note that $$\begin{aligned}\label{rhoxuw}
\left\lVert \nabla \rho^{\alpha- \frac{1}{2}} \right\rVert _{L^{2}(\Omega)}^{2}&= 2\pi \int^{R}_{0}\left| \partial_{r}\left( \rho^{\alpha- \frac{1}{2}} \right) \right| ^{2}{r}dr\\
&= 2\pi \left( \frac{\alpha - \frac{1}{2}}{\alpha} \right)^2\int_{0}^{1}(w-u)^{2}dx\\
&\le C\left( \|w\|_{L^2(0,1)}^2 + \|u\|_{L^2(0,1)}^2 \right).
\end{aligned}$$
By H\"{o}lder's inequality, \eqref{rhoxuw}, \eqref{xrRelation1_alpha<1} and \eqref{weighted2_alpha<1}, we obtain
\begin{equation}
\begin{aligned}\label{w1_alpha<10}
I_1\leq& C\int_{0}^{1}w^2dx+C\int_{0}^{1}\frac{x^2}{r^2}dx\\
\leq&C\int_{0}^{1}w^2dx+C\int_{0}^{1}r^{2-\frac{4}{\gamma}}dx\\
\leq&C\int_{0}^{1}w^2dx+C\int_{0}^{R}\rho r^{3-\frac{4}{\gamma}}dr\\
\leq&C\int_{0}^{1}w^2dx+C\left\|\rho^{\alpha-\frac{1}{2}}r^{\xi}\right\|^{2}_{L^\infty(0,R)}\int_{0}^{R}\rho^{2-2\alpha} r^{3-\frac{4}{\gamma}-2\xi}dr\\
\leq&C\int_{0}^{1}w^2dx\\
&+C\Big(1+\left\|\nabla\rho^{\alpha-\frac{1}{2}}\right\|^2_{L^2(\Omega)}\Big)\Big(\int_{0}^{R}\rho^{\gamma}rdr\Big)^{\frac{2-2\alpha}{\gamma}}\Big(\int_{0}^{R}r^{(3-\frac{4}{\gamma}-2\xi-\frac{2-2\alpha}{\gamma})(\frac{\gamma}{\gamma-2+2\alpha})}dr\Big)^{\frac{\gamma-2+2\alpha}{\gamma}}\\
\leq&C\Big(1+\left\|u\right\|^2_{L^2(0,1)}+\left\|w\right\|^2_{L^2(0,1)}\Big),
\end{aligned}
\end{equation}
where we take $\xi>0$ to be sufficiently small such that
$$\xi<\frac{4(\gamma+\alpha)-8}{2\gamma},$$ 
provided that
$\gamma>2-\alpha.$

Similarly, we obtain
\begin{equation}
\begin{aligned}\label{w2_alpha<1}
I_2\leq& C\|w\|^2_{L^2(0,1)}+C\int_{0}^{1}r^2dx\\
\leq&C\|w\|^2_{L^2(0,1)}+C.\\
\end{aligned}
\end{equation}
Combining \eqref{2dwbdestimate} and \eqref{w1_alpha<10}-\eqref{w2_alpha<1}, we obtain

\begin{equation}
\begin{aligned}
\frac{d}{d\tau}\int_0^1\left(\frac{1}{2} w^2 + \frac{\rho^{\gamma - 1}}{\gamma - 1} \right)dx 
+  \frac{4\alpha\gamma}{(\gamma + \alpha)^2} \int_0^1 \left( \bigl(\rho^{\frac{\gamma + \alpha}{2}}\bigr)_x r \right)^2 dx \\
\leq C\left(1 + \| u\|_{L^2(0,1)}^2 + \| w\|_{L^2(0,1)}^2\right).
\end{aligned}
\end{equation}
By applying Gronwall's inequality and \eqref{basic2_alpha<1}, we finally prove equation \eqref{2w2_alpha<1}.
\end{proof}
\begin{corl}
    For any sufficiently small $\xi > 0$, there exists a positive constant $C$ independent of $T$ such that
\begin{equation}\label{weighted2_alpha<1}
\sup_{0\leq t\leq T}\left\|\rho^{\alpha-\frac{1}{2}}r^{\xi}\right\|_{L^\infty(0,R)}\leq C(T).
\end{equation}
\end{corl}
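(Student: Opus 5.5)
The corollary follows by combining the weighted bound \eqref{weighted2_alpha<1} from the earlier lemma with the $L^2$ bounds on $u$ and $w$ already established. That lemma gives, for any sufficiently small $\xi>0$,
\[
\sup_{0\le t\le T}\bigl\|\rho^{\alpha-\frac12}r^{\xi}\bigr\|_{L^\infty(0,R)}\le C\Bigl(1+\sup_{0\le t\le T}\bigl\|\nabla\rho^{\alpha-\frac12}\bigr\|_{L^2(\Omega)}\Bigr),
\]
so it suffices to bound $\sup_{t}\|\nabla\rho^{\alpha-\frac12}\|_{L^2(\Omega)}$ by a constant depending only on $T$ and the data.

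For this I will use the identity recorded inside the proof of \eqref{2w2_alpha<1}. By the Lagrangian change of variables $dx=\rho r\,dr$, $\partial_r=\rho r\,\partial_x$, we have $\partial_r\rho^{\alpha-\frac12}=\frac{\alpha-\frac12}{\alpha}\rho^{\frac12}\, r(\rho^\alpha)_x$. Since $w-u=r(\rho^\alpha)_x$ for $N=2$, this gives
\[
\bigl\|\nabla\rho^{\alpha-\frac12}\bigr\|_{L^2(\Omega)}^2=2\pi\Bigl(\tfrac{\alpha-\frac12}{\alpha}\Bigr)^2\int_0^1 (w-u)^2\,dx\le C\bigl(\|w\|_{L^2(0,1)}^2+\|u\|_{L^2(0,1)}^2\bigr).
\]
Taking the supremum in time and inserting the energy estimate \eqref{basic2_alpha<1} for $\sup_t\int_0^1u^2dx$ and the BD-type estimate \eqref{2w2_alpha<1} for $\sup_t\int_0^1w^2dx$ gives $\sup_t\|\nabla\rho^{\alpha-\frac12}\|_{L^2(\Omega)}\le C(T)$. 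Substituting this into the weighted lemma yields the claim.

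There is no real obstacle here; the only points to check are bookkeeping ones.
\begin{itemize}
\item The constant in the weighted lemma was independent of $T$, and the only $T$-dependence enters through \eqref{basic2_alpha<1} and \eqref{2w2_alpha<1}.
\item The argument is not circular. The proof of \eqref{2w2_alpha<1} used only the a priori form of \eqref{weighted2_alpha<1}, with the gradient norm on the right, together with the $u$-energy bound. It never assumed the conclusion of the corollary, so combining it with the $L^2$ bounds now closes the loop legitimately.
\end{itemize}
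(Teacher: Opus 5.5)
Your proposal is correct and is essentially the paper's own proof. The paper likewise combines the preceding weighted lemma with the identity $\|\nabla\rho^{\alpha-\frac12}\|_{L^2(\Omega)}^2=2\pi\bigl(\frac{\alpha-\frac12}{\alpha}\bigr)^2\int_0^1(w-u)^2\,dx$ and the $L^2$ bounds \eqref{basic2_alpha<1} and \eqref{2w2_alpha<1} on $u$ and $w$; your remark that this is not circular, because the proof of \eqref{2w2_alpha<1} uses only the gradient form of the lemma and closes by Gronwall, is also accurate.
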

\begin{proof}
By \eqref{weighted2_alpha<1}, \eqref{rhoxuw}, \eqref{2w2_alpha<1} we obtain 
\begin{equation}\label{1weighted2_alpha<1}
\sup_{0\leq t\leq T}\left\|\rho^{\alpha-\frac{1}{2}}r^{\xi}\right\|_{L^\infty(0,R)}\leq C\Big(1 + \|w\|_{L^2}+ \|u\|_{L^2}\Big)\leq C(T).
\end{equation}
\end{proof}

  Given the estimates of the BD entropy and the basic energy, together with the definition of the effective velocity $w$, an improved relationship between $x$ and $r$ can be obtained.  \begin{lema}  
  Under the assumption of \eqref{n2laphasmaal1}. For any $1 \leq q < \infty$, there exists a constant $C(T,q) > 0$ such that
\begin{equation}\label{xr1_alpha<1}
x \leq C(T,q)\, r(x,\tau)^{\frac{2(q-1)}{q}}, \quad (x,\tau) \in [0,1] \times [0,T].
\end{equation}
\end{lema}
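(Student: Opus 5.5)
The plan is to repeat the argument of \eqref{xrRelation1_alpha<1}, with $\gamma$ replaced by an arbitrary $q\in[1,\infty)$. This requires a bound on $\sup_{0\le t\le T}\|\rho\|_{L^q(\Omega)}$ for every finite $q$, that is, a bound on $\int_0^R\rho^q s\,ds$. In two dimensions such a bound should come from the $H^1$-type control of a power of $\rho$ provided by the BD entropy, combined with the two-dimensional Sobolev (Trudinger-type) embedding.

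\textbf{Step 1: $H^1$ bound for $\rho^{\alpha-\frac12}$.} By \eqref{rhoxuw}, \eqref{basic2_alpha<1} and \eqref{2w2_alpha<1},
\[
\sup_{0\le t\le T}\|\nabla\rho^{\alpha-\frac12}\|_{L^2(\Omega)}^2\le C\sup_{0\le t\le T}\bigl(\|w\|^2_{L^2(0,1)}+\|u\|^2_{L^2(0,1)}\bigr)\le C(T).
\]
For the zeroth-order part, note that $\alpha-\frac12<\frac12<1$. Hence, by H\"older and conservation of mass $\int_\Omega\rho\,d\mathbf x=C$, we get $\|\rho^{\alpha-\frac12}\|_{L^2(\Omega)}\le C$. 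Together these give $\sup_t\|\rho^{\alpha-\frac12}\|_{H^1(\Omega)}\le C(T)$ with $\Omega=B_R\subset\mathbb R^2$.

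\textbf{Step 2: $L^q$ bounds for $\rho$.} Since $H^1(\Omega)\hookrightarrow L^p(\Omega)$ for every $p<\infty$ in two dimensions, we obtain
\[
\|\rho\|_{L^q(\Omega)}=\|\rho^{\alpha-\frac12}\|_{L^{q/(\alpha-\frac12)}(\Omega)}^{1/(\alpha-\frac12)}\le C(T,q)
\]
for every $1\le q<\infty$, uniformly for $t\in[0,T]$. Here $\alpha>\frac12$ is what makes $q/(\alpha-\frac12)$ finite. This is the only genuinely two-dimensional ingredient. The main point to check is that the embedding constant on the ball is harmless, which it is because $R$ is fixed.

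\textbf{Step 3: the $x$--$r$ relation.} For $q>1$, exactly as in the proof of \eqref{xrRelation1_alpha<1},
\[
x=\int_0^{r(x,\tau)}\rho s\,ds\le\Bigl(\int_0^{r}\rho^q s\,ds\Bigr)^{\frac1q}\Bigl(\int_0^{r}s\,ds\Bigr)^{\frac{q-1}{q}}\le C(T,q)\,r^{\frac{2(q-1)}{q}}.
\]
The case $q=1$ is trivial, since then the exponent is $0$ and $x\le1$.

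I expect no real obstacle. The only delicate point is ensuring that Step 1 uses the already-closed $L^2$ bounds on $u$ and $w$, rather than something circular.
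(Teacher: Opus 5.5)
Your proposal is correct and follows essentially the same route as the paper. The paper also combines \eqref{basic2_alpha<1} and \eqref{2w2_alpha<1} to get $\rho^{\alpha-\frac12}\in W^{1,2}(\Omega)$, uses the two-dimensional embedding into every $L^q(\Omega)$, and then applies the identical H\"older argument; your mass-conservation bound on $\|\rho^{\alpha-\frac12}\|_{L^2(\Omega)}$ and the exponent conversion $\|\rho\|_{L^q}=\|\rho^{\alpha-\frac12}\|^{1/(\alpha-\frac12)}_{L^{q/(\alpha-\frac12)}}$ just spell out steps the paper leaves implicit.
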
 
\begin{proof}
Combining \eqref{basic2_alpha<1},\eqref{2w2_alpha<1} and Sobolev embedding inequality, we deduce that $\rho^{\alpha-\frac{1}{2}} \in W^{1,2}(\Omega) \hookrightarrow L^{q}(\Omega),$ for any $1 \leq q < \infty$. Thus for any $x\in[0,1]$, it holds that
\begin{equation}
\begin{aligned}
x &= \int_{0}^{r(x,\tau)} \rho s\, ds  \leq C\left( \int_{0}^{r(x,\tau)} \rho^q s\, ds \right)^{\frac{1}{q}} \left( \int_{0}^{r(x,\tau)} s \,ds \right)^{\frac{q-1}{q}}\\
&\leq C(T,q) r^{\frac{2(q-1)}{q}}.
\end{aligned}\end{equation}

\end{proof}
Based on the basic energy estimate and the BD-entropy estimate established above, we further exploit the dissipative structure of the system by deriving higher integrability estimates for the velocity field $u$. By carefully handling the source term, we transform the higher integrability estimates for $u$ into a weighted estimate for the density $\rho$.
\begin{lema}\label{2duhighnot}
Under the assumption of \eqref{n2laphasmaal1} and the condition $1< n< n_{2}(\alpha)$ with $n\in \mathbb{R}$ and for any $1 \leq q < \infty$, the following estimates hold 
\begin{equation}\begin{aligned}\label{2du4}
&{\sup_{0\leq t\leq T}\int_{0}^{1}u^{2n}dx + \int_{0}^{T}\int_{0}^{1}\Big(\rho^{\alpha-1}\frac{u^{2n}}{r^{2}} +\rho^{1+\alpha}r^{2}u_x^{2}u^{2n-2}\Big)dx dt} \\
\leq &C(T)+C\int_{0}^{T}\int_{0}^{1}\rho^{2n(\gamma-\alpha)+\alpha-1}r^{2n-2}dxdt+C(T)\int_{0}^{T}\int_{0}^{1}\rho^{(2n-1)(1-\alpha)}r^{6n-\frac{4n}{q}-2}dxdt.
\end{aligned}
\end{equation}

\end{lema}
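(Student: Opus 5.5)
Multiply the momentum equation $\eqref{eq:NSP-lagrangian-rhoalpha}_2$ (with $N=2$) by $u^{2n-1}$ and integrate over $[0,1]$, using $u(0,\tau)=u(1,\tau)=0$. The time derivative gives $\frac{1}{2n}\frac{d}{d\tau}\int_0^1 u^{2n}dx$.

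For the viscous terms, integrate $\alpha\int_0^1 r u^{2n-1}(\rho^{1+\alpha}(ru)_x)_x dx$ by parts. Since $(ru)_x = r u_x + u/(\rho r)$ (from $r_x=\rho^{-1}r^{-1}$), this produces a quadratic form in $a=\rho^{\frac{1+\alpha}{2}} r u_x u^{n-1}$ and $b=\rho^{\frac{\alpha-1}{2}} u^n/r$. The term $-\int_0^1 u^{2n}(\rho^\alpha)_x dx$ is integrated by parts once more, giving $2n\int_0^1\rho^\alpha u^{2n-1}u_x dx$, which is again of type $ab$.

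The whole dissipation therefore reads
\[
\int_0^1 \bigl(\alpha(2n-1)a^2 + c_n(\alpha)\,ab + \alpha b^2\bigr)dx
\]
for an explicit coefficient $c_n(\alpha)$, built from $2(\alpha-1)$-type cross terms plus the $(2n-2)$ contribution. The condition $n<n_2(\alpha)$, i.e.\ $\alpha>\alpha_{2,-}(n)$, should be exactly the discriminant condition making this form positive definite. Indeed $\alpha_{2,\pm}(n)$ look like the roots of such a discriminant $c_n^2-4\alpha^2(2n-1)=0$. This yields a lower bound $\delta\int_0^1(a^2+b^2)dx$, which is the dissipation on the left of \eqref{2du4}. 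Verifying this algebra against the definition of $\alpha_{2,-}$ is the first check I would do.

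Next, the pressure term. Integrate by parts:
\[
-\int_0^1 r(\rho^\gamma)_x u^{2n-1}dx=\int_0^1\rho^\gamma (r u^{2n-1})_x dx .
\]
This splits into $(2n-1)\int_0^1\rho^\gamma r u_x u^{2n-2}dx$ and $\int_0^1\rho^{\gamma-1}u^{2n-1}/r\,dx$. By Young's inequality, each is bounded by $\varepsilon\int_0^1(a^2+b^2)dx$ plus a remainder. For the first term, write $\rho^\gamma r u_x u^{2n-2}=a\cdot\rho^{\gamma-\frac{1+\alpha}{2}}u^{n-1}$. Then use Young with exponents in $u$ to trade $u^{2n-2}$ against $b^2=\rho^{\alpha-1}u^{2n}r^{-2}$, i.e.\ $u^{2n-2}\le \varepsilon \rho^{\alpha-1}u^{2n}r^{-2}\cdot(\cdots)+C(\cdots)$. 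This produces the remainder $\rho^{2n(\gamma-\alpha)+\alpha-1}r^{2n-2}$; the exponent check is to confirm that $\rho^{2\gamma-1-\alpha}u^{2n-2}$ interpolated with $b^2$ gives exactly this power. The second pressure term is handled the same way and produces the same remainder.

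Finally, the Poisson terms $-\kappa\int_0^1\frac{x}{r}u^{2n-1}dx+\frac{\kappa}{2}\int_0^1 r u^{2n-1}dx$. Write $\frac{x}{r}u^{2n-1}=b^{\frac{2n-1}{n}}\cdot \rho^{-\frac{(\alpha-1)(2n-1)}{2n}}r^{\frac{2n-1}{n}}\frac{x}{r}$. Young with exponents $\frac{2n}{2n-1}$ and $2n$ gives $\varepsilon b^2$ plus $C\rho^{(2n-1)(1-\alpha)}r^{4n-2}(x/r)^{2n}$. Then \eqref{xr1_alpha<1} gives $x^{2n}\le C(T,q)r^{4n(q-1)/q}$, which yields precisely $\rho^{(2n-1)(1-\alpha)}r^{6n-\frac{4n}{q}-2}$. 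The $r u^{2n-1}$ term is treated identically using $r\le R$, $x\le 1$, and absorbed into the same remainders or into $C\int_0^1 u^{2n}dx + C$.

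Integrating in time and applying Gronwall to $\int_0^1 u^{2n}dx$ (with the initial data in $H^3$ giving a bounded initial value) gives \eqref{2du4}. The main obstacle is the positivity of the dissipative quadratic form for $n<n_2(\alpha)$; the Young bookkeeping is routine.
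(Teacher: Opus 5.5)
Your proposal is correct and follows the paper's proof essentially step for step. The cross term is indeed $2n(\alpha-1)\rho^\alpha u^{2n-1}u_x$, and positivity of your quadratic form, $n^2(1-\alpha)^2<(2n-1)\alpha^2$, is exactly $\alpha>\alpha_{2,-}(n)=\frac{n}{n+\sqrt{2n-1}}$; the paper encodes this same condition as a Young's-inequality requirement with small $\varepsilon$. Your pressure remainder $\rho^{2n(\gamma-\alpha)+\alpha-1}r^{2n-2}$, your Poisson remainder obtained via \eqref{xr1_alpha<1}, and your closing Gronwall step coincide with the paper's $J_1$, $J_2$, $J_3$ estimates.
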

\begin{proof}   
Multiplying \(\eqref{eq:NSP-lagrangian-rhoalpha}_2\) by \(u^{2n-1}\) and integrating over \((0,1)\) gives
\begin{equation}
\begin{aligned}
\frac{1}{2n}\frac{d}{d\tau}\int_0^1 u^{2n}dx +\int_0^1 (\rho^\gamma)_x ru^{2n-1}dx =&-\alpha \int_0^1 \rho^{1+\alpha} (ru)_x (ru^{2n-1})_xdx \\
&- \int_0^1 (\rho^\alpha)_x u^{2n}dx -\kappa\int^{1}_{0} \frac{x}{r}u^{2n-1}dx\\
&+ \frac{\kappa}{2}\int^{1}_{0}u^{2n-1}rdx.
\end{aligned}
\end{equation}
A direct computation leads to
\begin{equation}
\begin{aligned}\label{2d-uhighestimate}
&\frac{1}{2n}\frac{d}{d\tau}\int_0^1 u^{2n}dx + \int_0^1 \Big( \alpha\rho^{\alpha-1} \frac{u^{2n}}{r^2} + (2n-1)\alpha \rho^{1+\alpha} u_x^2 u^{2n-2} r^{2} + 2n(\alpha-1) \rho^\alpha u^{2n-1}  u_x \Big) dx \\
=& \int_0^1 \Big(  \rho^{\gamma-1} \frac{u^{2n-1}}{r} +(2n-1) \rho^\gamma u^{2n-2} u_x r \Big) dx -\kappa\int^{1}_{0} \frac{x}{r}u^{2n-1}dx + \frac{\kappa}{2}\int^{1}_{0}u^{2n-1}rdx.
\end{aligned}
\end{equation}
Applying Young's inequality, we obtain
\begin{equation}\label{ufix1}
\begin{aligned}
& 2n(\alpha-1) \rho^\alpha u^{2n-1}  u_x \\
\ge& -(1-2\varepsilon)(2n-1)\alpha \rho^{1+\alpha} u_x^2 u^{2n-2} r^{2} - \frac{\big(2n(\alpha-1)\big)^2}{4(1-2\varepsilon)(2n-1)\alpha} \rho^{\alpha-1} \frac{u^{2n}}{r^2}\\
\ge&-(1-2\varepsilon)\Big((2n-1)\alpha\rho^{1+\alpha}u_x^2u^{2n-2}r^2+\alpha\rho^{\alpha{-1}}\frac{u^{2n}}{r^2}\Big).
\end{aligned}
\end{equation}
Taking \(\varepsilon\) sufficiently small such that
\[
\frac{\big(2n(\alpha-1)\big)^2}{4(1-2\varepsilon)(2n-1)\alpha} <\alpha (1-2\varepsilon).
\]
Such an \(\varepsilon\) exists because \(n<n_2(\alpha)\). Substituting \eqref{ufix1} into \eqref{2d-uhighestimate} yields
\begin{equation}
\begin{aligned}\label{2duhighfirst}
&\frac{1}{2n}\frac{d}{d\tau}\int_0^1 u^{2n}dx +2\varepsilon \int_0^1 \Big(  \alpha\rho^{\alpha-1} \frac{u^{2n}}{r^2} + (2n-1)\alpha \rho^{1+\alpha} u_x^2 u^{2n-2} r^{2} \Big) dx \\
\le& C \int_0^1 \Big( (2n-1)\rho^\gamma u^{2n-2}  u_x r + \rho^{\gamma-1} \frac{u^{2n-1}}{r} \Big) dx-\kappa\int^{1}_{0} \frac{x}{r}u^{2n-1}dx+ \frac{\kappa}{2}\int^{1}_{0}u^{2n-1}rdx\\
\triangleq& J_1+J_2+J_3.
\end{aligned}
\end{equation}
Applying Young's inequality again, we obtain
\begin{equation}\begin{aligned}\label{3uhigh21}
J_{1}&\leq C\left( \int^{1}_{0}\rho^{\alpha-1} \frac{u^{2n}}{r^{2}}dx \right)^{\frac{2n-1}{2n}}\left( \int^{1}_{0}\rho^{2n(\gamma-\alpha)+\alpha-1}r^{2(n-1)}dx \right)^{\frac{1}{2n}}\\
&\quad+C\left( \int^{1}_{0} \rho^{1+\alpha}u^{2n-2}r^{2}u_{x}^{2}dx \right)^{\frac{1}{2}}\left( \int^{1}_{0} \rho^{\alpha-1}\frac{u^{2n}}{r^{2}}dx \right)^{\frac{n-1}{2n}}\left( \int^{1}_{0}\rho^{2n(\gamma-\alpha)+\alpha-1}r^{2(n-1)} dx\right)^{\frac{1}{2n}}\\
&\leq \frac{\alpha\varepsilon}{3}\int^{1}_{0}\rho^{\alpha-1} \frac{u^{2n}}{r^{2}}dx +C(\varepsilon)\int^{1}_{0}\rho^{2n(\gamma-\alpha)+\alpha-1}r^{2(n-1)}dx\\
&\quad+(2n-1)\alpha\varepsilon\int^{1}_{0}\rho^{1+\alpha}u^{2n-2}r^{2}u_{x}^{2}dx.
\end{aligned}\end{equation}
Using Young's inequality combined with \eqref{xr1_alpha<1}, we obtain
yields
\begin{equation}
\begin{aligned}\label{m2}
J_2 &\le \frac{\alpha\varepsilon}{3} \int_{0}^{1}\rho^{\alpha-1}\frac{u^{2n}}{r^2}dx + C\int_{0}^{1}\rho^{(2n-1)(1-\alpha)}x^{2n}r^{2n-2}dx \\
&\le \frac{\alpha\varepsilon}{3} \int_{0}^{1}\rho^{\alpha-1}\frac{u^{2n}}{r^2}dx + C(T)\int_{0}^{1}\rho^{(2n-1)(1-\alpha)}r^{6n-\frac{4n}{q}-2}dx. \\
\end{aligned}
\end{equation}
Using Young's inequality, we obtain
\begin{equation}
\begin{aligned}\label{M3}
J_3 &\le C\int_{0}^{1}u^{2n}dx+C.
\end{aligned}
\end{equation}
Plugging \eqref{3uhigh21}--\eqref{M3} into \eqref{2duhighfirst} and then applying Gronwall's inequality, we obtain  \eqref{2du4}.

\end{proof}
\begin{lema}
Under the assumption of \eqref{n2laphasmaal1} and the condition $1< m\in A_{\text{set}}$ and for any $1 \leq q < \infty$, the following estimates hold 
\begin{equation}\begin{aligned}\label{2uw4}
&\sup_{0\leq t\leq T}\int_{0}^{1}w^{2m}dx + \int_{0}^{T}\int_0^1 \rho^{\gamma-\alpha} \big(r (\rho^\alpha)_x\big)^{2m} dxdt\\ \leq& C+C\int_{0}^{T}\int_{0}^{1}\rho^{\gamma-\alpha}u^{2m}dxdt+C\int_{0}^{T}\int_{0}^{1}(r^{1-\frac{2}{q}})^{{2m}}dxdt +C\int_{0}^{T}\int^{1}_{0}r^{2m}dxdt.
\end{aligned}\end{equation}
\end{lema}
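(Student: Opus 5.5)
We work directly from the effective velocity equation \eqref{bd2_alpha<1} with $N=2$,
\[
w_\tau + r(\rho^\gamma)_x = -\kappa\frac{x}{r}+\frac{\kappa}{2}r ,
\]
and multiply it by $w^{2m-1}$, then integrate over $(0,1)$. The reason for taking $m\in A_{\text{set}}$ is that $2m-1 = 1+\frac{2s}{2k+1}$, so $w^{2m-1}$ is an odd power with a real meaning for negative $w$. This is exactly the structure in which Lemma~\ref{mdense} applies.

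\textbf{Pressure term.} We write $r(\rho^\gamma)_x = \frac{\gamma}{\alpha}\rho^{\gamma-\alpha}\, r(\rho^\alpha)_x$ and decompose $w = r(\rho^\alpha)_x + u$. Setting $a = r(\rho^\alpha)_x$ and $b=u$, the pressure contribution becomes
\[
\frac{\gamma}{\alpha}\int_0^1 \rho^{\gamma-\alpha}\, a\,(a+b)^{2m-1}\,dx .
\]
Lemma~\ref{mdense}, applied with exponent $2m-1 = 1+\frac{2s}{2k+1}$, gives the pointwise bound
\[
a(a+b)^{2m-1}\ge \varepsilon |a|^{2m}-|b|^{2m}.
\]
Multiplying by the positive weight $\rho^{\gamma-\alpha}$, we obtain
\[
\frac{1}{2m}\frac{d}{d\tau}\int_0^1 w^{2m}\,dx+\varepsilon\frac{\gamma}{\alpha}\int_0^1\rho^{\gamma-\alpha}\big(r(\rho^\alpha)_x\big)^{2m}dx\le \frac{\gamma}{\alpha}\int_0^1\rho^{\gamma-\alpha}u^{2m}dx -\kappa\int_0^1\frac{x}{r}w^{2m-1}dx+\frac{\kappa}{2}\int_0^1 r\,w^{2m-1}dx .
\]

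\textbf{Poisson terms.} By Young's inequality, $\big|\frac{x}{r}w^{2m-1}\big|\le C w^{2m}+C(x/r)^{2m}$. From \eqref{xr1_alpha<1} we have $x/r\le C(T,q)\,r^{1-\frac{2}{q}}$, so this term is bounded by $C w^{2m}+C(T)\big(r^{1-\frac2q}\big)^{2m}$. The last term is handled the same way and is bounded by $Cw^{2m}+Cr^{2m}$.

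\textbf{Closing.} Gronwall's inequality applied to $\int_0^1 w^{2m}\,dx$, with $\int_0^1 w_0^{2m}\,dx\le C$ from the $H^3$ regularity and positivity of the initial data, gives \eqref{2uw4}. The exponential factor from Gronwall is absorbed into the constant.

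\textbf{Main obstacle.} The only delicate point is the coercivity step. Without the odd-power structure, the cross term $\rho^{\gamma-\alpha}a(a+b)^{2m-1}$ cannot be bounded below uniformly for noninteger $m$. Lemma~\ref{mdense} is precisely what resolves this, and it is the reason for the restriction $m\in A_{\text{set}}$. Everything else is routine Young's inequality combined with the $x$–$r$ relation \eqref{xr1_alpha<1}, and here no singularity survives because $1-\frac{2}{q}>0$ once $q>2$.
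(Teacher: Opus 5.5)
Your proposal is correct and follows essentially the same route as the paper's proof. You test \eqref{bd2_alpha<1} with $w^{2m-1}$, apply Lemma~\ref{mdense} with $a=r(\rho^\alpha)_x$, $b=u$ (via $(\rho^\gamma)_x=\frac{\gamma}{\alpha}\rho^{\gamma-\alpha}(\rho^\alpha)_x$) for the coercive term, bound the Poisson terms by Young's inequality and \eqref{xr1_alpha<1}, and close with Gronwall; your added remarks that $\int_0^1 w_0^{2m}\,dx\le C$ follows from the $H^3$ data and that the constant on the $(r^{1-\frac{2}{q}})^{2m}$ term depends on $T$ and $q$ are details the paper leaves implicit.
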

\begin{proof}
Multiplying \eqref{bd2_alpha<1} by $w^{2m-1}$ and integrating over \([0,1]\) gives
\begin{equation}
\begin{aligned}
&\frac{1}{2m}\frac{d}{d\tau}\int_0^1 w^{2m} dx + \int_0^1 \gamma \rho^{\gamma-1} \rho_x \big(u + r(\rho^\alpha)_x\big)^{2m-1} r dx \\
=&-\kappa\int^{1}_{0} \frac{x}{r}w^{2m-1}dx + { \frac{ \kappa}{2}\int^{1}_{0}w^{2m-1}rdx }.
\end{aligned}\end{equation}
Using Lemma \ref{mdense}, we have
\begin{equation}
\begin{aligned}
\gamma \rho^{\gamma-1}  \rho_x \big(u + r (\rho^\alpha)_x\big)^{2m-1} r &= r (\rho^\alpha)_x \big(u + r (\rho^\alpha)_x\big)^{2m-1} \frac{\gamma}{\alpha} \rho^{\gamma-\alpha} \\
&\ge \varepsilon \rho^{\gamma-\alpha} \big(r (\rho^\alpha)_x\big)^{2m} - C \rho^{\gamma-\alpha} u^{2m},
\end{aligned}\end{equation}
where \(\varepsilon>0\) is a small constant depending only on \(m,\gamma,\alpha\). Therefore, combining  Young's inequality and \eqref{xr1_alpha<1}, we arrive at
\begin{equation}
\begin{aligned} \label{2dufinna}
&\frac{1}{2m}\frac{d}{d\tau}\int_0^1 w^{2m} dx + \varepsilon \int_0^1 \rho^{\gamma-\alpha} \big(r (\rho^\alpha)_x\big)^{2m} dx \\\le& C \int_0^1 \rho^{\gamma-\alpha} u^{2m} dx-\kappa\int_{0}^{1}\frac{x}{r}w^{2m-1}dx+\frac{\kappa}{2}\int_{0}^{1}w^{2m-1}rdx\\
\leq&C\int_{0}^{1}w^{2m}dx +C\int_0^1 \rho^{\gamma-\alpha} u^{2m} dx+ C\int_{0}^{1}r^{2m(1-\frac{2}{q})}dx+ C \int_0^1 r^{2m}dx.\\
\end{aligned}\end{equation}
Combining \eqref{2dufinna} with Gronwall's inequality, we obtain \eqref{2uw4}.

\end{proof}
\begin{lema}\label{lem: def k}
Let \(N=2\). For any \((\alpha,\gamma)\) satisfying \eqref{n2laphasmaal1}, there exists an \(\alpha\)-dependent constant \(k\in A_{\text{set}}\) with \(k>1\) such that
\begin{equation}\label{dense2d}
1-\frac{k\sqrt{2k-1}-2k+1}{k^2-2k+1}<\alpha<1-\frac{1}{2k}. 
\end{equation}
\end{lema}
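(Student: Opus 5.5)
The statement only asks for some $k\in A_{\text{set}}$, $k>1$, with $\alpha_{2,-}(k)<\alpha<1-\frac{1}{2k}$. The plan is to prove that the set of real $k>1$ satisfying both inequalities contains a nonempty open interval, and then use density of $A_{\text{set}}$ in $(1,\infty)$ to pick $k$ in it.

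\textbf{Left inequality.} By the definition of $\alpha_{2,-}$ and $n_2$, for $\alpha\in(\frac12,1)$ the number $n_2(\alpha)\in(1,\infty)$ is the unique solution of $\alpha_{2,-}(n_2(\alpha))=\alpha$. Since $\alpha_{2,-}$ is strictly increasing on $(1,\infty)$, we have $\alpha_{2,-}(k)<\alpha$ if and only if $1<k<n_2(\alpha)$.

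\textbf{Right inequality.} We have $\alpha<1-\frac1{2k}$ if and only if $k>\frac{1}{2(1-\alpha)}$.

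\textbf{Nonemptiness.} It therefore suffices to show
\[
\max\Bigl\{1,\tfrac{1}{2(1-\alpha)}\Bigr\}<n_2(\alpha).
\]
The bound $n_2(\alpha)>1$ holds by definition. Write $k_0=\frac{1}{2(1-\alpha)}$. If $k_0\le1$ there is nothing more to prove, so assume $k_0>1$. By monotonicity, $k_0<n_2(\alpha)$ is equivalent to $\alpha_{2,-}(k_0)<\alpha$. Since $1-\alpha=\frac1{2k_0}$, this reduces to the inequality, for all $k>1$,
\[
\frac{k\sqrt{2k-1}-2k+1}{(k-1)^2}>\frac{1}{2k}.
\]
Clearing denominators gives $2k^2\sqrt{2k-1}>4k^2-2k+(k-1)^2=5k^2-4k+1$. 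Set $s=\sqrt{2k-1}>1$, so $k=\frac{s^2+1}{2}$. Substituting, the inequality becomes
\[
(s^2+1)^2 s>\frac{5(s^2+1)^2}{4}-2(s^2+1)+1,
\]
that is, after multiplying by $4$,
\[
4s(s^2+1)^2>5s^4+2s^2+1.
\]
For $s\ge1$ the left side is at least $4s^5+8s^3+4s\ge 4s^4+8s^2+4$. Their difference is $(4s^5-5s^4)+\dots$, so this direct comparison needs care near $s=1$. Checking $s=1$ directly gives $16>8$. Writing $p(s)=4s^5-5s^4+8s^3-2s^2+4s-1$, we get
\[
p(s)=s^4(4s-5)+s^2(8s-2)+(4s-1).
\]
For $s\ge\frac54$ every term is positive. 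For $1\le s\le\frac54$ we have $s^4(4s-5)\ge -s^4$, and $-s^4+8s^3-2s^2>0$ there since $8s>s^2+2$. Hence $p(s)>0$ for all $s\ge1$, and the interval is nonempty. This verification is the only genuinely computational step, and it is where care is needed. If the polynomial bookkeeping fails, a fallback is to compare the asymptotics $\alpha_{2,-}(k)=1-\sqrt2\,k^{-1/2}+\dots$ against $1-\frac1{2k}$ as $k\to\infty$, and separately near $k=1$.

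\textbf{Conclusion.} The open interval $\bigl(\max\{1,\frac1{2(1-\alpha)}\},\,n_2(\alpha)\bigr)$ is nonempty. Since $A_{\text{set}}$ is dense in $(1,\infty)$, it meets this interval, and any $k$ in the intersection depends only on $\alpha$ and satisfies \eqref{dense2d}.
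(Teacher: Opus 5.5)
Your overall strategy is the paper's. You reduce everything to positivity of $h(a)=\frac{a\sqrt{2a-1}-2a+1}{(a-1)^2}-\frac{1}{2a}$ at $a=k_0=\frac{1}{2(1-\alpha)}$, then use density of $A_{\text{set}}$. Note that $k_0>1$ automatically because $\alpha>\frac12$, so your case $k_0\le 1$ is vacuous. Describing the admissible set as $(k_0,n_2(\alpha))$ via monotonicity of $\alpha_{2,-}$, rather than the paper's continuity argument at $k_0$, is harmless.

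The gap is in the one step you rightly called the real content: the verification of $h>0$, which the paper only asserts. Substituting $k=\frac{s^2+1}{2}$ into $2k^2\sqrt{2k-1}$ gives $\frac12 s(s^2+1)^2$, not $s(s^2+1)^2$. The inequality you actually need is
\[
2s(s^2+1)^2>5s^4+2s^2+1,\qquad s>1.
\]
You instead proved $4s(s^2+1)^2>5s^4+2s^2+1$, which is strictly weaker and does not imply it.

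Your check ``$16>8$ at $s=1$'' should have been a warning sign. At $k=1$ both sides of $2k^2\sqrt{2k-1}>5k^2-4k+1$ equal $2$, consistent with $\alpha_{2,-}(1^+)=\frac12=1-\frac{1}{2\cdot 1}$. So the correct transformed inequality must degenerate to an equality at $s=1$; in fact the difference vanishes to third order there, so there is no slack to exploit near $s=1$. Your asymptotic fallback would also only cover $k$ near $1$ and near $\infty$, not all $k>1$.

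The repair is short. Expanding and factoring,
\[
2s(s^2+1)^2-(5s^4+2s^2+1)=2s^5-5s^4+4s^3-2s^2+2s-1=(s-1)^3(2s^2+s+1),
\]
which is positive for all $s>1$. With this replacement for your polynomial $p(s)$, the rest of your argument goes through and gives the lemma.
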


\begin{proof}
To obtain the desired \(k\), we first consider the auxiliary function
\begin{equation}\label{auxiliary}
h(a)=\frac{a\sqrt{2a-1}-2a+1}{(a-1)^2}-\frac{1}{2a},\qquad a>1.
\end{equation}
A direct computation shows that \(h(a)>0\) for all \(a>1\).

Since \(\alpha\) satisfies condition \eqref{n2laphasmaal1}, we set
\[
k_0:=\frac{1}{2(1-\alpha)}.
\]
By virtue of \eqref{auxiliary}, we have $h(k_0)>0$, which implies
\[
1-\frac{k_0\sqrt{2k_0-1}-2k_0+1}{k_0^2-2k_0+1}<1-\frac{1}{2k_0}=\alpha.
\]

Moreover, the set \(A_{\text{set}}\) is dense in \((1,\infty)\). Hence we can choose \(k\in A_{\text{set}}\) with \(k>k_0\) sufficiently close to \(k_0\) such that \eqref{dense2d} hold. \end{proof}

Next, we shall establish the upper and lower bounds of the density. The key step is to obtain the high integrability estimate of the density gradient, which is derived from the high integrability of $u$ and the effective velocity $w$. Meanwhile, the high integrability of $u$ and $w$ can be achieved by the $r$-weighted estimate of the density.
\begin{lema}\label{rhouw2dhigh}
Under the assumption of \eqref{n2laphasmaal1}, and let $k$ be defined as in Lemma \ref{lem: def k}. The following estimates hold
\begin{equation}\label{2drhouhigh}
\sup_{0\leq t\leq T}\int_{0}^{R}\rho u^{2k}rdr + \int_{0}^{T}\int_{0}^{R}\Big(\rho^{\alpha}\frac{u^{2k}}{r} + \rho^{\alpha}(u_r)^{2}u^{2k-2}r\Big)drdt \leq C(T),
\end{equation}
and
\begin{equation}\label{w-uhigh12}
\sup_{0\leq t\leq T}\int_{0}^{R}\Big|(\rho^{\alpha -1 + \frac{1}{2k}})_r\Big|^{2k}rdr + \int_{0}^{T}\int_{0}^{R}\Big|(\rho^{\alpha -1 + \frac{\gamma - \alpha + 1}{2k}})_r\Big|^{2k}rdrdt \leq C(T).
\end{equation}
\end{lema}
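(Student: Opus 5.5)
We apply Lemma \ref{2duhighnot} with $n=k$ and the $w$-lemma \eqref{2uw4} with $m=k$. This is admissible: Lemma \ref{lem: def k} gives $\alpha_{2,-}(k)<\alpha$, and since $\alpha_{2,-}$ is increasing this means $k<n_2(\alpha)$; also $k\in A_{\text{set}}$ and $k>1$. In Eulerian variables, $dx=\rho r\,dr$, so $\int_0^1u^{2k}dx=\int_0^R\rho u^{2k}r\,dr$. The dissipation terms $\rho^{\alpha-1}u^{2k}r^{-2}dx$ and $\rho^{1+\alpha}r^2u_x^2u^{2k-2}dx$ become $\rho^\alpha u^{2k}r^{-1}dr$ and $\rho^{\alpha}u_r^2u^{2k-2}r\,dr$ (using $u_x=u_r/(\rho r)$). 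So \eqref{2drhouhigh} follows once the two source integrals on the right of \eqref{2du4} are bounded by $C(T)$.

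\textbf{Source terms of \eqref{2du4}.} In Eulerian form the first is $\int_0^T\int_0^R\rho^{2k(\gamma-\alpha)+\alpha}r^{2k-1}dr\,dt$ and the second is $\int_0^T\int_0^R\rho^{(2k-1)(1-\alpha)+1}r^{6k-4k/q-1}drdt$. I will estimate both using the weighted bound $\|\rho^{\alpha-\frac12}r^\xi\|_{L^\infty}\le C(T)$, the $L^q$-integrability of $\rho$ for every $q<\infty$ (from $\rho^{\alpha-\frac12}\in W^{1,2}(\Omega)$), and the dissipation $\int_0^T\|\nabla\rho^{\frac{\gamma+\alpha}{2}}\|_{L^2}^2$ from \eqref{2w2_alpha<1}. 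In 2D this dissipation gives $\rho^{\frac{\gamma+\alpha}2}\in L^2_tL^p_x$ for all $p<\infty$, hence $\rho\in L^{\gamma+\alpha}_tL^p_x$.

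The second term has a large positive power of $r$ and only a moderate power of $\rho$. Choosing $q$ large, it is bounded directly by $\sup_t\|\rho\|_{L^{p}(\Omega)}^{p}$ for a suitable finite $p$, hence by $C(T)$.

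The first term is the real obstacle. The power $2k(\gamma-\alpha)+\alpha$ may be large when $\gamma$ is large, while the weight $r^{2k-1}$ only reduces to the measure $r\,dr$ up to $r^{2k-2}$. I would first try to write $\rho^{2k(\gamma-\alpha)+\alpha}=\rho^{\gamma+\alpha}\cdot\rho^{2k(\gamma-\alpha)}$. The factor $\rho^{\gamma+\alpha}$ is integrable in $L^1_tL^{p}_x$. The remaining factor would be controlled in $L^\infty_tL^{p'}_x$ by the uniform-in-time $L^q$ bounds.

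If uniform-in-time $L^q$ bounds are not available (they are, from $\rho^{\alpha-1/2}\in L^\infty_tW^{1,2}$), a fallback is to close with a small power of $R_T$ and a later bootstrap. The condition $\alpha<1-\frac1{2k}$ from Lemma \ref{lem: def k} should enter exactly at this step and in the next one.

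\textbf{Estimate \eqref{w-uhigh12}.} Take $m=k$ in \eqref{2uw4}. The $r$-integrals there are trivially bounded. The remaining term $\int_0^T\int_0^1\rho^{\gamma-\alpha}u^{2k}dx\,dt$ equals $\int\!\!\int\rho^{\gamma-\alpha+1}u^{2k}r\,dr$. I split it by Hölder between $\rho^\alpha u^{2k}r^{-1}$ (whose time integral is controlled by \eqref{2drhouhigh}) and $\sup_t\int\rho u^{2k}r$, with the leftover density power absorbed by the $L^p$ bounds.

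This gives $\sup_t\int_0^1w^{2k}dx\le C(T)$, and therefore $\int_0^1(w-u)^{2k}dx=\int_0^1|r(\rho^\alpha)_x|^{2k}dx\le C(T)$. Converting to Eulerian variables,
\[
\int_0^1|r(\rho^\alpha)_x|^{2k}dx=\int_0^R\rho^{1-2k}|(\rho^\alpha)_r|^{2k}r\,dr=C\int_0^R\bigl|(\rho^{\alpha-1+\frac1{2k}})_r\bigr|^{2k}r\,dr .
\]
Here the exponent $\alpha-1+\frac1{2k}$ is nonzero, since $\alpha<1-\frac1{2k}$. This gives the first part of \eqref{w-uhigh12}.

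The time-integrated dissipation term $\rho^{\gamma-\alpha}|r(\rho^\alpha)_x|^{2k}dx$ becomes $\rho^{\gamma-\alpha+1-2k}|(\rho^\alpha)_r|^{2k}r\,dr$. This equals $C\bigl|(\rho^{\alpha-1+\frac{\gamma-\alpha+1}{2k}})_r\bigr|^{2k}r$, which gives the second part.
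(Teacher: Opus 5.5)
Your derivation of \eqref{2drhouhigh} is correct and differs only mildly from the paper's. The check $k<n_2(\alpha)$ is right. Bounding both source terms of \eqref{2du4} by $C\sup_t\int_\Omega\rho^{p}\,d\mathbf{x}$ also works: the $r$-weights are at least $r$ up to constants, and $\rho^{\alpha-\frac12}\in L^\infty(0,T;H^1(\Omega))$ gives uniform $L^s(\Omega)$ bounds for every $s<\infty$. The paper instead bounds these terms pointwise via $\rho\le C(\xi,T)\,r^{-\xi/(\alpha-\frac12)}$ with $\xi$ arbitrarily small. Both rest on the same $H^1$ bound, so:
\begin{itemize}
\item the first source term is not a real obstacle;
\item no $R_T$ fallback is needed;
\item the condition $\alpha<1-\frac1{2k}$ plays no role in this lemma; it is used only in Proposition \ref{2dlowerdensity}.
\end{itemize}
Incidentally, in Eulerian variables the BD dissipation controls $\nabla\rho^{\frac{\gamma+\alpha-1}{2}}$, not $\nabla\rho^{\frac{\gamma+\alpha}{2}}$, but you never use it.

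The genuine gap is your bound on $\int_0^T\!\int_0^R\rho^{\gamma-\alpha+1}u^{2k}r\,dr\,dt$ in the proof of \eqref{w-uhigh12}. Set $A=\rho^\alpha u^{2k}r^{-1}$ and $B=\rho u^{2k}r$. To keep the power of $u$ equal to $2k$, any interpolation must use $A^\theta B^{1-\theta}$, with $A$ and $B$ controlled only in $L^1_x$. The leftover factor is then $L=\rho^{\gamma-\alpha+(1-\alpha)\theta}r^{2\theta}$, and it must be estimated in $L^\infty_x$. Putting $L$ in $L^p_x$ with $p<\infty$ would require $A^\theta B^{1-\theta}\in L^{p'}_x$, i.e.\ control of $u^{2kp'}$, which you do not have. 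Since $\gamma-\alpha>0$, $L$ is a positive power of $\rho$, so your $L^p$ bounds on $\rho$ cannot absorb it. You need $\theta>0$ together with a pointwise weighted bound.

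The fix is the weighted estimate you listed but did not use here. Take $\theta=1$ and write $\rho^{\gamma-\alpha+1}u^{2k}r=\rho^{\gamma-2\alpha+1}r^{2}\cdot\rho^{\alpha}u^{2k}r^{-1}$. Note that $\gamma-2\alpha+1>3-3\alpha>0$. Then $\rho^{\gamma-2\alpha+1}r^2\le\|\rho^{\alpha-\frac12}r^\xi\|_{L^\infty}^{\frac{\gamma-2\alpha+1}{\alpha-\frac12}}r^{2-\xi\frac{\gamma-2\alpha+1}{\alpha-\frac12}}\le C(T)$ for small $\xi$. The time integral of $A$ from \eqref{2drhouhigh} then closes the estimate; this is exactly the paper's argument.

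Alternatively, you can keep your $L^p$ idea with one extra step. First apply Lemma \ref{2duhighnot} with some $n\in(k,n_2(\alpha))\cap A_{\text{set}}$ to get $\sup_t\int_0^R\rho u^{2n}r\,dr\le C(T)$. Then use H\"older with exponents $\frac nk$ and $\frac{n}{n-k}$, which leaves only $\int_0^R\rho^{(\gamma-\alpha+1-\frac kn)\frac{n}{n-k}}r\,dr$, and that is controlled by your $L^p$ bounds.

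The rest of your argument is fine: the $w^{2k}$ bound and the conversion of both terms of \eqref{w-uhigh12} to Eulerian variables.
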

\begin{proof}
Using Proposition \ref{2duhighnot} and \eqref{weighted2_alpha<1}, and rewriting the estimates in Eulerian coordinates, we obtain that
\[
\begin{aligned}
&\sup_{0\leq t\leq T}\int_{0}^{R}\rho u^{2k}r\,dr + \int_{0}^{T}\int_{0}^{R}\left(\rho^{\alpha}\frac{u^{2k}}{r} +\rho^{\alpha}u_r^{2}u^{2k-2}r\right)drdt \\
\leq& C + C\int_{0}^{T}\int_{0}^{R}\rho^{2k(\gamma-\alpha)+\alpha}r^{2k-1}drdt \\
&+C(T)\int_{0}^{T}\int_{0}^{R}\rho^{(2k-1)(1-\alpha)+1}r^{6k-\frac{4k}{q}-1}drdt \\
\leq& C + C\sup_{0\leq t\leq T}\left\| \rho^{\alpha-\frac{1}{2}}r^{\xi}\right\|_{L^{\infty}(0,R)}^{\frac{2k(\gamma-\alpha)+\alpha}{\alpha-1/2}}
\int_{0}^{T}\int_{0}^{R}r^{2k-1-\frac{2k(\gamma-\alpha)+\alpha}{\alpha-1/2}\xi}drdt \\
&+C(T)\sup_{0\leq t\leq T}\left\|\rho^{\alpha-\frac{1}{2}}r^{\xi}\right\|^{\frac{(2k-1)(1-\alpha)+1}{\alpha-\frac{1}{2}}}_{L^\infty(0,R)}\int_{0}^{T}\int_{0}^{R} r^{6k-\frac{4k}{q}-1-\xi\frac{(2k-1)(1-\alpha)+1}{\alpha-\frac{1}{2}}}drdt \\
\leq& C(T),
\end{aligned}
\]
where $\xi>0$ is chosen sufficiently small so that
\begin{equation}
\begin{cases}
2k-1-\dfrac{2k(\gamma-\alpha)+\alpha}{\alpha-\frac{1}{2}}\,\xi > -1, \\[1.2em]
6k-\frac{4k}{q}-1-\xi\frac{(2k-1)(1-\alpha)+1}{\alpha-\frac{1}{2}}>-1.
\end{cases}
\end{equation}
Such $\xi$ exists provided $q$ is sufficiently large and \eqref{n2laphasmaal1} holds.

We next prove \eqref{w-uhigh12}. From \eqref{2uw4}, coupled with \eqref{weighted2_alpha<1}, \eqref{2drhouhigh}, and the definition of the effective velocity, we obtain
\begin{equation}
\begin{aligned}
&\sup_{0\leq t\leq T}\int_{0}^{R}\Big|(\rho^{\alpha-1+\frac{1}{2k}})_r\Big|^{2k}r\,dr + \int_{0}^{T}\int_{0}^{R}\Big|(\rho^{\alpha-1+\frac{\gamma-\alpha+1}{2k}})_r\Big|^{2k}r\,drdt \\
&\leq C\sup_{0\leq t\leq T}\int_{0}^{R}\rho u^{2k}r\,dr + C\sup_{0\leq t\leq T}\int_{0}^{R}\rho w^{2k}r\,dr \\
&\quad + C\int_{0}^{T}\int_{0}^{R}\Big|(\rho^{\alpha-1+\frac{\gamma-\alpha+1}{2k}})_r\Big|^{2k}r\,drdt\\
&\leq C(T) + C\int_{0}^{T}\int_{0}^{R}\rho^{\gamma-\alpha+1}u^{2k}r drdt +C\int_{0}^{T}\int_{0}^{R}\rho r^{2k\left(1-\frac{2}{q}\right)+1}drdt \\
&\quad+C\int_{0}^{T}\int^{R}_{0}\rho r^{2k+1}drdt\\
&\leq C(T) + C\int_{0}^{T}\int_{0}^{R}\rho^{\gamma-2\alpha+1}r^{2}\rho^{\alpha}\frac{u^{2k}}{r}drdt+C\sup_{0\leq t\leq T}\left\|\rho^{\alpha-\frac{1}{2}}r^{\xi}\right\|^{\frac{1}{\alpha-\frac{1}{2}}}_{L^\infty}\int_{0}^{T}\int_{0}^{R}r^{2k\left(1-\frac{2}{q}\right)+1-\frac{\xi}{\alpha-\frac{1}{2}}}drdt \\
&\quad+C\sup_{0\leq t\leq T}\left\|\rho^{\alpha-\frac{1}{2}}r^{\xi}\right\|^{\frac{1}{\alpha-\frac{1}{2}}}_{L^\infty(0,R)}\int_{0}^{T}\int_{0}^{R}r^{2k+1-\frac{\xi}{\alpha-\frac{1}{2}}}drdt\\
&\leq C(T) + C\sup_{0\leq t\leq T}\left\| \rho^{\alpha-\frac{1}{2}}r^{\xi}\right\|_{L^{\infty(0,R)}(0,R)}^{\frac{\gamma-2\alpha+1}{\alpha-\frac{1}{2}}}
\int_{0}^{T}\int_{0}^{R}\rho^{\alpha}\frac{u^{2k}}{r}drdt \\
&\leq C(T),
\end{aligned}
\end{equation}
where we used the fact that
\[
\gamma-2\alpha+1 > 0,
\]
and $\xi$ is taken sufficiently small so that
\begin{equation}
\begin{cases}
\left(\dfrac{\gamma-2\alpha+1}{\alpha-\frac{1}{2}}\right)\xi < 2, \\[1.2em]
2k\left(1-\frac{2}{q}\right)+1-\frac{\xi}{\alpha-\frac{1}{2}}>-1,\\
2k+1-\frac{\xi}{\alpha-\frac{1}{2}}>-1.
\end{cases}
\end{equation}
The existence of such $\xi$ is guaranteed by taking $q$ sufficiently large and by \eqref{n2laphasmaal1}.
This finishes the proof of Proposition \ref{rhouw2dhigh}.
\end{proof}
Let
\begin{equation}
R_T := \sup_{0 \le t \le T} \| \rho(t) \|_{L^\infty(\Omega)} + 1, \qquad
V_T := \sup_{0 \le t \le T} \| \rho^{-1}(t) \|_{L^\infty(\Omega)} + 1.
\end{equation}
\begin{prop}
 Under the assumption of \eqref{n2laphasmaal1}, there exists a constant $C(T)>0$ such that
    \begin{equation}
        R_T\leq C(T).
    \end{equation}
\end{prop}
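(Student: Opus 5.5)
We will bound $R_T$ by using the gradient estimate \eqref{w-uhigh12} of Lemma \ref{rhouw2dhigh} and splitting the ball into a region away from the center and a small central region. Set $\beta:=\alpha-1+\frac{1}{2k}$. By the choice of $k$ in Lemma \ref{lem: def k} we have $\alpha<1-\frac{1}{2k}$, so $\beta<0$. Hence \eqref{w-uhigh12} gives $\sup_t\int_0^R|(\rho^{\beta})_r|^{2k}r\,dr\le C(T)$, i.e. control of $\nabla\rho^{\beta}$ in $L^{2k}$ with $2k>2$.

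Since $\beta<0$, an upper bound for $\rho$ is equivalent to a positive lower bound for $\rho^{\beta}$. We therefore estimate $\rho^{\beta}$ from below by a one-dimensional argument along rays. For $0\le r_1<r_2\le R$, H\"older's inequality gives
\[
|\rho^{\beta}(r_2)-\rho^{\beta}(r_1)|\le \Big(\int_{r_1}^{r_2}|(\rho^\beta)_r|^{2k}r\,dr\Big)^{\frac{1}{2k}}\Big(\int_{r_1}^{r_2}r^{-\frac{1}{2k-1}}dr\Big)^{\frac{2k-1}{2k}}.
\]
Because $\frac{1}{2k-1}<1$, the last integral is finite even with $r_1=0$. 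This is exactly the two-dimensional Morrey embedding $W^{1,2k}(\Omega)\hookrightarrow C^{0,1-1/k}$ for radial functions, and it shows $\rho^{\beta}$ is H\"older continuous on $[0,R]$ with a constant $C(T)$, including at the origin.

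To anchor the oscillation bound, we use mass conservation and the energy bound \eqref{basic2_alpha<1}: $\int_\Omega\rho\,d\mathbf{x}$ is fixed. So for each $t$ there is a point $r_0(t)$ with $\rho(r_0,t)\le C$, i.e. $\rho^{\beta}(r_0)\ge c>0$. The oscillation estimate alone does not rule out $\rho^\beta$ dropping from $c$ to near $0$. We instead integrate $|\rho^\beta-\rho^\beta(r_0)|\le C(T)$ together with $\rho^\beta\ge0$, which only yields $\rho^\beta\ge -C$ and is useless.

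The anchor step is the main obstacle, and we resolve it by passing to a positive power. Write $\rho^{-|\beta|}$ and apply the same estimate to $f=\rho^{-|\beta|}$, where we have $\|\nabla f\|_{L^{2k}}\le C$ and the oscillation bound. The goal is to bound $1/f=\rho^{|\beta|}$. Wherever $f$ is small, $\rho$ is large. Combining $|\nabla f|\in L^{2k}$ with $\int\rho^{\gamma}\,d\mathbf{x}\le C$ (energy), that is $\int f^{-\gamma/|\beta|}\le C$, we argue as follows. If $f(r_*)=\delta$, then $f\le\delta+C|r-r_*|^{\theta}$ on a ball of radius $\sim\delta^{1/\theta}$ around $r_*$. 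Here $\theta=1-\frac1k$, and for $r_*$ near the origin this is a full disk, since the H\"older estimate is two-dimensional. Hence
\[
\int f^{-\gamma/|\beta|}\gtrsim \delta^{-\gamma/|\beta|}\,\delta^{2/\theta}.
\]
This contradicts the energy bound as $\delta\to0$ provided $\gamma/|\beta|>2/\theta$. If this exponent condition fails, we fall back on the bootstrap $R_T^{\beta'}\le C(T)R_T^{\sigma}+C(T)$ sketched in the introduction. In that version we use the dissipative time-integrated bound in \eqref{w-uhigh12} with the power $\alpha-1+\frac{\gamma-\alpha+1}{2k}$, and the weighted bound \eqref{weighted2_alpha<1} to absorb growth near the center, choosing the parameters so that $\beta'>\sigma$. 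Either way we conclude $R_T\le C(T)$.
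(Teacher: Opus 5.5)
Your main argument is correct, and it takes a genuinely different route from the paper's. The paper never uses the sign of $\beta=\alpha-1+\frac{1}{2k}$. It applies $W^{1,1}(0,R)\hookrightarrow L^\infty(0,R)$ directly to the positive power $\rho^\alpha$, writes $|\partial_r\rho^\alpha|=\frac{\alpha}{|\beta|}\rho^{1-\frac{1}{2k}}|\partial_r\rho^\beta|$, and by H\"older obtains
\[
\int_0^R|\partial_r\rho^\alpha|\,dr\le C\Big(\int_0^R|\partial_r\rho^\beta|^{2k}r\,dr\Big)^{\frac{1}{2k}}\Big(\int_0^R\rho\,r^{-\frac{1}{2k-1}}\,dr\Big)^{\frac{2k-1}{2k}}.
\]
The first factor is bounded by \eqref{w-uhigh12}. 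The last factor, and also $\int_0^R\rho^\alpha\,dr$, are controlled by $\sup_t\|\rho\|_{L^q(\Omega)}\le C(T,q)$ for every $q<\infty$. The paper gets this from $\rho^{\alpha-\frac12}\in H^1(\Omega)$ (BD entropy) and the two-dimensional Sobolev embedding.

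That proof is a direct estimate with no case distinction, but it relies on the strong 2D integrability of $\rho$. Your proof instead exploits $\beta<0$. The same weighted $L^{2k}$ bound gives H\"older continuity of $\rho^\beta$ with exponent $\theta=1-\frac1k$ up to the origin. A sublevel-set count then turns any bound $\int_\Omega\rho^q\,d\mathbf{x}\le C$ into a positive lower bound on $\min\rho^\beta$, provided $q/|\beta|>2/\theta$. So you trade the $L^q$-for-all-$q$ input for a contradiction-type argument that needs only low integrability.

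The one thing to repair is the ending. You never check the exponent condition, and the fallback you offer if it fails is not an argument. The bootstrap $R_T^{\beta'}\le C(T)R_T^{\sigma}+C(T)$ is the paper's three-dimensional scheme, and you specify none of its exponents. In fact the condition always holds. The inequality $\gamma/|\beta|>2/\theta$ is equivalent to $\gamma(k-1)>2k(1-\alpha)-1$. Since $\alpha>\frac12$ and $\gamma>1$, you have $2k(1-\alpha)-1<k-1<\gamma(k-1)$. The same computation with $q=1$ shows that mass conservation alone would already suffice. Replace the fallback by this line.

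Two minor wording points:
\begin{itemize}
\item $f=\rho^{-|\beta|}$ is just $\rho^\beta$ again, so there is no ``passing to a positive power''.
\item Your H\"older estimate is one-dimensional in $r$, not two-dimensional. This does not matter: the set $\{\mathbf{x}\in\Omega:\ r_*-\eta\le|\mathbf{x}|\le r_*+\eta\}$ is a disk or an annulus of radial width at least $\eta$ (for $\eta\le R$). Its area is therefore at least $\pi\eta^2$ for every $r_*\in[0,R]$, which is all the count needs.
\end{itemize}
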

\begin{proof}

By the Sobolev embedding theorem in one dimension, we have
\begin{equation}
    \begin{aligned}
\left\lVert \rho^{\alpha} \right\rVert _{L^{\infty}(0,R)}&\leq \int^{R}_{0}\rho^{\alpha}dr+\int^{R}_{0}|\partial_{r}\rho^{\alpha}|dr:=G_{1}+G_{2}.
\end{aligned}
\end{equation}
Using  H\"older's inequality yields
\begin{equation}\begin{aligned}\label{rhobeta12}
G_{1}&= \int^{R}_{0}\rho^{\alpha}dr=\int^{R}_{0}\rho^{\alpha}r^{\frac{1}{3}}r^{- \frac{1}{3}}dr\\
&\leq\left( \int^{R}_{0}\rho^{3\alpha}rdr \right)^{\frac{1}{3}}\left( \int^{R}_{0}r^{- \frac{1}{2}}dr \right)^{\frac{2}{3}}\\
&\leq C(T).
\end{aligned}\end{equation}
H\"older's inequality and \eqref{w-uhigh12} gives that
\begin{equation}\begin{aligned}\label{rhobeta213}
G_{2}&= \int^{R}_{0} |C\rho^{\alpha-1}\rho_{r}|dr\leq \alpha \int^{R}_{0}|\rho^{\alpha-1}\rho^{\frac{1}{2k}-1}\rho_{r}\rho^{1- \frac{1}{2k}}| dr\\
&\leq C(T)\Big(\int_{0}^{R}|\partial_r\rho^{\alpha-1+\frac{1}{2k}}|^{2k}rdr\Big)^{\frac{1}{2k}}\Big(\int_{0}^{R}\rho r^{-\frac{1}{2k-1}}dr\Big)^{\frac{2k-1}{2k}}\\
&\leq C(T)\Big(\int_{0}^{R}\rho^{\frac{2k}{k-1}}rdr\Big)^{\frac{k-1}{2k}}\Big(\int_{0}^{R}r^{-\frac{2k^2-k+1}{2k^2+k-1}}dr\Big)^{\frac{k+1}{2k}}.
\end{aligned}\end{equation} 
We deduce from \eqref{rhobeta12} and \eqref{rhobeta213} that
\begin{equation}
    R_{T}^{\alpha}\leq C(T).
\end{equation}
\end{proof}
\begin{prop}\label{2dlowerdensity}
Under the assumption of \eqref{n2laphasmaal1}, there exists a constant $C(T)>0$ such that     
\begin{equation}
    V_T\leq C(T).
\end{equation}
\end{prop}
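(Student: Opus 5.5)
To obtain the lower bound, I will work with the specific volume $v=\rho^{-1}$ and treat it exactly as $\rho^\alpha$ was treated in the upper-bound proposition. The idea is to bound a negative power of $\rho$ in $L^\infty(0,R)$ by the one-dimensional Sobolev inequality, using the higher integrability estimates of Lemma \ref{rhouw2dhigh} together with $R_T\le C(T)$.

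Fix $\beta>0$, to be chosen. The one-dimensional embedding gives
\[
\|\rho^{-\beta}\|_{L^\infty(0,R)}\le C\int_0^R\rho^{-\beta}dr+C\int_0^R|\partial_r\rho^{-\beta}|dr=:H_1+H_2 .
\]
The zero-order term $H_1$ is controlled by interpolation, writing $\rho^{-\beta}\le \|\rho^{-\beta}\|_{L^\infty}^{1-\theta}\rho^{-\beta\theta}$ for small $\theta>0$. Alternatively, one can use that $\rho^{\alpha-\frac12}\in H^1(\Omega)$ is bounded below in an averaged sense through the mass $\int_0^R\rho r\,dr=1$. Crude Young absorption is enough for this term.

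The main term is $H_2$. Since $\partial_r\rho^{-\beta}=-\frac{\beta}{\alpha-1+\frac1{2k}}\rho^{-\beta-\alpha+1-\frac1{2k}}\partial_r\rho^{\alpha-1+\frac1{2k}}$, H\"older's inequality with weight $r$ and \eqref{w-uhigh12} give
\[
H_2\le C\Big(\int_0^R|\partial_r\rho^{\alpha-1+\frac1{2k}}|^{2k}r\,dr\Big)^{\frac1{2k}}\Big(\int_0^R\rho^{-(\beta+\alpha-1+\frac1{2k})\frac{2k}{2k-1}}r^{-\frac1{2k-1}}dr\Big)^{\frac{2k-1}{2k}}.
\]
The weight $r^{-1/(2k-1)}$ is integrable because $k>1$. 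Pulling out $\|\rho^{-1}\|_{L^\infty}$ then yields
\[
H_2\le C(T)\,V_T^{\beta+\alpha-1+\frac1{2k}} .
\]
Since $\alpha<1-\frac1{2k}$ by \eqref{dense2d}, the exponent $\alpha-1+\frac1{2k}$ is negative, so this power of $V_T$ is strictly smaller than $\beta$. Taking the supremum over $t$, we get
\[
V_T^{\beta}\le C(T)V_T^{\sigma}+C(T),\qquad \sigma=\beta-\Big(1-\alpha-\tfrac1{2k}\Big)<\beta,
\]
and Young's inequality closes the bound, giving $V_T\le C(T)$.

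The main obstacle is the sign of $\alpha-1+\frac1{2k}$. Estimate \eqref{w-uhigh12} controls a negative power of $\rho$, and this is exactly what the choice $\alpha<1-\frac1{2k}$ in Lemma \ref{lem: def k} provides. I also need to check that $H_1$ and the factor from $\rho^{-(\cdots)}$ do not produce powers of $V_T$ equal to or larger than $\beta$. If the interpolation in $H_1$ is awkward, the fallback is to choose $\beta=1-\alpha-\frac1{2k}$ directly. Then $\partial_r\rho^{-\beta}$ equals, up to a constant, $\partial_r\rho^{\alpha-1+\frac1{2k}}$, and $H_2\le C(T)$ follows from H\"older's inequality alone. $H_1$ is bounded by $\int_0^R\rho^{-\beta}dr\le \varepsilon\|\rho^{-\beta}\|_{L^\infty}+C$ only if some lower integrability of $\rho$ is known; otherwise I would bound it by $R\,\|\rho^{-\beta}\|_{L^\infty(0,\delta)}$-type splitting combined with $\rho(r_0)\ge c$ at some point $r_0$ guaranteed by the mass constraint. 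Using $\|f\|_{L^\infty}\le |f(r_0)|+\int_0^R|f_r|\,dr$ at that point avoids $H_1$ entirely, and this is the version I expect to work cleanly.
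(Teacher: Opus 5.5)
Your final version, the one anchored at a point $r_0$, is correct. It takes a genuinely different and shorter route than the paper.

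Drop the first treatment of $H_1$, though. The interpolation step needs a bound on $\int_0^R\rho^{-\beta\theta}\,dr$ that none of the lower-order estimates provide; the conserved quantity $\int_0^1 v\,dx$ is, in Eulerian variables, just $\int_0^R r\,dr$. The crude bound $H_1\le R\,V_T^{\beta}$ carries the same power as the left-hand side, so Young's inequality cannot absorb it.

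The anchor argument is the one to commit to. Since $\int_0^R\rho(r,t)\,r\,dr=1$, at each time there is $r_0(t)$ with $\rho(r_0,t)\ge 2/R^2$. Take $\beta=1-\alpha-\frac1{2k}$, which is positive by \eqref{dense2d}. Then $\rho^{-\beta}=\rho^{\alpha-1+\frac1{2k}}$, and
\[
\rho^{-\beta}(r,t)\le \Big(\frac{R^2}{2}\Big)^{\beta}+\Big(\int_0^R\big|\partial_r\rho^{\alpha-1+\frac1{2k}}\big|^{2k}r\,dr\Big)^{\frac1{2k}}\Big(\int_0^R r^{-\frac1{2k-1}}\,dr\Big)^{\frac{2k-1}{2k}}\le C(T)
\]
by \eqref{w-uhigh12} and $k>1$. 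Hence $\rho\ge C(T)^{-1/\beta}$.

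The paper instead stays in Lagrangian coordinates. It applies $W^{1,1}(0,1)\hookrightarrow L^\infty$ to $v=\rho^{-1}$ and handles the zero-order term by $\int_0^1v\,dx=\int_0^1v_0\,dx$. It writes $|v_x|=\alpha^{-1}|(\rho^\alpha)_x|\,v^{1+\alpha}$ and uses a three-factor H\"older inequality, one factor being $\big(\int_0^1|(\rho^\alpha)_x|^{2k}r^{2k}dx\big)^{1/(2k)}$, the same quantity as in \eqref{w-uhigh12}. This gives $V_T\le C+C(T)V_T^{\alpha+\frac1{2k}}$, which closes by Young because $\alpha+\frac1{2k}<1$.

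Both proofs use exactly \eqref{w-uhigh12} and the condition $\alpha<1-\frac1{2k}$. In the paper that condition appears as sublinearity of the exponent; in yours it is the sign of $\alpha-1+\frac1{2k}$. Your choice of $\beta$ makes $\rho^{-\beta}$ precisely the function whose gradient is controlled. As a result you need no absorption step and get an explicit lower bound. The only extra ingredient is the elementary pointwise anchor from the mass constraint, replacing the paper's volume identity in the zero-order term.

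The same anchor argument also transfers to the 3D case once $R_T\le C(T)$ is known, since \eqref{w-uhigh} carries $R_T^{2k\sigma}$. There the weight $r^2$ leads to $\int_0^R r^{-\frac{2}{2k-1}}dr$, which is finite because $k>3$.
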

\begin{proof}
Let \(v(x,\tau) = \frac{1}{\rho(x,\tau)}\). From the continuity equation $\eqref{eq:NSP-lagrangian-rhoalpha}_1$ we deduce \(v_\tau = (r u)_x\). Integrating this relation over \([0,1] \times [0,\tau]\)  leads to
\begin{equation}\label{v01}
\int_0^1 v(x,\tau) \, dx = \int_0^1 v_0(x) \, dx \le C.
\end{equation}

Now choose a sufficiently small \(\xi\) satisfying \(0 < \xi < \frac{k-1}{2k}\). We employ estimates \eqref{w-uhigh12} and \eqref{v01}, together with the one-dimensional Sobolev embedding inequality, to bound \(v(x,\tau)\) as follows:
\[
\begin{aligned}
v(x,\tau) &\le \int_0^1 v \, dx + \int_0^1 | v_x| \, dx \\
&\le C + C \int_0^1 | (\rho^\alpha)_x| \, v^{\alpha+1} \, dx\\
&\le C+\left( \int_0^1 | (\rho^\alpha)_x|^{2k} r^{2k} \, dx \right)^{\frac{1}{2k}}
C\left( \int_0^1 v \, r^{-\frac{2}{2\xi+1}} \, dx \right)^{\frac{1+2\xi}{2}} \\
&\quad \times \left( \int_0^1 v^{(\alpha+\frac12-\xi)\frac{2k}{k-1-2k\xi}} \, dx \right)^{\frac{k-1-2k\xi}{2k}}.
\end{aligned}
\]
We obtain
\begin{equation}\label{vlow}
v(x,\tau) \le C + C(T) \, V_T^{\alpha + \frac{1}{2k}}.
\end{equation}
Finally, the desired estimate \(\eqref{vlow}\) is obtained by taking the supremum over \((x,\tau) \in [0,1] \times [0,T]\) and applying Young's inequality together with \eqref{dense2d}, completing the proof of Proposition \ref{2dlowerdensity}.

\end{proof}

\subsection{A priori estimates for 
$N=3$: lower-order estimates}
\indent  Compared with the two-dimensional case, the structure of the Poisson term changes in three dimensions, and the weight in the $r$-weighted estimate of the density also differs. This leads to more difficulties in the three-dimensional problem, and the two-dimensional methods cannot be directly extended.

We now derive the basic energy estimates. Due to the presence of the Poisson term, we consider two separate cases: $\kappa = -1$ and $\kappa = 1$. Accordingly, the admissible range of $\gamma$ depends on which case is under consideration.
\begin{lema}Under the assumption of \eqref{plasmas3dalphagamma}, consider either of the following cases:
\begin{itemize}
    \item $\kappa = -1$ (plasmas) with $\gamma > 1$;
    \item $\kappa = 1$ (gaseous stars) with $\gamma > \frac{4}{3}$.
\end{itemize}
Then there exists a positive constant $C(T)$ depending on $T$ such that 
   \begin{equation}\label{b100-twocase}
       \sup_{0\leq t\leq T}\int_{0}^{1} \left( u^{2}+\rho^{\gamma-1}  \right) dx + \int_{0}^{T} \int_{0}^{1} \left( \rho^{\alpha-1}\frac{u^{2}}{r^{2}} +\rho^{1+\alpha}(r^2u_x)^2 \right) dx dt \le C (T).
   \end{equation} 
\end{lema}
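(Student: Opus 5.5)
The plan is to run the classical energy method on $\eqref{eq:NSP-lagrangian-rhoalpha}_2$ with $N=3$, as in Lemma \ref{2d-u2}, with one change. I will not estimate the singular source $-\kappa\frac{x}{r^{2}}+\kappa\frac{r}{3}$ by Young's inequality. Instead I will recognise it as $-\Phi_r$ and turn it into the time derivative of the field energy $\int_\Omega|\nabla\Phi|^2\,d\mathbf{x}$. This removes the $\frac{x}{r^2}$ singularity at the origin altogether. What remains is the sign of the field energy, which is why the two cases $\kappa=\pm1$ must be treated separately.

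\textbf{Step 1 (dissipation).} Multiply $\eqref{eq:NSP-lagrangian-rhoalpha}_2$ by $u$, integrate over $[0,1]$, and integrate by parts using $u(0)=u(1)=0$ and $\eqref{eq:NSP-lagrangian-rhoalpha}_1$.
\begin{itemize}
\item The pressure term gives $\frac{d}{d\tau}\frac{1}{\gamma-1}\int_0^1\rho^{\gamma-1}dx$.
\item The viscous terms give a quadratic form in $\rho^{\frac{\alpha-1}{2}}\frac{u}{r}$ and $\rho^{\frac{1+\alpha}{2}}r^2u_x$. It comes from $\alpha\rho^{1+\alpha}\bigl((r^2u)_x\bigr)^2$ together with the term $-2ur(\rho^\alpha)_x u$, after integrating the latter by parts.
\end{itemize}
As in \eqref{ufix1} with $n=1$, I would check that this form is positive definite with a coercivity constant $c(\alpha)>0$. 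The discriminant condition requires $\alpha$ above a threshold of the type $\alpha_{3,-}(1^+)=\frac23$, and this is guaranteed by $\alpha>\frac56$ in \eqref{plasmas3dalphagamma}. This produces the dissipation term in \eqref{b100-twocase}.

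\textbf{Step 2 (Poisson term as field energy).} I would compute the Poisson contribution in Eulerian variables, where it equals $-\int_\Omega\rho\mathbf{u}\cdot\nabla\Phi\,d\mathbf{x}$. Since $\rho_t=-\mathrm{div}(\rho\mathbf{u})$, $\Delta\Phi_t=\kappa\rho_t$, and the boundary conditions $\rho\mathbf{u}\cdot n=\partial_n\Phi=0$ hold, we get
\[
-\int_\Omega\rho\mathbf{u}\cdot\nabla\Phi\,d\mathbf{x}=-\int_\Omega\rho_t\Phi\,d\mathbf{x}=\frac{1}{2\kappa}\frac{d}{dt}\int_\Omega|\nabla\Phi|^2d\mathbf{x}.
\]
The constant background $\bar\rho=1$ does not affect this computation. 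Hence
\[
\frac{d}{d\tau}\Bigl(\int_0^1\bigl(\tfrac12u^2+\tfrac{1}{\gamma-1}\rho^{\gamma-1}\bigr)dx-\frac{1}{2\kappa}\int_\Omega|\nabla\Phi|^2\Bigr)+c(\alpha)\int_0^1\Bigl(\rho^{\alpha-1}\frac{u^2}{r^2}+\rho^{1+\alpha}(r^2u_x)^2\Bigr)dx\le0 .
\]
Equivalently, I could verify this directly in Lagrangian form using $r^2\Phi_r=\kappa\bigl(x-\frac{r^3}{3}\bigr)$.

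\textbf{Step 3 (the two cases).}
\begin{itemize}
\item For $\kappa=-1$, the field energy enters with a plus sign. Integrating in time gives \eqref{b100-twocase} for any $\gamma>1$, since the initial field energy is finite by Lemma \ref{lem:poisson-neumann}.
\item For $\kappa=1$, I must bound $\frac12\|\nabla\Phi\|_{L^2}^2$ by the energy.
\end{itemize}
For the second case, testing $\Delta\Phi=\rho-1$ with $\Phi$ (which has mean zero) and using the embedding $H^1\hookrightarrow L^6$ for mean-zero functions gives
\[
\|\nabla\Phi\|_{L^2}^2\le C\|\rho-1\|_{L^{6/5}}^2\le C+C\|\rho\|_{L^{6/5}}^2 .
\]
Next I interpolate between $L^1$ and $L^\gamma$. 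The $L^1$ norm is controlled because mass is conserved: $\int\rho\,d\mathbf{x}=4\pi$. This gives
\[
\|\rho\|_{L^{6/5}}^2\le C\Bigl(\int_\Omega\rho^\gamma d\mathbf{x}\Bigr)^{\frac{1}{3(\gamma-1)}} .
\]
Here $\int_\Omega\rho^\gamma d\mathbf{x}=4\pi\int_0^1\rho^{\gamma-1}dx$. The exponent $\frac{1}{3(\gamma-1)}$ is strictly less than $1$ exactly when $\gamma>\frac43$. In that case Young's inequality absorbs the field energy into $\frac{1}{2(\gamma-1)}\int_0^1\rho^{\gamma-1}dx$ plus a constant. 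Integrating in time then yields \eqref{b100-twocase}.

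The main obstacle is this last absorption for gaseous stars. The identity in Step 2 needs careful bookkeeping of the Neumann and mean-zero conditions \eqref{uniqueness}. The interpolation exponent is sharp, and it is exactly where the restriction $\gamma>\frac43$ comes from.
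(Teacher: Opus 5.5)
Your proposal is correct and is essentially the paper's argument. In both, the singular source becomes an exact time derivative, which is harmless for $\kappa=-1$; for $\kappa=1$ the resulting gravitational energy is controlled by $\|\nabla\Phi\|_{L^2}^2\le C\|\rho-1\|_{L^{6/5}}^2$ and $L^1$--$L^\gamma$ interpolation, and this is exactly where $\gamma>\frac43$ enters. The only difference is bookkeeping: you absorb the whole source $-\kappa\frac{x}{r^2}+\kappa\frac{r}{3}=-\Phi_r$ into $\frac{d}{dt}\|\nabla\Phi\|_{L^2}^2$, while the paper writes only the $\frac{x}{r^2}$ part as $\kappa\frac{d}{d\tau}\int_0^1\frac{x}{r}\,dx$, bounds the $\frac{r}{3}$ part by Young and Gronwall, and rewrites $\int_0^1\frac{x}{r}\,dx$ as $\frac12\int_0^R r^2\Phi_r^2\,dr$ plus bounded terms. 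Your coefficient should be $\frac{1}{8\pi\kappa}$ rather than $\frac{1}{2\kappa}$, since $\int_0^1\cdot\,dx$ corresponds to $\frac{1}{4\pi}\int_\Omega\rho\,\cdot\,d\mathbf{x}$, but this is harmless, and your version even yields a $T$-independent bound.
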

\begin{proof}
We multiply both sides of \eqref{eq:NSP-lagrangian-rhoalpha} by $u$ and then integrate by parts over $(0,1)\times(0,\tau)$. This yields
   \begin{equation}
\begin{aligned}\label{uinte1}
\frac{d}{d\tau} \left( \frac{1}{2} \int^{1}_{0}u^{2}dx + \frac{1}{\gamma-1} \int^{1}_{0} \rho^{\gamma-1}dx \right)
&+ \int^{1}_{0}\Big( (4\alpha-2)\rho^{\alpha-1}\frac{u^{2}}{r^{2}} + \alpha\rho^{1+\alpha}(r^{2}u_{x})^{2} \Big) dx \\
&- (4-4\alpha) \int^{1}_{0}\rho^{\alpha} u u_x r \, dx \\
&= -\kappa \int^{1}_{0} \frac{x}{r^{2}}u \, dx + \frac{\kappa}{3} \int^{1}_{0} u r \, dx \\
&\triangleq H_{1}+H_{2}.
\end{aligned}
\end{equation}
Note that
\begin{equation}
        \begin{aligned}
&(4\alpha-2)\int_{0}^{1}\rho^{\alpha-1}\frac{u^{2}}{r^{2}}dx+ \alpha\int_{0}^{1}\rho^{1+\alpha}(r^{2}u_{x})^{2}dx-(4-4\alpha) \int_{0}^{1}\rho^{\alpha}uu_xrdx\\\geq& (3\alpha-2)\int_{0}^{1}\rho^{\alpha-1}\frac{u^{2}}{r^{2}}dx+(2\alpha-1)\int_{0}^{1}\rho^{1+\alpha}(r^2u_x)^2dx\\
\geq&(3\alpha-2)\int_{0}^{1}\Big(\rho^{\alpha-1}\frac{u^{2}}{r^{2}}+\rho^{1+\alpha}(r^2u_x)^2\Big)dx,
\end{aligned}
    \end{equation}
where we have used the fact that $\alpha >\frac{5}{6}$.
Noting that $r_\tau = u$, we deduce that
    \begin{equation}
        \begin{aligned}\label{xtau-11}
H_{1}&= -\kappa \int^{1}_{0} \frac{x}{r^{2}}udx=-\kappa \int^{1}_{0} \frac{x}{r^{2}}r_{\tau}dx= \kappa \frac{d}{d\tau}\int^{1}_{0} \frac{x}{r}dx.
\end{aligned}
    \end{equation}
An application of Young's inequality gives    
     \begin{equation}
        \begin{aligned}\label{xtau000}
        H_2\leq C\int_{0}^{1}u^2dx+C\int_{0}^{1}r^2dx.
        \end{aligned}
    \end{equation}
\noindent\textbf{Case 1: $\kappa = -1$ (plasmas)}

In the case $\kappa = -1$, combining \eqref{xtau-11} and \eqref{xtau000} yields
    \begin{equation}
        \begin{aligned}
\frac{d}{d\tau} \left( \frac{1}{2} \int^{1}_{0}u^{2}dx+ \frac{1}{\gamma-1} \int^{1}_{0} \rho^{\gamma-1}dx + \int^{1}_{0} \frac{x}{r}dx\right)&+(3\alpha-2)\int^{1}_{0}\left(\rho^{\alpha-1} \frac{u^{2}}{r^{2}}+ \rho^{1+\alpha}(r^{2}u_{x})^{2} \right)dx\\
&\leq C\int_{0}^{1}u^2dx+C\int^{1}_{0}r^{{2}}dx.\\
\end{aligned}
    \end{equation}
Applying Gronwall's inequality to the above inequality yields \eqref{b100-twocase}.

\noindent\textbf{Case 2: $\kappa = 1$ (gaseous stars)}

When $\kappa =1$, the problem becomes more involved. Combining \eqref{uinte1}--\eqref{xtau000} yields
    \begin{equation}
        \begin{aligned}
&\frac{d}{d\tau} \left( \frac{1}{2} \int^{1}_{0}u^{2}dx+ \frac{1}{\gamma-1} \int^{1}_{0} \rho^{\gamma-1}dx \right)+(3\alpha-2)\int^{1}_{0}\left( \rho^{\alpha-1}\frac{u^{2}}{r^{2}}+ \rho^{1+\alpha}(r^{2}u_{x})^{2} \right)dx\\
\le & \frac{d}{d\tau}\int^{1}_{0} \frac{x}{r}dx+C\int^{1}_{0}u^2dx+C\int^{1}_{0}r^2dx.
\end{aligned}
    \end{equation}   
The following terms require careful treatment. Using the fact that \begin{equation}
    \frac{1}{r^2}(r^2\Phi_r)_r=\rho-1\implies x(r)=\int^r_0
\left[(s^2\Phi_s)_s+s^2\right]ds=r^2\Phi_r+ \frac{r^3}{3},\end{equation} we obtain
\begin{align*}
&\int^{1}_{0} \frac{x}{r}dx= \int^{R}_{0} \frac{x}{r}\rho r^{2}dr=\int^{R}_{0}\rho rxdr\\
&= \int^{R}_{0}r^{3}\left(  1+ \frac{2\Phi_{r}}{r}+ \Phi_{rr} \right) \left( \Phi_{r}+ \frac{r}{3} \right)dr\\
&= \int^{R}_{0}\left( \Phi_{r}r^{3}+ \frac{r^{4}}{3} + \Phi_{rr}\Phi_{r}r^{3}+ \frac{\Phi_{rr}r^{4}}{3}+2\left| \Phi_{r} \right| ^{2}r^{2}+ \frac{2\Phi_{r}r^{3}}{3}\right)dr\\
&= \frac{1}{2}\int^{R}_{0} (\Phi_{r}^{2})_{r}r^{3}dr+ 2\int^{R}_{0} r^{2}\left| \Phi_{r} \right| ^{2}dr+ \frac{1}{3} \int^{R}_{0} \Phi_{rr}r^{4}dr+ \frac{5}{3}\int^{R}_{0} r^{3}\Phi_{r}dr+ \frac{1}{3}\int^{R}_{0}r^{4}dr\\
&=\frac{1}{2}\int^{R}_{0} r^{2}\left| \Phi_{r} \right| ^{2}dr+\frac{1}{3}\int^{R}_{0} r^{3} \Phi_{r} dr+\frac{1}{15}R^5\\
&\leq C\int^{R}_{0} r^{2}\left| \Phi_{r} \right| ^{2}dr+C\\
&\leq C\|\nabla\Phi\|^2_{L^2(\Omega)}+C.
\end{align*}
Next, we need to carefully deal with the term $\|\nabla\Phi\|_{L^2(\Omega)}^2$.
We have that
\begin{equation}
        \begin{aligned}
\int^{R}_{0}|\Phi_{r}|^{2}r^{2}dr&= \frac{1}{4\pi}\int_{\Omega}|\nabla \Phi|^{2}d\mathbf{x}= {\frac{1}{4\pi}}\left\lVert \nabla \Phi \right\rVert ^{2}_{L^{2}(\Omega)}.
\end{aligned}
    \end{equation}
We multiply \eqref{eq:NSP-original} by $\Phi$ and integrate by parts to get
    \begin{equation}
        \begin{aligned}
\int_{\Omega}\Delta \Phi \cdot \Phi d\mathbf{x}=\int_{\Omega}(\rho-1)\Phi d\mathbf{x},
\end{aligned}
    \end{equation}
thus we have
    \begin{equation}\label{est of nabla Phi}
        \begin{aligned}
\left\lVert \nabla \Phi\right\rVert ^{2}_{L^{2}}&= -\int_{\Omega}(\rho-1)\Phi d\mathbf{x}\leq C\left\lVert  \rho-1\right\rVert _{L^{\frac{6}{5}}(\Omega)}\left\lVert \Phi \right\rVert _{L^{6}(\Omega)}\\
&\leq C\left\lVert \rho- 1\right\rVert _{L^{\frac{6}{5}}(\Omega)}\left\lVert \nabla \Phi \right\rVert _{L^{2}(\Omega)}.
\end{aligned}
    \end{equation}
It then follows that
\begin{equation}\label{j11}
        \begin{aligned}
\int^{1}_{0} \frac{x}{r}dx\leq& C  \left\lVert \nabla \Phi\right\rVert ^{2}_{L^{2}}+C\\
\leq&C\left\lVert \rho- 1\right\rVert^2 _{L^{\frac{6}{5}}(\Omega)}+C,\\
\leq&C(1+\|\rho\|^{\frac{5\gamma-6}{3(\gamma-1)}}_{L^1(\Omega)}\|\rho\|^{\frac{\gamma}{3(\gamma-1)}}_{L^{\gamma}(\Omega)})\\
\leq&\frac{1}{2(\gamma-1)}\|\rho\|^{\gamma}_{L^{\gamma}(\Omega)}+C,
        \end{aligned}
    \end{equation}
where we used \eqref{plasmas3dalphagamma}.
Combining \eqref{uinte1}, \eqref{j11}, \eqref{xtau000} and integrating over $(0,\tau)$ yields
 \begin{equation}
        \begin{aligned}\label{qq1}
&\frac{1}{2} \int^{1}_{0}u^{2}dx+ \frac{1}{2(\gamma-1)} \int^{1}_{0} \rho^{\gamma-1}dx +(3\alpha-2)\int^{\tau}_{0}\int^{1}_{0}\left( \rho^{1+\alpha}\frac{u^{2}}{r^{2}}+ \rho^{1+\alpha}(r^{2}u_{x})^{2} \right)dxds\\
\leq & C\int^{\tau}_{0}\int^{1}_{0} u^2dxds+ C\int^{\tau}_{0}\int^{1}_{0}r^2 dxds+\frac{1}{2} \int^{1}_{0}u_{0}^{2}dx+ \frac{1}{2(\gamma-1)}\int^{1}_{0}\rho_{0}^{\gamma-1}dx.\\
\end{aligned}
    \end{equation}
    Finally, we apply Gronwall's inequality to \eqref{qq1} and obtain \eqref{b100-twocase}.
\end{proof}
Similarly to the 2D case, we obtain a relation between $x$ and $r$.
\begin{lema}   
Under the assumption of \eqref{plasmas3dalphagamma}, there exists a constant $C(T)>0$ such that
\begin{equation}\label{x1301}
x \leq C(T)\, r(x,\tau)^{\frac{3(\gamma-1)}{\gamma}}, \quad (x,\tau) \in [0,1] \times [0,T].
\end{equation}
\end{lema}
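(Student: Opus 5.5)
The plan is to repeat the two-dimensional argument behind \eqref{xrRelation1_alpha<1}, changing only the volume element. In three dimensions the Lagrangian coordinate is
\[
x(r,\tau)=\int_0^{r(x,\tau)}\rho(s,\tau)\,s^{2}\,ds ,
\]
so the measure $s^2\,ds$ now plays the role that $s\,ds$ played before.

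First, I will apply H\"older's inequality with exponents $\gamma$ and $\frac{\gamma}{\gamma-1}$ against this measure:
\[
x\le \Big(\int_0^{r}\rho^{\gamma}s^{2}\,ds\Big)^{\frac1\gamma}\Big(\int_0^{r}s^{2}\,ds\Big)^{\frac{\gamma-1}{\gamma}}
= \Big(\int_0^{r}\rho^{\gamma}s^{2}\,ds\Big)^{\frac1\gamma}\Big(\frac{r^{3}}{3}\Big)^{\frac{\gamma-1}{\gamma}} .
\]
The second factor already gives the power $r^{\frac{3(\gamma-1)}{\gamma}}$. It therefore only remains to show that $\int_0^R\rho^\gamma s^2\,ds$ is bounded uniformly on $[0,T]$.

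Second, this bound comes from the basic energy estimate \eqref{b100-twocase}. Changing variables via $dx=\rho r^2\,dr$ turns $\int_0^1\rho^{\gamma-1}\,dx$ into $\int_0^R\rho^{\gamma}r^2\,dr$, which equals $\frac{1}{4\pi}\|\rho\|_{L^\gamma(\Omega)}^\gamma$. Estimate \eqref{b100-twocase} bounds this by $C(T)$ uniformly in $t\in[0,T]$, in both the plasma case and the gaseous-star case under \eqref{plasmas3dalphagamma}. Hence $x\le C(T)\,r^{\frac{3(\gamma-1)}{\gamma}}$ for all $(x,\tau)\in[0,1]\times[0,T]$.

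I expect no real obstacle here. The only point that needs care is the case $\kappa=1$: the basic energy bound there required absorbing the Poisson term through \eqref{j11}, which uses $\gamma>\frac43$. That condition is guaranteed by \eqref{plasmas3dalphagamma}, so the bound $\sup_{t}\|\rho\|_{L^\gamma}\le C(T)$ is available and the argument closes.
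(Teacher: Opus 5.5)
Your proposal is correct and follows the paper's proof: Hölder's inequality with exponents $\gamma$ and $\frac{\gamma}{\gamma-1}$ against the measure $s^2\,ds$, combined with the uniform bound on $\int_0^R\rho^\gamma r^2\,dr=\int_0^1\rho^{\gamma-1}\,dx$ from \eqref{b100-twocase}. Your remark about the gaseous-star case is also accurate: there the energy bound relies on $\gamma>\frac43$, which \eqref{plasmas3dalphagamma} guarantees.
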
 
\begin{proof}
For any $x\in[0,1]$ and \eqref{b100-twocase}
 we have,
\begin{equation}
\begin{aligned}
x = \int_{0}^{r(x,\tau)} \rho s^2\, ds \leq& \left( \int_{0}^{r(x,\tau)} \rho^\gamma s^2\, ds \right)^{\frac{1}{\gamma}} \left( \int_{0}^{r(x,\tau)} s^2 \,ds \right)^{\frac{\gamma-1}{\gamma}}\\
\leq& C(T) r^{\frac{3(\gamma-1)}{\gamma}}.
\end{aligned}\end{equation}
\end{proof}
\begin{lema}For any sufficiently small $\xi > 0$, there exists a positive constant $C$ independent of $T$ such that
     \begin{equation}\label{w4rhoalpha<1}
         \sup_{\tau\in [0, T]}\left\lVert \rho^{\alpha-\frac{1}{2}} r^{\frac{1}{2}+\xi} \right\rVert _{L^\infty(0, R)} \le C\left(\Big\|\nabla\rho^{\alpha-\frac{1}{2}}\Big\|_{L^2(\Omega)}+1\right).
     \end{equation}
\end{lema}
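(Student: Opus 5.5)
We follow the proof of the two–dimensional weighted bound \eqref{weighted2_alpha<1}, with the extra weight $r^{1/2}$ absorbing the heavier three–dimensional measure $r^2\,dr$. Set $f=\rho^{\alpha-\frac12}r^{\frac12+\xi}$ and apply the one–dimensional Sobolev embedding on $(0,R)$:
\[
\|f\|_{L^\infty(0,R)}\le C\int_0^R \rho^{\alpha-\frac12}r^{\frac12+\xi}\,dr+C\int_0^R |\partial_r\rho^{\alpha-\frac12}|\,r^{\frac12+\xi}\,dr+C\int_0^R \rho^{\alpha-\frac12}r^{\xi-\frac12}\,dr.
\]
We bound the three terms separately.

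\textbf{Gradient term.} By Cauchy–Schwarz with the measure $r^2\,dr$,
\[
\int_0^R |\partial_r\rho^{\alpha-\frac12}|\,r^{\frac12+\xi}\,dr\le\Big(\int_0^R|\partial_r\rho^{\alpha-\frac12}|^2r^2dr\Big)^{\frac12}\Big(\int_0^R r^{2\xi-1}dr\Big)^{\frac12}\le C\|\nabla\rho^{\alpha-\frac12}\|_{L^2(\Omega)}.
\]
This works because $\xi>0$. The $r^{1/2}$ factor exactly balances the weight, just as $r^{\xi}$ did in two dimensions.

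\textbf{Mass term.} Since $0<\alpha-\frac12<1$, Hölder with exponent $\frac1{\alpha-1/2}$ gives
\[
\int_0^R \rho^{\alpha-\frac12}r^{\frac12+\xi}\,dr\le\Big(\int_0^R\rho r^2dr\Big)^{\alpha-\frac12}\Big(\int_0^R r^{p}dr\Big)^{\frac32-\alpha}.
\]
Here $p=\frac{\frac12+\xi-2(\alpha-\frac12)}{\frac32-\alpha}$, which is larger than $-1$. The first factor is the conserved mass $\int_0^R\rho r^2dr=1$, so this term is bounded by a constant.

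\textbf{Singular lower-order term.} This is the main obstacle, because the weight $r^{\xi-\frac12}$ is the most singular. First we use Hölder with a large exponent $s$ on $\rho^{\alpha-\frac12}$ against $r^2dr$:
\[
\int_0^R \rho^{\alpha-\frac12}r^{\xi-\frac12}dr\le\Big(\int_0^R\rho^{(\alpha-\frac12)s}r^2dr\Big)^{\frac1s}\Big(\int_0^R r^{(\xi-\frac12-\frac2s)\frac{s}{s-1}}dr\Big)^{\frac{s-1}{s}}.
\]
The $r$-integral converges when $(\frac12-\xi+\frac2s)\frac{s}{s-1}<1$, i.e. $s>5-2\xi$ up to harmless adjustments; we take $s=6$.

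Next, the three-dimensional Sobolev inequality on the ball gives
\[
\|\rho^{\alpha-\frac12}\|_{L^6(\Omega)}\le C\big(\|\nabla\rho^{\alpha-\frac12}\|_{L^2(\Omega)}+\|\rho^{\alpha-\frac12}\|_{L^2(\Omega)}\big).
\]
Since $2\alpha-1\le1$, we have $\|\rho^{\alpha-\frac12}\|_{L^2}^2=\int\rho^{2\alpha-1}\le C(1+\int\rho)\le C$ by mass conservation.

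\textbf{Conclusion.} Summing the three bounds gives \eqref{w4rhoalpha<1} with $C$ independent of $T$. The only inputs are the conserved mass and the constraint $\alpha\le1$. The delicate points are choosing $s\ge6$ compatible with the Sobolev exponent and keeping $\xi$ small; if $s=6$ were borderline, we would instead split the ball into $r<1$ and $r\ge1$.
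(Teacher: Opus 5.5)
Your proof is correct and follows the paper's argument: the one-dimensional Sobolev embedding, Cauchy--Schwarz against $r^2\,dr$ for the gradient term, and H\"older with exponent $6$ combined with the three-dimensional Sobolev bound on $\|\rho^{\alpha-\frac12}\|_{L^6(\Omega)}$ for the singular term $\int_0^R\rho^{\alpha-\frac12}r^{\xi-\frac12}\,dr$. The only deviation is the mass term, which the paper also routes through $L^6$, whereas you use H\"older against the conserved mass; that shortcut is equally valid.

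One small slip: the condition $(\frac12-\xi+\frac2s)\frac{s}{s-1}<1$ is equivalent to $s>\frac{6}{1+2\xi}$, not $s>5-2\xi$. So $s=6$ works precisely because $\xi>0$ (the $r$-exponent becomes $\frac{6\xi}{5}-1>-1$), and no fallback is needed.
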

\begin{proof}
By the one-dimensional Sobolev inequality, we have
   \begin{equation}
\begin{aligned}\label{w3rhoalpha<1}
\left\lVert \rho^{\alpha-\frac{1}{2}} r^{\frac{1}{2}+\xi} \right\rVert_{L^{\infty}(0,R)}
&\leq \int^{R}_{0} \rho^{\alpha-\frac{1}{2}} r^{\frac{1}{2}+\xi} \, dr
+ \int^{R}_{0} \Bigl| \bigl( \rho^{\alpha-\frac{1}{2}} r^{\frac{1}{2}+\xi} \bigr)_r \Bigr| \, dr \\
&= M_{1} + M_{2}.
\end{aligned}
\end{equation}
Then, combining H\"older's inequality with the Sobolev inequality yields    
    \begin{equation}
        \begin{aligned}\label{w2rhoalpha<1}
M_{1}&\leq \left( \int^{R}_{0}\left( \rho^{\alpha-\frac{1}{2}}r^{\frac{1}{3}} \right)^{6} dr\right)^{\frac{1}{6}}\left( \int^{R}_{0} r^{\left( \xi+ \frac{1}{6} \right)\cdot \frac{6}{5}} dr\right)^{\frac{5}{6}}\\
&\leq C\left( \int^{R}_{0}\rho^{6\alpha-3}r^{2}dr \right)^{\frac{1}{6}}\left( \int^{R}_{0}r^{\frac{6}{5}\xi+ \frac{1}{5}}dr \right)^{\frac{5}{6}}\\
&\leq C\left(\Big\|\nabla\rho^{\alpha-\frac{1}{2}}\Big\|_{L^2(\Omega)}+\Big\|\rho^{\alpha-\frac{1}{2}}\Big\|_{L^2(\Omega)}\right)\\
&\leq C\left(\Big\|\nabla\rho^{\alpha-\frac{1}{2}}\Big\|_{L^2(\Omega)}+1\right).
\end{aligned}
    \end{equation}
For $M_2$, a similar argument yields
    \begin{equation}
        \begin{aligned}\label{w1rhoalpha<1}
M_{2}&= \int^{R}_{0}\Big |\left( \rho^{\alpha-\frac{1}{2}}r^{\frac{1}{2}+\xi} \right)_r\Big|dr\\
&\leq C\left( \int^{R}_{0}|\partial_{r}\rho^{\alpha-\frac{1}{2}}|r^{2}dr \right)^{\frac{1}{2}}\left( \int^{R}_{0}r^{2\xi-1}dr \right)^{\frac{1}{2}}+C\left( \int^{R}_{0}\rho^{6\alpha-3}r^{2}dr \right)^{\frac{1}{6}}\left( \int^{R}_{0}r^{\frac{6}{5}\xi-1}dr \right)^{\frac{5}{6}}\\
&\leq C\left(\Big\|\nabla\rho^{\alpha-\frac{1}{2}}\Big\|_{L^2(\Omega)}+1\right).
\end{aligned}
    \end{equation}
Substituting \eqref{w2rhoalpha<1}-\eqref{w1rhoalpha<1} into \eqref{w3rhoalpha<1} gives \eqref{w4rhoalpha<1}, which completes the proof.
\end{proof}
\begin{lema}
Under the assumption of \eqref{plasmas3dalphagamma}, there exists a constant $C(T)>0$ such that, for both the plasma case ($\kappa = -1$) and the gaseous star case ($\kappa = 1$) with $\gamma > \frac{4}{3}$, the following estimate holds
    \begin{equation}\label{bdsfinall000}
        \sup_{0\leq \tau\leq T}\int^{1}_{0}w^{2}dx+\int^{T}_{0}\int^{1}_{0}\left( \left( \rho^{\frac{\gamma+\alpha}{2}} \right)_{x}r^{2} \right)^{2}dxds\leq C(T).
    \end{equation}
\end{lema}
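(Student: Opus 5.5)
We run the BD entropy argument in three dimensions, as in the proof of \eqref{2w2_alpha<1}. Set $N=3$ in \eqref{bd2_alpha<1}, multiply by $w$ and integrate over $[0,1]$. Since $w=u+r^2(\rho^\alpha)_x$, the pressure term splits as
\[
\int_0^1 (\rho^\gamma)_x r^2 w\,dx=\int_0^1(\rho^\gamma)_x r^2u\,dx+\int_0^1 (\rho^\gamma)_x(\rho^\alpha)_x r^4\,dx .
\]
For the first piece, integrate by parts and use $\rho_\tau=-\rho^2(r^2u)_x$; this gives $\frac{d}{d\tau}\frac{1}{\gamma-1}\int_0^1\rho^{\gamma-1}dx$. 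The second piece equals $\frac{4\alpha\gamma}{(\gamma+\alpha)^2}\int_0^1\big((\rho^{\frac{\gamma+\alpha}{2}})_x r^2\big)^2dx$. Hence
\[
\frac{d}{d\tau}\int_0^1\Big(\tfrac12 w^2+\tfrac{\rho^{\gamma-1}}{\gamma-1}\Big)dx+\frac{4\alpha\gamma}{(\gamma+\alpha)^2}\int_0^1\big((\rho^{\frac{\gamma+\alpha}{2}})_x r^2\big)^2dx=-\kappa\int_0^1\frac{x}{r^2}w\,dx+\frac{\kappa}{3}\int_0^1 rw\,dx\triangleq K_1+K_2 .
\]
The term $K_2$ is harmless: $K_2\le C\|w\|_{L^2}^2+C\int_0^1 r^2dx\le C\|w\|^2_{L^2}+C$.

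The main obstacle is $K_1$, because of the $r^{-2}$ singularity. Write $w=u+r^2(\rho^\alpha)_x$ and split $K_1$ accordingly.

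\textbf{The $u$-part.} It is exactly $-\kappa\int_0^1\frac{x}{r^2}r_\tau dx=\kappa\frac{d}{d\tau}\int_0^1\frac{x}{r}dx$, as in \eqref{xtau-11}.
\begin{itemize}
\item For $\kappa=-1$ this term moves to the left with a good sign.
\item For $\kappa=1$, the argument leading to \eqref{j11} gives $\int_0^1\frac{x}{r}dx\le C\|\rho-1\|^2_{L^{6/5}}+C\le C(T)$ pointwise in time, using \eqref{b100-twocase} and $\gamma>\frac43$. After time integration this term is therefore bounded.
\end{itemize}

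\textbf{The $(\rho^\alpha)_x$-part.} It reads $-\kappa\int_0^1 x(\rho^\alpha)_x\,dx$. Integrating by parts in $x$ gives $-\kappa\big[x\rho^\alpha\big]_0^1+\kappa\int_0^1\rho^\alpha dx$. In Eulerian variables $\int_0^1\rho^\alpha dx=\int_0^R\rho^{1+\alpha}r^2dr=C\|\rho\|^{1+\alpha}_{L^{1+\alpha}(\Omega)}$, matching the term announced in the introduction. The boundary term $\rho^\alpha(1,\tau)$ is awkward, so alternatively we keep the term as $\frac{\alpha}{\alpha'}\int_0^1 x\rho^{\alpha-\frac{\gamma+\alpha}{2}}(\rho^{\frac{\gamma+\alpha}{2}})_x dx$ with $\alpha'=\frac{\gamma+\alpha}{2}$. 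Then Young's inequality gives
\[
\varepsilon\int_0^1\big((\rho^{\frac{\gamma+\alpha}{2}})_xr^2\big)^2dx+C_\varepsilon\int_0^1\frac{x^2}{r^4}\rho^{\alpha-\gamma}dx .
\]
The first term is absorbed into the dissipation. For the second we use \eqref{x1301}, i.e. $x\le C r^{3(\gamma-1)/\gamma}$, so that
\[
\int_0^1\frac{x^2}{r^4}\rho^{\alpha-\gamma}dx\le C\int_0^R\rho^{1+\alpha-\gamma}r^{\frac{6(\gamma-1)}{\gamma}-2}dr .
\]
When $1+\alpha-\gamma\ge0$, we control $\rho^{1+\alpha-\gamma}$ through the weighted bound \eqref{w4rhoalpha<1}, i.e. by $\|\rho^{\alpha-\frac12}r^{\frac12+\xi}\|_{L^\infty}^{\frac{1+\alpha-\gamma}{\alpha-\frac12}}$ times a power of $r$ that remains integrable for small $\xi$. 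Here the restrictions $\gamma>\frac43$ and $\alpha>\frac56$ in \eqref{plasmas3dalphagamma} should make the exponents work. The power $\frac{1+\alpha-\gamma}{\alpha-\frac12}<2$ lets Young's inequality reduce this to $C(1+\|\nabla\rho^{\alpha-\frac12}\|^2_{L^2})$. By the 3D analogue of \eqref{rhoxuw} this is at most $C(1+\|u\|^2_{L^2}+\|w\|^2_{L^2})$. When $\gamma>1+\alpha$, we instead use $\int_0^1 v\,dx\le C$ together with $r^{-2}$ integrability.

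Collecting the estimates yields
\[
\frac{d}{d\tau}E(\tau)+c\int_0^1\big((\rho^{\frac{\gamma+\alpha}{2}})_xr^2\big)^2dx\le C(1+\|u\|^2_{L^2}+\|w\|^2_{L^2}),
\]
where $E$ possibly includes $\kappa$-signed $\int_0^1\frac{x}{r}dx$ terms that are either nonnegative or bounded. Gronwall's inequality together with \eqref{b100-twocase} then gives \eqref{bdsfinall000}.

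The step I expect to be delicate is choosing the exponents in the $K_1$ estimate, namely checking that the radial power is integrable near $r=0$ under \eqref{plasmas3dalphagamma}. If this fails, the fallback is to first bound $\|\rho\|_{L^{1+\alpha}}^{1+\alpha}$ by interpolation between $L^\gamma$ and the Sobolev bound on $\rho^{\frac{\gamma+\alpha}{2}}$ coming from the dissipation.
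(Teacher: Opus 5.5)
Your handling of the pressure term, of $K_2$, and of the $u$-part of $K_1$ is fine. The gap is in the $(\rho^\alpha)_x$-part of $K_1$, which is exactly where you expected trouble.

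After Young's inequality you must control $\int_0^R\rho^{1+\alpha-\gamma}r^{4-\frac{6}{\gamma}}\,dr$. Note that $1+\alpha-\gamma>0$ automatically, since $1+\alpha-(4\alpha-1-\alpha^2)=(1-\alpha)(2-\alpha)>0$, so your case $\gamma>1+\alpha$ never occurs. Inserting \eqref{w4rhoalpha<1} leaves the radial power $4-\frac{6}{\gamma}-(\frac12+\xi)\frac{1+\alpha-\gamma}{\alpha-\frac12}$, and integrability at $r=0$ requires $5-\frac{6}{\gamma}>\frac{1+\alpha-\gamma}{2\alpha-1}$. At $\gamma=\frac43$ the left side equals $\frac12$, while the right side equals $\frac{\alpha-\frac13}{2\alpha-1}>\frac12$ for every $\alpha$. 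So for each $\alpha\in(\frac56,1)$ there is an interval of admissible $\gamma$ just above $\frac43$ on which the integral diverges. For example, $\alpha=0.95$, $\gamma=1.35$ satisfies \eqref{plasmas3dalphagamma} but gives the power $\approx-1.11$. The hypotheses $\gamma>\frac43$ and $\alpha>\frac56$ do not repair this; pairing the crude relation \eqref{x1301} with the worst-case pointwise weight bound loses too much near the center.

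Your fallback is the right tool for the bulk term, and it is what the paper does:
\begin{itemize}
\item integrate by parts to produce $\kappa\int_0^1\rho^\alpha\,dx=\frac{\kappa}{4\pi}\|\rho\|^{1+\alpha}_{L^{1+\alpha}(\Omega)}$;
\item interpolate between $L^1$ and $L^{3(\gamma+\alpha-1)}$;
\item apply Sobolev to $\rho^{\frac{\gamma+\alpha-1}{2}}$, whose squared gradient norm is a multiple of the dissipation.
\end{itemize}
The resulting exponent $\frac{6\alpha}{3(\gamma+\alpha)-4}$ is $<2$ precisely because $\gamma>\frac43$.

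However, this brings back the boundary term $-\kappa\rho^\alpha(1,\tau)$ that you set aside. For $\kappa=1$ it is nonpositive and can be dropped. For $\kappa=-1$ it has the bad sign and is a pointwise value that none of your quantities controls, so your argument is incomplete in the plasma case. The missing idea is to localize away from the center. By \eqref{x1301}, $r(x,\tau)\ge C_0(T)>0$ for $x\in[\frac12,1]$, so there $|(\rho^\alpha)_x|\le Cr^2|(\rho^\alpha)_x|=C|w-u|$. Averaging $\rho^\alpha(1,\tau)=\rho^\alpha(x,\tau)+\int_x^1(\rho^\alpha)_y\,dy$ over $x\in[\frac12,1]$ then gives $\rho^\alpha(1,\tau)\le C\int_0^1\rho^\alpha\,dx+C(1+\|u\|^2_{L^2}+\|w\|^2_{L^2})$. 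With this, the interpolation bound closes the estimate for both signs of $\kappa$.
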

\begin{proof}
We multiply the equation \eqref{bd2_alpha<1} by $w$ to obtain
    \begin{equation}\begin{aligned}\label{bd3}
        \frac{1}{2} \frac{d}{d\tau}\int^{1}_{0}w^{2}dx=&-\int^{1}_{0}(\rho^{\gamma})_{x}r^{2}(u+r^{2}(\rho^{\alpha})_x )dx{-\kappa}\int^{1}_{0} \frac{x}{r^{2}}(u+r^{2}(\rho^{\alpha})_x )dx\\
        &+ \frac{{\kappa}}{3}\int^{1}_{0}r(u+r^{2}(\rho^{\alpha})_x )dx\\
        \triangleq& G_1+G_2+G_3.
     \end{aligned}\end{equation}
Based on $\eqref{eq:NSP-lagrangian-rhoalpha}_1$, we obtain
     \begin{equation}
        \begin{aligned}\label{bd31ALPHA<1}
G_{1}&= -\int^{1}_{0}(\rho^{\gamma})_{x}r^{2}(u+r^{2}(\rho^{\alpha})_x)dx\\
&= -\int^{1}_{0}\rho^{\gamma} \frac{\rho_{\tau}}{\rho^{2}}dx- \frac{4{\alpha}\gamma}{(\gamma+\alpha)^{2}}\int^{1}_{0}\left( \left( \rho^{\frac{\gamma+\alpha}{2}} \right)_{x}r^{2} \right)^{2}dx\\
&= -\frac{1}{\gamma-1} \frac{d}{d\tau}\int^{1}_{0}\rho^{\gamma-1}dx- \frac{4{\alpha}\gamma}{(\gamma+\alpha)^{2}}\int^{1}_{0}\left( \left( \rho^{\frac{\gamma+\alpha}{2}} \right)_{x}r^{2} \right)^{2}dx.
\end{aligned}
    \end{equation}
    Using Young's inequality, we deduce that
 \allowdisplaybreaks
\begin{align} \label{1bd31ALPHA<1}
G_3=&\frac{{\kappa}}{3}\int^{1}_{0}r(u+r^{2}(\rho^{\alpha})_{x})dx\nonumber\\
\leq& C\int^{1}_{0} u^2dx+ C\int^{1}_{0}r^2dx+ C\int^{1}_{0}r^{3}(\rho^{\alpha})_{x}dx\nonumber\\
\leq&C\int^{1}_{0} u^2dx+ C\left( \int^{1}_{0}(r^{2}(\rho^{\alpha})_{x})^{2}dx \right)^{\frac{1}{2}}\left( \int^{1}_{0}r^{2}dx \right)^{\frac{1}{2}}+C\nonumber\\
\leq& C\int^{1}_{0} u^2dx+C\Big\|w-u\Big\|^2_{L^2(0,1)}+C\nonumber\\
\leq& C(1+\left\lVert w \right\rVert ^2_{L^{2}(0,1)}+\left\lVert u \right\rVert^2 _{L^{2}(0,1)}).
\end{align}
\noindent\textbf{Case 1: $\kappa = -1$ (plasmas)}

Using integration by parts, we have
\begin{equation}
        \begin{aligned}  \label{bd3230finall}
  G_2=&  \int^{1}_{0} \frac{x}{r^{2}}(u+r^{2}(\rho^{\alpha})_{x})dx\\
 =&-\frac{d}{d\tau} \int^{1}_{0} \frac{x}{r}dx+x\rho^{{\alpha}}|^{1}_{0}-\int^{1}_{0}\rho^{\alpha} dx\\
=&- \frac{d}{d\tau} \int^{1}_{0} \frac{x}{r}dx+\rho^{\alpha}(1,\tau)-\int^{1}_{0}\rho^{\alpha} dx.
\end{aligned}\end{equation} 
To estimate the boundary term $\rho^{\alpha}(1,\tau)$, we employ the following identity
\[
\rho^{\alpha}(1,\tau) = \rho^{\alpha}(x,\tau) + \int_{x}^{1} (\rho^{\alpha})_y \, dy.
\]
Integrating both sides of the above identity with respect to $x$ over the interval $[\frac{1}{2}, 1]$ yields
\begin{equation}
\begin{aligned}\label{boundarytermsfianlly}
\frac{1}{2} \rho^{\alpha}(1,\tau) =& \int_{\frac{1}{2}}^{1} \rho^{\alpha}(x,\tau) \, dx + \int_{\frac{1}{2}}^{1} \int_{x}^{1} (\rho^{\alpha})_y \, dy \, dx\\
=&\int_{\frac{1}{2}}^{1} \rho^{\alpha}(x,\tau) \, dx + \int_{\frac{1}{2}}^{1} \int_{\frac{1}{2}}^{y} (\rho^{\alpha})_y \, dx \, dy
\end{aligned}\end{equation}
From \eqref{x1301}, it follows that when $x > 0$, we also have $r > 0$, which implies the uniform bounds
\begin{equation}\label{rlowupper}
r\geq C_0,
\end{equation}
where $C_0$ depends on $T$.
Using the definition of the effective velocity $w $ together with the bounds in \eqref{rlowupper}, a direct calculation yields
\begin{equation}\begin{aligned}\label{boundaryterms}
\left| \int_{\frac{1}{2}}^1 \int_{\frac{1}{2}}^y (\rho^\alpha)_y dx dy \right| &= \left| \int_{\frac{1}{2}}^1 \left(y - \frac{1}{2}\right) (\rho^\alpha)_y dy \right| \le \frac{1}{2} \int_{\frac{1}{2}}^1 |(\rho^\alpha)_y| dy \le C \int_{\frac{1}{2}}^1 r^2 |(\rho^\alpha)_y| dy \\
&= C \int_{\frac{1}{2}}^1 |w(y) - u(y)| dy \\
&\le C \int_{\frac{1}{2}}^1 w^2 dy + C \int_{\frac{1}{2}}^1 u^2 dy.
\end{aligned}\end{equation}
Combining \eqref{boundaryterms} with \eqref{boundarytermsfianlly}, we finally obtain
\begin{equation}\label{boundaryterms1}
\frac{1}{2} \rho^{\alpha}(1,\tau) \leq C\int_{0}^{1} \rho^{\alpha} \, dx + C \int_{0}^{1} w^2 \, dx + C \int_{0}^{1} u^2 \, dx.
\end{equation}
From \eqref{bd3230finall} and \eqref{boundaryterms1}, we have
    \begin{equation}\label{kapap-1g2}
G_2\leq- \frac{d}{d\tau} \int^{1}_{0} \frac{x}{r}dx+C\int^{1}_{0}\rho^{\alpha} dx+C \int_{0}^{1} w^2 \, dx + C \int_{0}^{1} u^2 \, dx.
\end{equation}
\noindent\textbf{Case 2: $\kappa = 1$ (gaseous stars)}

By integration by parts, we have
  \begin{equation}
        \begin{aligned}  \label{kapap1g2}
  G_2=&  -\int^{1}_{0} \frac{x}{r^{2}}(u+r^{2}(\rho^{\alpha})_{x})dx\\
 =&\frac{d}{d\tau} \int^{1}_{0} \frac{x}{r}dx-x\rho^{{\alpha}}|^{1}_{0}+\int^{1}_{0}\rho^{\alpha} dx\\
\leq& \frac{d}{d\tau} \int^{1}_{0} \frac{x}{r}dx+\int^{1}_{0}\rho^{\alpha} dx.
\end{aligned}\end{equation} 
Putting together \eqref{bd31ALPHA<1}, \eqref{1bd31ALPHA<1}, and \eqref{kapap-1g2}--\eqref{kapap1g2} gives  
    \begin{equation}
        \begin{aligned}
&\frac{1}{2} \frac{d}{d\tau}\int^{1}_{0}w^{2}dx+ \frac{1}{\gamma-1} \frac{d}{d\tau}\int^{1}_{0}\rho^{\gamma-1}dx+ \frac{4\alpha\gamma}{(\gamma+\alpha)^{2}}\int^{1}_{0}\left( \left( \rho^{\frac{\gamma+\alpha}{2}} \right)_{x}r^{2} \right)^{2}dx\\
\leq& \kappa\frac{d}{d\tau}\int^{1}_{0} \frac{x}{r}dx+C\int^{1}_{0}\rho^{\alpha} dx+ C\int^{1}_{0}w^{2}dx+C\int^{1}_{0}u^{2}dx+C.
\end{aligned}
    \end{equation}
Integration of the above inequality over $[0,\tau]$ and using \eqref{j11} leads to
    \begin{equation}
        \begin{aligned}\label{finallalpha<1both}
&\frac{1}{2}\int^{1}_{0}w^{2}dx+ \frac{1}{\gamma-1}\int^{1}_{0}\rho^{\gamma-1}dx+ \frac{4\alpha\gamma}{(\gamma+\alpha)^{2}}\int^{\tau}_{0}\int^{1}_{0}\left( \left( \rho^{\frac{\gamma+\alpha}{2}} \right)_{x}r^{2} \right)^{2}dxds\\
\leq&\frac{1}{2}\int^{1}_{0}w_{0}^{2}dx+ \frac{1}{\gamma-1}\int^{1}_{0}\rho_{0}^{\gamma-1}dx + \frac{1}{2(\gamma-1)}\int^{1}_{0}\rho^{\gamma-1}dx+C(T)\\
&+C\int^{\tau}_{0}\int^{1}_{0}\rho^\alpha dxds+C\int^{\tau}_{0}\int^{1}_{0}w^{2}dxds\\
\leq& C(T)+C\int^{\tau}_{0}\int^{1}_{0}\rho^{\alpha} dxds+C\int^{\tau}_{0}\int^{1}_{0}w^{2}dxds.
\end{aligned}
    \end{equation}
Then it remains to estimate the term $\int^{\tau}_{0}\int^{1}_{0}\rho^{\alpha} dxds.$

It is worth noting that we have
\begin{equation}
    \begin{aligned}
\int^{1}_{0}\rho^{\alpha} dx&= \int^{R}_{0}\rho^{{1+\alpha}}r^{2}dr=\frac{1}{4\pi}\int_{\Omega}\rho^{{1+\alpha}}d\mathbf{x}.
\end{aligned}
\end{equation}
Applying the interpolation inequality gives
\begin{equation}\label{3rho1010}
    \begin{aligned}
\int^{1}_{0}\rho^\alpha dx = \frac{1}{4\pi}\left\lVert \rho \right\rVert^{\alpha+1}_{L^{\alpha+1}(\Omega)} &\leq C\left\lVert \rho \right\rVert_{L^{1}(\Omega)}^{\frac{3\gamma+2\alpha-4}{3(\gamma+\alpha)-4}}\left\lVert \rho \right\rVert_{L^{3(\gamma+\alpha-1)}(\Omega)}^{\frac{3\alpha(\gamma+\alpha-1)}{3(\gamma+\alpha)-4}}\\
&\leq C\left\lVert \rho \right\rVert_{L^{3(\gamma+\alpha-1)}(\Omega)}^{\frac{3\alpha(\gamma+\alpha-1)}{3(\gamma+\alpha)-4}}\\
&= C\left\lVert \rho^{\frac{\gamma+\alpha-1}{2}} \right\rVert _{L^{6}(\Omega)}^{\frac{6\alpha}{3(\gamma+\alpha)-4}}\\
&\leq C\Big(\left\lVert \rho^{\frac{\gamma+\alpha-1}{2}} \right\rVert_{L^{2}(\Omega)}+\left\lVert \nabla\rho^{\frac{\gamma+\alpha-1}{2}} \right\rVert_{L^{2}(\Omega)}\Big)^{\frac{6\alpha}{3(\gamma+\alpha)-4}}\\
&\leq C(T)\Big(1+\left\lVert \nabla\rho^{\frac{\gamma+\alpha-1}{2}} \right\rVert_{L^{2}(\Omega)}^{\frac{6\alpha}{3(\gamma+\alpha)-4}}\Big)\\
&\leq\frac{2\gamma\alpha}{(\gamma+\alpha)^{2}}\int^{1}_{0}\left( \left( \rho^{\frac{\gamma+\alpha}{2}} \right)_{x}r^{2} \right)^{2}dx+C(T),
\end{aligned}
\end{equation}
where we used \eqref{plasmas3dalphagamma}.
Inserting \eqref{3rho1010} into \eqref{finallalpha<1both} gives   
    \begin{equation}
        \begin{aligned}
\frac{1}{2}\int^{1}_{0}w^{2}dx+ \frac{1}{2(\gamma-1)}\int^{1}_{0}\rho^{\gamma-1}dx+&  \frac{2\gamma\alpha}{(\gamma+\alpha)^{2}}\int^{1}_{0}\left( \left( \rho^{\frac{\gamma+\alpha}{2}} \right)_{x}r^{2} \right)^{2}dx\\
\leq& C(T)+C\int^{\tau}_{0}\int^{1}_{0}w^{2}dxds.
\end{aligned}
    \end{equation}
Applying Gronwall’s inequality, we finally arrive at \eqref{bdsfinall000}.

 From \eqref{bdsfinall000} and the definition of $w$, we have
 \begin{equation}\label{w4rho1}
         \sup_{0\leq t\leq T}\left\lVert \rho^{\alpha-\frac{1}{2}} r^{\frac{1}{2}+\xi} \right\rVert _{L^\infty(0, R)} \le C(T).
     \end{equation}

\end{proof}
With the BD entropy estimate, we obtain higher integrability of the density. Consequently, we derive the following relation between $x$ and $r$.
\begin{lema}
Under the assumption of \eqref{plasmas3dalphagamma}, there exists a constant $C(T)>0$ such that
    \begin{equation}\label{xrbd3}
   x \leq C(T)\,  {r(x,t)^{3\left(1-\frac{1}{6\alpha-3}\right)}}, \quad (x,\tau) \in [0,1] \times [0,T].
\end{equation}
\end{lema}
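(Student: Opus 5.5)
The plan mirrors the proof of \eqref{x1301}, but replaces the $L^\gamma$ bound on $\rho$ with the higher integrability given by the BD entropy. By \eqref{b100-twocase} and \eqref{bdsfinall000}, together with the identity
\[
\nabla\rho^{\alpha-\frac12}\;\propto\;(w-u)/(r^2\rho^{1/2})
\]
(the 3D analogue of \eqref{rhoxuw}), we have
\[
\sup_{0\le t\le T}\big\|\nabla\rho^{\alpha-\frac12}\big\|_{L^2(\Omega)}\le C\big(\|w\|_{L^2(0,1)}+\|u\|_{L^2(0,1)}\big)\le C(T).
\]
Combining this with $\|\rho^{\alpha-\frac12}\|_{L^2(\Omega)}\le C$, which follows from mass conservation since $2\alpha-1<1$, the three-dimensional Sobolev embedding $H^1(\Omega)\hookrightarrow L^6(\Omega)$ gives
\[
\sup_{0\le t\le T}\int_\Omega \rho^{6\alpha-3}\,d\mathbf{x}=\sup_{0\le t\le T}\big\|\rho^{\alpha-\frac12}\big\|_{L^6(\Omega)}^6\le C(T).
\]
This bound was already used implicitly in the estimate of $M_1$.

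Set $q:=6\alpha-3$. Under \eqref{plasmas3dalphagamma} we have $\alpha>\frac56$, so $q>2>1$ and H\"older's inequality applies exactly as in \eqref{x1301}. For any $x\in[0,1]$,
\[
x=\int_0^{r(x,\tau)}\rho s^2\,ds\le\Big(\int_0^{r}\rho^{q}s^2\,ds\Big)^{\frac1q}\Big(\int_0^{r}s^2\,ds\Big)^{\frac{q-1}{q}}\le C(T)\,r^{\frac{3(q-1)}{q}}=C(T)\,r^{3\left(1-\frac{1}{6\alpha-3}\right)}.
\]
This is \eqref{xrbd3}.

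The only point needing care is the first step: verifying that the $L^2$ bounds on $u$ and $w$ control $\|\nabla\rho^{\alpha-\frac12}\|_{L^2(\Omega)}$ with the correct $r$-weights in three dimensions. Writing $\int_\Omega|\nabla\rho^{\alpha-\frac12}|^2d\mathbf{x}=4\pi\int_0^R|\partial_r\rho^{\alpha-\frac12}|^2r^2dr$, changing to Lagrangian variables via $dx=\rho r^2dr$ and $\partial_r=\rho r^2\partial_x$, and using $r^2(\rho^\alpha)_x=w-u$, the integrand should reduce to a constant multiple of $(w-u)^2$ integrated in $x$, exactly as in the 2D computation. Once this identity is confirmed, the remaining steps are routine.
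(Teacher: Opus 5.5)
Your proposal is correct and matches the paper's proof step for step. The energy and BD bounds control $\|\nabla\rho^{\alpha-\frac12}\|_{L^2(\Omega)}^2=4\pi\big(\tfrac{2\alpha-1}{2\alpha}\big)^2\int_0^1(w-u)^2\,dx$, the 3D Sobolev embedding then gives $\sup_t\int_\Omega\rho^{6\alpha-3}\,d\mathbf{x}\le C(T)$, and H\"older with exponent $6\alpha-3$ yields the claim. Your mass-conservation bound on $\|\rho^{\alpha-\frac12}\|_{L^2(\Omega)}$ makes explicit a step the paper leaves implicit. One notational point: the pointwise formula $\propto (w-u)/(r^2\rho^{1/2})$ is the Lagrangian derivative $\partial_x\rho^{\alpha-\frac12}$, whereas the Eulerian radial derivative is $\partial_r\rho^{\alpha-\frac12}=\tfrac{2\alpha-1}{2\alpha}\rho^{1/2}(w-u)$; either way, the integral identity you anticipated holds.
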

\begin{proof}
It follows from \eqref{b100-twocase} and \eqref{bdsfinall000} that
    \begin{equation}\label{est: rho alpha-1/2 H1}
        \begin{aligned}
 \int_{\Omega}\Big|\nabla \rho^{{\alpha-}\frac{1}{2}}\Big|^{2}d\mathbf{x}
\leq C\int^{1}_{0}(w-u)^2dx\leq C\int^{1}_{0}w^2dx+C\int^{1}_{0}u^2dx\leq C(T).
\end{aligned}
    \end{equation}
From the three-dimensional Sobolev embedding inequality and \eqref{est: rho alpha-1/2 H1}, we obtain

\begin{equation}\label{6alpha-3t}
\begin{aligned}
\int_\Omega \rho^{6\alpha-3} d\mathbf{x}
\le C \left( \|\rho^{\alpha-\frac{1}{2}}\|_{L^2(\Omega)} + \|\nabla \rho^{\alpha-\frac{1}{2}}\|_{L^2(\Omega)} \right)^6 \le C(T).
\end{aligned}
\end{equation}
Consequently, the above inequality implies that
\begin{equation}
\begin{aligned}
x &= \int_0^{r} \rho s^2 ds \\
&\le C\left( \int_0^{r} \rho^{6\alpha-3} s^2 ds \right)^{\frac{1}{6\alpha-3}} \left( \int_0^{r} s^2 ds \right)^{\frac{6\alpha-4}{6\alpha-3}} \\
&\le C(T) r^{3\left(1-\frac{1}{6\alpha-3}\right)}.
\end{aligned}
\end{equation}
\end{proof}

\begin{lema}
Under the assumption of \eqref{plasmas3dalphagamma} and the condition $1< n< n_{3}(\alpha)$ with $n\in \mathbb{R}$, the following estimates hold 
\begin{equation}\begin{aligned}\label{3du4}
&\sup_{0\leq t\leq T}\int_{0}^{1}u^{2n}dx + \int_{0}^{T}\int_{0}^{1}\Big(\rho^{\alpha-1}\frac{u^{2n}}{r^{2}} +\rho^{1+\alpha}r^{4}u_x^{2}u^{2n-2}\Big)dxdt \\
\leq& C+C\int_{0}^{T}\int_{0}^{1}\rho^{2n(\gamma-\alpha)+\alpha-1}r^{2n-2}dxdt\\
&+C\int_{0}^{T}\int_{0}^{1}\rho^{(2n-1)(1-\alpha)}r^{6n(1-\frac{1}{6\alpha-3})-2}dxdt. \\
\end{aligned}\end{equation}
\end{lema}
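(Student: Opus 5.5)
I will follow the proof of Lemma \ref{2duhighnot} almost verbatim, now with $N=3$. Multiply $\eqref{eq:NSP-lagrangian-rhoalpha}_2$ by $u^{2n-1}$ and integrate over $(0,1)$. After integrating by parts, the viscous term $\alpha\int_0^1 r^2u^{2n-1}(\rho^{1+\alpha}(r^2u)_x)_x\,dx$ together with $-2\int_0^1 ur(\rho^\alpha)_xu^{2n-1}dx$ should produce three contributions. Using $\rho_\tau=-\rho^2(r^2u)_x$ and $(r^2)_x=2r\cdot\rho^{-1}r^{-2}$, these are a good term $c_1(n,\alpha)\int\rho^{\alpha-1}u^{2n}r^{-2}dx$, a good term $(2n-1)\alpha\int\rho^{1+\alpha}r^4u_x^2u^{2n-2}dx$, and a cross term $c_2(n,\alpha)\int\rho^\alpha ru^{2n-1}u_x\,dx$. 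In the case $n=1$, these constants reduce to $(4\alpha-2)$ and $-(4-4\alpha)$, as in \eqref{uinte1}.

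Next I absorb the cross term by Young's inequality, exactly as in \eqref{ufix1}. Requiring the resulting quadratic form in $(\rho^{\frac{\alpha-1}{2}}u^n r^{-1},\ \rho^{\frac{1+\alpha}{2}}r^2u_xu^{n-1})$ to be positive definite gives a discriminant condition $c_2^2<4(2n-1)\alpha c_1$. I expect this to be a quadratic inequality in $\alpha$ whose roots are $\alpha_{3,\pm}(n)$. Then $n<n_3(\alpha)$ leaves a margin $2\varepsilon$ of dissipation. This algebraic check is the one step where I am unsure of the exact constants. I will first verify it at $n=1$, where the condition should reduce to $\alpha>2/3$, matching $\lim_{n\to1^+}\alpha_{3,-}=2/3$.

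The right-hand side then consists of three kinds of terms.

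\textbf{Pressure term.} After integrating $\int(\rho^\gamma)_xr^2u^{2n-1}$ by parts, it gives $\int\rho^{\gamma-1}u^{2n-1}r^{-1}\,dx$ and $(2n-1)\int\rho^\gamma r^2u^{2n-2}u_x\,dx$. I bound these as in \eqref{3uhigh21}, by Hölder and Young against the two dissipation terms. A weight count gives the remainder $C\int\rho^{2n(\gamma-\alpha)+\alpha-1}r^{2n-2}dx$. For example, $\rho^\gamma r^2u^{2n-2}u_x$ splits as $(\rho^{\frac{1+\alpha}{2}}r^2u_xu^{n-1})\cdot(\rho^{\frac{\alpha-1}{2}}u^nr^{-1})^{\frac{n-1}{n}}\cdot\rho^{a}r^{b}$ with the exponents matching.

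\textbf{Poisson term.} The term $-\kappa\int\frac{x}{r^2}u^{2n-1}dx$ is handled as in \eqref{m2}:
\[
\Big|\int_0^1\frac{x}{r^2}u^{2n-1}dx\Big|\le\frac{\alpha\varepsilon}{3}\int_0^1\rho^{\alpha-1}\frac{u^{2n}}{r^2}dx+C\int_0^1\rho^{(2n-1)(1-\alpha)}x^{2n}r^{-2}dx.
\]
I then insert \eqref{xrbd3} for $x^{2n}$, which gives exactly the last integral in \eqref{3du4}, $r^{6n(1-\frac{1}{6\alpha-3})-2}$.

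\textbf{Linear term.} The term $\frac{\kappa}{3}\int u^{2n-1}r\,dx$ is bounded by $C\int u^{2n}dx+C$, using $r\le R$.

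Collecting these bounds and applying Gronwall's inequality in $\int_0^1u^{2n}dx$ gives \eqref{3du4}. The main obstacle is the coefficient bookkeeping in the viscous term and confirming that the positivity threshold is precisely $n<n_3(\alpha)$. The remaining estimates are routine copies of the two-dimensional argument, with $r^2$ in place of $r$.
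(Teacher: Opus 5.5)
Your proposal is correct and follows the paper's proof step for step. You test with $u^{2n-1}$, absorb the cross term $\rho^\alpha r u^{2n-1}u_x$ by Young's inequality, bound the two pressure pieces by H\"older against the two dissipation terms, treat the Poisson term by Young's inequality together with $x\le C(T)\,r^{3(1-\frac{1}{6\alpha-3})}$, and close with Gronwall.

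On the constants you were unsure of: the paper's computation gives $c_1=4\alpha-2$ and $c_2=4n(\alpha-1)$. Your discriminant condition therefore becomes $2(n-1)^2\alpha^2-(4n^2-2n+1)\alpha+2n^2<0$, whose roots are exactly $\alpha_{3,\pm}(n)$, so $n<n_3(\alpha)$ is indeed the right threshold.
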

\begin{proof}
   By multiplying \eqref{eq:NSP-lagrangian-rhoalpha} by $u^{2n-1}$ and then integrating by parts, we obtain
   \begin{equation}
        \begin{aligned}\label{3uhigh10}
            \frac{1}{2n} \frac{d}{d\tau}\int^{1}_{0}u^{2n}dx+& (4\alpha-2)\int^{1}_{0} \rho^{\alpha-1}\frac{u^{2n}}{r^{2}}dx+(2n-1)\alpha\int^{1}_{0}\rho^{1+\alpha}u^{2n-2}u_{x}^{2}r^{4}dx\\
            &+4n(\alpha-1)\int_{0}^{1}\rho^{\alpha}u^{2n-1}u_xrdx\\
           =&\int^{1}_{0}\rho^{\gamma}(r^{2}u^{2n-1})_{x}dx -\kappa\int^{1}_{0} \frac{x}{r^{2}}u^{2n-1}dx +\frac{\kappa}{3}\int^{1}_{0}u^{2n-1}rdx\\
           \triangleq& H_1+H_2+H_3.
        \end{aligned}
    \end{equation}
Applying Young's inequality, we obtain
\begin{equation}
\begin{aligned}\label{equ}
&4n(\alpha-1) \rho^\alpha u^{2n-1}  u_x r\\
\ge& -(1-2\varepsilon)(2n-1)\alpha \rho^{1+\alpha} u_x^2 u^{2n-2} r^{4} - \frac{\big(4n(\alpha-1)\big)^2}{4(1-2\varepsilon)(2n-1)\alpha} \rho^{\alpha-1} \frac{u^{2n}}{r^2}\\
\geq& -(1-2\varepsilon)\Big((2n-1)\alpha\rho^{1+\alpha}u_x^2u^{2n-2}r^4+(4\alpha-2)\rho^{\alpha{-1}}\frac{u^{2n}}{r^2}\Big).
\end{aligned}
\end{equation}
provided that \(\varepsilon\) is sufficiently small such that
\begin{equation}\label{2depsilon}
\frac{\big(4n(\alpha-1)\big)^2}{4(1-2\varepsilon)(2n-1)\alpha} < (1-2\varepsilon)(4\alpha-2),
\end{equation}
The estimate \eqref{2depsilon} holds, provided that \( n < n_3(\alpha) \), which guarantees the existence of such \(\varepsilon\).
 Integration by parts combined with H\"older's inequality implies that   
    \begin{equation}\begin{aligned}\label{3uhigh2}
H_{1}&= \int^{1}_{0}\rho^{\gamma}(r^{2}u^{2n-1})_{x}dx\\
&= 2\int^{1}_{0}\rho^{\gamma-1} \frac{u^{2n-1}}{r}dx+(2n-1)\int^{1}_{0}\rho^{\gamma}u^{2n-2}u_{x}r^{2}dx\\
&\leq C\left( \int^{1}_{0}\rho^{\alpha-1} \frac{u^{2n}}{r^{2}}dx \right)^{\frac{2n-1}{2n}}\left( \int^{1}_{0}\rho^{2n(\gamma-\alpha)+\alpha-1}r^{2(n-1)}dx \right)^{\frac{1}{2n}}\\
&\quad+C\left( \int^{1}_{0} \rho^{1+\alpha}u^{2n-2}r^{4}u_{x}^{2}dx \right)^{\frac{1}{2}}\left( \int^{1}_{0} \rho^{\alpha-1}\frac{u^{2n}}{r^{2}}dx \right)^{\frac{n-1}{2n}}\left( \int^{1}_{0}\rho^{2n(\gamma+\alpha)+\alpha-1}r^{2(n-1)} dx\right)^{\frac{1}{2n}}\\
&\leq \frac{\varepsilon(4\alpha-2)}{4}\int^{1}_{0}\rho^{\alpha-1} \frac{u^{2n}}{r^{2}}dx +C(\varepsilon)\int^{1}_{0}\rho^{2n(\gamma-\alpha)+\alpha-1}r^{2(n-1)}dx\\
&\quad+\frac{\varepsilon(2n-1)\alpha}{4} \int^{1}_{0}\rho^{1+\alpha}u^{2n-2}r^{4}u_{x}^{2}dx.
\end{aligned}\end{equation}
Using H\"older's inequality gives
\begin{equation}\begin{aligned}\label{3uhigh2}
H_{2}&= -\kappa\int^{1}_{0} \frac{x}{r^{2}}u^{2n-1}dx\\
&\leq C\left( \int^{1}_{0} \rho^{\alpha-1}\frac{u^{2n}}{r^{2}}dx \right)^{\frac{2n-1}{2n}}\left( \int^{1}_{0} \rho^{(2n-1)(1-\alpha)}\frac{x^{2n}}{r^{2}}dx \right)^{\frac{1}{2n}}\\
&\leq \frac{\varepsilon(4\alpha-2)}{4}\int_{0}^{1}\rho^{\alpha-1}\frac{u^{2n}}{r^{2}}dx+C\int_{0}^{1}\rho^{(2n-1)(1-\alpha)}r^{6n(1-\frac{1}{6\alpha-3})-2}dx.\\
\end{aligned}\end{equation}
In the same manner,
\begin{equation}\begin{aligned}\label{3uhigh30}
H_{3}&\leq C\int^{1}_{0}u^{2n}dx +C.\\
\end{aligned}\end{equation}
Substituting \eqref{equ}–\eqref{3uhigh30} into \eqref{3uhigh10} yields
\begin{equation}\begin{aligned}\label{3du41}
&\frac{d}{d\tau}\int_{0}^{1}u^{2n}dx + \varepsilon\int_{0}^{1}\Big((4\alpha-2)\rho^{\alpha-1}\frac{u^{2n}}{r^{2}} +(2n-1)\alpha\rho^{1+\alpha}r^{4}u_x^{2}u^{2n-2}\Big)dx\\
\leq &C+C\int_{0}^{1}\rho^{2n(\gamma-\alpha)+\alpha-1}r^{2n-2}dx\\
&+C\int_{0}^{1}\rho^{(2n-1)(1-\alpha)}r^{6n(1-\frac{1}{6\alpha-3})-2}dx. \\
\end{aligned}\end{equation}
An application of Gronwall's inequality leads to \eqref{3du4}.
\end{proof}
\begin{lema}
Under  the condition $1< m\in A_{\text{set}}$, the following estimates hold 
\begin{equation}\begin{aligned}\label{3dw4}
\sup_{0\leq t\leq T}\int_{0}^{1}w^{2m}dx + \int_{0}^{T}\int_{0}^{1}\Big(r^2\Big(\rho^{\frac{\gamma+\alpha(2m-1)}{2m}}\Big)_x\Big)^{2m}dx dt \leq& C+C\int_{0}^{T}\int_{0}^{1}\rho^{\gamma-\alpha}u^{2m}dxdt \\
&+C\int_{0}^{T}\int_{0}^{1}r^{2m(1-\frac{1}{2\alpha-1})}dxdt.
\end{aligned}\end{equation}
\end{lema}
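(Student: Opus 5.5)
We mimic the two-dimensional argument for \eqref{2uw4}, now with $N=3$ and $w=u+r^2(\rho^\alpha)_x$. We multiply \eqref{bd2_alpha<1} (with $N=3$) by $w^{2m-1}$ and integrate over $[0,1]$. This gives
\begin{equation*}
\frac{1}{2m}\frac{d}{d\tau}\int_0^1 w^{2m}dx+\int_0^1 \frac{\gamma}{\alpha}\rho^{\gamma-\alpha}\,r^2(\rho^\alpha)_x\big(u+r^2(\rho^\alpha)_x\big)^{2m-1}dx
=-\kappa\int_0^1\frac{x}{r^2}w^{2m-1}dx+\frac{\kappa}{3}\int_0^1 r\,w^{2m-1}dx .
\end{equation*}
Here we used $(\rho^\gamma)_x r^2=\frac{\gamma}{\alpha}\rho^{\gamma-\alpha}r^2(\rho^\alpha)_x$.

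\textbf{Step 1 (dissipation).} Since $m\in A_{\text{set}}$, we have $2m-1=1+\frac{2s}{2k+1}$. Lemma \ref{mdense} then applies with $a=r^2(\rho^\alpha)_x$ and $b=u$, giving
\[
\rho^{\gamma-\alpha}a(a+b)^{2m-1}\ge \varepsilon\rho^{\gamma-\alpha}\big(r^2(\rho^\alpha)_x\big)^{2m}-\rho^{\gamma-\alpha}u^{2m}.
\]
Next observe that
\[
\rho^{\gamma-\alpha}\big(r^2(\rho^\alpha)_x\big)^{2m}=\rho^{\gamma-\alpha}\big(\alpha\rho^{\alpha-1}\rho_x r^2\big)^{2m}=c\Big(r^2\big(\rho^{\frac{\gamma+\alpha(2m-1)}{2m}}\big)_x\Big)^{2m}.
\]
To see this, note that $\rho^{\frac{\gamma-\alpha}{2m}}\rho^{\alpha-1}\rho_x$ is a multiple of $(\rho^{\frac{\gamma-\alpha}{2m}+\alpha})_x$, and $\frac{\gamma-\alpha}{2m}+\alpha=\frac{\gamma+\alpha(2m-1)}{2m}$. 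This produces exactly the dissipation term in \eqref{3dw4}, at the cost of the term $C\int_0^1\rho^{\gamma-\alpha}u^{2m}dx$, which we keep on the right-hand side.

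\textbf{Step 2 (Poisson terms).} For the singular term we use Young's inequality:
\[
\Big|\int_0^1\frac{x}{r^2}w^{2m-1}dx\Big|\le C\int_0^1 w^{2m}dx+C\int_0^1\Big(\frac{x}{r^2}\Big)^{2m}dx .
\]
We then invoke the improved relation \eqref{xrbd3}, $x\le C(T)r^{3(1-\frac{1}{6\alpha-3})}$, which yields
\[
\frac{x}{r^2}\le C(T)\,r^{1-\frac{3}{6\alpha-3}}=C(T)\,r^{1-\frac{1}{2\alpha-1}},
\]
so that $(x/r^2)^{2m}\le C(T)r^{2m(1-\frac{1}{2\alpha-1})}$. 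This is precisely the last term in \eqref{3dw4}. The term $\int_0^1 r w^{2m-1}dx$ is bounded by $C\int_0^1w^{2m}dx+C$, because $r\le R$.

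\textbf{Step 3 (closing).} Collecting the estimates gives
\[
\frac{d}{d\tau}\int_0^1 w^{2m}dx+\varepsilon\int_0^1\Big(r^2\big(\rho^{\frac{\gamma+\alpha(2m-1)}{2m}}\big)_x\Big)^{2m}dx\le C\int_0^1w^{2m}dx+C\int_0^1\rho^{\gamma-\alpha}u^{2m}dx+C\int_0^1 r^{2m(1-\frac{1}{2\alpha-1})}dx+C .
\]
Gronwall's inequality then yields \eqref{3dw4}, since $w_0\in L^{2m}$ for $H^3$ data.

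The only delicate point is the Poisson term. The exponent $1-\frac{1}{2\alpha-1}$ is negative for $\alpha<1$, so the singularity at the origin is not removed here. It is merely transferred into the explicit integral $\int_0^1 r^{2m(1-\frac1{2\alpha-1})}dx$, whose finiteness will be checked later using the $r$-weighted bound \eqref{w4rho1} after changing to Eulerian variables. For the present lemma we keep it on the right-hand side unestimated, so the main obstacle is only ensuring that the sign structure of Lemma \ref{mdense} applies. That is guaranteed by $m\in A_{\text{set}}$.
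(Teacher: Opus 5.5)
Your proposal is correct and follows the paper's proof essentially step by step. You test \eqref{bd2_alpha<1} against $w^{2m-1}$, use Lemma \ref{mdense} (applicable since $m\in A_{\text{set}}$) to extract the dissipation at the cost of $C\int_0^1\rho^{\gamma-\alpha}u^{2m}dx$, bound the Poisson term by Young's inequality together with the BD-entropy relation $x\le C(T)r^{3(1-\frac{1}{6\alpha-3})}$, i.e. $x/r^2\le C(T)r^{1-\frac{1}{2\alpha-1}}$, and close with Gronwall. Your explicit check that $\rho^{\gamma-\alpha}\bigl(r^2(\rho^\alpha)_x\bigr)^{2m}$ is a constant multiple of the stated dissipation term fills in a step the paper leaves implicit.
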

\begin{proof}
By multiplying \eqref{bd2_alpha<1} by $w^{2m-1}$ we obtain 
\begin{equation}\begin{aligned}\label{bdw001}
    \frac{1}{2m} \frac{d}{d\tau} \int^{1}_{0}w^{2m}dx=&-\int^{1}_{0} (\rho^{\gamma})_{x}r^{2}w^{2k-1}dx  -\kappa\int^{1}_{0} \frac{x}{r^{2}}w^{2m-1}dx +  \frac{\kappa}{3}\int^{1}_{0}w^{2m-1}rdx\\
    \triangleq& H_1+H_2+H_3.
\end{aligned}\end{equation}
Using Lemma \ref{mdense}, we obtain
\begin{equation}\begin{aligned}\label{bdw100}
H_{1}&= -\gamma \int^{1}_{0}\rho^{\gamma-1}\rho_{x}r^{2}w^{2m-1}dx\\
&= -\int^{1}_{0} r^2(\rho^{\alpha})_x\bigl(u + r^2(\rho^{\alpha})_x\bigr)^{2m-1}\frac{\gamma}{\alpha}\rho^{\gamma-\alpha}dx\\
&\leq -\varepsilon\int^{1}_{0}\rho^{\gamma-\alpha}\bigl(r^2(\rho^{\alpha})_x\bigr)^{2m}dx+C\int^{1}_{0}\rho^{\gamma-\alpha}u^{2m}dx .
\end{aligned}\end{equation}
By virtue of H\"older's inequality, we obtain
\begin{equation}\begin{aligned}\label{2whighintegrability31}
H_{2}&\leq C\left( \int^{1}_{0} w^{(2m-1) \frac{2m}{2m-1}}dx \right)^{\frac{2m-1}{2m}}\left( \int^{1}_{0} \left( \frac{x}{r^2} \right)^{2m}dx \right)^{\frac{1}{2m}}\leq C\int^{1}_{0}w^{2m}dx+C\int_{0}^{1}r^{2m(1-\frac{1}{2\alpha-1})}dx.\\
\end{aligned}\end{equation}
In the same manner
\begin{equation}\begin{aligned}\label{2whighintegrability3}
H_3\leq C\int^{1}_{0}w^{2m}dx+C.
\end{aligned}\end{equation}
Combining \eqref{bdw100}–\eqref{2whighintegrability3} with \eqref{bdw001} yields
\begin{equation}\begin{aligned}
    \frac{1}{2m} \frac{d}{d\tau}\int^{1}_{0}w^{2m}dx+ \varepsilon\int^{1}_{0}\rho^{\gamma-\alpha}\bigl(r^2(\rho^{\alpha})_x\bigr)^{2m}dx&\leq C\int^{1}_{0}w^{2m}dx+C\int^{1}_{0}\rho^{\gamma-1}u^{2m}dx\\
    &\quad+C\int_{0}^{1}r^{2m(1-\frac{1}{2\alpha-1})}dx.\\
\end{aligned}\end{equation}
Applying Gronwall's inequality to the above inequality gives \eqref{3dw4}.
\end{proof}
\begin{lema}\label{2dlemma3}
Let $N = 3$. For any $(\alpha,\gamma)$ satisfying 
 \eqref{plasmas3dalphagamma}, there exists an $\alpha,\gamma$-dependent constant $k\in A_{set}$ with $k > 3$ such that
\begin{equation}\label{3dalphagamma}
\frac{4k+3}{4k+6}< \alpha < 1 - \frac{1}{2k} ,
\end{equation}
and
\begin{equation}\label{3dalphagamma1}
\frac{4}{3} < \gamma < 3\alpha -1 + \frac{\alpha}{2k}. 
\end{equation}
\end{lema}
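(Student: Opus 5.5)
The plan is to turn each of the three required inequalities into an explicit bound on $k$, show that the admissible $k$ form a nonempty open interval inside $(3,\infty)$, and then pick $k$ from that interval using the density of $A_{\text{set}}$ in $(1,\infty)$, as in Lemma \ref{lem: def k}.

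\textbf{Step 1: rewrite the conditions as bounds on $k$.} For $k>0$ and $\alpha<1$:
\begin{itemize}
\item The condition $\alpha<1-\frac{1}{2k}$ is equivalent to $k>k_0:=\frac{1}{2(1-\alpha)}$.
\item Clearing the denominator in $\frac{4k+3}{4k+6}<\alpha$ gives $4k(1-\alpha)<6\alpha-3$. This is equivalent to $k<k_1:=\frac{6\alpha-3}{4(1-\alpha)}$.
\item For $\gamma<3\alpha-1+\frac{\alpha}{2k}$ there are two cases. If $\gamma\le 3\alpha-1$, it holds for every $k>0$, and we set $k_2:=+\infty$. If $\gamma>3\alpha-1$, it is equivalent to $k<k_2:=\frac{\alpha}{2(\gamma-3\alpha+1)}$.
\end{itemize}
The lower bound $\frac43<\gamma$ in \eqref{3dalphagamma1} is assumed directly in \eqref{plasmas3dalphagamma}. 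So the lemma reduces to finding $k\in A_{\text{set}}$ with
\[
\max(3,k_0)<k<\min(k_1,k_2).
\]

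\textbf{Step 2: check that this interval is nonempty.} Each of the four comparisons follows from \eqref{plasmas3dalphagamma}.
\begin{itemize}
\item $k_0<k_1$ is equivalent to $2<6\alpha-3$, that is, $\alpha>\frac56$.
\item $3<k_1$ is equivalent to $12(1-\alpha)<6\alpha-3$, that is, $\alpha>\frac56$.
\item When $k_2<\infty$, $k_0<k_2$ is equivalent to $\gamma-3\alpha+1<\alpha(1-\alpha)$, that is, $\gamma<4\alpha-1-\alpha^2$.
\item When $k_2<\infty$, $3<k_2$ is equivalent to $\gamma<3\alpha-1+\frac{\alpha}{6}$.
\end{itemize}
The last condition is the one I expect to need care, because it is not stated explicitly in the hypotheses. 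It does follow from the upper bound on $\gamma$:
\[
\bigl(4\alpha-1-\alpha^2\bigr)-\bigl(3\alpha-1+\tfrac{\alpha}{6}\bigr)=\alpha\bigl(\tfrac56-\alpha\bigr)<0 .
\]
Hence $\gamma<4\alpha-1-\alpha^2<3\alpha-1+\frac{\alpha}{6}$.

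\textbf{Step 3: conclude by density.} By Step 2, $I:=(\max(3,k_0),\min(k_1,k_2))$ is a nonempty open subinterval of $(1,\infty)$. Since $A_{\text{set}}$ is dense in $(1,\infty)$, there is some $k\in A_{\text{set}}\cap I$. Such a $k$ satisfies $k>3$, \eqref{3dalphagamma} and \eqref{3dalphagamma1}. It depends only on $\alpha$ and $\gamma$, since $k_0,k_1,k_2$ do.
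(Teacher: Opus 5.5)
Your proof is correct and follows essentially the same route as the paper. The paper sets $k_0=\frac{1}{2(1-\alpha)}$, checks that at $k=k_0$ the left inequality of \eqref{3dalphagamma} holds strictly and that $3\alpha-1+\frac{\alpha}{2k_0}=4\alpha-1-\alpha^2$, and then picks $k\in A_{\text{set}}$ slightly above $k_0$ by continuity and density; your interval $(\max(3,k_0),\min(k_1,k_2))$ just makes that admissible range explicit (and since $\alpha>\frac56$ already gives $k_0>3$, your separate check $3<k_2$ is redundant but harmless).
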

\begin{proof}
The first inequality in \eqref{3dalphagamma} is equivalent to $k <\frac{6\alpha-3}{4-4\alpha}$. A direct computation shows that
\begin{equation}\label{3dn3limt}
\frac{4n+3}{4n+6} < 1 - \frac{1}{2n} ,\quad \Longleftrightarrow \quad n > 3.
\end{equation}
$\frac{6\alpha-3}{4-4\alpha}$ is strictly increasing on $(\frac{5}{6},1)$. Set $k_0 = \frac{1}{2(1-\alpha)}$. Then \eqref{3dn3limt} yields
\[
\frac{4k_0+3}{4k_0+6} < \alpha = 1 - \frac{1}{2k_0},
\]
and \eqref{plasmas3dalphagamma}, gives
\[
\frac{4}{3}< \gamma < 3\alpha - 1 + \frac{\alpha}{2k_0}.
\]
By continuity of the relevant functions and the density of $A_{\text{set}}$ in $(1,\infty)$, we may select $k \in A_{\text{set}}$ such that $k > k_0$ and $k - k_0$ is sufficiently small, thereby ensuring that both \eqref{3dalphagamma} and \eqref{3dalphagamma1} hold.
\end{proof}
\begin{lema}\label{rhouw3dhigh}
Under the assumption of \eqref{plasmas3dalphagamma}, and let $k$ be defined as in Lemma \ref{2dlemma3}. Then there exists a constant 
\begin{equation}\label{sigma}
\frac{2k+2-(2k+3)\alpha}{k} < \sigma <\frac{1}{2k},
\end{equation}
such that
\begin{equation}\label{gamma}
\alpha - \frac{\alpha}{2k} + \sigma < \gamma < 3\alpha - 1 + \frac{\alpha - 1}{2k} + \sigma .
\end{equation}
Moreover, the following estimates hold:
\begin{equation}\label{rhouhigh}
\sup_{0\leq t\leq T}\int_{0}^{R}\rho u^{2k}r^{2}dr + \int_{0}^{T}\int_{0}^{R}\Big(\rho^{\alpha}u^{2k} + \rho^{\alpha}u_r^{2}u^{2k-2}r^{2}\Big)drdt \leq C(T)R_{T}^{2k\sigma},
\end{equation}
and
\begin{equation}\label{w-uhigh}
\sup_{0\leq t\leq T}\int_{0}^{R}\bigl|\partial_{r}\rho^{\alpha -1 + \frac{1}{2k}}\bigr|^{2k}r^{2}dr + \int_{0}^{T}\int_{0}^{R}\Big|\partial_{r}\rho^{\alpha -1 + \frac{\gamma - \alpha + 1}{2k}}\Big|^{2k}r^{2}drdt \leq C(T)R_{T}^{2k\sigma}.
\end{equation}
\end{lema}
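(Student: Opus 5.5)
The proof splits into two parts. First I check that an admissible $\sigma$ exists, which is pure algebra using Lemma \ref{2dlemma3}. Then I run the higher-integrability estimates \eqref{3du4} and \eqref{3dw4} with $n=m=k$. Every right-hand term will be bounded by a factor $R_T^{2k\sigma}$ times a quantity controlled by the weighted bound \eqref{w4rho1}.

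\emph{Choice of $\sigma$.} The interval \eqref{sigma} is nonempty iff $4k+4-(4k+6)\alpha<1$, i.e.\ $\alpha>\frac{4k+3}{4k+6}$, which is \eqref{3dalphagamma}. The upper inequality in \eqref{gamma} is equivalent to $\sigma>\gamma-3\alpha+1-\frac{\alpha-1}{2k}$. By \eqref{3dalphagamma1} this threshold is strictly below $\frac{1}{2k}$. The lower inequality in \eqref{gamma} holds automatically, since $\alpha-\frac{\alpha}{2k}+\sigma<\alpha+\frac{1-\alpha}{2k}<1<\frac43<\gamma$. So I take $\sigma$ in the nonempty interval between the larger of the two lower thresholds and $\frac{1}{2k}$. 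Also $k<n_3(\alpha)$ should hold, since $k$ is close to $\frac{1}{2(1-\alpha)}$; I expect this to reduce to the first inequality of \eqref{3dalphagamma}, but I would verify it explicitly.

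\emph{Estimate \eqref{rhouhigh}.} I apply \eqref{3du4} with $n=k$ and return to Eulerian variables via $dx=\rho r^2dr$. The pressure term becomes $\int\rho^{p}r^{2k}dr$ with $p=2k(\gamma-\alpha)+\alpha$. I write $\rho^{p}\le R_T^{2k\sigma}\rho^{p-2k\sigma}$, which needs $p-2k\sigma\ge0$; this is the lower bound in \eqref{gamma}. Then I use $\rho^{\alpha-\frac12}\le C(T)r^{-\frac12-\xi}$ from \eqref{w4rho1}. The resulting $r$-integral converges iff $\frac{p-2k\sigma}{2\alpha-1}<2k+1$ (up to $\xi$). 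A direct computation shows this is exactly $\gamma<3\alpha-1+\frac{\alpha-1}{2k}+\sigma$, the upper bound in \eqref{gamma}. The Poisson term $\int\rho^{(2k-1)(1-\alpha)+1}r^{6k(1-\frac{1}{6\alpha-3})}dr$ comes from the $x$–$r$ relation \eqref{xrbd3}. I treat it the same way, peeling off $R_T^{2k\sigma}$ and converting the remaining power of $\rho$ into a power of $r$ by \eqref{w4rho1}. The lower bound $\sigma>\frac{2k+2-(2k+3)\alpha}{k}$ should be precisely what makes the resulting $r$-exponent exceed $-1$. Gronwall then gives \eqref{rhouhigh}.

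\emph{Estimate \eqref{w-uhigh} and the main obstacle.} I apply \eqref{3dw4} with $m=k\in A_{set}$; Lemma \ref{mdense} is already built into it. The source term $\int r^{2k(1-\frac{1}{2\alpha-1})}dx$ is bounded using \eqref{w4rho1}, since $\alpha>\frac56$. For $\int\rho^{\gamma-\alpha}u^{2k}dx=\int\rho^{\gamma-\alpha+1}u^{2k}r^2dr$, I bound it by $\sup(\rho^{\gamma-2\alpha+1}r^2)\int\rho^\alpha u^{2k}dr$. If the weighted bound does not absorb $\rho^{\gamma-2\alpha+1}r^2$, the remainder is again split off as a power of $R_T$. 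This feeds into the dissipation already bounded in \eqref{rhouhigh}, yielding $C(T)R_T^{2k\sigma}$. Finally $r^2(\rho^\alpha)_x=w-u$, so $\int\rho\,|w-u|^{2k}r^2dr$ controls $\int|\partial_r\rho^{\alpha-1+\frac1{2k}}|^{2k}r^2dr$. The dissipation term similarly gives the time-integrated gradient of $\rho^{\alpha-1+\frac{\gamma-\alpha+1}{2k}}$. The delicate step is the exponent bookkeeping: all powers of $R_T$ must collapse to at most $2k\sigma$, and every $r$-integral must stay integrable near the origin. I expect the Poisson term with the $\frac{x}{r^2}$ singularity, and the $\rho^{\gamma-\alpha}u^{2k}$ coupling term, to be where the constraints \eqref{sigma}–\eqref{gamma} are sharp.
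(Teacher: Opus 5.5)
Your overall scheme matches the paper's. You apply \eqref{3du4} and \eqref{3dw4} with $n=m=k$, pass to Eulerian variables, peel off $R_T^{2k\sigma}$, and turn the remaining powers of $\rho$ into powers of $r$ via \eqref{w4rho1}. Several parts are correct:
\begin{itemize}
\item The existence argument for $\sigma$ is right.
\item The $u$-pressure computation is right; it reproduces exactly the two inequalities in \eqref{gamma}.
\item Your check that $k<n_3(\alpha)$ is right. It follows from $\alpha>\frac{4k+3}{4k+6}$, a point the paper leaves implicit.
\end{itemize}

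The gap is in where you place the lower bound of \eqref{sigma}. In the $u$-equation the Poisson term is harmless. With $q=(2k-1)(1-\alpha)+1$, the integral $\int_0^R\rho^{q}r^{6k(1-\frac{1}{6\alpha-3})}dr$ converges under \eqref{w4rho1} alone iff $\alpha>\frac{10k+1}{14k+1}$. This is automatic for $\alpha>\frac56$, so that term imposes no condition on $\sigma$.

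The term that actually needs $\sigma$ is the one you dismiss in the $w$-estimate, namely $\int_0^1 r^{2k(1-\frac{1}{2\alpha-1})}dx=\int_0^R\rho\, r^{2k\frac{2\alpha-2}{2\alpha-1}+2}dr$. If you use only $\rho\le C(T)r^{-\frac{1+2\xi}{2\alpha-1}}$, the $r$-exponent exceeds $-1$ iff $(4k+6)\alpha>4k+4+2\xi$, essentially $\alpha>\frac{2k+2}{2k+3}$. But Lemma \ref{2dlemma3} gives $\alpha<1-\frac{1}{2k}<\frac{2k+2}{2k+3}$. So this integral is not bounded by $C(T)$ that way, and ``$\alpha>\frac56$'' does not save it.

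The fix is what the paper does. Write $\rho\le R_T^{2k\sigma}\rho^{1-2k\sigma}$; the condition $\sigma<\frac{1}{2k}$ is what keeps $1-2k\sigma>0$, so \eqref{w4rho1} can be applied to the remainder. The resulting exponent $2k\frac{2\alpha-2}{2\alpha-1}+2-\frac{(1+2\xi)(1-2k\sigma)}{2\alpha-1}$ exceeds $-1$, for $\xi$ small, exactly when $\sigma>\frac{2k+2-(2k+3)\alpha}{k}$. This is where the lower bound in \eqref{sigma} comes from. It is also why \eqref{w-uhigh} carries $R_T^{2k\sigma}$ independently of the coupling term.

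For the coupling term $\int\rho^{\gamma-\alpha+1}u^{2k}r^2dr$, your fallback of splitting off a further power of $R_T$ is not allowed. Combined with the $R_T^{2k\sigma}$ coming from \eqref{rhouhigh}, it would push the total exponent above $2k\sigma$. Instead, verify that \eqref{w4rho1} absorbs $\rho^{\gamma-2\alpha+1}r^2$ outright. This needs $\frac{\gamma-2\alpha+1}{2\alpha-1}<2$, i.e.\ $\gamma<6\alpha-3$. That follows from $\gamma<4\alpha-1-\alpha^2$, because $\alpha^2+2\alpha-2>0$ for $\alpha>\frac56$.
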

\begin{proof}
Transforming \eqref{3du4} into Eulerian coordinates and then applying \eqref{w4rho1} and \eqref{sigma}, we obtain
\allowdisplaybreaks
\begin{align}
& \sup_{0\leq t\leq T}\int_{0}^{R}\rho u^{2k}r^{2}dr + \int_{0}^{T}\int_{0}^{R}\left(\rho^{\alpha}u^{2k} + \rho^{\alpha}u_r^{2}u^{2k - 2}r^{2}\right)drdt \nonumber\\
&\leq C + C\int_{0}^{T}\int_{0}^{R}\rho^{2k(\gamma -\alpha) + \alpha}r^{2k}drdt \nonumber\\
&\quad + \int_{0}^{T}\int_{0}^{R}\rho^{(2k-1)(1-\alpha)+1}r^{6k\left(1-\frac{1}{6\alpha-3}\right)}drdt \nonumber\\
&\leq C + C\sup_{0\leq t\leq T}\left\|\rho^{\alpha -\frac{1}{2}}r^{\frac{1}{2} +\xi}\right\|_{L^{\infty}(0,R)}^{\frac{2k(\gamma -\alpha) + \alpha -2k\sigma}{\alpha-\frac{1}{2}}}R_{T}^{2k\sigma} \int_{0}^{T}\int_{0}^{R} r^{2k - \left(\frac{1}{2}+\xi\right)\frac{2k(\gamma -\alpha) + \alpha - 2k\sigma}{\alpha-\frac{1}{2}}} \, dr \, dt \nonumber\\
&\quad + C\left\|\rho^{\alpha-\frac{1}{2}}r^{\frac{1}{2}+\xi}\right\|_{L^{\infty}(0,R)}^{\frac{(1-\alpha)(2k-1)+1-2k\sigma}{\alpha-\frac{1}{2}}} R_{T}^{2k\sigma}  \int_{0}^{T}\int_{0}^{R} r^{6k\left(1-\frac{1}{6\alpha-3}\right) - \left(\frac{1}{2}+\xi\right)\frac{(1-\alpha)(2k-1)+1-2k\sigma}{\alpha-\frac{1}{2}}} \, dr \, dt \nonumber\\
&\leq C(T)R_{T}^{2k\sigma},
\end{align}
where $\xi > 0$ is a sufficiently small constant satisfying
\begin{equation}
\left\{
\begin{aligned}
&2k - \frac{2k(\gamma - \alpha) + \alpha - 2k\sigma}{\alpha - 1 / 2}\left(\frac{1}{2} +\xi\right) > -1, \\[6pt]
&6k\left(1-\frac{1}{6\alpha-3}\right) - \left(\frac{1}{2}+\xi\right)\frac{(1-\alpha)(2k-1)+1-2k\sigma}{\alpha-\frac{1}{2}} > -1.
\end{aligned}
\right.
\end{equation}
The existence of such $\xi$ follows from \eqref{sigma} and \eqref{gamma}. We thus obtain \eqref{rhouhigh}.
We now prove \eqref{w-uhigh}. Using \eqref{w4rho1} and \eqref{rhouhigh}, we obtain
\begin{equation}
\begin{aligned}
& \sup_{0\leq t\leq T}\int_{0}^{R}|\partial_{r}\rho^{\alpha -1 + \frac{1}{2k}}|^{2k}r^{2}dr + \int_{0}^{T}\int_{0}^{R}|\partial_{r}\rho^{\alpha -1 + \frac{\gamma - \alpha + 1}{2k}}|^{2k}r^{2}drdt \\
\leq &C\sup_{0\leq t\leq T}\int_{0}^{R}\rho u^{2k}r^{2}dr +C\sup_{0\leq t\leq T}\int_{0}^{R}\rho (u+\rho^{-1}(\rho^{\alpha})_r)^{2k}r^{2}dr\\
&+ C\int_{0}^{T}\int_{0}^{R}|\partial_{r}\rho^{\alpha -1 + \frac{\gamma - \alpha + 1}{2k}}|^{2k}r^{2}drdt  \\
\leq& C(T)R_{T}^{2k\sigma} + C\int_{0}^{T}\int_{0}^{R}\rho^{\alpha}u^{2k}\rho^{\gamma -2\alpha +1}r^{2}drdt \\
&+C\int_{0}^{T}\int_{0}^{R}\rho r^{2k(1-\frac{1}{2\alpha-1})+2}drdt\\
\leq& C(T)R_{T}^{2k\sigma} + C\sup_{0\leq t\leq T}\left\|\rho^{\alpha -\frac{1}{2}}r^{\frac{1}{2} +\xi}\right\|_{L^{\infty}(0,R)}^{\frac{\gamma-2\alpha+1}{\alpha-\frac{1}{2}}}\int_{0}^{T}\int_{0}^{R}\rho^{\alpha}u^{2k}drdt \\
&+C\sup_{0\leq t\leq T}\Big\|\rho^{\alpha-\frac{1}{2}}r^{\frac{1}{2}+\xi}\Big\|_{L^\infty}^{{\frac{1-2k\sigma}{\alpha-\frac{1}{2}}}}R_T^{2k\sigma}\int_{0}^{T}\int_{0}^{R}r^{2k(1-\frac{1}{2\alpha-1})+2-(\frac{1}{2}+\xi)\frac{(1-2k\sigma)}{\alpha-\frac{1}{2}}}drdt\\
\leq& C(T)R_{T}^{2k\sigma},
\end{aligned}
\end{equation}
where we used $\gamma - 2\alpha + 1 > 0$, and $\xi$ is taken sufficiently small so that
\begin{equation}
\left\{
\begin{aligned}
&2k\left(1-\frac{1}{2\alpha-1}\right)+2-\left(\frac{1}{2}+\xi\right)\frac{(1-2k\sigma)}{\alpha-\frac{1}{2}} > -1, \\[6pt]
&\left(\frac{\gamma - 2\alpha + 1}{\alpha - 1 / 2}\right)\left(\frac{1}{2} +\xi\right) < 2.
\end{aligned}
\right.
\end{equation}
The existence of such $\xi$ is guaranteed by \eqref{3dalphagamma} and \eqref{sigma}. This completes the proof of Proposition \ref{rhouw3dhigh}.
\end{proof}
\begin{prop}
\label{prop:3.9}
Under the assumption of \eqref{plasmas3dalphagamma}, there exists a constant $C(T)>0$ such that
\[
R_{T} \leq C(T),
\]
where $R_{T} := \displaystyle\sup_{0\leq t\leq T}\|\rho(t)\|_{L^{\infty}(\Omega)} + 1$.
\end{prop}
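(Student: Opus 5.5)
We will follow the two-dimensional argument for the upper bound, replacing the constant bounds of Lemma \ref{rhouw2dhigh} by the $R_T^{2k\sigma}$ bounds of Lemma \ref{rhouw3dhigh}. The starting point is the one-dimensional Sobolev inequality applied to a power $\rho^{\beta}$ on $(0,R)$:
\[
\|\rho^{\beta}\|_{L^\infty(0,R)}\le \int_0^R\rho^{\beta}\,dr+\int_0^R|\partial_r\rho^{\beta}|\,dr=:G_1+G_2 .
\]
We choose $\beta>0$ (a candidate is $\beta=\alpha$, or $\beta=\alpha-1+\frac1{2k}+\delta$ for a suitable $\delta$) so that the chain below closes.

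The term $G_1$ is of lower order. We write $\rho^{\beta}=\rho^{\beta}r^{2/p}r^{-2/p}$ and use H\"older. The factor $\int_0^R\rho^{p\beta}r^2\,dr$ is controlled by \eqref{6alpha-3t}, which gives $\rho\in L^{6\alpha-3}(\Omega)$, together with the energy bound $\rho\in L^{\gamma}(\Omega)$ from \eqref{b100-twocase}. If $p\beta$ exceeds these exponents, we interpolate with $R_T$. This yields $G_1\le C(T)+C(T)R_T^{\theta_1}$ with $\theta_1<\beta$.

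For $G_2$ we write $\partial_r\rho^{\beta}=c\,\partial_r\rho^{\alpha-1+\frac1{2k}}\,\rho^{\beta-\alpha+1-\frac1{2k}}$ and apply H\"older with the weight $r^{2}$:
\[
G_2\le C\Big(\int_0^R|\partial_r\rho^{\alpha-1+\frac1{2k}}|^{2k}r^2dr\Big)^{\frac1{2k}}\Big(\int_0^R\rho^{(\beta-\alpha+1-\frac1{2k})\frac{2k}{2k-1}}r^{-\frac{2}{2k-1}}dr\Big)^{\frac{2k-1}{2k}}.
\]
The first factor is at most $C(T)R_T^{\sigma}$ by \eqref{w-uhigh}. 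The singular weight $r^{-2/(2k-1)}$ is integrable because $k>3$. In the second factor we take $\sup\rho$ for the excess power, or split off a small integrable part using $\rho\in L^{6\alpha-3}$ and the weighted bound \eqref{w4rho1}, namely $\rho^{\alpha-\frac12}\le C(T)r^{-\frac12-\xi}$. This gives $G_2\le C(T)R_T^{\sigma+(\beta-\alpha+1-\frac1{2k})-\eta}$ for some small gain $\eta\ge0$.

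Combining the two bounds gives
\[
R_T^{\beta}\le C(T)R_T^{\sigma'}+C(T),\qquad \sigma'=\max\Big(\theta_1,\ \sigma+\beta-\alpha+1-\tfrac1{2k}-\eta\Big).
\]
The inequality closes, via Young, provided $\sigma'<\beta$. This reduces essentially to $\sigma<\alpha-1+\frac1{2k}+\eta$.

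This exponent bookkeeping is the main obstacle. The admissible window \eqref{sigma} only guarantees $\sigma<\frac1{2k}$, while $\alpha-1<0$, so the crude estimate of the second factor by $R_T$ is not enough. The loss must be recovered from the spatial integrability. We would first try to bound the second factor without any $\sup\rho$: use the weighted estimate \eqref{w4rho1} near the origin, and away from the origin the bound $r\ge C_0$ from \eqref{rlowupper} together with the $L^{6\alpha-3}$ bound. The aim is for the only $R_T$-dependence to be the $R_T^{\sigma}$ from \eqref{w-uhigh}, and for a positive power of $\rho$ to survive in the first factor. To achieve this we would choose $\beta$ close to $\alpha-1+\frac1{2k}$, the smallest power controlled directly by \eqref{w-uhigh}. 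Then $R_T^{\beta}\le C(T)R_T^{\sigma}+C(T)$, which closes because the strict inequality $\sigma<\frac1{2k}$ in \eqref{sigma} combined with \eqref{3dalphagamma} should give $\sigma<\beta$. The constraints \eqref{sigma}–\eqref{gamma} were set up precisely so that this comparison holds.
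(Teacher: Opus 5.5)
You have the right framework, and it is the paper's: the one-dimensional Sobolev inequality for $\rho^\beta$, H\"older in $G_2$ against the weighted $L^{2k}$ bound \eqref{w-uhigh} for $\partial_r\rho^{\alpha-1+\frac1{2k}}$ (which supplies $R_T^{\sigma}$), and removing the leftover power of $\rho$ with \eqref{w4rho1} instead of $\sup\rho$. But your closing step fails.

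By \eqref{3dalphagamma}, $\alpha<1-\frac1{2k}$, so $\alpha-1+\frac1{2k}<0$. Taking $\beta$ near this value makes $\rho^\beta$ a negative power of $\rho$, whose $L^\infty$ norm says nothing about $R_T$. It also means $G_1=\int_0^R\rho^\beta\,dr$ is no longer controlled by the $L^{6\alpha-3}$ bound. Moreover, the lower end of \eqref{sigma} is $\frac{2k+2-(2k+3)\alpha}{k}>0$, because $\alpha<1-\frac1{2k}<\frac{2k+2}{2k+3}$. Hence $\sigma>0>\alpha-1+\frac1{2k}$, and the inequality $\sigma<\beta$ you invoke is false. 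Your other candidate, $\beta=\alpha$, also fails (see below).

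The missing point is that you do not need $\beta$ small. Write the power of $\rho$ in the second H\"older factor as $\rho^{p}$ with $p=\frac{2k(\beta-\alpha)+2k-1}{2k-1}$, and bound all of it by \eqref{w4rho1}, i.e.\ $\rho^p\le C(T)\,r^{-p\frac{1+2\xi}{2\alpha-1}}$. Then that factor is bounded by $C(T)$, with no $R_T$ at all, for every $\beta>0$ (which guarantees $p>0$) such that the resulting power of $r$ is integrable at $0$. A direct computation shows that, for small $\xi$, this integrability holds exactly when $\beta<(3\alpha-2)(1-\frac1k)$.

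Likewise, $G_1\le C(T)$ follows directly from \eqref{6alpha-3t} and H\"older once $\beta<2\alpha-1$, with no interpolation against $R_T$. So take
\[
\sigma<\beta<\min\Big\{2\alpha-1,\ (3\alpha-2)\Big(1-\frac1k\Big)\Big\}.
\]
This interval is nonempty. Indeed $\sigma<\frac1{2k}$ and $2\alpha-1>\frac23$. Also $\alpha>\frac{4k+3}{4k+6}$ together with $k>3$ gives $(3\alpha-2)(1-\frac1k)>\frac{(4k-3)(k-1)}{k(4k+6)}\ge\frac1{2k}$. Both upper bounds lie below $\alpha$, which is why $\beta=\alpha$ cannot work here. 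With this choice, $R_T^\beta\le C(T)+C(T)R_T^\sigma$ and $\beta>\sigma$, so the estimate closes; this is the paper's argument.
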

\begin{proof}
Choose $\beta$ such that
$$\sigma<\beta<(3\alpha-2)\Big(1-\frac{1}{2k}\Big),$$
where $\sigma$ is given in Proposition \ref{rhouw3dhigh}.
In view of \eqref{3dalphagamma}, one readily verifies that $\beta$ satisfies
\begin{equation}\label{betalimt}
\max \Big\{0,\alpha - 1 + \frac{1}{2k}\Big\} < \beta < \min \Big\{2\alpha - 1,\,(3\alpha - 2)\Big(1 - \frac{1}{k}\Big)\Big\}. 
\end{equation}
By the Sobolev embedding theorem in one dimension, we have
\begin{equation}
    \begin{aligned}
\left\lVert \rho^{\beta} \right\rVert _{L^{\infty}(0,R)}&\leq \int^{R}_{0}\rho^{\beta}dr+\int^{R}_{0}|\partial_{r}\rho^{\beta}|dr:=K_{1}+K_{2}.
\end{aligned}
\end{equation}
Using \eqref{6alpha-3t} and H\"older's inequality yields
\begin{equation}\begin{aligned}\label{rhobeta00}
K_{1}&= \int^{R}_{0}\rho^{\beta}dr=\int^{R}_{0}\Big(\rho^{6\alpha-3}r^{2}\Big)^{\frac{\beta}{6\alpha-3}}r^{- \frac{2\beta}{6\alpha-3}}dr\\
&\leq\left( \int^{R}_{0}\rho^{6\alpha-3}r^{2}dr \right)^{\frac{\beta}{6\alpha-3}}\left( \int^{R}_{0}r^{- \frac{2\beta}{6\alpha-\beta-3}}dr \right)^{\frac{6\alpha-\beta-3}{6\alpha-3}}\\
&\leq C(T),
\end{aligned}\end{equation}
where we use \eqref{betalimt}.

H\"older's inequality and \eqref{w-uhigh} gives that
\begin{equation}\begin{aligned}\label{rhobeta200}
K_{2}&= \int^{R}_{0} |\beta \rho^{\beta-1}\rho_{r}|dr\\
&\leq C\left( \int^{R}_{0}|\left( \rho^{\alpha-1+\frac{1}{2k}} \right)_r|^{2k}r^{2}dr \right)^{\frac{1}{2k}}\left( \int^{R}_{0}\left( \rho^{\beta+1-\alpha-\frac{1}{2k}}r^{- \frac{1}{k}} \right)^{\frac{2k}{2k-1}} dr\right)^{\frac{2k-1}{2k}}\\
&\leq C(T)(R_{T}^{2k \sigma})^{\frac{1}{2k}}\left( \int^{R}_{0}\rho^{(\beta+1-\alpha-\frac{1}{2k})\frac{2k}{2k-1}}r^{- \frac{2}{2k-1}}dr \right)^{\frac{2k-1}{2k}}\\
&\leq C(T)R_{T}^{\sigma}\left\lVert \rho^{\alpha-\frac{1}{2}}r^{\frac{1}{2}+\xi} \right\rVert _{L^{\infty}}^{\frac{\beta-\alpha+\frac{2k-1}{2k}}{\alpha-\frac{1}{2}}}\left( \int^{R}_{0}r^{- \frac{2}{2k-1}- (\frac{1}{2}+\xi) \frac{\frac{2k}{2k-1}(\beta-\alpha)+1}{\alpha-\frac{1}{2}}}dr \right)^{\frac{2k-1}{2k}}\\
&\leq C(T) R_{T}^{\sigma},
\end{aligned}\end{equation} 
where $\xi$ is chosen sufficiently small such that
\begin{equation}
    - \frac{2}{2k-1}- (\frac{1}{2}+\xi)\frac{\frac{2k}{2k-1}(\beta-\alpha)+1}{\alpha-\frac{1}{2}}> -1\implies \xi< \frac{(2k-1)(2k-3)-(2k(\beta-\alpha)2k-1)}{2(2k(\beta-\alpha)+2k-1)}.
\end{equation}
Such a $\xi$ exists due to \eqref{betalimt}.
We deduce from \eqref{rhobeta200} and \eqref{rhobeta00} that
\begin{equation}
    R_{T}^{\beta}\leq C(T)+C(T)R_{T}^{\sigma}\implies R_{T}\leq C(T),\beta>\sigma.
\end{equation}
\end{proof}
\begin{prop}
Under the assumption of \eqref{plasmas3dalphagamma}, there exists a constant $C(T)>0$ such that
\begin{equation}\label{lowerdensity00}
V_{T}\leq C(T). 
\end{equation}
\end{prop}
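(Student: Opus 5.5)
We follow the strategy of Proposition \ref{2dlowerdensity}, adapted to the $r^2$ weights of the three-dimensional setting. Since Proposition \ref{prop:3.9} gives $R_T\le C(T)$, the bound \eqref{w-uhigh} becomes
\[
\sup_{0\le t\le T}\int_0^R\bigl|\partial_r\rho^{\alpha-1+\frac{1}{2k}}\bigr|^{2k}r^2\,dr\le C(T).
\]
In Lagrangian variables this reads $\sup_\tau\int_0^1 |r^2(\rho^\alpha)_x|^{2k}\rho^{?}\,dx\le C(T)$ for an explicit nonnegative power of $\rho$. Because $\rho$ is bounded above, we may convert freely between powers of $\rho$ sitting on the good side.

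Set $v=1/\rho$. The continuity equation gives $v_\tau=(r^2u)_x$, and hence $\int_0^1 v\,dx=\int_0^1 v_0\,dx\le C$. By the one-dimensional Sobolev embedding,
\[
v(x,\tau)\le C+\int_0^1|v_x|\,dx=C+\frac1\alpha\int_0^1 |r^2(\rho^\alpha)_x|\,r^{-2}v^{\alpha+1}\,dx .
\]
We apply H\"older's inequality with exponents $2k$ and $\frac{2k}{2k-1}$. The factor $|r^2(\rho^\alpha)_x|$, together with the appropriate power of $\rho$ or $v$, is controlled by the $L^{2k}$ bound above. What remains is
\[
\Bigl(\int_0^1 r^{-\frac{4k}{2k-1}}v^{(\alpha+1-\theta)\frac{2k}{2k-1}}\,dx\Bigr)^{\frac{2k-1}{2k}}.
\]
We split this factor once more by H\"older's inequality. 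One piece is $\int_0^1 v\,r^{-s}\,dx=\int_0^R r^{2-s}\,dr$, which is finite provided $s<3$. The other piece is a power of $\sup v\le V_T$. This yields
\[
v\le C+C(T)V_T^{\alpha+\frac1{2k}-\delta}\quad\text{or}\quad v\le C+C(T)V_T^{\theta'}
\]
with an exponent $\theta'<1$. Taking the supremum over $(x,\tau)\in[0,1]\times[0,T]$ and applying Young's inequality then gives $V_T\le C(T)$. The condition $\alpha<1-\frac1{2k}$ in \eqref{3dalphagamma} ensures that $\alpha+\frac1{2k}<1$, just as in two dimensions.

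The main obstacle is the singular weight $r^{-2}$ near the origin. After H\"older's inequality we must integrate $r^{-\frac{4k}{2k-1}}$ against a power of $v$. The trick of absorbing $dx=\rho r^2dr$, i.e.\ using $v\,dx=r^2dr$, gives integrability only when $\frac{4k}{2k-1}\cdot(\text{fraction})<3$. We will therefore distribute the exponents so that exactly one factor of $v$ is paired with the weight. If the weight is still too strong, we fall back on the upper bound $\rho\le C(T)$, i.e.\ $v\ge c$, together with the relation \eqref{xrbd3} between $x$ and $r$. That relation gives $r\ge c\,x^{1/(3(1-\frac1{6\alpha-3}))}$, which turns powers of $r^{-1}$ into integrable powers of $x^{-1}$ once $k>3$ is large enough.

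Alternatively, if $x$–$r$ control is insufficient, we can use the pointwise structure $\|r^2\rho_x\|_{L^\infty}$ mentioned in the introduction. This would be obtained from $L^\infty$ bounds on $u$ and $w$, via Sobolev estimates on $v^\beta$. In either route, the closing mechanism is the sublinear exponent of $V_T$.
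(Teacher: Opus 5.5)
Your proposal is correct and follows the paper's argument: conservation of $\int_0^1 v\,dx$, the one-dimensional Sobolev bound, H\"older against $\int_0^1|r^2(\rho^\alpha)_x|^{2k}dx=C\int_0^R|\partial_r\rho^{\alpha-1+\frac1{2k}}|^{2k}r^2\,dr\le C(T)$ (the power of $\rho$ you left as ``?'' is exactly zero, and $R_T\le C(T)$ removes the factor $R_T^{2k\sigma}$ in \eqref{w-uhigh}), absorption of the singular weight through $v\,dx=r^2\,dr$, and closing with $V_T^{\alpha+\frac1{2k}}$ because $\alpha+\frac1{2k}<1$ by \eqref{3dalphagamma}. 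The integrability check you left open is immediate, since $\frac{4k}{2k-1}<3$ iff $k>\frac32$ and here $k>3$; so bounding $v^{(\alpha+1)\frac{2k}{2k-1}}\le V_T^{(\alpha+1)\frac{2k}{2k-1}-1}v$ already finishes the proof, and neither fallback is needed (the $x$--$r$ relation, or $L^\infty$ bounds on $u,w$, which are not established in the $\alpha<1$ section).
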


\begin{proof}
Let $v(x,\tau) = \frac{1}{\rho(x,\tau)}$. By $\eqref{eq:NSP-lagrangian-rhoalpha}_1$, we know that $v$ satisfies the equation $v_{\tau} = (r^{2}u)_{x}$. Integrating this equation over $(0,1)\times(0,\tau)$, we obtain
\begin{equation}\label{3dv1}
\int_{0}^{1}v(x,\tau)dx = \int_{0}^{1}v_{0}(x)dx \leq C. 
\end{equation}
Let $\xi > 0$ be chosen small enough such that $\xi < \frac{2k-3}{4k}$. Applying \eqref{3dv1}, \eqref{w-uhigh} and the one-dimensional Sobolev embedding yields
\begin{equation}
\begin{aligned}
v(x,\tau) &\leq \int_{0}^{1}vdx + \int_{0}^{1}|\partial_{x}v|dx \\
&\leq C + C\int_{0}^{1}|(\rho^{\alpha})_x|v^{\alpha+1}dx \\
&\leq C + C\left(\int_{0}^{1}|(\rho^{\alpha})_x|^{2k}r^{4k}dx\right)^{\frac{1}{2k}}\left(\int_{0}^{1}vr^{-\frac{3}{\xi+1}}dx\right)^{\frac{2+2\xi}{3}}\left(\int_{0}^{1}v^{(\alpha+\frac{1}{3}-\frac{2\xi}{3})\frac{6k}{2k-3-4k\xi}}dx\right)^{\frac{2k-3-4k\xi}{6k}} \\
&\leq C + C(T)\left(\int_{0}^{1}v^{(\alpha+\frac{1}{3}-\frac{2\xi}{3})\frac{6k}{2k-3-4k\xi}-1+1}dx\right)^{\frac{2k-3-4k\xi}{6k}} \\
&\leq C(T)V_{T}^{\alpha+\frac{1}{2k}}.
\end{aligned}
\end{equation}
Taking the supremum of the above expression over $[0,1]\times[0,T]$ and applying Young's inequality together with \eqref{w-uhigh}, we obtain \eqref{lowerdensity00}.
\end{proof}

\subsection{A priori estimates: Higher-Order estimates}
\label{sec-4}
In this subsection, we establish higher-order estimates for the local classical solution $(\rho,u,\Phi)$. We always take $N=2$ or $3$ with $\frac{N}{N-1} < \alpha \leq 1$. 
\begin{lema}
Let $N=2,$ $l>1,$ or $N=3,$ $l\in(1.5,3),$ and assume that
\begin{equation}\label{assumption of lem5.1}
 \begin{aligned} &\sup_{0\leq t\leq T} \left( \|\mathbf{u}\|_{L^{2l}} + \|\rho\|_{L^\infty} + \|\rho^{-1}\|_{L^\infty} + \|\nabla\rho\|_{L^2} + \|\nabla\rho\|_{L^{2l}} \right) \\&\qquad + \int_0^T \left( \|\nabla\rho\|_{L^2}^2 + \|\nabla\rho\|_{L^{2l}}^{2l} + \|\nabla\mathbf{u}\|_{L^2}^2 \right) dt \leq C(T). \end{aligned} 
\end{equation}
Then there exists a constant \(C(T) > 0\) such that
\begin{equation}\label{LOW1}
\sup_{0\leq t\leq T} \|\nabla\mathbf{u}\|_{L^2}^2 + \int_0^T \int_\Omega \rho |\dot{\mathbf{u}}|^2 \, \mathrm{d}\mathbf{x} \, dt \leq C(T).
\end{equation}
\end{lema}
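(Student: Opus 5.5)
We test the momentum equation in Eulerian form against $\dot{\mathbf u}=\mathbf u_t+\mathbf u\cdot\nabla\mathbf u$, in the spirit of the standard $H^1$ estimate for variable-viscosity flows. First we rewrite the viscous term, using $\mu=\rho^\alpha$ and $\lambda=(\alpha-1)\rho^\alpha$, as
\[
\mathrm{div}(\rho^\alpha\mathcal D\mathbf u)+\nabla((\alpha-1)\rho^\alpha\mathrm{div}\mathbf u)=\rho^\alpha\big(\tfrac12\Delta\mathbf u+(\alpha-\tfrac12)\nabla\mathrm{div}\mathbf u\big)+\mathcal D\mathbf u\,\nabla\rho^\alpha+(\alpha-1)\mathrm{div}\mathbf u\,\nabla\rho^\alpha .
\]
The principal part thus has the positive, bounded coefficient $\rho^\alpha$, which is legitimate because $C(T)^{-1}\le\rho\le C(T)$ by \eqref{assumption of lem5.1}. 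The first-order remainder is $\nabla\rho\cdot\nabla\mathbf u$-type. Multiplying by $\dot{\mathbf u}$ and integrating, the time derivative falls on $\frac12\int(\rho^\alpha|\mathcal D\mathbf u|^2+\lambda(\rho)(\mathrm{div}\mathbf u)^2)$. Since $\alpha>\frac{N-1}{N}$, we have $\mu+N\lambda>0$, so this energy is equivalent to $\|\nabla\mathbf u\|_{L^2}^2$ (Korn, $\mathbf u\in H_0^1$).

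The commutator terms coming from $\partial_t\rho^\alpha$ and $\mathbf u\cdot\nabla$ are of the form $\int|\nabla\mathbf u|^3$ or $\int\rho^\alpha_t|\nabla\mathbf u|^2$, with $\rho^\alpha_t=-\mathbf u\cdot\nabla\rho^\alpha-\alpha\rho^\alpha\mathrm{div}\mathbf u$. The remaining forcing terms are:
\begin{itemize}
\item the pressure term $\int\nabla P\cdot\dot{\mathbf u}$, bounded by $\|\nabla\rho\|_{L^2}\|\sqrt\rho\dot{\mathbf u}\|_{L^2}$;
\item the potential term $\int\rho\nabla\Phi\cdot\dot{\mathbf u}$, where $\|\nabla\Phi\|_{H^1}\le C\|\rho-1\|_{L^2}\le C$ by Lemma \ref{lem:poisson-neumann};
\item the lower-order viscous terms $\int|\nabla\rho||\nabla\mathbf u||\dot{\mathbf u}|$.
\end{itemize}
All of these are absorbed with $\varepsilon\|\sqrt\rho\dot{\mathbf u}\|^2$.

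The crux is controlling $\|\nabla\mathbf u\|_{L^3}^3$ and $\|\nabla\rho\,\nabla\mathbf u\|_{L^2}$ via elliptic regularity. Treat
\[
\tfrac12\Delta\mathbf u+(\alpha-\tfrac12)\nabla\mathrm{div}\mathbf u=\rho^{-\alpha}\big(\rho\dot{\mathbf u}+\nabla P+\rho\nabla\Phi-(\text{first-order terms})\big)
\]
as a Lamé system with Dirichlet data. This gives
\[
\|\nabla^2\mathbf u\|_{L^2}\le C\big(\|\sqrt\rho\dot{\mathbf u}\|_{L^2}+1+\|\nabla\rho\|_{L^{2l}}\|\nabla\mathbf u\|_{L^{2l/(l-1)}}\big).
\]
Gagliardo--Nirenberg then gives $\|\nabla\mathbf u\|_{L^{2l/(l-1)}}\le C\|\nabla\mathbf u\|_{L^2}^{1-\theta}\|\nabla^2\mathbf u\|_{L^2}^{\theta}$ with $\theta=N/(2l)<1$. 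This is exactly where $l>1$ is needed in 2D and $l>3/2$ in 3D. Young's inequality absorbs the $\|\nabla^2\mathbf u\|$ term, yielding
\[
\|\nabla^2\mathbf u\|_{L^2}\le C\big(\|\sqrt\rho\dot{\mathbf u}\|_{L^2}+1+\|\nabla\rho\|_{L^{2l}}^{1/(1-\theta)}\|\nabla\mathbf u\|_{L^2}\big).
\]
In 2D, $1/(1-\theta)=2l/(2l-2)$. Because only $\int_0^T\|\nabla\rho\|_{L^{2l}}^{2l}dt$ is assumed finite, we do not use this power-dependent bound as it stands. Instead, the time-uniform bound $\sup_t\|\nabla\rho\|_{L^{2l}}\le C$ in \eqref{assumption of lem5.1} makes that factor a constant, so we get
\[
\|\nabla^2\mathbf u\|_{L^2}\le C(\|\sqrt\rho\dot{\mathbf u}\|_{L^2}+1+\|\nabla\mathbf u\|_{L^2}).
\]
This is the step I expect to be most delicate, especially checking that the upper bound $l<3$ in 3D is compatible with the interpolation exponents.

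With this, the cubic term is handled as
\[
\|\nabla\mathbf u\|_{L^3}^3\le C\|\nabla\mathbf u\|_{L^2}^{3-N/2}\|\nabla^2\mathbf u\|_{L^2}^{N/2}\le\varepsilon\|\sqrt\rho\dot{\mathbf u}\|^2+C(1+\|\nabla\mathbf u\|_{L^2}^{2})\|\nabla\mathbf u\|_{L^2}^{2}.
\]
In 3D this Young step produces $\|\nabla\mathbf u\|_{L^2}^6$, i.e.\ an extra factor $\|\nabla\mathbf u\|_{L^2}^4$ rather than $\|\nabla\mathbf u\|_{L^2}^2$. For that case I would instead keep $\|\nabla\mathbf u\|_{L^2}^2$ as the Gronwall coefficient, using $\int_0^T\|\nabla\mathbf u\|_{L^2}^2\le C$. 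Spherical symmetry also lets one rewrite $\nabla\mathbf u$ through $u_r$ and $u/r$, where the one-dimensional structure improves the interpolation.

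Collecting everything gives
\[
\frac{d}{dt}E(t)+\tfrac12\|\sqrt\rho\dot{\mathbf u}\|_{L^2}^2\le C(1+\|\nabla\mathbf u\|_{L^2}^2+\|\nabla\rho\|_{L^{2l}}^{2l})\,E(t)+C,
\]
where $E\simeq\|\nabla\mathbf u\|_{L^2}^2$. The coefficient is integrable in time by \eqref{assumption of lem5.1}. Gronwall's inequality then yields \eqref{LOW1}.
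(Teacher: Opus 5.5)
\textbf{There is a genuine gap in the case $N=3$, at the cubic terms.} Estimating $\int_\Omega|\nabla\mathbf u|^3\,\mathrm{d}\mathbf x$ directly by Gagliardo--Nirenberg gives $\|\nabla\mathbf u\|_{L^2}^{3/2}\|\nabla^2\mathbf u\|_{L^2}^{3/2}$. After inserting your $H^2$ bound and Young's inequality, this becomes $\varepsilon\|\sqrt\rho\dot{\mathbf u}\|_{L^2}^2+C\|\nabla\mathbf u\|_{L^2}^6$, i.e.\ $E'\lesssim E^3$.

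Your fallback of keeping $\|\nabla\mathbf u\|_{L^2}^2$ as the Gronwall coefficient is not available. It would require $E^3\lesssim E^2$, while the only time-integrable information you have is $\int_0^T E\,dt\le C(T)$. The resulting differential inequality can therefore blow up in finite time. The remark about spherical symmetry is not carried out and does not obviously help, since near $r=0$ the interpolation is genuinely three-dimensional.

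You also list the term $\int|\mathbf u||\nabla\rho^\alpha||\nabla\mathbf u|^2$ coming from $\rho^\alpha_t$ but never estimate it. In 2D your argument does close, because there the cubic term only gives $E'\lesssim(1+E)E$. So the failure is specific to three dimensions.

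\textbf{The missing ingredient is the hypothesis $\sup_t\|\mathbf u\|_{L^{2l}}\le C(T)$, which your proposal never uses.} It is a Serrin-type condition ($2l>N$), and it is what makes the $H^1$ estimate linear. Since $\mathbf u=0$ on $\partial\Omega$, each cubic term can be integrated by parts so that one factor is an undifferentiated $\mathbf u$, for example
\[
\int\mu\,\partial_iu_j\partial_iu_k\partial_ku_j\,\mathrm{d}\mathbf x=-\int u_j\,\partial_k(\mu\,\partial_iu_j\partial_iu_k)\,\mathrm{d}\mathbf x .
\]
Hence all nonlinear viscous contributions, including both parts of $\int\rho^\alpha_t|\nabla\mathbf u|^2$, are bounded by $C\int(|\mathbf u||\nabla\mathbf u||\nabla^2\mathbf u|+|\nabla\rho||\mathbf u||\nabla\mathbf u|^2)\,\mathrm{d}\mathbf x$. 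By H\"older and Gagliardo--Nirenberg with exponent $N/(2l)$, this is at most
\[
C(T)\|\mathbf u\|_{L^{2l}}\Big(\|\nabla\mathbf u\|_{L^{\frac{2l}{l-1}}}\|\nabla^2\mathbf u\|_{L^2}+\|\nabla\rho\|_{L^{2l}}\|\nabla\mathbf u\|_{L^{\frac{2l}{l-1}}}^2\Big)\le C(T)\Big(\|\nabla\mathbf u\|_{L^2}^{1-\frac{N}{2l}}\|\mathbf u\|_{H^2}^{1+\frac{N}{2l}}+\|\nabla\mathbf u\|_{L^2}^{2-\frac{N}{l}}\|\mathbf u\|_{H^2}^{\frac{N}{l}}\Big).
\]
Since $1+\frac{N}{2l}<2$, Young's inequality together with your bound $\|\mathbf u\|_{H^2}\le C(\|\sqrt\rho\dot{\mathbf u}\|_{L^2}+\|\nabla\mathbf u\|_{L^2}+1)$ turns this into $\frac18\|\sqrt\rho\dot{\mathbf u}\|_{L^2}^2+C(T)(1+\|\nabla\mathbf u\|_{L^2}^2)$. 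The resulting inequality is linear in $E$. Integrating in time and using $\int_0^T\|\nabla\mathbf u\|_{L^2}^2\,dt\le C(T)$ then gives \eqref{LOW1} without any Gronwall argument. This is how the paper treats $Q_2$ and $Q_3$.
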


\begin{proof}
In the radially symmetric setting, we have \(\mathcal{D}\mathbf{u} = \nabla\mathbf{u}\). Therefore, we rewrite $\eqref{eq:NSP-original}_2$ as
\begin{equation}\label{rewirteeq:NSP-original}
\rho \dot{\mathbf{u}}+\nabla P=\text{div}\left(\mu(\rho)\nabla \mathbf{u}\right)+\nabla(\lambda(\rho)\text{div}\left(\mathbf{u}\right))-\rho \nabla \Phi.
\end{equation}
Next, we test \eqref{rewirteeq:NSP-original} with \(\dot{\mathbf{u}}\) and integrate the result over \(\Omega\). Integrating by parts leads to
\begin{equation}\label{low0}
\begin{aligned}
\int_\Omega \rho |\dot{\mathbf{u}}|^2 \, \mathrm{d}\mathbf{x} =& -\int_\Omega \nabla P \cdot \dot{\mathbf{u}} \, \mathrm{d}\mathbf{x} - \int_\Omega \mu(\rho )\nabla \mathbf{u} : \nabla \dot{\mathbf{u}} \, \mathrm{d}\mathbf{x}\\
&-\int_{\Omega}\lambda(\rho)\text{div}\mathbf{u} \text{div}\dot{\mathbf{u}} \mathrm{d}\mathbf{x} -\int_\Omega\rho\dot{\mathbf{u}}\cdot{\nabla\Phi}\, \mathrm{d}\mathbf{x}\\
\triangleq& Q_1 + Q_2+Q_3+Q_{4}.
\end{aligned}
\end{equation}
We have
\begin{equation}\label{low1}
\begin{aligned}
Q_1 \leq& C(T) \int_\Omega |\nabla \rho| \sqrt{\rho} |\dot{\mathbf{u}}| \, \mathrm{d}\mathbf{x} \\
\leq& \frac{1}{8} \|\sqrt{\rho} \dot{\mathbf{u}}\|_{L^2}^2 + C(T) \|\nabla \rho\|_{L^2}^2 .
\end{aligned}
\end{equation}
By combining H\"{o}lder's inequality, Young's inequality and \eqref{est of nabla Phi}, we arrive at
\begin{equation}\label{low4}
\begin{aligned}
Q_4\leq& \frac{1}{8} \|\sqrt{\rho} \dot{\mathbf{u}}\|_{L^2}^2+C(T)\|\nabla\Phi\|^2_{L^2}\\
\leq&\frac{1}{8} \|\sqrt{\rho} \dot{\mathbf{u}}\|_{L^2}^2+C(T)\|\rho-1\|_{L^{\frac{6}{5}}}^2\\
\le& \frac{1}{8} \|\sqrt{\rho} \dot{\mathbf{u}}\|_{L^2}^2+C(T).
\end{aligned}
\end{equation}
From \eqref{eq:NSP-original}$_2$, we see that $\mathbf{u}$ satisfies the elliptic system:
\begin{equation}\label{eq: elliptic sys of u}
\begin{aligned}
\begin{cases}
\Delta \mathbf{u}+(\alpha-1)\nabla \text{div}\left(\mathbf{u}\right)= \frac{1}{\rho^{\alpha}}\left( \rho \dot{\mathbf{u}}+\nabla P-\nabla \mu(\rho)\cdot \nabla \mathbf{u}-\nabla\lambda(\rho)\text{div}\left(\mathbf{u}\right)+\rho \nabla \Phi \right) &\text{ in }\Omega, \\[4pt]
\mathbf{u}=0&\text{ on }\partial\Omega.
\end{cases}
\end{aligned}
\end{equation}
Due to elliptic regularity, H\"{o}lder's inequality,
\begin{equation}\label{eq: u H2}
\begin{aligned}
\left\lVert \mathbf{u} \right\rVert _{H^{2}}&\leq C(T)\left( \|\rho\dot{\mathbf{u}}\|_{L^2}+\|\nabla\rho\|_{L^2}+\||\nabla\rho||\nabla \mathbf{u}|\|_{L^2}+\|\nabla\Phi\|_{L^2} \right)  \\
&\leq C(T)\left( \left\lVert \sqrt{ \rho }\dot{\mathbf{u}} \right\rVert _{L^{2}}+ \left\lVert \nabla\rho \right\rVert _{L^{2}}+\left\lVert \nabla \rho \right\rVert_{L^{2l}}\left\lVert \nabla \mathbf{u} \right\rVert _{L^{\frac{2l}{l-1}}}+\left\lVert \nabla \Phi \right\rVert_{L^{2}}  \right) \\
&\leq C(T)\left(\left\lVert \sqrt{ \rho }\dot{\mathbf{u}} \right\rVert _{L^{2}}+ 1+\left\lVert \nabla \mathbf{u} \right\rVert _{L^{2}}^{1- \frac{N}{2l}}\left\lVert \nabla\mathbf{u} \right\rVert_{H^{1}}^{\frac{N}{2l}}   \right) \\
&\leq \frac{1}{2}\left\lVert \mathbf{u} \right\rVert _{H^{2}}+ C(T)\left( \left\lVert \sqrt{ \rho }  \dot{\mathbf{u}}\right\rVert_{L^{2}}+\left\lVert \nabla \mathbf{u} \right\rVert_{L^{2}} +1 \right) .
\end{aligned}
\end{equation}
Employing \eqref{assumption of lem5.1} together with \eqref{eq: u H2}, we deduce that
\allowdisplaybreaks
\begin{align}\label{low2}
Q_{2}&= -\int_{\Omega}\mu(\rho)\nabla \mathbf{u}:\nabla \dot{\mathbf{u}}\mathrm{d}\mathbf{x} \nonumber\\
&=- \frac{1}{2}\frac{d}{dt}\int_{\Omega}\mu(\rho)\left| \nabla \mathbf{u} \right| ^{2}\mathrm{d}\mathbf{x}
-  \frac{1}{2}\int_{\Omega}\text{div}\left(\mu(\rho)\mathbf{u}\right)\left| \nabla \mathbf{u} \right| ^{2}\mathrm{d}\mathbf{x} \nonumber\\
&\quad -  \frac{1}{2}(\alpha-1)\int_{\Omega}\mu(\rho)\text{div}\left(\mathbf{u}\right)\left| \nabla \mathbf{u} \right| ^{2}\mathrm{d}\mathbf{x}
-\int_{\Omega}\mu(\rho)\nabla \mathbf{u}:\nabla(\mathbf{u}\cdot \nabla \mathbf{u})\mathrm{d}\mathbf{x} \nonumber\\
&\leq - \frac{1}{2}\frac{d}{dt}\int_{\Omega}\mu(\rho)\left| \nabla \mathbf{u} \right| ^{2}\mathrm{d}\mathbf{x}
+C(T)\int_{\Omega}|\nabla \rho||\mathbf{u}||\nabla \mathbf{u}|^{2}\mathrm{d}\mathbf{x} \nonumber\\
&\qquad +C(T)\int_{\Omega}|\mathbf{u}|\left| \nabla \mathbf{u} \right| \left| \nabla^{2}\mathbf{u} \right| \mathrm{d}\mathbf{x} \nonumber\\
&\leq - \frac{1}{2}\frac{d}{dt}\int_{\Omega}\mu(\rho)\left| \nabla \mathbf{u} \right| ^{2}\mathrm{d}\mathbf{x}
+C(T)\left\lVert \nabla \rho \right\rVert _{L^{2l}}\left\lVert \mathbf{u} \right\rVert _{L^{2l}}\left\lVert \nabla \mathbf{u} \right\rVert ^{2}_{L^{\frac{2l}{l-1}}} \nonumber\\
&\qquad +C(T)\left\lVert \mathbf{u} \right\rVert _{L^{2l}}\left\lVert \nabla\mathbf{u} \right\rVert _{L^{\frac{2l}{l-1}}}\left\lVert \nabla^{2}\mathbf{u} \right\rVert_{L^{2}} \nonumber\\
&\leq - \frac{1}{2}\frac{d}{dt}\int_{\Omega}\mu(\rho)\left| \nabla \mathbf{u} \right| ^{2}\mathrm{d}\mathbf{x}
+ \frac{1}{8}\left\lVert \sqrt{ \rho }\dot{\mathbf{u}} \right\rVert _{L^{2}}^{2}
+ C(T)\left( \left\lVert \nabla\mathbf{u} \right\rVert^{2}_{L^{2}} +1 \right).
\end{align}
Similarly, we have
\begin{equation}\label{low3}
    \begin{aligned}
Q_{3}&\leq - \frac{1}{2} \frac{d}{dt}\int_{\Omega}\lambda(\rho)(\text{div}\mathbf{u})^{2}\mathrm{d}\mathbf{x}+ \frac{1}{8}\left\lVert \sqrt{ \rho }\dot{\mathbf{u}} \right\rVert^{2}_{L^{2}}+ C(T)\left( \left\lVert \nabla\mathbf{u} \right\rVert^{2}_{L^{2}} +1 \right) .\end{aligned}
\end{equation}
Inserting \eqref{low1}, \eqref{low4}, \eqref{low2} and \eqref{low3} into \eqref{low0} and integrating in time yields 
\begin{equation}
    \int_{\Omega}\mu(\rho)|\nabla \mathbf{u}|^{2}\mathrm{d}\mathbf{x}+ \int_{\Omega}\lambda(\rho)\text{div}\left(\mathbf{u}\right)^{2}\mathrm{d}\mathbf{x}+ \int^{T}_{0}\int_{\Omega}\rho \left| \dot{\mathbf{u}} \right| ^{2}\mathrm{d}\mathbf{x}dt\leq C(T).
\end{equation}
We note that $\left| \text{div}\mathbf{u}\right|\leq \sqrt{ N }\left| \nabla \mathbf{u} \right|$ and $\alpha\in\left( \frac{N-1}{N},1 \right),$ which completes the proof.

\end{proof}

\begin{lema}\label{lemm4.14}
     Under the assumption of \eqref{assumption of lem5.1}, there exists a constant $C(T)>0$ such that
     \begin{equation}\label{LOW2}
\sup_{0\leq t\leq T} \left( \|\sqrt{\rho} \dot{\mathbf{u}}\|_{L^2}^2 + \|\mathbf{u}\|_{H^2}^2 \right) + \int_0^T \int_\Omega |\nabla \dot{\mathbf{u}}|^2 \, \mathrm{d}\mathbf{x} \, dt \leq C(T).
\end{equation}
\end{lema}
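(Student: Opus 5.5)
We follow the standard Hoff-type argument: apply the material derivative $\partial_t+\mathbf{u}\cdot\nabla$ to the momentum equation \eqref{rewirteeq:NSP-original}, test with $\dot{\mathbf{u}}$, and close by Gronwall using \eqref{LOW1} and \eqref{eq: u H2}. Concretely, write the $j$-th component as $\rho\dot u^j+\partial_jP=\partial_i(\mu(\rho)\partial_iu^j)+\partial_j(\lambda(\rho)\div\mathbf{u})-\rho\partial_j\Phi$. Apply $\dot u^j[\partial_t(\cdot)+\div(\mathbf{u}\,\cdot)]$ and integrate over $\Omega$. By the continuity equation the left side gives $\frac12\frac{d}{dt}\int\rho|\dot{\mathbf{u}}|^2$. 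On the right we obtain four groups of terms: pressure, viscous (with $\mu$ and $\lambda$), and potential. The boundary condition $\mathbf{u}|_{\partial\Omega}=0$ gives $\dot{\mathbf{u}}=0$ on $\partial\Omega$, so no boundary terms arise in the integrations by parts.

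For the pressure term, the usual manipulation with $P_t+\div(P\mathbf{u})=(P-\rho P')\div\mathbf{u}$ gives a bound by $C\|\nabla\mathbf{u}\|_{L^2}\|\nabla\dot{\mathbf{u}}\|_{L^2}$. For the viscous terms, integrating by parts produces the good term $-\int\mu(\rho)|\nabla\dot{\mathbf{u}}|^2-\int\lambda(\rho)|\div\dot{\mathbf{u}}|^2$, which is coercive since $\rho$ is bounded above and below and $\alpha>\frac{N-1}{N}$. The commutators are bounded by $C\int|\nabla\mathbf{u}|^2|\nabla\dot{\mathbf{u}}|$, and the time derivative of $\mu(\rho)$ becomes $\mu'(\rho)\rho_t=-\mu'(\rho)(\mathbf{u}\cdot\nabla\rho+\rho\div\mathbf{u})$. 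The $\mathbf{u}\cdot\nabla\rho$ piece is handled by moving it into the divergence form $\mu_t+\div(\mu\mathbf{u})=(\mu-\rho\mu')\div\mathbf{u}$, so no derivative of $\rho$ survives. Thus all viscous error terms are $\le \varepsilon\|\nabla\dot{\mathbf{u}}\|_{L^2}^2+C\|\nabla\mathbf{u}\|_{L^4}^4$.

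For the potential term we need $\partial_t(\rho\nabla\Phi)+\div(\rho\nabla\Phi\otimes\mathbf{u})=\rho\,\frac{D}{Dt}\nabla\Phi$, so we must bound $\|\nabla\Phi_t\|_{L^2}$. From the Neumann problem $\Delta\Phi_t=\kappa\rho_t=-\kappa\div(\rho\mathbf{u})$ with $\partial_n\Phi_t=0$, we get $\|\nabla\Phi_t\|_{L^2}\le C\|\rho\mathbf{u}\|_{L^2}\le C$. In addition, $\|\mathbf{u}\cdot\nabla^2\Phi\|_{L^2}\le C\|\mathbf{u}\|_{L^{2l}}\|\nabla^2\Phi\|_{L^{2l/(l-1)}}$ is bounded by Lemma \ref{lem:poisson-neumann} since $\rho\in L^\infty$. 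So this term is $\le C\|\sqrt\rho\dot{\mathbf{u}}\|_{L^2}\le C(1+\|\sqrt\rho\dot{\mathbf{u}}\|_{L^2}^2)$.

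The main obstacle is $\|\nabla\mathbf{u}\|_{L^4}^4$. We use Gagliardo–Nirenberg, $\|\nabla\mathbf{u}\|_{L^4}^4\le C\|\nabla\mathbf{u}\|_{L^2}^{4-N}\|\mathbf{u}\|_{H^2}^{N}$, together with \eqref{eq: u H2} and \eqref{LOW1}, to get $\|\nabla\mathbf{u}\|_{L^4}^4\le C(1+\|\sqrt\rho\dot{\mathbf{u}}\|_{L^2}^{N})$. In 2D this is quadratic and Gronwall closes directly, using $\int_0^T\|\sqrt\rho\dot{\mathbf{u}}\|_{L^2}^2\,dt\le C$. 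In 3D the power is $3$, which is superlinear, so instead we split $\|\nabla\mathbf{u}\|_{L^4}^4\le\|\nabla\mathbf{u}\|_{L^2}\|\nabla\mathbf{u}\|_{L^6}^3$. Since $\|\nabla\mathbf{u}\|_{L^6}\le C(\|\sqrt\rho\dot{\mathbf{u}}\|_{L^2}+1)$, we get $\|\nabla\mathbf{u}\|_{L^4}^4\le C(1+\|\sqrt\rho\dot{\mathbf{u}}\|_{L^2}^2)\,\|\sqrt\rho\dot{\mathbf{u}}\|_{L^2}$. Gronwall then applies with the integrable weight $\|\sqrt\rho\dot{\mathbf{u}}\|_{L^2}^2+1$, using $\|\sqrt\rho\dot{\mathbf{u}}\|_{L^2}\le 1+\|\sqrt\rho\dot{\mathbf{u}}\|_{L^2}^2$. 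If this turns out too weak, the fallback is to use the radial structure: one-dimensional embeddings give $\|\nabla\mathbf{u}\|_{L^\infty}\le C(1+\|\mathbf{u}\|_{H^2})$ away from the origin, and near the origin we use $u/r$. Finally, the $\|\mathbf{u}\|_{H^2}$ bound in \eqref{LOW2} follows from \eqref{eq: u H2}, and the initial value of $\|\sqrt\rho\dot{\mathbf{u}}\|_{L^2}$ is controlled by the $H^3$ data.
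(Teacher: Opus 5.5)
Your argument is correct, but you handle the viscous commutators differently from the paper.

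\textbf{The paper's route.} In \eqref{n2} the paper expands $\partial_t\mu(\rho)$ and the convective term directly. This leaves error terms of the form $|\nabla\dot{\mathbf{u}}||\mathbf{u}||\nabla^2\mathbf{u}|$ and $|\nabla\dot{\mathbf{u}}||\mathbf{u}||\nabla\mathbf{u}||\nabla\rho|$. To control them it needs:
\begin{itemize}
\item the $W^{2,2l}$ elliptic estimate \eqref{est: u W2 2l};
\item the $L^{2l}$ bound on $\nabla\rho$;
\item an interpolation of $\|\sqrt\rho\dot{\mathbf{u}}\|_{L^{2l}}$ between $\|\sqrt\rho\dot{\mathbf{u}}\|_{L^2}$ and $\|\nabla\dot{\mathbf{u}}\|_{L^2}$.
\end{itemize}

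\textbf{Your route.} You group $\partial_t\nabla\mathbf{u}+\mathbf{u}\cdot\nabla\nabla\mathbf{u}=\nabla\dot{\mathbf{u}}-\nabla\mathbf{u}\cdot\nabla\mathbf{u}$. You also use $\mu_t+\div(\mu\mathbf{u})=(\mu-\rho\mu')\div\mathbf{u}$, and the analogous identity for $\lambda$. With these, every viscous error term is exactly cubic, of the form $|\nabla\dot{\mathbf{u}}||\nabla\mathbf{u}|^2$. This is a genuine cancellation. It lets you close using only \eqref{eq: u H2}, \eqref{LOW1} and Gagliardo--Nirenberg, which is cleaner and uses less of \eqref{assumption of lem5.1} at this step. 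Note that \eqref{eq: u H2} itself still relies on $\|\nabla\rho\|_{L^{2l}}$.

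\textbf{Trade-off.} The paper's route has a side benefit: it produces \eqref{est: u W2 2l}, which is reused later in \eqref{nabla u L infty}.

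\textbf{On your 3D step.} The splitting $\|\nabla\mathbf{u}\|_{L^4}^4\le\|\nabla\mathbf{u}\|_{L^2}\|\nabla\mathbf{u}\|_{L^6}^3$ gives exactly the same bound $C(1+\|\sqrt\rho\dot{\mathbf{u}}\|_{L^2})^3$ as Gagliardo--Nirenberg, so it is not a genuinely different estimate. What closes the argument is that, with $X=\|\sqrt\rho\dot{\mathbf{u}}\|_{L^2}^2$, \eqref{LOW1} gives $\int_0^T X\,dt\le C$. So $1+X$ is an integrable Gronwall weight, and any right-hand side up to $C(1+X)^2$ is harmless. The paper in fact faces $X^2$ in \eqref{na}, coming from $\|\nabla\mathbf{u}\|_{L^4}^4\le C\|\nabla\mathbf{u}\|_{H^1}^4$, and handles it the same way. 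Your superlinearity worry and the radial fallback are therefore unnecessary.
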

\begin{proof}
Following Hoff \cite{HOFF-1995}, we apply $\dot{u}_j[\partial_t + \operatorname{div}(\mathbf{u},\cdot)]$ to \eqref{rewirteeq:NSP-original}, sum over $j$, integrate over $\Omega$, and integrate by parts. This leads to
\allowdisplaybreaks
\begin{align} \label{nfin}
\Big( \frac{1}{2} \int_{\Omega} \rho |\dot{\mathbf{u}}|^{2} \mathrm{d}\mathbf{x} \Big)_{t}
=&- \int_\Omega \dot{\mathbf{u}} \cdot \Big[ \nabla P_t + \text{div}(\mathbf{u} \otimes \nabla P) \Big] \mathrm{d}\mathbf{x}  \nonumber\\
&+ \int_\Omega \dot{\mathbf{u}} \cdot \Big[ \partial_t \big( \text{div}(\mu(\rho)\nabla \mathbf{u}) \big) + \text{div} \Big( \mathbf{u} \otimes \text{div}(\mu(\rho)\nabla \mathbf{u}) \Big) \Big] \mathrm{d}\mathbf{x} \nonumber\\
&+\int_\Omega \dot{\mathbf{u}} \cdot \Big[ \partial_t \big( \nabla(\lambda(\rho)\text{div} \mathbf{u}) \big) + \text{div} \Big( \mathbf{u} \otimes \nabla(\lambda(\rho)\text{div} \mathbf{u}) \Big) \Big] \mathrm{d}\mathbf{x} \nonumber\\
& -\int_{\Omega} \dot{\mathbf{u}}\cdot{[(\rho\nabla\Phi)_t+\text{div}(\mathbf{u}\otimes (\rho\nabla\Phi))]}\mathrm{d}\mathbf{x}\nonumber\\
\triangleq &N_{1} + N_{2}+N_3+N_{4}. 
\end{align}
Integration by parts to the first term yields
\begin{equation}
\begin{aligned}\label{n1}
N_{1} 
&= - \int_\Omega \dot{ \mathbf{u}} \cdot [\nabla P_t + \text{div}( \mathbf{u} \otimes \nabla P)]  \mathrm{d}\mathbf{x} \\
&= \int_{\Omega} \big[ -P'(\rho) \rho \operatorname{div} \mathbf{u} \, \partial_{j} \dot{u}_{j} + \partial_{k} (\partial_{j} \dot{u}_{j} u_{k}) P - P \partial_{j} (\partial_{k} \dot{u}_{j} u_{k}) \big] \mathrm{d}\mathbf{x}\\
&=  \int_{\Omega}\left[ -P'(\rho)\rho \text{div}\left(\mathbf{u}\right)\text{div}\left(\dot{\mathbf{u}}\right)+P\text{div}\left(\dot{\mathbf{u}}\right)\text{div}\left(\mathbf{u}\right)-P\partial_{k}\dot{\mathbf{u}}_{j}\partial_{j}\mathbf{u}_{k} \right]\mathbf{d}\mathbf{x}\\
&\leq C(T) \|\nabla \mathbf{u}\|_{L^{2}} \|\nabla \dot{\mathbf{u}}\|_{L^{2}} \\
&\leq \varepsilon \|\nabla \dot{\mathbf{u}}\|_{L^{2}}^{2} + C(\varepsilon,T) \|\nabla \mathbf{u}\|_{L^{2}}^{2}. 
\end{aligned}
\end{equation}
Applying integration by parts to the second term gives
\begin{equation}\label{n2}
\begin{aligned}
N_{2}=& \int_{\Omega} \dot{u}_{j}[\partial_{it}(\mu(\rho)\partial_{i}u_{j})+\partial_{k}\left(u_{k}\partial_{i}(\mu(\rho)\partial_{i}u_{j}) \right)]\mathrm{d}\mathbf{x}\\
=&- \int_{\Omega}\partial_{i}\dot{u}_{j} \partial_{t}(\mu(\rho) \partial_{i}u_{j})\mathrm{d}\mathbf{x}-  \int_{\Omega}\partial_{k}\dot{u}_{j}u_{k} \partial_{i}(\mu(\rho)\partial_{i}u_{j})\mathrm{d}\mathbf{x} \\
=& -\int_{\Omega}\mu(\rho)|\nabla \dot{\mathbf{u}}|^{2}\mathrm{d}\mathbf{x}- \int_{\Omega}\partial_{i}\dot{u}_{j}\partial_{t}(\mu(\rho))\partial_{i}u_{j}\mathrm{d}\mathbf{x}+ \int_{\Omega}\mu(\rho)\partial_{i}\dot{u}_{j}\partial_{i}(u_{k}\partial_{k}u_{j})\mathrm{d}\mathbf{x}\\
&-\int_{\Omega}\partial_{k}\dot{u}_{j}u_{k}\mu(\rho)\partial_{ii}u_{j}\mathrm{d}\mathbf{x}-\int_{\Omega}\partial_{k}\dot{u}_{j}u_{k}\partial_{i}(\mu(\rho))\partial_{i}u_{j}\mathrm{d}\mathbf{x}\\
\leq&-\int_{\Omega}\mu(\rho)\left| \nabla \dot{\mathbf{u}} \right| ^{2}\mathrm{d}\mathbf{x}+C(T)\int_{\Omega}\left( \left| \nabla \dot{\mathbf{u}} \right|\left| \nabla \mathbf{u} \right| ^{2}+\left| \nabla \dot{\mathbf{u}} \right|  \left| \mathbf{u} \right| \left| \nabla^{2}\mathbf{u} \right| +\left| \nabla \dot{\mathbf{u}} \right| \left| \mathbf{u} \right| \left| \nabla \mathbf{u} \right| \left| \nabla \rho \right|  \right) \mathrm{d}\mathbf{x}\\
\leq &-\int_{\Omega}\mu(\rho)\left| \nabla \dot{\mathbf{u}} \right| ^{2}\mathrm{d}\mathbf{x}+\varepsilon \left\lVert \nabla \dot{\mathbf{u}} \right\rVert _{L^{2}}\\
&+C(\varepsilon,T)\left( \left\lVert \left| \mathbf{u} \right| \left| \nabla^{2}\mathbf{u} \right|  \right\rVert^{2}_{L^{2}}+\left\lVert \nabla \mathbf{u} \right\rVert ^{4}_{L^{4}}+\left\lVert \left| \nabla \rho \right| \left| \mathbf{u} \right| \left| \nabla \mathbf{u} \right|  \right\rVert  _{L^{2}} \right) .
\end{aligned}
\end{equation}
From \eqref{eq: elliptic sys of u} and \eqref{assumption of lem5.1} we obtain 
\begin{equation}\label{est: u W2 2l}
    \begin{aligned}
\left\lVert \mathbf{u} \right\rVert _{W^{2,2l}}&\leq C(T)\left( \|\rho\dot{\mathbf{u}}\|_{L^{2l}}+\|\nabla\rho\|_{L^{2l}}+\||\nabla\rho||\nabla \mathbf{u}|\|_{L^{2l}}+\|\rho\nabla\Phi\|_{L^{2l}} \right) \\
&\leq C(T)\left( \left\lVert \sqrt{ \rho }\dot{\mathbf{u}} \right\rVert_{L^{2l}}+\left\lVert \nabla \rho \right\rVert_{L^{2l}} +\left\lVert \nabla \rho \right\rVert_{L^{2l}}\left\lVert \nabla \mathbf{u} \right\rVert _{L^{\infty}}+\left\lVert \rho \right\rVert _{L^{\infty}}\left\lVert  \Phi \right\rVert _{W^{2,2l}}   \right) \\
&\leq C(T)\left( \left\lVert \sqrt{ \rho } \dot{\mathbf{u}}\right\rVert_{L^{2l}}+\left\lVert \nabla \rho \right\rVert _{L^{2l}}+\left\lVert \nabla \mathbf{u} \right\rVert _{L^{2}}^{\frac{2l-N}{2l+Nl-N}}\left\lVert \nabla \mathbf{u} \right\rVert ^{\frac{Nl}{2l+Nl-N}} _{W^{1,2l}}+\left\lVert \rho-1\right\rVert _{L^{2l}} \right) \\
&\leq \frac{1}{2}\left\lVert \mathbf{u} \right\rVert _{W^{2,2l}}+C(T)\left( \left\lVert \sqrt{ \rho }\dot{\mathbf{u}} \right\rVert _{L^{2l}}+\left\lVert \nabla \rho \right\rVert _{L^{2l}}+\left\lVert \nabla \mathbf{u} \right\rVert _{L^{2}} +1\right) .
\end{aligned}
\end{equation}
Thus we have 
\begin{equation}\label{eq: N2_1}
    \begin{aligned}
C(\varepsilon,T)\left\lVert \left| \mathbf{u} \right| \left| \nabla^{2}\mathbf{u} \right|  \right\rVert ^{2}_{L^{2}}&\leq C(\varepsilon,T)\left\lVert \mathbf{u} \right\rVert ^{2}_{L^{\frac{2l}{l-1}}}\left\lVert \nabla^{2}\mathbf{u} \right\rVert ^{2}_{L^{2l}}\\
&\leq C(\varepsilon,T)\left\lVert \mathbf{u} \right\rVert ^{2}_{L^{\frac{2l}{l-1}}}\left\lVert \mathbf{u} \right\rVert ^{2}_{W^{2,2l}}\\
&\leq C(\varepsilon,T)\left\lVert \mathbf{u} \right\rVert ^{2}_{L^{\frac{2l}{l-1}}}\left( \left\lVert \sqrt{ \rho }\dot{\mathbf{u}} \right\rVert _{L^{2l}}^{2}+\left\lVert \nabla \rho \right\rVert _{L^{2l}}^{2}+\left\lVert \nabla \mathbf{u} \right\rVert _{L^{2}} ^{2}+1\right) \\
&\leq C(\varepsilon,T)\left\lVert \nabla \mathbf{u} \right\rVert ^{2}_{L^{2}}\left( \left\lVert \sqrt{ \rho }\dot{\mathbf{u}} \right\rVert^{\frac{N+2l-Nl}{l}}_{L^{2}}\left\lVert \nabla \dot{\mathbf{u}} \right\rVert^{\frac{Nl-N}{l}}_{L^{2}}+1  \right) \\
&\leq \varepsilon \left\lVert \nabla \dot{\mathbf{u}} \right\rVert ^{2}_{L^{2}}+C(\varepsilon,T)\left( \left\lVert \sqrt{ \rho } \dot{\mathbf{u}}\right\rVert ^{2}_{L^{2}}+1 \right),
\end{aligned}
\end{equation} 
where we use $\|\mathbf{u}\|_{L^{\frac{2l}{l-1}}}\le C\|\nabla \mathbf{u}\|_{L^2}$ for $\mathbf{u}$ with zero boundary condition. From \eqref{assumption of lem5.1}, \eqref{LOW1} and \eqref{eq: u H2} we obtain 
\begin{equation}
    \begin{aligned}
C(\varepsilon,T)\left\lVert \nabla \mathbf{u} \right\rVert ^{4}_{L^{4}}&\leq C(\varepsilon,T)\left\lVert \nabla \mathbf{u} \right\rVert ^{4}_{H^{1}}\\
&\leq C(\varepsilon,T)(\left\lVert \sqrt{ \rho } \dot{\mathbf{u}}\right\rVert ^{4}_{L^{2}}+\left\lVert \nabla \mathbf{u} \right\rVert ^{4}_{L^{2}}+1)\\
&\leq C(\varepsilon,T)(\left\lVert \sqrt{ \rho } \dot{\mathbf{u}}\right\rVert ^{4}_{L^{2}}+\left\lVert \nabla \mathbf{u} \right\rVert ^{2}_{L^{2}}+1).
\end{aligned}
\end{equation}
Analogously to \eqref{eq: N2_1}, it holds that
\begin{equation}\label{eq: N2_3}
    \begin{aligned}
C(\varepsilon,T)\left\lVert \left| \nabla \rho \right| \left| \mathbf{u} \right| \left| \nabla \mathbf{u} \right|  \right\rVert  _{L^{2}} &\leq C(\varepsilon,T)\left\lVert \nabla \rho \right\rVert^{2}_{L^{2l}}\left\lVert \mathbf{u} \right\rVert ^{2}_{L^{\frac{2l}{l-1}}}\left\lVert \nabla \mathbf{u} \right\rVert ^{2}_{L^{\infty}}\\
&\leq C(\varepsilon,T)\left\lVert \mathbf{u} \right\rVert ^{2}_{L^{\frac{2l}{l-1}}}\left\lVert \nabla \mathbf{u} \right\rVert ^{2}_{W^{1,2l}}\\
&\leq \varepsilon \left\lVert \nabla \dot{\mathbf{u}} \right\rVert ^{2}_{L^{2}}+ C(\varepsilon,T)\left( \left\lVert \sqrt{ \rho }\dot{\mathbf{u}} \right\rVert _{L^{2}}^{2}+1\right) .
\end{aligned}
\end{equation}
Substituting \eqref{eq: N2_1}-\eqref{eq: N2_3} into \eqref{n2}, one has 
\begin{equation}
    \begin{aligned}
N_{2}\leq & -\int_{\Omega}\mu(\rho)\left| \nabla \dot{\mathbf{u}} \right| ^{2}\mathrm{d}\mathbf{x}+3\varepsilon \left\lVert \nabla \dot{\mathbf{u}} \right\rVert ^{2}_{L^{2}}+C(\varepsilon,T)\left\lVert \sqrt{ \rho }\dot{\mathbf{u}} \right\rVert ^{4}_{L^{2}}\\&+C(\varepsilon,T)\left( \left\lVert \sqrt{ \rho } \dot{\mathbf{u}}\right\rVert^{2}_{L^{2}}+\left\lVert \nabla \mathbf{u} \right\rVert ^{2}_{L^{2}}+1  \right) .
\end{aligned}
\end{equation}
In a similar way, we arrive at
\begin{equation}
    \begin{aligned}
N_{3}\leq& -\int_{\Omega}\lambda(\rho)(\text{div}\left(\dot{\mathbf{u}}\right))^{2}\mathrm{d}\mathbf{x}+\varepsilon \left\lVert \nabla \dot{\mathbf{u}} \right\rVert _{L^{2}}\mathrm{d}\mathbf{x}\\&+C(\varepsilon,T)\left( \left\lVert \left| \mathbf{u} \right| \left| \nabla^{2}\mathbf{u} \right|  \right\rVert^{2}_{L^{2}}+\left\lVert \nabla \mathbf{u} \right\rVert ^{4}_{L^{4}}+\left\lVert \left| \nabla \rho \right| \left| \mathbf{u} \right| \left| \nabla \mathbf{u} \right|  \right\rVert  _{L^{2}} \right) \\
\leq& -\int_{\Omega}\lambda(\rho)(\text{div}\left(\dot{\mathbf{u}}\right))^{2}\mathrm{d}\mathbf{x}+3\varepsilon \left\lVert \nabla \dot{\mathbf{u}} \right\rVert ^{2}_{L^{2}}\mathrm{d}\mathbf{x}\\&+C(\varepsilon,T)\left\lVert \sqrt{ \rho }\dot{\mathbf{u}} \right\rVert ^{4}_{L^{2}}+C(\varepsilon,T)\left( \left\lVert \sqrt{ \rho } \dot{\mathbf{u}}\right\rVert^{2}_{L^{2}}+\left\lVert \nabla \mathbf{u} \right\rVert ^{2}_{L^{2}}+1  \right) .
\end{aligned}
\end{equation}
Applying H\"{o}lder's inequality, Young's inequality, \eqref{assumption of lem5.1} and \eqref{LOW1} yields
\begin{equation}\label{n4}
\begin{aligned}
N_{4}&= -\int_{\Omega}\dot{u}_{j}\left[ \partial_{t}(\rho \partial_{j}\Phi)+\partial_{k}(u_{k}\rho \partial_{j}\Phi) \right] \mathrm{d}\mathbf{x}\\
&= -\int_{\Omega}\rho \dot{\mathbf{u}}\cdot \left( \nabla \Phi_{t}+\mathbf{u}\cdot \nabla^{2}\Phi \right) \mathrm{d}\mathbf{x}\\
&\leq C(T)\left( \left\lVert \sqrt{ \rho }\dot{\mathbf{u}} \right\rVert _{L^{2}}\left\lVert \nabla \Phi_{t} \right\rVert _{L^{2}}+\left\lVert \nabla^{2}\Phi  \right\rVert _{L^{2}}\left\lVert \dot{\mathbf{u}} \right\rVert _{L^{4}}\left\lVert \mathbf{u} \right\rVert _{L^{4}}\right) \\
&\leq C(T)\left( \left\lVert \sqrt{ \rho } \dot{\mathbf{u}}\right\rVert ^{2}_{L^{2}} +1+\left\lVert \rho-1 \right\rVert_{L^{2}}\left\lVert  \sqrt{ \rho } \dot{\mathbf{u}} \right\rVert_{L^{2}}^{\frac{4-N}{4}}\left\lVert \nabla \dot{\mathbf{u}} \right\rVert _{L^{2}}^{\frac{N}{4}} \left\lVert \nabla \mathbf{u} \right\rVert_{L^{2}} \right)\\
&\leq \varepsilon \left\lVert \nabla \dot{\mathbf{u}} \right\rVert_{L^{2}}^{2} +C(\varepsilon,T)\left( \left\lVert \sqrt{ \rho } \dot{\mathbf{u}}\right\rVert^{2}_{L^{2}}+1  \right) .
\end{aligned}
\end{equation}
Here we note that 
\begin{equation}
\begin{cases}
\Delta \Phi_{t} 
=-\kappa\text{div}\left(\rho \mathbf{u}\right)&\text{ in }\Omega, \\
\frac{\partial \Phi_{t}}{\partial n}=0&\text{ on }\partial\Omega.
\end{cases}
\end{equation}
For $1<q<\infty$, we arrive at
\begin{equation}\label{est: nabla phi_t}
\begin{aligned}
\|\nabla\Phi_t\|_{L^q}\leq C\|\rho\mathbf{u}\|_{L^q}.
\end{aligned}
\end{equation}
Inserting the estimates \eqref{n1}-\eqref{n4} into \eqref{nfin} produces
\begin{equation}\label{na}
 \begin{aligned} &\frac{1}{2}\frac{d}{dt}\int_{\Omega} \rho |\dot{\mathbf{u}}|^2 \mathrm{d}\mathbf{x}+(\left\lVert \rho^{-1} \right\rVert _{L^{\infty}}^{-\alpha}(1+(\alpha-1)N)-8\varepsilon)\left\lVert \nabla \dot{\mathbf{u}} \right\rVert ^{2}_{L^{2}}\\
\leq&\frac{1}{2}\frac{d}{dt}\int_{\Omega} \rho |\dot{\mathbf{u}}|^2 \mathrm{d}\mathbf{x} + \int_{\Omega} \left( \mu(\rho) |\nabla \dot{\mathbf{u}}|^2 + \lambda(\rho) (\mathrm{div}\, \dot{\mathbf{u}})^2 \right) \mathrm{d}\mathbf{x} - 8\varepsilon \|\nabla \dot{\mathbf{u}}\|_{L^2}^2 \\ \leq &C(\varepsilon,T)\left( \|\sqrt{\rho}\dot{\mathbf{u}}\|_{L^2}^2 + \|\nabla \rho\|_{L^2}^2  + \|\nabla \mathbf{u}\|_{L^2}^2 +1\right) + C(\varepsilon,T) \|\sqrt{\rho}\dot{\mathbf{u}}\|_{L^2}^4. \end{aligned} 
\end{equation}
Choose $\varepsilon\in(0,\frac{1}{16}\left\lVert \rho^{-1} \right\rVert _{L^{\infty}}^{-\alpha}(1+(\alpha-1)N))$ sufficiently small. Finally, applying Gronwall's inequality to \eqref{na} yields \eqref{LOW2}, which completes the proof of Lemma \ref{lemm4.14}.
\end{proof}
\begin{lema}
Under the assumption of \eqref{assumption of lem5.1}, there exists a constant $C(T)>0$ such that
\begin{equation}\label{high}
\sup_{0\leq t\leq T} (\|\rho\|_{H^2}^2 +\|\nabla\Phi\|^2_{H^3} ) + \int_0^T \|\mathbf{u}\|_{H^3}^2 \, dt \leq C(T).
\end{equation}
\end{lema}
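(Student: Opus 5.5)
We need $\sup_t(\|\rho\|_{H^2}^2+\|\nabla\Phi\|_{H^3}^2)+\int_0^T\|\mathbf u\|_{H^3}^2dt\le C(T)$. The plan is to close a coupled estimate for $\nabla^2\rho$ and $\nabla^3\mathbf u$, using the lower-order bounds \eqref{assumption of lem5.1}, \eqref{LOW1} and \eqref{LOW2}. The potential is handled separately by elliptic theory.

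\textbf{Step 1: the potential.} Apply Lemma \ref{lem:poisson-neumann} to $\Delta\Phi=\kappa(\rho-1)$ with homogeneous Neumann data. This gives
\[
\|\nabla\Phi\|_{H^{k+1}}\le C\|\rho-1\|_{H^k}.
\]
With $k=2$, $\|\nabla\Phi\|_{H^3}$ is controlled by $\|\rho\|_{H^2}+1$, so it suffices to bound $\rho$ in $H^2$. With $k=1$, we also get $\|\nabla\Phi\|_{W^{1,2l}}\le C(T)$, using $\rho\in L^\infty$ and $\nabla\rho\in L^{2l}$.

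\textbf{Step 2: a preliminary bound on $\nabla\mathbf u$.} First bound $\int_0^T\|\nabla\mathbf u\|_{L^\infty}dt$. By \eqref{est: u W2 2l}, $\|\mathbf u\|_{W^{2,2l}}$ is controlled by $\|\sqrt\rho\dot{\mathbf u}\|_{L^{2l}}$ plus bounded quantities. Gagliardo–Nirenberg with \eqref{LOW2} then gives $\|\sqrt\rho\dot{\mathbf u}\|_{L^{2l}}\le C\|\nabla\dot{\mathbf u}\|_{L^2}+C$, and the right-hand side is in $L^2(0,T)$. Hence $\nabla\mathbf u\in L^2(0,T;L^\infty)$ because $2l>N$. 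Here the restriction $l>1.5$ for $N=3$ is used.

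\textbf{Step 3: the density.} Apply $\nabla^2$ to the continuity equation in the form
\[
\rho_t+\mathbf u\cdot\nabla\rho+\rho\,\mathrm{div}\,\mathbf u=0,
\]
multiply by $\nabla^2\rho$ and integrate. After integrating the transport term by parts, this yields
\[
\frac{d}{dt}\|\nabla^2\rho\|_{L^2}\le C\|\nabla\mathbf u\|_{L^\infty}\|\nabla^2\rho\|_{L^2}+C\|\nabla^2\mathbf u\|_{L^{2l}}\|\nabla\rho\|_{L^{2l/(l-1)}}+C\|\nabla^3\mathbf u\|_{L^2}.
\]
Bound $\|\nabla\rho\|_{L^{2l/(l-1)}}$ by $C(1+\|\nabla^2\rho\|_{L^2})$ via Sobolev, using $l>N/2$.

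\textbf{Step 4: the $H^3$ bound on $\mathbf u$.} Differentiate the elliptic system \eqref{eq: elliptic sys of u} once and apply $H^3$ elliptic regularity for the Lamé operator $\Delta+(\alpha-1)\nabla\mathrm{div}$, which is strongly elliptic since $\alpha>\frac{N-1}{N}$. This gives
\[
\|\mathbf u\|_{H^3}\le C\bigl(\|\nabla(\rho^{1-\alpha}\dot{\mathbf u})\|_{L^2}+\|\nabla(\rho^{-\alpha}\nabla P)\|_{L^2}+\|\nabla(\rho^{-\alpha}\nabla\rho^\alpha\cdot\nabla\mathbf u)\|_{L^2}+\|\nabla(\rho^{1-\alpha}\nabla\Phi)\|_{L^2}\bigr).
\]
Each term is bounded by $C(T)(1+\|\nabla\dot{\mathbf u}\|_{L^2})(1+\|\nabla^2\rho\|_{L^2})$. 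The worst terms are $|\nabla\rho||\dot{\mathbf u}|$, bounded by $\|\nabla\rho\|_{L^{2l}}\|\dot{\mathbf u}\|_{L^{2l/(l-1)}}$, and $|\nabla^2\rho||\nabla\mathbf u|$, bounded by $\|\nabla^2\rho\|_{L^2}\|\nabla\mathbf u\|_{L^\infty}$. Interpolate the latter factor using Step 2.

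\textbf{Step 5: closing.} Substitute Step 4 into Step 3 to get
\[
\frac{d}{dt}\|\nabla^2\rho\|_{L^2}\le C(T)\bigl(1+\|\nabla\dot{\mathbf u}\|_{L^2}+\|\nabla\mathbf u\|_{L^\infty}\bigr)(1+\|\nabla^2\rho\|_{L^2}).
\]
The coefficient is in $L^1(0,T)$ by \eqref{LOW2} and Step 2, so Gronwall gives $\sup_t\|\rho\|_{H^2}\le C(T)$. Returning to Step 4, $\int_0^T\|\mathbf u\|_{H^3}^2dt\le C(T)\int_0^T(1+\|\nabla\dot{\mathbf u}\|_{L^2}^2)dt\le C(T)$, and Step 1 finishes $\nabla\Phi$.

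\textbf{Main obstacle.} The hardest part is Step 4: keeping the nonlinear term $\nabla^2\rho\,\nabla\mathbf u$ linear in $\|\nabla^2\rho\|_{L^2}$ with an integrable coefficient. This is why $\|\nabla\mathbf u\|_{L^\infty}$ must be bounded through the $W^{2,2l}$ estimate rather than through $H^3$, which would be circular. In the three-dimensional case the Sobolev exponents with $l\in(1.5,3)$ must be checked carefully.
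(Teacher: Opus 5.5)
Your proposal is correct and follows essentially the same route as the paper: a $\nabla^2$ energy estimate on the continuity equation, the bound $\|\nabla\mathbf u\|_{L^\infty}\le C(T)(1+\|\nabla\dot{\mathbf u}\|_{L^2})$ from the $W^{2,2l}$ estimate, an $H^3$ elliptic bound for $\mathbf u$ of the form $C(T)(1+\|\nabla\dot{\mathbf u}\|_{L^2})(1+\|\nabla^2\rho\|_{L^2})$, Gronwall with \eqref{LOW2}, and Lemma \ref{lem:poisson-neumann} for $\nabla\Phi$. The differences are only bookkeeping: you work with $\|\nabla^2\rho\|_{L^2}$ unsquared and pair $\nabla\rho\in L^{2l/(l-1)}$ (from Sobolev) with $\nabla^2\mathbf u\in L^{2l}$, which also keeps the $|\nabla\rho|^2$ terms linear in $\|\nabla^2\rho\|_{L^2}$. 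The paper instead works with squared norms and uses the opposite H\"older pairing.
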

\begin{proof}
We begin by applying the  operator $\partial_j \partial_i$  to the equation $\eqref{eq:NSP-original}_1$
\begin{equation}
\begin{aligned}
&\partial_j \partial_i 
\rho_t + \partial_j \partial_i \mathbf{u}\cdot \nabla \rho + \mathbf{u} \cdot \nabla \partial_j \partial_i \rho + \partial_j \mathbf{u} \cdot \nabla \partial_i \rho + \partial_i \mathbf{u} \cdot \nabla \partial_j \rho \\
&+ \partial_j \partial_i \rho \div \mathbf{u} + \rho \partial_j \partial_i \div \mathbf{u} + \partial_i \rho \partial_j \div \mathbf{u} + \partial_j \rho \partial_i \div \mathbf{u} = 0.
\end{aligned}
\end{equation}
We then test the equation against $2\partial_j\partial_i \rho$ and integrate over $\Omega$. After performing an integration by parts, this yields
 \allowdisplaybreaks
\begin{align} \label{rho2}
\frac{\mathrm{d}}{\mathrm{d}t} \int_{\Omega} |\partial_j \partial_i \rho|^2 \,\mathrm{d}\mathbf{x}
&= - \int_{\Omega} |\partial_j \partial_i \rho|^2 \, \mathrm{div}\boldsymbol{\mathbf{u}} \,\mathrm{d}\mathbf{x}
   - 2 \int_{\Omega} \partial_j \partial_i \rho \, \partial_i \boldsymbol{\mathbf{u}} \cdot \nabla \partial_j \rho \,\mathrm{d}\mathbf{x} \notag\\
&\quad - 2 \int_{\Omega} \partial_j \partial_i \rho \, \partial_j \boldsymbol{\mathbf{u}} \cdot \nabla \partial_i \rho \,\mathrm{d}\mathbf{x}
   - 2 \int_{\Omega} \rho \, \partial_j \partial_i \rho \, \partial_j \partial_i \mathrm{div}\boldsymbol{\mathbf{u}} \,\mathrm{d}\mathbf{x} \notag\\
&\quad - 2 \int_{\Omega} \partial_i \rho \, \partial_j \partial_i \rho \, \partial_j \mathrm{div}\boldsymbol{\mathbf{u}} \,\mathrm{d}\mathbf{x}
   - 2 \int_{\Omega} \partial_j \rho \, \partial_j \partial_i \rho \, \partial_i \mathrm{div}\boldsymbol{\mathbf{u}} \,\mathrm{d}\mathbf{x} \notag\\
&\quad - 2 \int_{\Omega} \partial_j \partial_i \rho \, \partial_j \partial_i \boldsymbol{\mathbf{u}} \cdot \nabla \rho \,\mathrm{d}\mathbf{x} \notag\\
&\leq C(T)(1 + \|\nabla \mathbf{u}\|_{L^\infty}) \left\|\nabla^2 \rho\right\|_{L^2}^2 + C(T) \left\||\nabla \rho| |\nabla^2 \mathbf{u}|\right\|_{L^2}^{2} + C(T) \left\|\nabla^3 \mathbf{u}\right\|_{L^2}^2 \notag\\ 
&\leq C(T)(1 + \|\nabla \mathbf{u}\|_{L^\infty}) \left\|\nabla^2 \rho\right\|_{L^2}^2 + C(T) \|\nabla \rho\|_{L^{2l}}^2 \|\nabla^2 \mathbf{u}\|_{L^{\frac{2l}{l-1}}}^2 + C(T) \left\|\nabla^3 \mathbf{u}\right\|_{L^2}^2 \notag\\
&\leq C(T)(1 + \|\nabla \mathbf{u}\|_{L^\infty}) \left\|\nabla^2 \rho\right\|_{L^2}^2 + C(T) \left\|\nabla^3 \mathbf{u}\right\|_{L^2}^2+ C(T).
\end{align}
Therefore, we need to handle $\|\nabla \mathbf{u}\|_{L^\infty}$ and $\left\|\nabla^3 \mathbf{u}\right\|_{L^2}$. First, from \eqref{est: u W2 2l} we obtain 
\begin{equation}\label{nabla u L infty}
    \begin{aligned} \|\nabla \mathbf{u}\|_{L^\infty} &\leq C \|\nabla \mathbf{u}\|_{W^{1,2l}} \leq C(T)\left( \|\sqrt{\rho} \dot{\mathbf{u}}\|_{L^{2l}} + \|\nabla \rho\|_{L^{2l}} + \|\nabla \mathbf{u}\|_{L^2} +1\right) \\
 &\leq C(T) + C(T) \|\nabla \dot{\mathbf{u}}\|_{L^2}. \end{aligned}
\end{equation}
Applying the $H^1$ estimate for the elliptic system \eqref{eq: elliptic sys of u} again, we arrive at
\begin{equation}\label{uh3}
    \begin{aligned}
\|\mathbf{u}\|_{H^3}^2 &\leq C \left\| \frac{1}{\rho^{\alpha}}\left( \rho \dot{\mathbf{u}}+\nabla P-\nabla \mu(\rho)\cdot \nabla \mathbf{u}-\nabla\lambda(\rho)\text{div}\left(\mathbf{u}\right)+\rho \nabla \Phi \right)\right\|_{H^1}^2 \\
&\leq C \left( 1 + \left\| |\nabla \rho| \dot{\mathbf{u}} \right\|_{L^2}^2 + \|\nabla \dot{\mathbf{u}}\|_{L^2}^2 + \left\| \nabla^2 \rho \right\|_{L^2}^2 + \|\nabla \rho\|_{L^4}^2 + \left\| |\nabla \rho|^2 \nabla \mathbf{u} \right\|_{L^2}^2 \right. \\
&\quad \left. + \left\| |\nabla^2 \rho| \nabla \mathbf{u} \right\|_{L^2}^2 + \left\| |\nabla \rho| \nabla^2 \mathbf{u} \right\|_{L^2}^2+\left\lVert \left| \nabla \rho \right|  \left| \nabla \Phi \right| \right\rVert_{L^{2}}+\left\lVert \rho \nabla^{2}\Phi \right\rVert _{L^{2}}  \right) \\
&\leq C(T) \left( 1 + \|\nabla \dot{\mathbf{u}}\|_{L^2}^2 \right) \left( 1 + \left\| \nabla^2 \rho \right\|_{L^2}^2 \right) + C(T) \left\| \nabla^2 \mathbf{u} \right\|_{L^{\frac{2l}{l-1}}}^2 \\
&\leq C(T) \left( 1 + \|\nabla \dot{\mathbf{u}}\|_{L^2}^2 \right) \left( 1 + \left\| \nabla^2 \rho \right\|_{L^2}^2 \right) + C(T) \left\| \nabla^2 \mathbf{u} \right\|_{L^2}^{\frac{2l-N}{l}} \left\| \nabla^2 \mathbf{u} \right\|_{H^1}^{\frac{N}{l}} \\
&\leq \frac{1}{2} \|\mathbf{u}\|_{H^3}^2 + C(T) \left( 1 + \|\nabla \dot{\mathbf{u}}\|_{L^2}^2 \right) \left( 1 + \left\| \nabla^2 \rho \right\|_{L^2}^2 \right).
\end{aligned}
\end{equation}
Inserting \eqref{nabla u L infty}, \eqref{uh3} into \eqref{rho2} gives
\begin{equation}
\begin{aligned}
\frac{d}{dt}\left\lVert \nabla^{2}\rho \right\rVert^2 _{L^{2}}\leq C(T)(1+\left\lVert \nabla \dot{\mathbf{u}} \right\rVert _{L^{2}}^{2})\left( 1+\left\lVert \nabla^{2}\rho \right\rVert^{2}_{L^{2}}  \right). 
\end{aligned}
\end{equation}
Consequently, thanks to \eqref{LOW2}, \eqref{psiestimate}, Gronwall's inequality implies \eqref{high}.
 \end{proof}
 \begin{lema}
Under the assumption of \eqref{assumption of lem5.1}, there exists a constant $C(T)>0$ such that
\begin{equation}\label{5.1}
\sup_{0\leq t\leq T} \left( \|\nabla\dot{\mathbf{u}}\|_{L^2}^2 + \|\nabla\mathbf{u}_t\|_{L^2}^2 + \|\mathbf{u}\|_{H^3}^2 \right) + \int_0^T \left( \|\mathbf{u}_{tt}\|_{L^2}^2 + \|\mathbf{u}_t\|_{H^2}^2 \right) dt \leq C(T).
\end{equation}
 \end{lema}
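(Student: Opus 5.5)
The plan is the standard Hoff-type estimate at the next level: test the time-differentiated momentum equation with $\mathbf{u}_{tt}$. We rewrite \eqref{rewirteeq:NSP-original} as
\[
\rho\mathbf{u}_t+\rho\mathbf{u}\cdot\nabla\mathbf{u}+\nabla P=\div(\mu(\rho)\nabla\mathbf{u})+\nabla(\lambda(\rho)\div\mathbf{u})-\rho\nabla\Phi,
\]
differentiate it in $t$, multiply by $\mathbf{u}_{tt}$ and integrate over $\Omega$. Since $\mathbf{u}_{tt}=0$ on $\partial\Omega$, this gives
\[
\frac12\frac{d}{dt}\int_\Omega\big(\mu(\rho)|\nabla\mathbf{u}_t|^2+\lambda(\rho)(\div\mathbf{u}_t)^2\big)\,\mathrm{d}\mathbf{x}+\int_\Omega\rho|\mathbf{u}_{tt}|^2\,\mathrm{d}\mathbf{x}
\]
on the left. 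On the right we collect:
\begin{itemize}
\item the commutator terms $\frac12\int(\mu(\rho)_t|\nabla\mathbf{u}_t|^2+\lambda(\rho)_t(\div\mathbf{u}_t)^2)$;
\item $-\int\mu(\rho)_t\nabla\mathbf{u}:\nabla\mathbf{u}_{tt}$ and the analogous $\lambda$-term;
\item $-\int\rho_t\mathbf{u}_t\cdot\mathbf{u}_{tt}$ and $-\int(\rho\mathbf{u}\cdot\nabla\mathbf{u})_t\cdot\mathbf{u}_{tt}$;
\item $-\int\nabla P_t\cdot\mathbf{u}_{tt}$ and $-\int(\rho\nabla\Phi)_t\cdot\mathbf{u}_{tt}$.
\end{itemize}
The terms in which $\nabla\mathbf{u}_{tt}$ appears, i.e.\ the $\mu(\rho)_t$, $\lambda(\rho)_t$ and $P_t$ terms, are handled by pulling the time derivative out. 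For example,
\[
-\int\mu(\rho)_t\nabla\mathbf{u}:\nabla\mathbf{u}_{tt}=-\frac{d}{dt}\int\mu(\rho)_t\nabla\mathbf{u}:\nabla\mathbf{u}_t+\int(\mu(\rho)_{tt}\nabla\mathbf{u}+\mu(\rho)_t\nabla\mathbf{u}_t):\nabla\mathbf{u}_t,
\]
and similarly $-\int\nabla P_t\cdot\mathbf{u}_{tt}=\frac{d}{dt}\int P_t\div\mathbf{u}_t-\int P_{tt}\div\mathbf{u}_t$.

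The ingredients for bounding the right-hand side come from \eqref{LOW1}, \eqref{LOW2} and \eqref{high}, together with the upper and lower bounds on $\rho$. They give $\sup_t\|\rho\|_{H^2}+\sup_t\|\mathbf{u}\|_{H^2}\le C(T)$ and $\int_0^T(\|\nabla\dot{\mathbf{u}}\|_{L^2}^2+\|\mathbf{u}\|_{H^3}^2)\,dt\le C(T)$. From the continuity equation, $\rho_t=-\div(\rho\mathbf{u})$ is bounded in $H^1$, hence in $L^6$. Differentiating once more, $\rho_{tt}=-\div(\rho\mathbf{u})_t$ satisfies $\|\rho_{tt}\|_{L^2}\le C(T)(1+\|\nabla\mathbf{u}_t\|_{L^2})$. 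For the potential, \eqref{est: nabla phi_t} gives $\|\nabla\Phi_t\|_{L^q}\le C\|\rho\mathbf{u}\|_{L^q}\le C(T)$. Differentiating the elliptic problem for $\Phi_t$ once more gives $\|\nabla\Phi_{tt}\|_{L^2}\le C\|(\rho\mathbf{u})_t\|_{L^2}\le C(T)(1+\|\mathbf{u}_t\|_{L^2})$.

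With these, each right-hand term is bounded by $\varepsilon\|\sqrt\rho\mathbf{u}_{tt}\|_{L^2}^2+C(T)(1+\|\nabla\dot{\mathbf{u}}\|_{L^2}^2)(1+\|\nabla\mathbf{u}_t\|_{L^2}^2)$, using H\"older, Gagliardo--Nirenberg and Young. The boundary-time terms such as $\int\mu(\rho)_t\nabla\mathbf{u}:\nabla\mathbf{u}_t$ are absorbed into $\frac14\int\mu(\rho)|\nabla\mathbf{u}_t|^2+C(T)$. The most dangerous term is $\int|\rho_t||\nabla\mathbf{u}_t|^2$, which I would bound by $\|\rho_t\|_{L^6}\|\nabla\mathbf{u}_t\|_{L^2}\|\nabla\mathbf{u}_t\|_{L^3}$. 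To control $\|\nabla\mathbf{u}_t\|_{H^1}$, we use the elliptic system obtained by differentiating \eqref{eq: elliptic sys of u} in $t$:
\[
\|\mathbf{u}_t\|_{H^2}\le C(T)\big(\|\sqrt\rho\mathbf{u}_{tt}\|_{L^2}+\|\nabla\mathbf{u}_t\|_{L^2}+1\big).
\]
Its right-hand side involves $\rho\mathbf{u}_{tt}$, $\nabla P_t$, $\nabla\mu(\rho)_t\cdot\nabla\mathbf{u}$ (controlled by $\|\rho_t\|_{H^1}\|\nabla\mathbf{u}\|_{W^{1,3}}$), $(\rho\mathbf{u}\cdot\nabla\mathbf{u})_t$ and $(\rho\nabla\Phi)_t$. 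Young's inequality then absorbs the resulting $\varepsilon\|\sqrt\rho\mathbf{u}_{tt}\|_{L^2}^2$ into the dissipation.

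The main obstacle is the initial layer. The quantity $\|\nabla\mathbf{u}_t(0)\|_{L^2}$ requires compatibility data, while the regularity in \eqref{eq:global-regularity} only asks for $L^\infty(\tau,T)$ bounds. I would therefore first run the estimate with the time weight $t$, i.e.\ multiply by $t$, so that Gronwall only needs $\int_0^T\|\nabla\mathbf{u}_t\|_{L^2}^2\le C(T)$. This integral follows from \eqref{LOW2} via $\mathbf{u}_t=\dot{\mathbf{u}}-\mathbf{u}\cdot\nabla\mathbf{u}$. Alternatively, I would note that $\mathbf{u}_0\in H^3$ and $\rho_0\in H^3$ make $\nabla\mathbf{u}_t(0)$ computable from the equation and bounded in $L^2$, and use this to remove the weight and get the unweighted statement. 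Gronwall then yields $\sup_t\|\nabla\mathbf{u}_t\|_{L^2}^2+\int_0^T\|\mathbf{u}_{tt}\|_{L^2}^2\le C(T)$. From this, the elliptic bound gives $\int_0^T\|\mathbf{u}_t\|_{H^2}^2\le C(T)$. Next, $\nabla\dot{\mathbf{u}}=\nabla\mathbf{u}_t+\nabla(\mathbf{u}\cdot\nabla\mathbf{u})$ gives the $\sup_t\|\nabla\dot{\mathbf{u}}\|_{L^2}$ bound. Finally, \eqref{uh3} with $\sup_t\|\nabla\dot{\mathbf{u}}\|_{L^2}$ and \eqref{high} gives $\sup_t\|\mathbf{u}\|_{H^3}\le C(T)$.
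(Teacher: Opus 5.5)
Your proposal is correct, but you organize the main energy identity differently from the paper.

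\textbf{The paper's route.} It does not differentiate the conservative form. It differentiates the normalized system \eqref{eq: elliptic sys of u}, i.e.\ the momentum equation divided by $\rho^{\alpha}$. After $\partial_t$ the principal part is then the constant-coefficient operator $-\Delta\mathbf{u}_t-(\alpha-1)\nabla\div\mathbf{u}_t$. Testing with $\mathbf{u}_{tt}$ gives $\frac12\frac{d}{dt}\int_\Omega\big(|\nabla\mathbf{u}_t|^2+(\alpha-1)(\div\mathbf{u}_t)^2\big)\,\mathrm{d}\mathbf{x}+\int_\Omega\rho^{1-\alpha}|\mathbf{u}_{tt}|^2\,\mathrm{d}\mathbf{x}$ on the left. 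Every remaining term has the form $\int_\Omega|\mathbf{u}_{tt}|(\cdots)$, with $(\cdots)$ containing at most $\nabla\rho_t$ and $\nabla\mathbf{u}_t$. These are bounded by $\varepsilon\|\mathbf{u}_{tt}\|_{L^2}^2$ plus quantities controlled through the $H^2$ estimate \eqref{uth2} for $\mathbf{u}_t$. So $\nabla\mathbf{u}_{tt}$ never appears. There is no integration by parts in time, no need for $\rho_{tt}$, and no boundary-in-time terms. Coercivity follows from $\|\div v\|_{L^2}\le\|\nabla v\|_{L^2}$ on $H^1_0$ together with $\alpha>0$.

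\textbf{Your route.} You keep the natural weighted energy $\int_\Omega(\mu|\nabla\mathbf{u}_t|^2+\lambda(\div\mathbf{u}_t)^2)$ with dissipation $\int_\Omega\rho|\mathbf{u}_{tt}|^2$. The cost is the bound on $\rho_{tt}$ and the extra boundary-in-time terms. You also need pointwise coercivity, $|\nabla v|^2+(\alpha-1)(\div v)^2\ge(1+(\alpha-1)N)|\nabla v|^2$. This uses $\alpha>\frac{N-1}{N}$, because $\lambda<0$ when $\alpha<1$; you should state it.

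Even in your setup the time integration by parts is avoidable. The terms $\div(\mu(\rho)_t\nabla\mathbf{u})$ and $\nabla P_t$ are already in $L^2$, with
$\|\div(\mu(\rho)_t\nabla\mathbf{u})\|_{L^2}\le C\big(\|\rho_t\|_{L^6}\|\nabla^2\mathbf{u}\|_{L^3}+\|\nabla\rho_t\|_{L^2}\|\nabla\mathbf{u}\|_{L^\infty}\big)$.
So you only need to integrate by parts in the principal term, which is essentially the paper's computation.

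\textbf{Two small corrections.}

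\begin{itemize}
\item In three dimensions $W^{1,3}$ does not embed into $L^\infty$. So $\nabla\mu(\rho)_t\cdot\nabla\mathbf{u}$ is not controlled by $\|\rho_t\|_{H^1}\|\nabla\mathbf{u}\|_{W^{1,3}}$. Use instead $\|\nabla\rho_t\|_{L^2}\|\nabla\mathbf{u}\|_{L^\infty}$ together with \eqref{nabla u L infty}, which gives $\|\nabla\mathbf{u}\|_{L^\infty}\le C(T)(1+\|\nabla\dot{\mathbf{u}}\|_{L^2})$. Your elliptic bound for $\mathbf{u}_t$ then still holds in the form you wrote, because $\|\nabla\dot{\mathbf{u}}\|_{L^2}\le C\|\nabla\mathbf{u}_t\|_{L^2}+C(T)$ by \eqref{eq: nabla u dot L2}.
\item The time-weighted detour is unnecessary. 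On its own it would only give $\sup_t t\|\nabla\mathbf{u}_t\|_{L^2}^2$, not the stated bound. Also, the solution class asks for $\mathbf{u}_t\in L^\infty(0,T;H_0^1)$ unweighted. The paper simply uses that $\nabla\mathbf{u}_t(0)\in L^2$ is determined by the $H^3$ data through the equation, which is your second option.
\end{itemize}
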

\begin{proof}
Differentiating \eqref{rewirteeq:NSP-original} with respect to \(t\) we arrive at 
\begin{equation}\label{eq: elliptic sys of u_t}
    \begin{aligned}
&-\Delta \mathbf{u}_{t}-(\alpha-1)\nabla \text{div}\left(\mathbf{u}_{t}\right)\\=& -\partial_{t}\left[ \frac{1}{\rho^{\alpha}}\left( \rho \left( \mathbf{u}_{t}+\mathbf{u}\cdot \nabla \mathbf{u} \right) +\nabla P-\nabla \mu(\rho)\cdot \nabla \mathbf{u}-\nabla\lambda(\rho)\text{div}\left(\mathbf{u}\right)+\rho \nabla \Phi \right)  \right].
\end{aligned}
\end{equation}
Multiplying by \(\mathbf{u}_{tt}\), and integrating over \(\Omega\) yields
\begin{equation}\label{high_tmp}
\begin{aligned}
&\frac{1}{2}\frac{d}{dt}\int_{\Omega}\left(|\nabla \mathbf{u}_t|^2+(\alpha-1)(\mathrm{div}\,\mathbf{u}_t)^2\right)\mathrm{d}\mathbf{x}+\int_{\Omega}\rho^{1-\alpha}|\mathbf{u}_{tt}|^2\mathrm{d}\mathbf{x} \\
\leq& C(T)\int_{\Omega}|\mathbf{u}_{tt}|\left(|\rho_t||\mathbf{u}_t|+|\rho_t||\mathbf{u}||\nabla \mathbf{u}|+|\mathbf{u}_t||\nabla \mathbf{u}|+|\mathbf{u}||\nabla \mathbf{u}_t|+|\rho_t||\nabla \rho|+|\nabla \rho_t|\right. \\
&\left.+|\rho_t||\nabla \rho||\nabla \mathbf{u}|+|\nabla \rho_t||\nabla \mathbf{u}|+|\nabla \rho||\nabla \mathbf{u}_t|+\left| \rho_{t} \right| \left| \nabla \Phi \right|+\left| \nabla \Phi_{t} \right|  \right)\mathrm{d}\mathbf{x} \\
\leq &\varepsilon\int_{\Omega}|\mathbf{u}_{tt}|^2\mathrm{d}\mathbf{x}+C(\varepsilon,T)\int_{\Omega}\left(|\rho_t|^2|\mathbf{u}_t|^2+|\rho_t|^2|\mathbf{u}|^2|\nabla \mathbf{u}|^2+|\mathbf{u}_t|^2|\nabla \mathbf{u}|^2+|\mathbf{u}|^2|\nabla \mathbf{u}_t|^2\right. \\
&\left.+|\rho_t|^2|\nabla \rho|^2+|\nabla \rho_t|^2+|\rho_t|^2|\nabla \rho|^2|\nabla \mathbf{u}|^2+|\nabla \rho_t|^2|\nabla \mathbf{u}|^2+|\nabla \rho|^2|\nabla \mathbf{u}_t|^2+\left| \rho_{t} \right| ^{2}\left| \nabla \Phi \right|^{2}+\left| \nabla \Phi_{t} \right| ^{2} \right)\mathrm{d}\mathbf{x} \\
\leq &\varepsilon\int_{\Omega}|\mathbf{u}_{tt}|^2\mathrm{d}\mathbf{x}+C(\varepsilon,T)\left(\|\rho_t\|_{L^6}^2\|\mathbf{u}_t\|_{L^6}^2+\|\rho_t\|_{L^6}^2\|\mathbf{u}\|_{L^\infty}^2\|\nabla \mathbf{u}\|_{L^6}^2+\|\mathbf{u}_t\|_{L^6}^2\|\nabla \mathbf{u}\|_{L^6}^2\right. \\
&+\|\mathbf{u}\|_{L^\infty}^2\|\nabla \mathbf{u}_t\|_{L^2}^2+\|\rho_t\|_{L^6}^2\|\nabla \rho\|_{L^6}^2+\|\nabla \rho_t\|_{L^2}^2+\|\rho_t\|_{L^6}^2\|\nabla \rho\|_{L^6}^2\|\nabla \mathbf{u}\|_{L^6}^2 \\
&\left.+\|\nabla \rho_t\|_{L^2}^2\|\nabla \mathbf{u}\|_{L^\infty}^2+\|\nabla \rho\|_{L^6}^2\|\nabla \mathbf{u}_t\|_{L^3}^2+\left\lVert \rho_{t} \right\rVert^{2}_{L^{6}}\left\lVert \nabla \Phi \right\rVert ^{2}_{L^{3}} +\left\lVert \nabla \Phi_{t} \right\rVert ^{2}_{L^{2}}\right).
\end{aligned}
\end{equation}
By virtue of $\eqref{eq:NSP-original}_1,$ $\eqref{eq:NSP-original}_3,$ \eqref{LOW2}, \eqref{high}, \eqref{est: nabla phi_t}, H\"{o}lder's inequality and Sobolev embedding, we deduce that
\begin{equation}
\begin{aligned}
\|\rho_t\|_{L^6}+\|\nabla\rho_t\|_{L^2}+\|\nabla\Phi\|_{L^3}+\|\nabla\Phi_t\|_{L^2}\leq C(T).
\end{aligned}
\end{equation} 
Since $u_t$ satisfies zero boundary condition,  it follows from the elliptic estimate of \eqref{eq: elliptic sys of u_t} that 
\begin{equation} \label{uth2}
\begin{aligned}
\|\mathbf{u}_t\|_{H^2}^2 \leq& C\left\|\partial_{t}\left[ \frac{1}{\rho^{\alpha}}\left( \rho \left( \mathbf{u}_{t}+\mathbf{u}\cdot \nabla \mathbf{u} \right) +\nabla P-\nabla \mu(\rho)\cdot \nabla \mathbf{u}-\nabla\lambda(\rho)\text{div}\left(\mathbf{u}\right)+\rho \nabla \Phi \right)  \right]\right\|_{L^2}^2 \\
\leq &C(T)\left(\|\rho_t\mathbf{u}_t\|_{L^2}^2+\|\mathbf{u}_{tt}\|_{L^2}^2+\||\rho_t\mathbf{u}||\nabla\mathbf{u}|\|_{L^2}^2+\||\mathbf{u}_t||\nabla\mathbf{u}|\|_{L^2}^2+\||\mathbf{u}||\nabla\mathbf{u}_t|\|_{L^2}^2\right. \\
&\left.+\|\rho_t\nabla\rho\|_{L^2}^2+\|\nabla\rho_t\|_{L^2}^2+\||\rho_t\nabla\rho||\nabla\mathbf{u}|\|_{L^2}^2+\||\nabla\rho_t||\nabla\mathbf{u}|\|_{L^2}^2+\||\nabla\rho||\nabla\mathbf{u}_t|\|_{L^2}^2\right.\\
&+\left.\left\lVert \left| \rho_{t} \right| \left| \nabla \Phi \right|  \right\rVert^{2}_{L^{2}}+\left\lVert \nabla \Phi_{t} \right\rVert^{2}_{L^{2}}  \right) \\
\leq &C(T)\left(\|\rho_t\|_{L^6}^2\|\mathbf{u}_t\|_{L^6}^2+\|\mathbf{u}_{tt}\|_{L^2}^2+\|\rho_t\|_{L^6}^2\|\mathbf{u}\|_{L^\infty}^2\|\nabla\mathbf{u}\|_{L^6}^2+\|\mathbf{u}_t\|_{L^6}^2\|\nabla\mathbf{u}\|_{L^6}^2\right. \\
&+\|\mathbf{u}\|_{L^\infty}^2\|\nabla\mathbf{u}_t\|_{L^2}^2+\|\rho_t\|_{L^6}^2\|\nabla\rho\|_{L^6}^2+\|\nabla\rho_t\|_{L^2}^2+\|\rho_t\|_{L^6}^2\|\nabla\rho\|_{L^6}^2\|\nabla\mathbf{u}\|_{L^6}^2 \\
&+\left.\|\nabla\rho_t\|_{L^2}^2\|\nabla\mathbf{u}\|_{L^\infty}^2+\|\nabla\rho\|_{L^6}^2\|\nabla\mathbf{u}_t\|_{L^3}^2 +\left\lVert \rho_{t} \right\rVert_{L^{6}}^{2}\left\lVert \nabla \Phi \right\rVert^{2}_{L^{3}}+\left\lVert \nabla \Phi_{t} \right\rVert ^{2}_{L^{2}}  \right)\\
\leq &C(T)\left(1+\|\nabla\dot{\mathbf{u}}\|_{L^2}^2+\|\mathbf{u}_{tt}\|_{L^2}^2+\|\nabla\mathbf{u}_t\|_{L^3}^2\right) \\
\leq &C(T)\left(1+\|\nabla\dot{\mathbf{u}}\|_{L^2}^2+\|\mathbf{u}_{tt}\|_{L^2}^2\right)+C(T)\|\nabla\mathbf{u}_t\|_{L^2}^{2-\frac{N}{3}}\|\nabla\mathbf{u}_t\|_{H^1}^{\frac{N}{3}} \\
\leq &\frac{1}{2}\|\nabla\mathbf{u}_t\|_{H^1}^2+C(T)\left(1+\|\nabla\dot{\mathbf{u}}\|_{L^2}^2+\|\mathbf{u}_{tt}\|_{L^2}^2\right),
\end{aligned}
\end{equation}
where we use 
\begin{equation}\label{eq: nabla u_t L2}
    \begin{aligned} \|\nabla \mathbf{u}_t\|_{L^2}^2 \leq C\|\nabla \dot{\mathbf{u}}\|_{L^2}^2 + C\|\nabla(\mathbf{u}\cdot\nabla\mathbf{u})\|_{L^2}^2 \leq C\|\nabla \dot{\mathbf{u}}\|_{L^2}^2 + C(T), \end{aligned}
\end{equation}
 and
 \begin{equation}\label{eq: nabla u dot L2}
     \begin{aligned}
\|\nabla \dot{\mathbf{u}}\|_{L^2}^2 \leq C\|\nabla \mathbf{u}_t\|_{L^2}^2 + C\|\nabla(\mathbf{u}\cdot\nabla\mathbf{u})\|_{L^2}^2 \leq C\|\nabla \mathbf{u}_t\|_{L^2}^2 + C(T).
\end{aligned}
 \end{equation}
substituting \eqref{uth2}, \eqref{eq: nabla u_t L2}, \eqref{eq: nabla u dot L2} into \eqref{high_tmp} yields
\begin{equation}\label{prop5.1 tmp}
\begin{aligned}
&\frac{1}{2}\frac{d}{dt}\int_{\Omega}\left(|\nabla \mathbf{u}_t|^2+(\alpha-1)(\mathrm{div}\,\mathbf{u}_t)^2\right)\mathrm{d}\mathbf{x}+\int_{\Omega}\rho^{1-\alpha}|\mathbf{u}_{tt}|^2\mathrm{d}\mathbf{x}\\
\leq& \varepsilon \left\lVert \mathbf{u}_{tt} \right\rVert ^{2}_{L^{2}}+C(\varepsilon,T)\left( 1+\left\lVert \nabla \dot{\mathbf{u} }\right\rVert  ^{2}_{L^{2}}+\left\lVert \nabla \mathbf{u}_{t} \right\rVert^{2}_{L^{3}} \right)\\
\leq& \varepsilon \left\lVert \mathbf{u}_{tt} \right\rVert  ^{2}_{L^{2}}+C(\varepsilon,T)\left( 1+\left\lVert \nabla \dot{\mathbf{u}} \right\rVert ^{2}_{L^{2}}+\left\lVert \nabla \mathbf{u}_{t} \right\rVert _{L^{2}}^{2- \frac{N}{3}} \left\lVert \nabla \mathbf{u}_{t} \right\rVert _{H^{1}}^{\frac{N}{3}}\right) \\
\leq&2\varepsilon \left\lVert \mathbf{u}_{tt} \right\rVert ^{2}_{L^{2}}+C(\varepsilon,T)\left(\left\lVert \nabla \dot{\mathbf{u}} \right\rVert _{L^{2}}^{2}+1\right).
\end{aligned}
\end{equation}
Integrating \eqref{prop5.1 tmp} with respect to time and choosing $\varepsilon\in\left(0, \frac{1}{4}\|\rho^{-1}\|_{L^{\infty}}^{\alpha-1}\right),$ we derive \eqref{5.1} from \eqref{LOW2}, \eqref{uh3}, \eqref{eq: nabla u_t L2}, \eqref{eq: nabla u dot L2} and \eqref{uth2}.
\end{proof}

\begin{prop}
Under the assumption of \eqref{assumption of lem5.1}, there exists a constant $C(T)>0$ such that
\begin{equation}\label{5.2}
\begin{aligned}
&\sup_{0\leq t\leq T} \left( \|\rho\|_{H^3}^2 + \|\nabla\Phi\|_{H^4}^2+\|\rho_t\|_{H^2}^2 + \|\nabla\Phi_t\|_{H^3}^2 +\|\rho_{tt}\|_{L^2}^2+\|\nabla\Phi_{tt}\|^2_{H^1} \right) \\
&+ \int_0^T \left( \|\mathbf{u}\|_{H^4}^2 + \|\rho_{tt}\|_{H^1}^2 \right) dt \leq C(T).
\end{aligned}
\end{equation}
\end{prop}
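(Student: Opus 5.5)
The plan is to proceed in the standard order for third-order estimates. First obtain $\|\nabla^3\rho\|_{L^2}$ by differentiating the continuity equation three times. Then obtain $\mathbf u\in L^2(0,T;H^4)$ from the elliptic system \eqref{eq: elliptic sys of u}. The time-derivative bounds for $\rho_t,\rho_{tt}$ are then read off from the continuity equation. The bounds for $\nabla\Phi,\nabla\Phi_t,\nabla\Phi_{tt}$ come from Lemma \ref{lem:poisson-neumann}. Throughout we take as already available \eqref{LOW1}, \eqref{LOW2}, \eqref{high} and \eqref{5.1}. These give $\sup_t(\|\mathbf u\|_{H^3}+\|\nabla\mathbf u_t\|_{L^2}+\|\rho\|_{H^2})\le C(T)$ and $\int_0^T(\|\mathbf u_t\|_{H^2}^2+\|\mathbf u_{tt}\|_{L^2}^2)dt\le C(T)$, together with upper and lower bounds for $\rho$. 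In particular $\|\nabla\mathbf u\|_{W^{1,\infty}}$ need not be bounded pointwise in time, but $\|\nabla\mathbf u\|_{L^\infty}\le C\|\mathbf u\|_{H^3}\le C(T)$ is bounded.

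\textbf{Step 1 ($H^3$ of $\rho$ coupled with $H^4$ of $\mathbf u$).} Apply $\partial^\beta$ with $|\beta|=3$ to $\rho_t+\mathbf u\cdot\nabla\rho+\rho\,\mathrm{div}\,\mathbf u=0$, test with $2\partial^\beta\rho$, and integrate by parts in the transport term. This gives
\[
\frac{d}{dt}\|\nabla^3\rho\|_{L^2}^2\le C(T)\big(1+\|\nabla\mathbf u\|_{L^\infty}+\|\nabla^2\mathbf u\|_{L^\infty}\big)\|\nabla^3\rho\|_{L^2}^2+C(T)\|\nabla^4\mathbf u\|_{L^2}^2+C(T),
\]
where the lower-order products are handled by $\|\nabla^2\rho\|_{L^4}\le C\|\rho\|_{H^3}$ and $\|\nabla^3\mathbf u\|_{L^2}\le C(T)$. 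Next apply $H^2$ elliptic regularity to \eqref{eq: elliptic sys of u}. This bounds $\|\mathbf u\|_{H^4}$ by $C(T)\big(\|\mathbf u_t\|_{H^2}+\|\rho\|_{H^3}+1\big)$, plus $\|\rho\nabla\Phi\|_{H^2}$, which is controlled by $\|\nabla\Phi\|_{H^2}\le C\|\rho-1\|_{H^1}$ via \eqref{psiestimate}. The key observation is that the worst term, $\nabla^2(\rho^{-\alpha}\nabla P)$ and the term with $\nabla^3\rho\,\nabla\mathbf u$, are linear in $\nabla^3\rho$ with coefficients bounded in $L^\infty$. So $\|\mathbf u\|_{H^4}^2\le C(T)\big(1+\|\mathbf u_t\|_{H^2}^2\big)\big(1+\|\nabla^3\rho\|_{L^2}^2\big)$. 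Also $\|\nabla^2\mathbf u\|_{L^\infty}\le C\|\mathbf u\|_{H^4}$, and $1+\|\mathbf u\|_{H^4}\le 1+\|\mathbf u\|_{H^4}^2$. Substituting gives a linear Gronwall inequality whose integrating factor $\int_0^T(1+\|\mathbf u_t\|_{H^2}^2)dt$ is finite by \eqref{5.1}. This yields $\sup_t\|\rho\|_{H^3}\le C(T)$ and then $\int_0^T\|\mathbf u\|_{H^4}^2dt\le C(T)$.

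\textbf{Step 2 (time derivatives).} From $\rho_t=-\mathbf u\cdot\nabla\rho-\rho\,\mathrm{div}\,\mathbf u$ we get $\|\rho_t\|_{H^2}\le C\|\mathbf u\|_{H^3}\|\rho\|_{H^3}\le C(T)$. Differentiating in time, $\rho_{tt}=-\mathbf u_t\cdot\nabla\rho-\mathbf u\cdot\nabla\rho_t-\rho_t\mathrm{div}\,\mathbf u-\rho\,\mathrm{div}\,\mathbf u_t$. Hence $\|\rho_{tt}\|_{L^2}\le C(T)(1+\|\nabla\mathbf u_t\|_{L^2})\le C(T)$ and $\|\rho_{tt}\|_{H^1}\le C(T)(1+\|\mathbf u_t\|_{H^2})$, which is square integrable in time.

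\textbf{Step 3 (the potential).} Use Lemma \ref{lem:poisson-neumann} on three Neumann problems:
\[
\Delta\Phi=\kappa(\rho-1),\qquad \Delta\Phi_t=\kappa\rho_t,\qquad \Delta\Phi_{tt}=\kappa\rho_{tt},
\]
each with zero flux. The compatibility conditions hold because mass is conserved, so $\int\rho_t=\int\rho_{tt}=0$. With $k=3$, $k=2$ and $k=0$ respectively, this gives $\|\nabla\Phi\|_{H^4}\le C\|\rho-1\|_{H^3}$, $\|\nabla\Phi_t\|_{H^3}\le C\|\rho_t\|_{H^2}$ and $\|\nabla\Phi_{tt}\|_{H^1}\le C\|\rho_{tt}\|_{L^2}$, all bounded by Steps 1–2.

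\textbf{Main obstacle.} The main obstacle is Step 1. One must check that every top-order term in the $H^2$ elliptic estimate enters linearly in $\nabla^3\rho$, with coefficients whose squares are time-integrable. In particular, the term $\nabla^3\rho\,\nabla\mathbf u$ needs $\|\nabla\mathbf u\|_{L^\infty}$ bounded, which holds since $\mathbf u\in L^\infty H^3$. The commutator terms $\nabla^2\rho\,\nabla^2\mathbf u$ are handled by $L^4$–$L^4$ estimates in dimension $N\le3$. Only then does the Gronwall argument close without requiring a bound on $\sup_t\|\mathbf u\|_{H^4}$.
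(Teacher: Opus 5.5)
Your overall route is the paper's. You run an $L^2$ energy estimate for $\nabla^3\rho$ from the thrice-differentiated continuity equation and couple it with the $H^2$ elliptic estimate for \eqref{eq: elliptic sys of u}, which bounds $\|\mathbf u\|_{H^4}$ linearly by $\|\nabla^3\rho\|_{L^2}$ plus the time-integrable $\|\mathbf u_t\|_{H^2}$ (the paper further trades this for $\|\mathbf u_{tt}\|_{L^2}$ via \eqref{uth2}). Then $\rho_t,\rho_{tt}$ come from the continuity equation and $\Phi,\Phi_t,\Phi_{tt}$ from the Neumann problem. Your elliptic bound and Steps 2--3 are fine.

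The closing of Step 1, however, fails as written. Your differential inequality has $\|\nabla^2\mathbf u\|_{L^\infty}$ multiplying $Y:=\|\nabla^3\rho\|_{L^2}^2$. You then insert $\|\nabla^2\mathbf u\|_{L^\infty}\le C\|\mathbf u\|_{H^4}\le C(1+\|\mathbf u\|_{H^4}^2)\le C(T)(1+\|\mathbf u_t\|_{H^2}^2)(1+Y)$. This gives $Y'\le C(T)(1+\|\mathbf u_t\|_{H^2}^2)(1+Y)Y+\dots$, which is Riccati-type, not linear. Even without squaring, $\|\mathbf u\|_{H^4}\lesssim 1+\|\mathbf u_t\|_{H^2}+Y^{1/2}$ leaves a $Y^{3/2}$ term. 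Such an inequality only gives a bound on a short time interval, not on an arbitrary $[0,T]$. The problem is real: $\|\nabla^2\mathbf u\|_{L^\infty}$ is not controlled by \eqref{high} and \eqref{5.1}. Through the elliptic system it needs $\nabla^2\rho\in L^p$ with $p>N$, essentially $\|\nabla^3\rho\|_{L^2}$ itself, so it cannot serve as an a priori integrable Gronwall coefficient.

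The repair is the paper's bookkeeping: never let $\nabla^2\mathbf u$ multiply $|\nabla^3\rho|^2$. The only term containing it is bounded as
$\int|\nabla^3\rho||\nabla^2\mathbf u||\nabla^2\rho|\,d\mathbf x\le\|\nabla^3\rho\|_{L^2}\|\nabla^2\mathbf u\|_{L^\infty}\|\nabla^2\rho\|_{L^2}\le C(T)\|\nabla^3\rho\|_{L^2}\|\mathbf u\|_{H^4}$,
using $\|\rho\|_{H^2}\le C(T)$ from \eqref{high}. Alternatively, use the $L^4$--$L^4$ bound with $\|\nabla^2\mathbf u\|_{L^4}\le C\|\mathbf u\|_{H^3}$ and $\|\nabla^2\rho\|_{L^4}\le C\|\rho\|_{H^3}$. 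You mention this but do not carry it into your displayed inequality. Likewise, by $L^2$--$L^3$--$L^6$,
$\int|\nabla^3\rho||\nabla^3\mathbf u||\nabla\rho|\,d\mathbf x\le C(T)\|\nabla^3\rho\|_{L^2}\|\mathbf u\|_{H^4}$.
Then the only coefficient of $Y$ is $1+\|\nabla\mathbf u\|_{L^\infty}\le C(T)$. So $Y'\le C(T)\big(1+Y+\|\mathbf u\|_{H^4}^2\big)\le C(T)\big(1+\|\mathbf u_t\|_{H^2}^2\big)(1+Y)$, which is genuinely linear and closes with \eqref{5.1}.
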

\begin{proof}
    By applying the operator $\partial_i \partial_j \partial_k$ to equation $\eqref{eq:NSP-original}_1$, multiplying the resulting expression by $\partial_{ijk}\rho$, summing over $i$, $j$, and $k$, and then integrating by parts, we apply \eqref{high}, \eqref{5.1} H\"{o}lder's inequality, Sobolev embedding and arrive at 
 \begin{equation}\label{prop5.2eq1}
\begin{aligned}
\frac{1}{2}\frac{d}{dt}\int_\Omega |\nabla^3\rho|^2 \mathrm{d}\mathbf{x}
&\le C\int_\Omega |\nabla^3\rho|\big(|\nabla^3\mathbf{u}||\nabla\rho| + |\nabla^2\mathbf{u}||\nabla^2\rho| + |\nabla \mathbf{u}||\nabla^3\rho| + \rho|\nabla^4\mathbf{u}|\big)\mathrm{d}\mathbf{x} \\
&\le C(T)\big(\|\nabla^3\rho\|_{L^2}\|\nabla^3\mathbf{u}\|_{L^3}\|\nabla\rho\|_{L^6} + \|\nabla^3\rho\|_{L^2}\|\nabla^2\mathbf{u}\|_{L^\infty}\|\nabla^2\rho\|_{L^2} \\
&\quad + \|\nabla \mathbf{u}\|_{L^\infty}\|\nabla^3\rho\|_{L^2}^2 + \|\rho\|_{L^\infty}\|\nabla^3\rho\|_{L^2}\|\nabla^4\mathbf{u}\|_{L^2}\big) \\
&\le C(T)\big(\|\nabla^3\rho\|_{L^2}\|\mathbf{u}\|_{H^4}\|\rho\|_{H^2} + \|\nabla^3\rho\|_{L^2}\|\mathbf{u}\|_{H^4}\|\rho\|_{H^2} \\
&\quad + \|\mathbf{u}\|_{H^3}\|\nabla^3\rho\|_{L^2}^2 + \|\nabla^3\rho\|_{L^2}\|\mathbf{u}\|_{H^4}\big) \\
&\leq C(T)(\|\nabla^3\rho\|^2_{L^2}+\|\mathbf{u}\|^2_{H^4}+1).
\end{aligned}
\end{equation}
From \eqref{eq: elliptic sys of u}, \eqref{high}, \eqref{5.1} and \eqref{uth2}, it holds that
\begin{equation}\label{prop5.2eq2}
\begin{aligned} 
\|\mathbf{u}\|^2_{H^4}\leq &C\left\lVert \frac{1}{\rho^{\alpha}}\Big( \rho \left( \mathbf{u}_{t}+\mathbf{u}\cdot \nabla \mathbf{u} \right) +\nabla P-\nabla \mu(\rho)\cdot \nabla \mathbf{u}-\nabla\lambda(\rho)\text{div}\left(\mathbf{u}\right)+\rho \nabla \Phi \right)  \Big\rVert _{H^{2}}^{2}\\
\leq& C(T) \Big( \|\mathbf{u}_t\|_{H^2}^2 + \||\nabla^2 \rho| |\mathbf{u}_t|\|_{L^2}^2 + \||\nabla \rho| |\nabla \mathbf{u}_t|\|_{L^2}^2 \\
&+ \||\nabla^2 \mathbf{u}| |\nabla \mathbf{u}|\|_{L^2}^2 + \||\mathbf{u}| |\nabla^3 \mathbf{u}|\|_{L^2}^2 \\
& + \|\nabla^3 \rho\|_{L^2}^2 + \||\nabla^2 \rho| |\nabla \rho|\|_{L^2}^2 + \||\nabla \rho|^3\|_{L^2}^2 \\
& + \||\nabla^3 \rho| |\nabla \mathbf{u}|\|_{L^2}^2 + \||\nabla^2 \rho| |\nabla^2 \mathbf{u}|\|_{L^2}^2 + \||\nabla \rho| |\nabla^3 \mathbf{u}|\|_{L^2}^2 + \||\nabla \rho|^2 |\nabla^2 \mathbf{u}|\|_{L^2}^2 \\
&+ \|\nabla^3 \Phi\|_{L^2}^2 + \||\nabla \rho| |\nabla^2 \Phi|\|_{L^2}^2 + \||\nabla^2 \rho| |\nabla \Phi|\|_{L^2}^2 +\||\nabla\rho|^2|\nabla\Phi|\|^2_{L^2}\Big) \\
\leq& C(T)\big(1 + \|\nabla^3\rho\|_{L^2}^2 + \|\mathbf{u}_t\|_{H^2}^2\big)\\
\leq&  C(T) \Big( 1 + \left\lVert \nabla^3\rho \right\rVert _{L^2}^2 + \left\lVert \mathbf{u}_{tt} \right\rVert _{L^2}^2 \Big).
\end{aligned}
\end{equation}Substituting \eqref{prop5.2eq2} into \eqref{prop5.2eq1}, applying Gronwall's inequality and \eqref{5.1} yields
\begin{equation}\label{5.2 tmp}
    \sup_{0 \le t \le T} \|\rho\|_{H^3}^2 +\int_0^T \|\mathbf{u}\|_{H^4}^2 dt \le C(T).
\end{equation}
Finally, we estimate the time derivatives of the density. Thus, from the above equation and \eqref{psiestimate}, we obtain an estimate for $\Phi$. By taking the second-order spatial derivative  of the continuity equation $\eqref{eq:NSP-original}_1$ and utilizing Hölder's inequality, Sobolev embedding, \eqref{high}, \eqref{5.1} along with \eqref{5.2 tmp}, we obtain
\begin{equation}\label{5.2 tmp2}
    \begin{aligned}
\|\nabla^2 \rho_t\|_{L^2} &\le C\|\nabla^2(\mathbf{u} \cdot \nabla \rho)\|_{L^2} + C\|\nabla^2(\rho \text{div} \mathbf{u})\|_{L^2} \\
&\le C\|\nabla^2 \mathbf{u} \cdot \nabla \rho\|_{L^2} + C\|\nabla \mathbf{u} \cdot \nabla^2 \rho\|_{L^2} +C \|\mathbf{u} \cdot \nabla^3 \rho\|_{L^2} \\
&\quad +C\|\nabla^2 \rho \text{div} \mathbf{u}\|_{L^2} + C\|\nabla \rho \cdot \nabla\text{div} \mathbf{u}\|_{L^2} + C\|\rho \nabla^2\text{div} \mathbf{u}\|_{L^2} \\
&\le C(T)\|\nabla^2 \mathbf{u}\|_{L^2}\|\nabla \rho\|_{L^\infty} + C(T)\|\nabla \mathbf{u}\|_{L^\infty}\|\nabla^2 \rho\|_{L^2} + C(T)\|\mathbf{u}\|_{L^\infty}\|\nabla^3 \rho\|_{L^2} \\
&\quad +C(T) \|\nabla^2 \rho\|_{L^2}\|\nabla \mathbf{u}\|_{L^\infty} + C(T)\|\nabla \rho\|_{L^\infty}\|\nabla^2 \mathbf{u}\|_{L^2} + C(T)\|\rho\|_{L^\infty}\|\nabla^3 \mathbf{u}\|_{L^2} \\
&\le C(T)\|\mathbf{u}\|_{H^3}\|\rho\|_{H^3}\le C(T).
\end{aligned}
\end{equation}
Differentiating the continuity equation $\eqref{eq:NSP-original}_1$ with respect to $t$ gives
\begin{equation}\label{eq:rho_tt_eq}
\rho_{tt} = - \rho_t \text{div} \mathbf{u} - \rho \text{div} \mathbf{u}_t - \mathbf{u}_t \cdot \nabla \rho - \mathbf{u} \cdot \nabla \rho_t.
\end{equation}
Taking the $L^2$-norm yields
\begin{equation}
\begin{aligned}
\|\rho_{tt}\|_{L^2} &\le C\big(\|\rho_t\|_{L^3}\|\nabla \mathbf{u}\|_{L^6} + \|\rho\|_{L^\infty}\|\nabla \mathbf{u}_t\|_{L^2} + \|\mathbf{u}_t\|_{L^6}\|\nabla \rho\|_{L^3} + \|\mathbf{u}\|_{L^\infty}\|\nabla \rho_t\|_{L^2}\big) \\
&\leq C\left( \left\lVert \rho_{t} \right\rVert _{H^{1}}\left\lVert \mathbf{u} \right\rVert _{H^{2}}+\left\lVert \rho \right\rVert _{H^{2}}\left\lVert \nabla \mathbf{u}_{t} \right\rVert _{L^{2}}\right) \le C(T),
\end{aligned}
\end{equation}
where we use \eqref{5.1}, \eqref{5.2 tmp2}. Take the spatial gradient on \eqref{eq:rho_tt_eq} and estimate its $L^2$-norm as follows
\begin{equation}
\begin{aligned}
\|\nabla \rho_{tt}\|_{L^2} &\le C\Big(\|\nabla\rho_t \text{div} \mathbf{u}\|_{L^2} + \|\rho_t \nabla\text{div} \mathbf{u}\|_{L^2} + \|\nabla\rho \text{div} \mathbf{u}_t\|_{L^2} + \|\rho \nabla\text{div} \mathbf{u}_t\|_{L^2} \\
&\quad + \|\nabla \mathbf{u}_t \cdot \nabla \rho\|_{L^2} + \|\mathbf{u}_t \cdot \nabla^2 \rho\|_{L^2} + \|\nabla \mathbf{u} \cdot \nabla \rho_t\|_{L^2} + \|\mathbf{u} \cdot \nabla^2 \rho_t\|_{L^2} \Big)\\
&\le C(T)\big(\|\nabla\rho_t\|_{L^3}\|\nabla \mathbf{u}\|_{L^6} + \|\rho_t\|_{L^\infty}\|\nabla^2 \mathbf{u}\|_{L^2} + \|\nabla\rho\|_{L^\infty}\|\nabla \mathbf{u}_t\|_{L^2} + \|\rho\|_{L^\infty}\|\nabla^2 \mathbf{u}_t\|_{L^2} \\
&\quad + \|\nabla \mathbf{u}_t\|_{L^2}\|\nabla \rho\|_{L^\infty} + \|\mathbf{u}_t\|_{L^\infty}\|\nabla^2 \rho\|_{L^2} + \|\nabla \mathbf{u}\|_{L^\infty}\|\nabla \rho_t\|_{L^2} + \|\mathbf{u}\|_{L^\infty}\|\nabla^2 \rho_t\|_{L^2}\big)\\
&\le C(T)\big(1 + \|\nabla^2 \mathbf{u}_t\|_{L^2} + \|\mathbf{u}_t\|_{H^2}\big) \\
&\le C(T)\big(1 + \|\mathbf{u}_t\|_{H^2}\big).
\end{aligned}
\end{equation}
Integrating this over time and recalling \eqref{5.1} yield
\begin{equation}
    \int_0^T \|\rho_{tt}\|_{H^1}^2 dt \le C(T) \int_0^T \big(1 + \|\mathbf{u}_t\|_{H^2}^2\big) dt \le C(T).
\end{equation}
From \eqref{eq:NSP-original} and \eqref{psiestimate}, we have
\begin{equation}
\Delta \Phi_t = \rho_t, \quad \Delta \Phi_{tt} = -\operatorname{div}(\rho_t \mathbf{u} + \rho \mathbf{u}_t),
\end{equation}
then we complete the proof of \eqref{5.2}.
\end{proof}
\subsection{Proof of the Main Results}

\subsubsection{Proof of the Theorem \ref{thm:global-classical-2dalpha1} and \ref{thm:global-classical-3dalpha1}}

For \( N = 2\) and \( N = 3\), based on lower-order estimates

\begin{equation}
\begin{aligned}
&\sup_{0 \leq t \leq T} \left( \| \mathbf{u} \|_{L^4} + \|\rho\|_{L^\infty} + \|\rho^{-1}\|_{L^\infty} + \|\nabla\rho\|_{L^2} + \|\nabla\rho\|_{L^4} \right)\\ 
&+ \int_0^T \left( \|\nabla\rho\|_{L^2}^2 + \|\nabla\rho\|_{L^4}^4 + \|\nabla\mathbf{u}\|_{L^2}^2 \right) dt \leq C(T),
\end{aligned}\end{equation}
we derive the higher-order estimates
\begin{equation}
\begin{aligned}
\sup_{0\leq t\leq T} &\left( \|\rho\|_{H^3}^2 + \|\nabla\Phi\|_{H^4}^2+\|\rho_t\|_{H^2}^2 + \|\nabla\Phi_t\|_{H^3}^2 +\|\rho_{tt}\|_{L^2}^2+\|\nabla\Phi_{tt}\|^2_{H^1} \right) \\
&+ \int_0^T \left( \|\mathbf{u}\|_{H^4}^2 + \|\rho_{tt}\|_{H^1}^2 \right) dt \leq C(T).
\end{aligned}
\end{equation}

Now that we have obtained the global a priori estimates, we proceed to prove that the solution can be extended to a global one.

\subsubsection{Regularity}

First, we verify the continuity of the solution, i.e.,

\begin{equation}\label{6.3}
(\rho,\nabla\Phi) \in C([0, T]; H^3), \quad u \in C([0, T]; H^3).
\end{equation}

The regularity \(\rho \in L^\infty(0, T; H^3)\) together with \(\rho_t \in L^\infty(0, T; H^2)\) yields \(\rho \in C([0, T]; H^2)\). The strong continuity in \(H^3\) follows from standard arguments based on weak and norm convergence. For the velocity field, the condition \(\mathbf{u} \in L^2(0, T; H^4) \cap H^1(0, T; H^2)\) gives \(\mathbf{u} \in C([0, T]; H^3)\) by standard interpolation theory.

\subsubsection{Global Existence}

To show that \(T^* = \infty\), we proceed by contradiction. Assume \(T^* < \infty\). The estimates \eqref{eq: u H2} remain valid as \(t \to T^*\); the continuity claim \eqref{6.3} allows us to define the limiting values \((\rho(\mathbf{x},T^*), \nabla\Phi(\mathbf{x},T^*), \mathbf{u}(\mathbf{x},T^*)) \in H^3\). The density retains a positive lower and upper bound. These limits can be taken as new initial data. By Lemma \ref{lem:local-existence}, this implies that \((\rho, u, \Phi)\) is indeed the unique classical solution defined on \(\Omega \times (0, T]\) for any \(0 < T < T^* = \infty\). The proof of Theorem \ref{thm:global-classical-2dalpha1} and \ref{thm:global-classical-3dalpha1} is thus complete.
\section{Global classical solutions away from vacuum ($\alpha=1$)}
\subsection{A priori estimates for $N=2$: lower-order estimates}
\begin{lema}Under the assumption of\eqref{gamma1alpha=1}, there exists a constant $C(T) > 0$ such that 
   \begin{equation}\label{basic2}
     \sup_{0\leq t\leq T}  \int_{0}^{1} \left( u^{2}+\rho^{\gamma-1}  \right) dx + \int_{0}^{T} \int_{0}^{1} \left( \frac{u^{2}}{r^{2}} +\rho^2  r^2u_x^2\right) dx dt \le C (T).
   \end{equation} 
\end{lema}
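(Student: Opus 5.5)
We follow the proof of Lemma \ref{2d-u2} with $\alpha=1$, where the cross term drops out. Set $N=2$, $\alpha=1$ in \eqref{eq:NSP-lagrangian-rhoalpha}, multiply the momentum equation by $u$ and integrate over $[0,1]$. Integrating by parts (the boundary terms vanish since $u(0,\tau)=u(1,\tau)=0$) and using $\rho_\tau=-\rho^2(ru)_x$ for the pressure term gives
\begin{equation*}
\frac{d}{d\tau}\int_0^1\Big(\frac12u^2+\frac{1}{\gamma-1}\rho^{\gamma-1}\Big)dx+\int_0^1\Big(\frac{u^2}{r^2}+\rho^2r^2u_x^2\Big)dx=-\kappa\int_0^1\frac{x}{r}u\,dx+\frac{\kappa}{2}\int_0^1ur\,dx.
\end{equation*}
With $\alpha=1$ the indefinite term $2(\alpha-1)\rho^\alpha uu_x$ is absent. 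Hence the viscous form is already exactly the coercive quantity $\int_0^1(\rho^{\alpha-1}u^2/r^2+\rho^{1+\alpha}r^2u_x^2)\,dx$ with $\alpha=1$, and no condition of the type $2\alpha-1>0$ is needed.

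For the Poisson terms, use Young's inequality with the weight $r^{-2}$:
\begin{equation*}
\Big|\kappa\int_0^1\frac{x}{r}u\,dx\Big|\le\frac12\int_0^1\frac{u^2}{r^2}dx+C\int_0^1x^2\,dx\le\frac12\int_0^1\frac{u^2}{r^2}dx+C .
\end{equation*}
Here we use $0\le x\le1$. In the $\alpha<1$ case the factor $\rho^{1-\alpha}$ forced the restriction $\gamma>2-\alpha$; with $\alpha=1$ that factor is $\rho^0=1$, so the only requirement on $\gamma$ is $\gamma>1$. For the remaining term, $|\frac{\kappa}{2}\int_0^1ur\,dx|\le C\int_0^1u^2dx+C\int_0^1r^2dx$. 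We have $r\le R$ because Lagrangian points stay inside $B_R$, so $\int_0^1r^2dx\le R^2$.

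Absorb the half of the dissipation into the left-hand side and apply Gronwall's inequality to
\begin{equation*}
\frac{d}{d\tau}\int_0^1\Big(\frac12u^2+\frac{\rho^{\gamma-1}}{\gamma-1}\Big)dx+\frac12\int_0^1\Big(\frac{u^2}{r^2}+\rho^2r^2u_x^2\Big)dx\le C\int_0^1u^2dx+C .
\end{equation*}
This gives \eqref{basic2}, with the initial energy finite because $\rho_*\le\rho_0\le\rho^*$ and $u_0\in H^3$.

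The argument is essentially routine. The one point that needs care is the singular term $x/r$ at the origin, and it is harmless here: the dissipation controls exactly $u^2/r^2$, leaving only $\int_0^1 x^2\,dx$, so no relation between $x$ and $r$ is needed at this stage.
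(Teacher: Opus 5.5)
Your proof is correct and follows the paper's argument: test the momentum equation with $u$, note that the cross term $2(\alpha-1)\rho^\alpha u u_x$ vanishes at $\alpha=1$, and absorb the Poisson term $-\kappa\int_0^1 \frac{x}{r}u\,dx$ into the dissipation $\int_0^1 \frac{u^2}{r^2}\,dx$ via Young's inequality. The only cosmetic difference is how the term $\frac{\kappa}{2}\int_0^1 ur\,dx$ is handled. The paper also absorbs it into $\int_0^1 \frac{u^2}{r^2}\,dx$, leaving $C\int_0^1 r^4\,dx$, and then integrates in time directly. You instead bound it by $C\int_0^1 u^2\,dx + C$ and close with Gronwall.
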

\begin{proof}
We take the $L^2$ inner product of $\eqref{eq:NSP-lagrangian-rhoalpha}_2$ with $u$ on $[0,1]$. Then, integration by parts and the use of $\eqref{eq:NSP-lagrangian-rhoalpha}_1$ give
\begin{equation}\begin{aligned}
    \label{u1}
\frac{d}{d\tau} \int_0^1 \left( \frac{1}{2}u^2 + \frac{1}{\gamma-1} \rho^{\gamma-1} \right) dx 
&+  \int_0^1 \frac{u^2}{r^2} dx + \int_0^1 \rho^2  r^2u_x^2 dx \\
&\leq \frac{1}{8}\int_{0}^{1}\frac{u^2}{r^2}dx+C\int_{0}^{1}x^2dx+C\int_{0}^{1}r^4dx\\
&\leq \frac{1}{8}\int_{0}^{1}\frac{u^2}{r^2}dx+C.
\end{aligned}\end{equation}
Integrating over $(0,\tau)$, we obtain \eqref{basic2}.
\end{proof}

\begin{lema}   
Under the assumption \eqref{gamma1alpha=1}, there exists a constant $C(T) > 0$ such that 
\begin{equation}
x \leq C(T)\, r(x,\tau)^{\frac{2(\gamma-1)}{\gamma}}, \quad (x,\tau) \in [0,1] \times [0,T].
\end{equation}
\end{lema}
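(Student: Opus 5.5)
The plan is to repeat the argument used for \eqref{xrRelation1_alpha<1} in the case $\alpha<1$, now with the basic energy bound \eqref{basic2} as input. Fix $(x,\tau)\in[0,1]\times[0,T]$ and use the definition of the Lagrangian coordinate with $N=2$:
\[
x=\int_0^{r(x,\tau)}\rho(s,\tau)\,s\,ds .
\]
Apply H\"older's inequality with exponents $\gamma$ and $\frac{\gamma}{\gamma-1}$ to the pair $\rho\, s^{1/\gamma}$ and $s^{(\gamma-1)/\gamma}$. This gives
\[
x\le\Big(\int_0^{r}\rho^\gamma s\,ds\Big)^{\frac1\gamma}\Big(\int_0^{r}s\,ds\Big)^{\frac{\gamma-1}{\gamma}} .
\]
The second factor equals $(r^2/2)^{(\gamma-1)/\gamma}$, which is the source of the power $r^{2(\gamma-1)/\gamma}$ in the statement.

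The only real step is to bound the first factor uniformly in $\tau\in[0,T]$. Changing variables back to Lagrangian coordinates, using $dx=\rho r\,dr$, gives
\[
\int_0^{R}\rho^\gamma s\,ds=\int_0^1\rho^{\gamma-1}\,dx ,
\]
and the right-hand side is bounded by $C(T)$ by \eqref{basic2}. Because the integration range $[0,r(x,\tau)]$ is contained in $[0,R]$, the first factor is at most $C(T)^{1/\gamma}$. Multiplying the two factors gives $x\le C(T)\,r(x,\tau)^{2(\gamma-1)/\gamma}$, where the constant depends only on $T$, $\gamma$ and the initial data.

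No genuine obstacle is expected. The one point to check is that \eqref{basic2} has been proved for every $\gamma>1$ when $\alpha=1$, with no extra restriction. This holds: with $\alpha=1$ the Poisson contribution in the proof of \eqref{basic2} is bounded by $\int_0^1 x^2\,dx$, so no interplay between $\gamma$ and $\alpha$ arises, unlike the requirement $\gamma>2-\alpha$ in Lemma \ref{2d-u2}. It follows that assumption \eqref{gamma1alpha=1} is enough.
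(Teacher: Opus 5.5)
Your proposal is correct and is essentially the paper's proof. It applies H\"older's inequality with exponents $\gamma$ and $\frac{\gamma}{\gamma-1}$ to $x=\int_0^{r}\rho s\,ds$, and bounds $\int_0^R\rho^\gamma s\,ds=\int_0^1\rho^{\gamma-1}dx$ by \eqref{basic2}. Your extra check, that \eqref{basic2} holds for all $\gamma>1$ when $\alpha=1$, is also right, since the dissipation term $\int_0^1 u^2/r^2\,dx$ carries no density weight.
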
 
\begin{proof}
For any $x\in[0,1]$ and \eqref{basic2}
 we have,
\begin{equation}
\begin{aligned}
x = \int_{0}^{r(x,\tau)} \rho s\, ds \leq& \left( \int_{0}^{r(x,\tau)} \rho^\gamma s\, ds \right)^{\frac{1}{\gamma}} \left( \int_{0}^{r(x,\tau)} s \,ds \right)^{\frac{\gamma-1}{\gamma}}\\
\leq& C(T) r^{\frac{2(\gamma-1)}{\gamma}}.
\end{aligned}\end{equation}
\end{proof}
\begin{lema}
There exists a positive constant $C$, independent of $T$, such that for any $0<\xi\ll 1$,
\begin{equation}\label{weighted2}
\sup_{0\leq t\leq T}\Big\|\rho^{\frac{1}{2}}r^{\xi}\Big\|_{L^\infty(0,R)}\leq C(1+\Big\|\nabla\rho^{\frac{1}{2}}\Big\|_{L^2(\Omega)}).
\end{equation}
\end{lema}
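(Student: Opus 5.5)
This is the $\alpha=1$ analogue of \eqref{weighted2_alpha<1}, so I will follow the same one-dimensional Sobolev argument with $\rho^{\alpha-\frac12}$ replaced by $\rho^{\frac12}$. For a function $f$ on $(0,R)$ we have $\|f\|_{L^\infty(0,R)}\le C\int_0^R|f|\,dr+\int_0^R|f_r|\,dr$. Applying this to $f=\rho^{\frac12}r^{\xi}$ and using the product rule splits the right-hand side into three pieces:
\[
\int_0^R\rho^{\frac12}r^{\xi}\,dr,\qquad \int_0^R|\partial_r\rho^{\frac12}|\,r^{\xi}\,dr,\qquad \xi\int_0^R\rho^{\frac12}r^{\xi-1}\,dr .
\]
Each piece is to be bounded by $C(1+\|\nabla\rho^{\frac12}\|_{L^2(\Omega)})$. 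In two dimensions $\|g\|_{L^p(\Omega)}^p=2\pi\int_0^R|g|^p r\,dr$, so each step is a matter of pulling out one factor of $r$ to match the measure $r\,dr$.

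The first piece is handled by Hölder's inequality against mass conservation $\int_0^R\rho r\,dr=\frac{1}{2\pi}\|\rho_0\|_{L^1(\Omega)}$:
\[
\int_0^R\rho^{\frac12}r^{\xi}\,dr\le\Big(\int_0^R\rho r\,dr\Big)^{\frac12}\Big(\int_0^R r^{2\xi-1}\,dr\Big)^{\frac12},
\]
which is finite for every $\xi>0$.

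For the second piece I write $r^{\xi}=r^{\frac12}\,r^{\xi-\frac12}$ and apply Cauchy--Schwarz:
\[
\int_0^R|\partial_r\rho^{\frac12}|\,r^{\xi}\,dr\le C\Big(\int_0^R|\partial_r\rho^{\frac12}|^2r\,dr\Big)^{\frac12}\Big(\int_0^R r^{2\xi-1}\,dr\Big)^{\frac12}\le C\|\nabla\rho^{\frac12}\|_{L^2(\Omega)} .
\]
Again the integral of $r^{2\xi-1}$ is finite precisely because $\xi>0$.

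The third piece is the main obstacle. The weight $r^{\xi-1}$ is almost non-integrable at the origin, so we need high integrability of $\rho^{\frac12}$ in $\Omega$. Following the $\alpha<1$ proof, I use Hölder's inequality with exponents $\frac4\xi$ and $\frac{4}{4-\xi}$:
\[
\int_0^R\rho^{\frac12}r^{\xi-1}\,dr\le\Big(\int_0^R\rho^{\frac{2}{\xi}}r\,dr\Big)^{\frac{\xi}{4}}\Big(\int_0^R r^{\frac{3\xi-4}{4-\xi}}\,dr\Big)^{\frac{4-\xi}{4}} .
\]
The last integral converges because $\frac{3\xi-4}{4-\xi}>-1$ is equivalent to $\xi>0$. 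The first factor equals $C\|\rho^{\frac12}\|_{L^{4/\xi}(\Omega)}$. By the two-dimensional embedding $H^1(\Omega)\hookrightarrow L^p(\Omega)$, valid for every $p<\infty$ with a constant depending on $p$ (hence on $\xi$), this is at most $C(\|\rho^{\frac12}\|_{L^2(\Omega)}+\|\nabla\rho^{\frac12}\|_{L^2(\Omega)})$. Finally $\|\rho^{\frac12}\|_{L^2(\Omega)}^2=\|\rho\|_{L^1(\Omega)}$ is conserved, and this gives the $1+$ term. Because none of these bounds uses the energy estimate \eqref{basic2}, only mass conservation and the fixed Sobolev constants, the resulting $C$ is independent of $T$, as the statement requires.

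Summing the three pieces and taking the supremum over $t\in[0,T]$ gives \eqref{weighted2}.
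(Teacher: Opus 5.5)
Your proof is correct and is the same argument the paper uses. The paper's proof of this lemma refers back to the $\alpha<1$ case, which splits $\int_0^R|\partial_r(\rho^{\alpha-\frac12}r^{\xi})|\,dr$ into the same three pieces with the same H\"older pairings: against $\int_0^R\rho r\,dr$, against $\int_0^R|\partial_r\rho^{\alpha-\frac12}|^2r\,dr$, and with exponents $\frac4\xi,\frac{4}{4-\xi}$ followed by the two-dimensional embedding $H^1\hookrightarrow L^{4/\xi}$; at $\alpha=1$ this is exactly what you wrote. Your remark that only mass conservation, and not the energy estimate, enters the bound is also correct, so $C$ is genuinely independent of $T$; this clarifies the paper's rather terse justification.
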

\begin{proof}
Similar to the previous case where $\alpha < 1$, by combining the one-dimensional Sobolev embedding inequality with H\"{o}lder's inequality, we obtain
\begin{equation}
\begin{aligned}
\Big\| \rho^{\frac{1}{2}}r^{\xi}\Big\|_{L^{\infty}(0,R)} 
&\leq C\left(1 + \Big\| \nabla \rho^{\frac{1}{2}}\Big\|_{L^{2}(\Omega)}+\Big\| \rho^{\frac{1}{2}}\Big\|_{L^{2}(\Omega)}\right)\\
&\leq C\left(1 + \Big\| \nabla \rho^{\frac{1}{2}}\Big\|_{L^{2}(\Omega)}\right).
\end{aligned}
\end{equation}
\end{proof}
\begin{lema}
Under the assumption of\eqref{gamma1alpha=1}, there exists a constant $C(T) > 0$ such that 
    \begin{equation}\label{2w2}
\sup_{0\leq t\leq T}\int^{1}_{0}w^{2}dx+\int^{T}_{0}\int^{1}_{0}\left( \left( \rho^{\frac{\gamma+1}{2}} \right)_{x}r \right)^{2}dxdt\leq C(T).
    \end{equation}
\end{lema}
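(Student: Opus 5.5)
We follow the proof of the corresponding estimate \eqref{2w2_alpha<1} for $\alpha<1$, now with $\alpha=1$ and $N=2$, so that $w=u+r\rho_x$ satisfies \eqref{bd2_alpha<1}, i.e. $w_\tau+(\rho^\gamma)_x r=-\kappa\frac{x}{r}+\frac{\kappa}{2}r$. We multiply this equation by $w$ and integrate over $[0,1]$. The pressure term splits as $\int_0^1(\rho^\gamma)_x r u\,dx+\int_0^1 (\rho^\gamma)_x\rho_x r^2\,dx$. Using $\rho_\tau=-\rho^2(ru)_x$ and integrating by parts with $u(0)=u(1)=0$, the first piece becomes $\frac{1}{\gamma-1}\frac{d}{d\tau}\int_0^1\rho^{\gamma-1}dx$. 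The second equals $\frac{4\gamma}{(\gamma+1)^2}\int_0^1\big((\rho^{\frac{\gamma+1}{2}})_x r\big)^2dx$. This gives
\begin{equation*}
\frac{d}{d\tau}\int_0^1\Big(\frac12 w^2+\frac{\rho^{\gamma-1}}{\gamma-1}\Big)dx+\frac{4\gamma}{(\gamma+1)^2}\int_0^1\big((\rho^{\frac{\gamma+1}{2}})_x r\big)^2dx=-\kappa\int_0^1\frac{x}{r}w\,dx+\frac{\kappa}{2}\int_0^1 rw\,dx\triangleq I_1+I_2 .
\end{equation*}

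The term $I_2$ is harmless: $I_2\le C\|w\|_{L^2(0,1)}^2+C\int_0^1r^2dx\le C\|w\|^2_{L^2(0,1)}+C$, since $r\le R$.

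The main term is $I_1$. By Cauchy--Schwarz, $I_1\le C\|w\|_{L^2}^2+C\int_0^1 x^2r^{-2}dx$. In two dimensions this Poisson term is much milder than in the case $\alpha<1$, and there are two routes to bound it.

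\emph{Route (a).} Since $x=\int_0^r\rho s\,ds$, Cauchy--Schwarz gives
\begin{equation*}
\frac{x^2}{r^2}\le \frac{1}{r^2}\int_0^r\rho^2 s\,ds\int_0^r s\,ds=\frac12\int_0^r\rho^2s\,ds .
\end{equation*}
Hence $\int_0^1x^2r^{-2}dx\le C\int_\Omega\rho^2\,d\mathbf{x}$. Interpolating via the two-dimensional Sobolev embedding of $\rho^{1/2}$ and the BD quantity then controls this by $C(1+\|\nabla\rho^{1/2}\|^2_{L^2})$.

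\emph{Route (b).} Mimic \eqref{w1_alpha<10} directly. Use the relation $x\le C(T)r^{2(\gamma-1)/\gamma}$ to write $\int_0^1 x^2r^{-2}dx\le C\int_0^R\rho r^{3-4/\gamma}dr$. Then bound $\rho=(\rho^{1/2}r^\xi)^2r^{-2\xi}$ by \eqref{weighted2}, leaving $\int_0^R r^{3-4/\gamma-2\xi}dr$. This integral is finite for $\xi$ small because $3-4/\gamma>-1$ for every $\gamma>1$.

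In either route, since $\alpha=1$ we have $r\rho_x=w-u$, so $\|\nabla\rho^{1/2}\|_{L^2(\Omega)}^2=\frac{\pi}{2}\int_0^1(w-u)^2dx\le C(\|w\|^2_{L^2}+\|u\|^2_{L^2})$. This yields $I_1\le C(1+\|u\|_{L^2}^2+\|w\|^2_{L^2})$.

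The step most likely to need care is the justification that the weighted bound \eqref{weighted2} (or the interpolation in route (a)) closes with only the $L^1$ mass and $\rho^{\gamma-1}$ control from \eqref{basic2}, for all $\gamma>1$ and without any extra restriction. I would first try route (b), since its exponent check $3-4/\gamma-2\xi>-1$ is immediate.

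Once $I_1$ and $I_2$ are bounded, combining them gives
\begin{equation*}
\frac{d}{d\tau}\int_0^1\Big(\frac12w^2+\frac{\rho^{\gamma-1}}{\gamma-1}\Big)dx+c\int_0^1\big((\rho^{\frac{\gamma+1}{2}})_xr\big)^2dx\le C\big(1+\|u\|^2_{L^2}+\|w\|^2_{L^2}\big).
\end{equation*}
Gronwall's inequality together with \eqref{basic2} then gives \eqref{2w2}.
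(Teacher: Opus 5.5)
Your route (b) is essentially the paper's own proof: it uses the same energy identity from multiplying \eqref{bd2_alpha<1} by $w$, the same reduction $\int_0^1 x^2r^{-2}dx\le C\int_0^R\rho\, r^{3-4/\gamma}dr$ via $x\le C(T)r^{2(\gamma-1)/\gamma}$, the weighted bound \eqref{weighted2} with $\xi<2(\gamma-1)/\gamma$, the identity $\|\nabla\rho^{1/2}\|^2_{L^2(\Omega)}=\frac{\pi}{2}\int_0^1(w-u)^2dx$, and Gronwall with \eqref{basic2}, so it is correct. Your route (a) also works and is slightly leaner: the pointwise bound $x^2/r^2\le\frac{1}{4\pi}\|\rho\|^2_{L^2(\Omega)}$, mass conservation and the 2D inequality $\|\rho^{1/2}\|^4_{L^4}\le C\|\rho^{1/2}\|^2_{L^2}\|\rho^{1/2}\|^2_{H^1}$ close the estimate without needing either the $x$--$r$ relation or \eqref{weighted2}.
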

\begin{proof}
Multiplying \eqref{bd2_alpha<1} by $w$ and integrating the resulting equation over $[0, 1]$, followed by an integration by parts, yields
\begin{equation}
\begin{aligned}\label{w1}
&\frac{d}{d\tau}  \left(\frac{1}{2}\int_0^1  w^2 dx +  \frac{1}{\gamma - 1} \int_0^1 \rho^{\gamma - 1} dx \right)  + \frac{4\gamma}{(\gamma + 1)^2} \int_0^1 \left( (\rho^{\frac{\gamma + 1}{2}})_x r \right)^2 dx \\
=&  -\kappa\int_0^1 \frac{x}{r}wdx+\frac{\kappa}{2}\int_0^1 rwdx
\triangleq R_1+R_2.
\end{aligned}
\end{equation}
By H\"{o}lder's inequality and \eqref{weighted2}, we obtain
\begin{equation}
\begin{aligned}\label{r1}
R_1\leq& \int_{0}^{1}w^2dx+C\int_{0}^{1}\frac{x^2}{r^2}dx\\
\leq&\int_{0}^{1}w^2dx+C\int_{0}^{1}r^{2-\frac{2}{\gamma}}dx\\
\leq&\int_{0}^{1}w^2dx+C\int_{0}^{R}\rho r^{3-\frac{4}{\gamma}}dr\\
\leq&\int_{0}^{1}w^2dx+C\Big\|\rho^{\frac{1}{2}}r^{\xi}\Big\|^2_{L^\infty(0,R)}\int_{0}^{R} r^{3-\frac{4}{\gamma}-2\xi}dr\\
\leq&\int_{0}^{1}w^2dx+C(1+\Big\|\nabla\rho^{\frac{1}{2}}\Big\|^2_{L^2(\Omega)})\\
\leq&C\Big(1+\Big\|u\Big\|^2_{L^2(0,1)}+\Big\|w\Big\|^2_{L^2(0,1)}\Big),
\end{aligned}
\end{equation}
where we need 
$$\xi<\frac{2(\gamma-1)}{\gamma}.$$
Similarly, we obtain
\begin{equation}
\begin{aligned}\label{w2}
R_2\leq&C\|w\|^2_{L^2(0,1)}+C.
\end{aligned}
\end{equation}
Combining \eqref{r1}-\eqref{w2} and \eqref{w1}, we obtain

\begin{equation}
\begin{aligned}
&\frac{d}{d\tau}\int_0^1\left(\frac{1}{2} u^2 + \frac{1}{2} w^2 + \frac{\rho^{\gamma - 1}}{\gamma - 1} \right)dx 
+ \int_0^1 r^2 \rho^{\gamma - 1} \rho_x^2dx \\
\leq &C(\xi)\left(1 + \| u\|_{L^2(0,1)}^2 + \| w\|_{L^2(0,1)}^2\right).
\end{aligned}
\end{equation}
By applying Gronwall's inequality, we finally prove equation \eqref{2w2}.

Thus,
\begin{equation}\label{1weighted2}
\sup_{0\leq t\leq T}\|\rho^{\frac{1}{2}}r^{\xi}\|_{L^\infty(0,R)}\leq C(T).
\end{equation}
\end{proof}
    \begin{lema}   
Under the assumption of\eqref{gamma1alpha=1}. For any $1 \leq q < \infty$, there exists a constant $C(T,q) > 0$ such that
\begin{equation}\label{xr1}
x \leq C(T,q)\, r(x,\tau)^{\frac{2(q-1)}{q}}, \quad (x,\tau) \in [0,1] \times [0,T].
\end{equation}
\end{lema}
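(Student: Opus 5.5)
The plan mirrors the proof of \eqref{xr1_alpha<1} in the case $\alpha<1$, with $\alpha=1$, so that the relevant power of the density is $\rho^{1/2}$.

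\textbf{Step 1: an $H^1$ bound for $\rho^{1/2}$.} With $\alpha=1$ and $N=2$, the definition $w=u+r(\rho)_x$ gives in Eulerian variables $r\rho_x=\rho^{-1}\rho_r$. Hence
\[
\|\nabla\rho^{\frac12}\|_{L^2(\Omega)}^2=2\pi\int_0^R|\partial_r\rho^{\frac12}|^2r\,dr=\frac{\pi}{2}\int_0^1(w-u)^2dx .
\]
By \eqref{basic2} and \eqref{2w2}, this is bounded by $C(\|u\|_{L^2(0,1)}^2+\|w\|_{L^2(0,1)}^2)\le C(T)$, uniformly on $[0,T]$.

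For the zeroth-order part, note that $\|\rho^{1/2}\|_{L^2(\Omega)}^2=\int_\Omega\rho\,d\mathbf{x}$. This equals $2\pi$ times the total Lagrangian mass, which is fixed, since $x\in[0,1]$. So $\rho^{1/2}\in L^\infty(0,T;H^1(\Omega))$ with norm bounded by $C(T)$.

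\textbf{Step 2: higher integrability.} In two dimensions the Sobolev embedding $H^1(\Omega)\hookrightarrow L^p(\Omega)$ holds for every $p<\infty$, with a constant depending on $p$ and $\Omega$. Taking $p=2q$ gives
\[
\sup_{0\le t\le T}\int_0^R\rho^q r\,dr\le C(T,q)\qquad\text{for every }1\le q<\infty .
\]

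\textbf{Step 3: the relation between $x$ and $r$.} For $(x,\tau)\in[0,1]\times[0,T]$, write $x=\int_0^{r(x,\tau)}\rho s\,ds$ and apply H\"older's inequality with exponents $q$ and $q/(q-1)$ against the measure $s\,ds$:
\[
x\le\Big(\int_0^{r}\rho^q s\,ds\Big)^{\frac1q}\Big(\int_0^{r}s\,ds\Big)^{\frac{q-1}{q}}\le C(T,q)\,r^{\frac{2(q-1)}{q}},
\]
which is \eqref{xr1}. The case $q=1$ is trivial.

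The only point needing care is Step 1: verifying that the BD quantity $w-u$ controls exactly $\nabla\rho^{1/2}$ when $\alpha=1$, and that the $L^2$ part is handled by mass conservation. Everything else is the standard Hölder argument already used in the case $\alpha<1$.
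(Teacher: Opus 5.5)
Your proposal is correct and follows the paper's route. The bound \eqref{2w2}, combined with \eqref{basic2} and mass conservation, puts $\rho^{1/2}$ in $L^\infty(0,T;H^1(\Omega))$; the two-dimensional Sobolev embedding then gives $\rho\in L^q$ for every finite $q$, and H\"older's inequality against $s\,ds$ yields \eqref{xr1}. The paper's own proof of this lemma only cites \eqref{2w2} and does the H\"older step, with the embedding argument spelled out just in the $\alpha<1$ analogue; you make explicit the identity $\|\nabla\rho^{1/2}\|_{L^2(\Omega)}^2=\frac{\pi}{2}\int_0^1(w-u)^2\,dx$ and the zeroth-order bound that it leaves implicit.
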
 
\begin{proof}
For any $x\in[0,1]$ and \eqref{2w2}
 we have,
\begin{equation}
\begin{aligned}
x = \int_{0}^{r(x,\tau)} \rho s\, ds  \leq& \left( \int_{0}^{r(x,\tau)} \rho^q s\, ds \right)^{\frac{1}{q}} \left( \int_{0}^{r(x,\tau)} s \,ds \right)^{\frac{q-1}{q}}\\
\leq& C(T,q) r^{\frac{2(q-1)}{q}}.
\end{aligned}\end{equation}
\end{proof}
\begin{lema}
Under the assumption of\eqref{gamma1alpha=1}, there exists a constant $C(T) > 0$ such that  
\begin{equation}\label{2u4}
\sup_{0\leq \tau \leq T}\int_{0}^{1}u^{4}dx + \int_{0}^{T}\int_{0}^{1}\left(\frac{u^{4}}{r^{2}} +\rho^{2}r^{2}u_x^{2}u^{2}\right)dx dt \leq C(T),
\end{equation}\begin{equation}\label{2w4}
\sup_{0\leq \tau \leq T}\int_{0}^{1}w^{4}dx + \int_{0}^{T}\int_{0}^{1}\rho^{\gamma -1}(r\rho_x)^{4}dx dt \leq C(T).
\end{equation}
\end{lema}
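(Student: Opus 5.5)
We follow the $\alpha<1$ scheme (Lemmas \ref{2duhighnot} and the $w^{2m}$ lemma) with $\alpha=1$, $n=m=2$. In this case the cross term $2n(\alpha-1)\rho^\alpha u^{2n-1}u_x$ vanishes, so no restriction on $n$ arises and no $A_{set}$ density argument is needed: $m=2$ corresponds to $s=2,k=0$.

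\textbf{Estimate for $u$.} Multiply $\eqref{eq:NSP-lagrangian-rhoalpha}_2$ (with $N=2$, $\alpha=1$) by $u^3$ and integrate over $(0,1)$. Integration by parts gives
\[
\frac14\frac{d}{d\tau}\int_0^1 u^4dx+\int_0^1\Big(\frac{u^4}{r^2}+3\rho^2r^2u_x^2u^2\Big)dx
=\int_0^1\Big(\rho^{\gamma-1}\frac{u^3}{r}+3\rho^\gamma u^2u_xr\Big)dx-\kappa\int_0^1\frac{x}{r}u^3dx+\frac{\kappa}{2}\int_0^1 u^3r\,dx .
\]
The pressure terms are treated by Young's inequality exactly as $J_1$, which leaves $C\int_0^1\rho^{4\gamma-4}r^2dx=C\int_0^R\rho^{4\gamma-3}r^3dr$. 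To bound this, write $\rho^{4\gamma-3}r^3=(\rho^{1/2}r^\xi)^{8\gamma-6}r^{3-(8\gamma-6)\xi}$ and apply \eqref{1weighted2} with $\xi$ small; the result is bounded by $C(T)$.

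For the Poisson term, use \eqref{xr1}, i.e. $x\le C r^{2-2/q}$, to get $\frac{x}{r}|u|^3\le \frac{u^4}{4r^2}+Cx^4r^{-2}$, with $x^4r^{-2}\le Cr^{6-8/q}$. This is bounded once $q\ge 2$, so the Poisson term causes no singularity. The last term is bounded by $C\int_0^1 u^4dx+C$. Gronwall's inequality then yields \eqref{2u4}.

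\textbf{Estimate for $w$.} Multiply \eqref{bd2_alpha<1} by $w^3$. Since $\gamma(\rho^\gamma)_xr=\gamma\rho^{\gamma-1}\rho_x r$ and $r(\rho)_x=w-u$, Lemma \ref{mdense} with exponent $2+\frac{2}{1}=4$ gives
\[
(\rho^\gamma)_xrw^3\ge \varepsilon\rho^{\gamma-1}(r\rho_x)^4-C\rho^{\gamma-1}u^4 .
\]
The Poisson terms are bounded by $C\int_0^1 w^4dx+C\int_0^1 (x/r)^4dx+C$, and $\int_0^1 (x/r)^4dx\le C$ again by \eqref{xr1}.

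It remains to control $\int_0^T\int_0^1\rho^{\gamma-1}u^4\,dxdt$, which I expect to be the \textbf{main obstacle}. Without an upper bound on $\rho$, rewrite it as $\int_0^T\int_0^1 \frac{u^4}{r^2}\cdot\rho^{\gamma-1}r^2\,dxdt$. I would bound $\sup\|\rho^{\gamma-1}r^2\|_{L^\infty(0,R)}$ using \eqref{1weighted2}: $\rho^{\gamma-1}r^2=(\rho^{1/2}r^\xi)^{2\gamma-2}r^{2-(2\gamma-2)\xi}\le C(T)$ for small $\xi$. Then the dissipation in \eqref{2u4} closes the estimate, and Gronwall's inequality gives \eqref{2w4}.

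If the weighted pointwise bound proves insufficient near $r=0$, the fallback is to estimate $\rho^{\gamma-1}u^4$ by Hölder's inequality between $\int u^4/r^2$ and $\int\rho^2r^2u_x^2u^2$, together with the $L^q$ integrability of $\rho$ provided by \eqref{2w2}.
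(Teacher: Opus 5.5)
Your proposal is correct and follows essentially the paper's argument. You test with $u^3$ and $w^3$, handle the Poisson term with $x\le C r^{2-2/q}$ from \eqref{xr1}, and close the $w$-estimate through $\rho^{\gamma-1}u^4\le C(T)\,u^4/r^2$ from \eqref{1weighted2} together with the dissipation in \eqref{2u4}. The remaining differences are cosmetic: you push Young's inequality to $C\int_0^1\rho^{4\gamma-4}r^2dx$ where the paper keeps $\rho^{2\gamma-2}r^2\cdot u^2/r^2$ and uses the basic energy dissipation, and you invoke Lemma \ref{mdense} (with $s=1,k=0$, not $s=2$) where the paper splits $r\rho_x=w-u$; the fallback is not needed.
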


\begin{proof}
We multiply $\eqref{eq:NSP-lagrangian-rhoalpha}_2$ by $u^3$ and integrate over $[0, 1]$. Performing an integration by parts on the resulting equation, we obtain
\begin{equation}\begin{aligned}\label{finallp}
\frac{1}{4}\frac{d}{d\tau}\int_{0}^{1}u^4dx+\int_{0}^{1}\frac{u^4}{r^2}dx+3\int_{0}^{1}u^2u_x^2\rho^2r^2dx=&-\int_{0}^{1}(\rho^{\gamma})_xru^3dx-\kappa\int_{0}^{1}\frac{x}{r}u^3dx\\
&+\frac{\kappa}{2}\int_{0}^{1}u^3rdx\\
\triangleq& P_1+P_2+P_3.
\end{aligned}\end{equation}
Integrating by parts and applying H\"{o}lder's inequality, we have
\begin{equation}\begin{aligned}\label{p1}
P_1=&\int_{0}^{1}\rho^{\gamma}(ru^3)_xdx\\
=&\int_{0}^{1}\rho^{\gamma-1}\frac{u^3}{r}dx+3\int_{0}^{1}\rho^{\gamma}u^2u_xrdx\\
\leq&\frac{1}{4}\int_{0}^{1}\frac{u^4}{r^2}dx+\frac{1}{4}\int_{0}^{1}u^2u_x^2\rho^2r^2dx+C\int_{0}^{1}
\rho^{2\gamma-2}u^2dx\\
\leq&\frac{1}{4}\int_{0}^{1}\frac{u^4}{r^2}dx+\frac{1}{4}\int_{0}^{1}u^2u_x^2\rho^2r^2dx+
\Big\|\rho^{\frac{1}{2}}r^{\xi}\Big\|^{4(\gamma-1)}_{L^\infty}\int_{0}^{1}\frac{u^2}{r^2}dx,
\end{aligned}\end{equation}
where $\xi< \frac{1}{2(\gamma-1)}.$
Applying Young's inequality together with \eqref{xr1}, \eqref{1weighted2} yields
\allowdisplaybreaks
\begin{align}
P_2\leq&\frac{1}{4}\int_{0}^{1}\frac{u^4}{r^2}dx+C\int_{0}^{1}x^4r^2dx\nonumber\\
\leq&\frac{1}{4}\int_{0}^{1}\frac{u^4}{r^2}dx+C\int_{0}^{1}r^{10-\frac{8}{q}}dx\nonumber\\
\leq&\frac{1}{4}\int_{0}^{1}\frac{u^4}{r^2}dx+C\int_{0}^{R}\rho r^{11-\frac{8}{q}}dr\nonumber\\
\leq&\frac{1}{4}\int_{0}^{1}\frac{u^4}{r^2}dx+C\Big\|\rho^{\frac{1}{2}}r^{\xi}\Big\|^2_{L^\infty}\int_{0}^{R} r^{11-\frac{8}{q}-2\xi}dr\nonumber\\
\leq&\frac{1}{4}\int_{0}^{1}\frac{u^4}{r^2}dx+C(T).
\end{align}
By a similar argument,
\begin{equation}\begin{aligned}\label{p3}
P_3\leq&\frac{1}{4}\int_{0}^{1}\frac{u^4}{r^2}dx+C\|\rho^{\frac{1}{2}}r^{\xi}\|^2_{L^\infty}\int_{0}^{R} r^{11-2\xi}dr\\
\leq&\frac{1}{4}\int_{0}^{1}\frac{u^4}{r^2}dx+C(T).
\end{aligned}\end{equation}
Integrating \eqref{p1}--\eqref{p3} and \eqref{finallp} over $(0,\tau)$ yields \eqref{2u4}.

After multiplying \eqref{bd2_alpha<1} by $w^3$ and integrating over $[0, 1]$, an integration by parts leads to
\begin{equation}\begin{aligned}\label{h1}
\frac{1}{4}\frac{d}{d\tau}\int_{0}^{1}w^4dx=&-\int_{0}^{1}(\rho^{\gamma})_xrw^3dx-\kappa\int_{0}^{1}\frac{x}{r}w^3dx+\frac{\kappa}{2}\int_{0}^{1}w^3rdx\\
\triangleq& E_1+E_2+E_3.
\end{aligned}\end{equation}
By the definition of effective velocity, we have
\begin{equation}\begin{aligned}
E_1=&-\gamma\int_{0}^{1}r\rho_x\rho^{\gamma-1}w^3dx\\
=&-\gamma\int_{0}^{1}(w-u)\rho^{\gamma-1}w^3dx\\
=&-\gamma\int_{0}^{1}w^4\rho^{\gamma-1}dx+\gamma\int_{0}^{1}\rho^{\gamma-1}uw^3dx\\
\triangleq& E_{11}+E_{12},
\end{aligned}\end{equation}
By \eqref{1weighted2} we have,
\begin{equation}\begin{aligned}
E_{12}\leq&\frac{\gamma}{2}\int_{0}^{1}\rho^{\gamma-1}w^4dx+\int_{0}^{1}\rho^{\gamma-1}u^4dx\\
\leq&\frac{\gamma}{2}\int_{0}^{1}\rho^{\gamma-1}w^4dx+C\Big\|\rho^{\frac{1}{2}}r^{\xi}\Big\|^{2(\gamma-1)}_{L^\infty}\int_{0}^{1}\frac{u^4}{r^2}dx\\
\leq&\frac{\gamma}{2}\int_{0}^{1}\rho^{\gamma-1}w^4dx+C(T)\int_{0}^{1}\frac{u^4}{r^2}dx.
\end{aligned}\end{equation}
From Young's inequality and \eqref{xr1}, it follows that
\begin{equation}\begin{aligned}
E_2\leq& \int_{0}^{1}w^4dx+C\int_{0}^{1}(\frac{x}{r})^4dx\\
\leq&\int_{0}^{1}w^4dx+C\int_{0}^{1}(r^{1-\frac{2}{q}})^4dx\\
\leq&\int_{0}^{1}w^4dx+C\Big\|\rho^{\frac{1}{2}}r^{\xi}\Big\|^2_{L^\infty}\int_{0}^{R}r^{5-\frac{8}{q}-2\xi}dr\\
\leq&\int_{0}^{1}w^4dx+C(T).
\end{aligned}\end{equation}
In a similar manner,
\begin{equation}\begin{aligned}\label{h3}
E_3\leq&\int_{0}^{1}w^4dx+C(T).
\end{aligned}\end{equation}
A combination of \eqref{h1}-\eqref{h3} yields
\begin{equation}\begin{aligned}
\frac{d}{d\tau}\int_{0}^{1}w^4dx+2{\gamma}\int_{0}^{1}\rho^{\gamma-1}w^4dx\leq C(T)\left(1+\int_{0}^{1}\frac{u^4}{r^2}dx+\int_{0}^{1}w^4dx\right).
\end{aligned}\end{equation}
From Gronwall's inequality and \eqref{2u4}, it follows that \eqref{2w4} holds.
\end{proof}

Based on the $L^4$ estimates for the velocity field and the effective velocity, we are able to further derive the upper bound for the density.
\begin{prop}
\label{prop:density_upper}
Under the assumption of\eqref{gamma1alpha=1}, there exists a constant $C(T) > 0$ such that 
\begin{equation}
R_T\leq C(T). 
\end{equation}
\end{prop}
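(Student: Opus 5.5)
To bound $R_T$ for $\alpha=1$, $N=2$, I will mirror the $\alpha<1$ upper bound proposition. The one-dimensional Sobolev embedding in $r$ gives
\[
\|\rho\|_{L^\infty(0,R)}\le C\int_0^R\rho\,dr+C\int_0^R|\rho_r|\,dr=:G_1+G_2 .
\]
For $G_1$ I will use the higher integrability of $\rho$ coming from the BD entropy. By \eqref{basic2} and \eqref{2w2}, $\|\nabla\rho^{1/2}\|_{L^2(\Omega)}^2\le C\int_0^1(w-u)^2dx\le C(T)$, and $\|\rho^{1/2}\|_{L^2(\Omega)}$ is controlled by the mass. So $\rho^{1/2}\in H^1(\Omega)\hookrightarrow L^{p}(\Omega)$ for every $p<\infty$ in two dimensions, i.e. $\int_0^R\rho^{q}r\,dr\le C(T,q)$ for all $q<\infty$. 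Hölder's inequality then gives
\[
G_1\le\Big(\int_0^R\rho^{3}r\,dr\Big)^{1/3}\Big(\int_0^R r^{-1/2}dr\Big)^{2/3}\le C(T).
\]

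For $G_2$ I will convert \eqref{2w4} together with \eqref{2u4} into an $L^4$ bound on the density gradient. Since $w-u=r(\rho)_x=\rho^{-1}\rho_r$ in Eulerian variables and $dx=\rho r\,dr$, we have
\[
\int_0^1 (w-u)^4dx=\int_0^R\rho^{-3}|\rho_r|^4 r\,dr=C\int_0^R|\partial_r\rho^{1/4}|^4 r\,dr ,
\]
so $\sup_t\int_0^R|\partial_r\rho^{1/4}|^4 r\,dr\le C(T)$. Then I write $|\rho_r|=4\rho^{3/4}|\partial_r\rho^{1/4}|$ and apply Hölder with exponents $4$ and $4/3$:
\[
G_2\le C\Big(\int_0^R|\partial_r\rho^{1/4}|^4r\,dr\Big)^{1/4}\Big(\int_0^R\rho\, r^{-1/3}dr\Big)^{3/4}.
\]
The last factor is bounded by splitting $\rho r^{-1/3}=(\rho r^{1/p})r^{-1/3-1/p}$ and using $\int\rho^{p}r\,dr\le C(T,p)$ with $p$ large. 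The remaining weight is $r^{-(1/3+1/p)p/(p-1)}$, which is integrable near $0$ as soon as $p>4$ or so. Hence $G_2\le C(T)$, and taking the supremum in time gives $R_T\le C(T)$.

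The main point to check is that no power of $R_T$ appears, which is why $\alpha=1$ is easier than the 3D case with $\alpha<1$. The only risk is the integrability exponent in $G_2$. If the bound on $\int\rho^q r\,dr$ were unavailable, I would instead use $\|\rho^{1/2}r^\xi\|_{L^\infty}\le C(T)$ from \eqref{1weighted2}. That gives $\rho\le C r^{-2\xi}$, and the factor $\int_0^R r^{-1/3-2\xi}dr$ is finite for small $\xi$, which also closes the estimate.
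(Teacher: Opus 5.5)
Your proof is correct, but it takes a different route from the paper.

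\textbf{The paper's route.} The paper derives $\sup_t\int_0^R|\partial_r\rho^{1/4}|^4 r\,dr\le C(T)$ from \eqref{2u4} and \eqref{2w4}, exactly as you do. It then stops there and applies the two-dimensional embedding $W^{1,4}(\Omega)\hookrightarrow L^\infty(\Omega)$ directly to the radial function $\rho^{1/4}$. The zeroth-order term is just $\|\rho^{1/4}\|_{L^4(\Omega)}^4=\int_\Omega\rho\,d\mathbf{x}$, the conserved mass. Since $4>N=2$, this gives $\|\rho^{1/4}\|_{L^\infty}\le C(T)$ in one line, with no weights to handle.

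\textbf{Your route.} You use the one-dimensional embedding in $r$ instead. This discards the natural measure $r\,dr$ and creates the singular weights $r^{-1/2}$ in $G_1$ and $r^{-1/3}$ in $G_2$. You absorb them with the extra integrability $\int_0^R\rho^q r\,dr\le C(T,q)$, which follows from the BD-entropy bound $\rho^{1/2}\in H^1(\Omega)$, or alternatively with \eqref{1weighted2}. Your Hölder exponents check out; in fact $p>3$ already suffices in the last step.

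\textbf{Comparison.} The paper's argument is shorter and needs only the $L^4$ gradient bound plus conservation of mass. Yours is the more robust template. It is the same $G_1+G_2$ scheme the paper has to use for $\alpha<1$ in two dimensions and in the three-dimensional cases. There the controlled gradient is either of a negative power of $\rho$ or carries a factor $R_T^{2k\sigma}$, so a direct Sobolev embedding does not close the estimate.
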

\begin{proof}
In view of \eqref{2u4} and \eqref{2w4}, we obtain
\begin{equation}\begin{aligned}
\sup_{0\leq t\leq T}\int_{0}^{1}(r\rho_x)^4dx\leq& C\sup_{0\leq t\leq T}\int_{0}^{1}(w-u)^4dx\\
\leq&C\sup_{0\leq t\leq T}\int_{0}^{1}w^4dx+C\sup_{0\leq t\leq T}\int_{0}^{1}u^4dx\\
\leq& C(T).
\end{aligned}\end{equation}
Transforming back to Eulerian coordinates, we obtain
\begin{equation}
\sup_{0\leq t\leq T}\int_{0}^{R}\left|\partial_{r}\rho^{\frac{1}{4}}\right|^{4}r dr\leq C(T).
\end{equation}
By the two-dimensional Sobolev embedding inequality, we deduce
\begin{equation}
\sup_{0\leq t\leq T}\Big\| \rho^{\frac{1}{4}}\Big\|_{L^{\infty}(\Omega)}\leq C\Big\| \rho^{\frac{1}{4}}\Big\|_{L^{\infty}L^{4}(\Omega)} + C\Big\| \nabla \rho^{\frac{1}{4}}\Big\|_{L^{\infty}L^{4}(\Omega)}\leq C(T).
\end{equation}
Therefore, we get the upper bound of the density.
\end{proof}
\begin{lema}
\label{lem3.9}
Under the assumption of\eqref{gamma1alpha=1}, there exists a constant $C(T) > 0$ such that 
\begin{equation}\label{2uinfty}
\sup_{0\leq \tau \leq T}\| u(\cdot,\tau)\|_{L^{\infty}(0,1)}\leq C(T),
\end{equation}
and
\begin{equation}\label{2winfty}
\sup_{0\leq \tau \leq T}\| w(\cdot,\tau)\|_{L^{\infty}(0,1)}\leq C(T).
\end{equation}
\end{lema}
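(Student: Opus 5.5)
The plan is to derive $L^{2n}$ bounds for $u$ and $w$ uniform in $n$ and then let $n\to\infty$, or, more directly, to obtain a coupled Gronwall inequality for $\|u\|_{L^\infty}$ and $\|w\|_{L^\infty}$, as the introduction announces. We now have $R_T\le C(T)$ from Proposition \ref{prop:density_upper}, and also the bounds \eqref{basic2}, \eqref{2w2}, \eqref{2u4}, \eqref{2w4}.

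First I treat $w$, which satisfies the transport-free ODE \eqref{bd2_alpha<1}. With $\alpha=1$, $N=2$ we have $(\rho^\gamma)_x r=\gamma\rho^{\gamma-1}r\rho_x=\gamma\rho^{\gamma-1}(w-u)$. Hence, for each fixed $x$,
\[
w_\tau+\gamma\rho^{\gamma-1}w=\gamma\rho^{\gamma-1}u-\kappa\frac{x}{r}+\frac{\kappa}{2}r .
\]
Multiplying by $\mathrm{sgn}(w)$, or using the integrating factor $e^{\gamma\int\rho^{\gamma-1}}\ge1$, gives
\[
|w(x,\tau)|\le|w_0(x)|+C\int_0^\tau\Big(R_T^{\gamma-1}\|u(s)\|_{L^\infty}+\Big\|\frac{x}{r}\Big\|_{L^\infty}+R\Big)ds .
\]
The Poisson term is harmless in 2D. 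Since $x=\int_0^r\rho s\,ds\le R_T r^2/2$, we get $x/r\le Cr\le CR$. So the estimate reads $\|w(\tau)\|_{L^\infty}\le C+C\int_0^\tau\|u\|_{L^\infty}ds$, where $w_0\in L^\infty$ because $(\rho_0,u_0)\in H^3$.

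Next I bound $\|u\|_{L^\infty}$ in terms of $\|w\|_{L^\infty}$. In Eulerian variables the momentum equation with $\alpha=1$ can be written as a parabolic equation for $u$. Indeed, $u_\tau=r(\rho^2(ru)_x)_x-\dots$, and in $r$ we have $\rho\dot u-(\rho(u_r+u/r))_r+\frac1r u\rho_r=\dots$. Rewriting $\rho_r$ through $w-u$ gives $\rho_r=\rho(w-u)/r^{}$ with the appropriate weights. The drift coupling between $u$ and $w$ is then zero-order. I would run a maximum-principle or $L^{2n}$ argument with constants independent of $n$: multiply the Lagrangian momentum equation by $u^{2n-1}$ as in \eqref{finallp}. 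For $\alpha=1$ the cross term $2n(\alpha-1)\rho^\alpha u^{2n-1}u_x$ vanishes, so the dissipation has the good sign for every $n$ and no restriction like $n<n_2(\alpha)$ arises. The pressure term is then $\int\rho^{\gamma}(ru^{2n-1})_x$. Rather than integrating it by parts, I would use $(\rho^\gamma)_xr=\gamma\rho^{\gamma-1}(w-u)$ to write it as $-\gamma\int\rho^{\gamma-1}(w-u)u^{2n-1}dx$. This is bounded by $C\|w\|_{L^\infty}\int|u|^{2n-1}+C\int u^{2n}$, and the second piece has the favorable sign from $+\gamma\rho^{\gamma-1}u^{2n}$. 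The Poisson term is bounded by $CR\int|u|^{2n-1}$. This yields
\[
\frac{d}{d\tau}\|u\|_{L^{2n}}\le C(1+\|w\|_{L^\infty})
\]
with $C$ independent of $n$, using $\int|u|^{2n-1}\le\|u\|_{L^{2n}}^{2n-1}$ on $[0,1]$. Letting $n\to\infty$ gives $\|u(\tau)\|_{L^\infty}\le\|u_0\|_{L^\infty}+C\int_0^\tau(1+\|w\|_{L^\infty})ds$.

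Adding the two integral inequalities and applying Gronwall closes both \eqref{2uinfty} and \eqref{2winfty}. The main obstacle is the $u$ step. I have to check carefully that, for $\alpha=1$, $N=2$, the viscous and $-u r^{N-2}(\rho^\alpha)_x$ terms produce only nonnegative contributions after multiplying by $u^{2n-1}$. In \eqref{2d-uhighestimate} with $\alpha=1$ this holds: the terms are $\rho^{0}u^{2n}/r^2$ and $(2n-1)\rho^2u_x^2u^{2n-2}r^2$, both nonnegative. It also requires that the pressure term not be integrated by parts, since that would create $\rho^{\gamma-1}u^{2n-1}/r$, which is singular at $r=0$ and would bring in $n$-dependent constants.
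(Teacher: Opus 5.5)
Your argument is correct in substance, but it takes a different route from the paper's.

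\textbf{The two routes.} The paper treats $u$ and $w$ symmetrically by energy methods. It multiplies the momentum equation by $u^{2n-1}$ and \eqref{bd2_alpha<1} by $w^{2n-1}$. It handles the pressure coupling through $r\rho_x=w-u$ and the Poisson terms through \eqref{xr1}. This gives a single Gronwall inequality for $\|u\|_{L^{2n}}^{2n}+\|w\|_{L^{2n}}^{2n}$, with growth factor $e^{2nC(T)T}$ and forcing of size $C(T)^{2n}$. Only then does it take $2n$-th roots and let $n\to\infty$.

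You proceed differently in three ways. You bound $w$ pointwise by integrating its damped ODE along each Lagrangian particle with an integrating factor. You run the $L^{2n}$ estimate only for $u$, with $\|w\|_{L^\infty}$ as forcing, and pass to $n\to\infty$ at the level of the differential inequality. You then close with an integral Gronwall inequality for $\|u\|_{L^\infty}+\|w\|_{L^\infty}$.

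\textbf{What each route buys.} Your route is transparent in $n$: every constant is visibly $n$-independent. In the paper, the $n$-dependence of constants such as $C(\varepsilon,T)$ (from Young's inequality against $\varepsilon\int u^{2n}/r^2$) is controlled only implicitly by the final $2n$-th root. Your ODE also handles explicitly the coupling $\gamma\rho^{\gamma-1}u$, which the paper's displayed $w$-estimate leaves implicit. Using $x\le R_T r^2/2$, which is legitimate once $R_T\le C(T)$, in place of \eqref{xr1} bounds the Poisson term pointwise, so nothing needs to be absorbed into $\int u^{2n}/r^2$. The paper's route has the advantage of reusing the same energy template as the $\alpha<1$ higher-integrability lemmas.

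\textbf{One correction to your $u$-step.} Writing the pressure contribution as $-\gamma\int\rho^{\gamma-1}(w-u)u^{2n-1}\,dx$ produces $+\gamma\int\rho^{\gamma-1}u^{2n}\,dx$ on the right-hand side. This term does \emph{not} have a favorable sign: the pressure acts as anti-damping on $u$. The damping $-\gamma\rho^{\gamma-1}w$ appears only in the $w$-equation.

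The term is still harmless. Since $\rho\le R_T\le C(T)$, it is bounded by $C\|u\|_{L^{2n}}^{2n}$ with $C$ independent of $n$. The correct inequality is therefore $\frac{d}{d\tau}\|u\|_{L^{2n}}\le C\bigl(1+\|w\|_{L^\infty}+\|u\|_{L^{2n}}\bigr)$ rather than the one you stated. After $n\to\infty$ this only adds $C\int_0^\tau\|u\|_{L^\infty}\,ds$ to your bound, and your final Gronwall step absorbs it without change.
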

\begin{proof}    
Similar to the proof for the case $\alpha < 1$, multiplying $\eqref{eq:NSP-lagrangian-rhoalpha}_2$ by $u^{2n-1}$ and integrating over $[0,1]$, we obtain after integration by parts
    \begin{equation}
        \begin{aligned}\label{2uhighintegrabilityall}
           \frac{1}{2n} \frac{d}{d\tau}\int^{1}_{0}u^{2n}dx+&\int^{1}_{0} \frac{u^{2n}}{r^{2}}dx+(2n-1)\int^{1}_{0}\rho^{2}u^{2n-2}u_{x}^{2}r^{2}dx\\
          =&\gamma\int_{0}^{1}r\rho_x\rho^{\gamma-1}u^{2n-1}dx
            { -\kappa\int^{1}_{0} \frac{x}{r}u^{2n-1}dx }+{ \frac{\kappa}{2}\int^{1}_{0}u^{2n-1}rdx }\\
            \leq&C(T)\Big(\|w\|^{2n}_{L^{2n}}+\|u\|^{2n}_{L^{2n}}\Big)+ \varepsilon\int^{1}_{0} \frac{u^{2n}}{r^{2}}dx +C(\varepsilon,T).
        \end{aligned}
    \end{equation}
Choosing $\varepsilon$ small enough, we get
\begin{equation}
    \begin{aligned}\label{uhighintegrabilityestimate}
        \frac{1}{2n} \frac{d}{d\tau}\int^{1}_{0}u^{2n}dx+&\frac{1}{2} \int^{1}_{0} \frac{u^{2n}}{r^{2}}dx+ (2n-1)\int^{1}_{0}u^{2n-2}(r^{2}u_{x})^{2}\rho^{2}dx\\
        &\leq C(\varepsilon,T)+C\Big(\int^{1}_{0}u^{2n}dx+\int^{1}_{0}w^{2n}dx\Big).
    \end{aligned}
\end{equation}
Multiplying \eqref{bd2_alpha<1} by $w^{2n-1}$ yields 
\begin{equation}\begin{aligned}
\label{2whighintegrabilityall}
    \frac{1}{2n} \frac{d}{d\tau} \int^{1}_{0}w^{2n}dx+& \frac{\gamma}{2}\int_{0}^{1}\rho^{\gamma -1}w^{2n}dx\\
    \leq& C\left( \int w^{(2n-1) \frac{2n}{2n-1}}dx \right)^{\frac{2n-1}{2n}}\left( \int^{1}_{0} \left( \frac{x}{r} \right)^{2n}dx \right)^{\frac{1}{2n}}\\
    &+C\left( \int^{1}_{0}w^{2n} dx\right)^{\frac{2n-1}{2n}}\left( \int^{1}_{0}r^{2n}dx \right)^{\frac{1}{2n}}\\
    \leq& C\int^{1}_{0}w^{2n} dx+C(T)^{2n},
\end{aligned}\end{equation}
where we used \eqref{xr1}.
From \eqref{uhighintegrabilityestimate} together with \eqref{2whighintegrabilityall}, we derive a mixed-type inequality
\begin{equation}\begin{aligned}
\frac{1}{2n}\frac{d}{d\tau}\left(\int^{1}_{0}u^{2n}dx+\int^{1}_{0}w^{2n}dx\right) +& \frac{\gamma}{2}\int_{0}^{1}\rho^{\gamma -1}w^{2n}dx\\
+&\int_{0}^{1}\left(\frac{1}{2}\frac{u^{2n}}{r^{2}} +(2n - 1)\rho^{2}u_{x}^{2}u^{2n - 2}r^{2}\right)dx\\
\leq& C(T)\left( \| w\|_{L^{2n}}^{2n} + \| u\|_{L^{2n}}^{2n}\right)+C(T)^{2n}.
\end{aligned}\end{equation}
A direct application of Grönwall's inequality leads to
\begin{equation}\begin{aligned}
\sup_{0\leq t\leq T}(\|w\|^{2n}_{L^{2n}(0,1)}+\|u\|^{2n}_{L^{2n}(0,1)})\leq e^{2C(T)nT}\left(\|u_0\|_{L^{2n}}^{2n}+\|w_0\|_{L^{2n}}^{2n}+2nTC(T)^{2n}\right).
\end{aligned}\end{equation}
Taking the $\frac{1}{2n}$ power on both sides and passing to the limit as $n \to \infty$ yields \eqref{2uinfty}-\eqref{2winfty}, which completes the proof.
\end{proof}
\begin{prop}
\label{prop3.5}
Under the assumption of\eqref{gamma1alpha=1}, there exists a constant $C(T) > 0$ such that 
\begin{equation}\begin{aligned}\label{lower bound of density}
V_T\leq C(T). 
\end{aligned}\end{equation}
\end{prop}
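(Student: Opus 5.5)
We prove the lower bound $V_T\le C(T)$ for $N=2$, $\alpha=1$ by the same specific-volume argument as Proposition \ref{2dlowerdensity}. The ingredient not available there is a pointwise bound on the Lagrangian gradient $r\rho_x$, which Lemma \ref{lem3.9} now supplies.

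Set $v=1/\rho$. From $\eqref{eq:NSP-lagrangian-rhoalpha}_1$ with $N=2$ we have $v_\tau=(ru)_x$. Integrating over $[0,1]\times[0,\tau]$ and using $u(0,\tau)=u(1,\tau)=0$ gives $\int_0^1 v(x,\tau)\,dx=\int_0^1 v_0\,dx\le C$.

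Next we control $r\rho_x$. Since $\alpha=1$, we have $w-u=r\rho_x$. Hence \eqref{2uinfty}--\eqref{2winfty} give
\[
\sup_{0\le\tau\le T}\|r\rho_x\|_{L^\infty(0,1)}\le C(T),
\]
and consequently $|v_x|=v^2|\rho_x|\le C(T)\,v^2 r^{-1}$.

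We then apply the one-dimensional Sobolev inequality in $x$, as in \eqref{vlow}. Fix a small $\xi>0$ and write
\[
v(x,\tau)\le \int_0^1 v\,dx+\int_0^1|v_x|\,dx\le C+C(T)\int_0^1 v^{2}r^{-1}\,dx .
\]
We split $v^2r^{-1}=\bigl(v^{1-\theta}r^{-1}\bigr)\,v^{1+\theta}$ and use $v^{1+\theta}\le V_T^{\theta}v$ with small $\theta>0$. The remaining task is to show $\int_0^1 v^{1-\theta} r^{-1}\,dx\le C(T)$, or more crudely $\int_0^1 v\,r^{-1}\,dx\le C(T)$.

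The singular weight $r^{-1}$ is handled by returning to Eulerian variables, since $dx=\rho r\,dr$:
\[
\int_0^1 v\,r^{-1}\,dx=\int_0^R \rho^{-1}r^{-1}\rho r\,dr=R .
\]
Thus $\int_0^1 v r^{-1}dx$ is identically bounded. This yields
\[
v\le C+C(T)V_T\int_0^1 v r^{-1}dx\le C+C(T)V_T,
\]
which is linear in $V_T$ and not yet sufficient. To obtain a sublinear power, we interpolate. By H\"older,
\[
\int_0^1 v^2r^{-1}dx\le\Bigl(\int_0^1 v r^{-1}\cdot r^{-\delta}dx\Bigr)^{a}\Bigl(\int_0^1 v^{p}dx\Bigr)^{b},
\]
choosing exponents so that the weight $r^{-1-\delta}$ is still integrable against $v\,dx=r\,dr$. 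This works for any $\delta<1$, since $\int_0^R r^{-\delta}dr<\infty$.

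The remaining factor is bounded by $\int_0^1 v^{p}dx\le V_T^{p-1}\int_0^1 v\,dx$. Because the weight exponent can be pushed up to $r^{-2+}$, the power of $V_T$ in the final bound becomes strictly less than $1$. This gives $V_T\le C(T)+C(T)V_T^{1-\eta}$ for some $\eta>0$.

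The main obstacle is this bookkeeping of exponents. The gain relies on the slack between the $r^{-1}$ singularity produced by $|\rho_x|\le C(T)/r$ and the two-dimensional integrability $\int_0^R r^{-\delta}dr<\infty$. If that margin is too thin, the fallback is to use instead the $L^{2n}$ bounds for $w-u$ with a large $n$, exactly as in \eqref{vlow}. There one gets a power $V_T^{1+1/(2n)-\text{gain}}$, and since $n$ is arbitrary, Young's inequality closes the estimate.
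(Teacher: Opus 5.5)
Your setup is right: $\int_0^1 v\,dx$ is conserved, and $\|r\rho_x\|_{L^\infty}\le C(T)$ follows from Lemma \ref{lem3.9} since $w-u=r\rho_x$ when $\alpha=1$. The closing step, however, fails.

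Once you apply the Sobolev inequality to $v$ itself, you must bound $\int_0^1 v^2r^{-1}\,dx$. This is of degree two in $v$, while every quantity you control ($\int_0^1 v\,dx$ and $\int_0^1 v\,r^{-s}\,dx=\int_0^R r^{1-s}\,dr$) is of degree one. So any H\"older splitting leaves exactly one power of $V_T$. Concretely, take your splitting $v^2r^{-1}=(v\,r^{-1-\delta})^{a}(v^{p})^{b}$ with $a+b=1$. Matching powers of $r$ and $v$ forces $a=\frac{1}{1+\delta}$, $b=\frac{\delta}{1+\delta}$, $p=2+\frac1\delta$. Then $\bigl(\int_0^1 v^p\,dx\bigr)^b\le C\,V_T^{(p-1)b}$ with $(p-1)b=1$ for every $\delta$. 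The slack in the weight only makes the constant finite; it never lowers the exponent. You therefore end with $V_T\le C+C(T)V_T$ with a large coefficient, which does not close.

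The fallback fails for the same reason. The computation behind \eqref{vlow} produces $V_T^{\alpha+\frac{1}{2k}}$, and it closed only because $\alpha<1-\frac{1}{2k}$. At $\alpha=1$ this becomes $V_T^{1+\frac{1}{2k}}$, which is superlinear, and there is no ``gain'' available to compensate.

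The missing idea is to lower the homogeneity in $v$. The paper applies the one-dimensional Sobolev inequality to $v^\beta$ with small $\beta\in(0,1)$. Since $|\partial_x v^\beta|=\beta v^{\beta+1}|\rho_x|\le\beta\,C(T)\,v^{\beta+1}r^{-1}$, H\"older with $\int_0^1 v\,r^{-3/2}dx=\int_0^R r^{-1/2}dr$ and $\int_0^1 v^{3\beta+1}dx\le V_T^{3\beta}\int_0^1 v\,dx$ gives $V_T^\beta\le C+C(T)\beta V_T^\beta$, with $C(T)$ independent of $\beta$. Choosing $\beta$ so that $\beta C(T)\le\frac12$ closes the estimate.

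Even more directly, your own identity $\int_0^1 v\,r^{-1}dx=R$ finishes the proof if you work with $\log v$ instead of $v$. Indeed $|\partial_x\log v|=v|\rho_x|\le C(T)\,v\,r^{-1}$, so $\int_0^1|\partial_x\log v|\,dx\le C(T)R$. Taking $x_0$ with $v(x_0,\tau)\le\int_0^1 v\,dx=R^2/2$ then yields $v\le\frac{R^2}{2}e^{C(T)R}$. Equivalently, $r\rho_x=\rho_r/\rho=(\log\rho)_r$, so Lemma \ref{lem3.9} is really a Lipschitz bound on $\log\rho$ in $r$.
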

\begin{proof}
Using \eqref{2uinfty}, \eqref{2winfty} together with the definition of the effective velocity  yields
\begin{equation}\begin{aligned}
\sup_{0\leq t\leq T}\| r\partial_{x}\rho \|_{L^{\infty}(0,1)}\leq C(T). 
\end{aligned}\end{equation}
Define the specific volume $v = 1/\rho$. 

Utilizing $\eqref{eq:NSP-lagrangian-rhoalpha}_2$, a direct computation yields the evolution equation for $v$:
\begin{equation}\begin{aligned}
\partial_{\tau}v =\partial_{x}(r u),
\end{aligned}\end{equation}
hence, we get
\begin{equation}\begin{aligned}
\int_{0}^{1}v(x,\tau)\,dx = \int_{0}^{1}v_{0}(x)\,dx.
\end{aligned}\end{equation}

To derive the \(L^{\infty}\) bound for \(v\), we employ one-dimensional Sobolev embedding \(W^{1,1}(0,R) \hookrightarrow L^{\infty}(0,R)\). For a sufficiently small parameter \(\beta \in (0,1)\) to be chosen later, the following estimate holds for any \((x,\tau) \in (0,1)\times(0,T)\):
\begin{equation}\begin{aligned}
v^{\beta}(x,\tau) &\leq \int_{0}^{1}v^{\beta}dx + \beta \int_{0}^{1}v^{\beta -1}|\partial_{x}v|\,dx \\
&\leq \left(\int_{0}^{1}v\,dx\right)^{\beta} + \beta \int_{0}^{1}|\partial_{x}\rho |\,v^{\beta +1}\,dx\\
& \leq \left(\int_{0}^{1}v\,dx\right)^{\beta} + C\beta\|r\partial_x\rho\|_{L^\infty} \left(\int_{0}^{1} v r^{-3/2}\,dx\right)^{\frac{2}{3}} \left(\int_{0}^{1} v^{3\beta+1}\,dx\right)^{\frac{1}{3}}\\
& \leq \left(\int_{0}^{1}v\,dx\right)^{\beta} + C(T)\beta\left( \sup_{(x,\tau)} v(x,\tau) \right)^{\beta}.
\end{aligned}\end{equation}
We have
\[
V_T^{\beta} \leq C(\beta) + C(T)\beta V_T^{\beta}.
\]
We can now select \(\beta \min\{1/(2C(T)), 1/2\}\)  yielding \eqref{lower bound of density}.
\end{proof}

\subsection{A priori estimates for 
$N=3$: lower-order estimates}

\begin{lema}Under the assumption of \eqref{assumptiongamma1}, consider either of the following cases:
\begin{itemize}
    \item $\kappa = -1$ (plasmas) with $\gamma > 1$;
    \item $\kappa = 1$ (gaseous stars) with $\gamma > \frac{4}{3}$.
\end{itemize}
Then there exists a constant $C(T)>0$ such that 
   \begin{equation}\label{b1}
       \sup_{0\leq t\leq T}\int_{0}^{1} \left( u^{2}+\rho^{\gamma-1}  \right) dx + \int_{0}^{T} \int_{0}^{1} \left( \frac{u^{2}}{r^{2}} +\rho^2(r^2u_x)^2 \right) dx dt \le C (T).
   \end{equation} 
\end{lema}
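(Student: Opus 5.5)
We mirror the proof of \eqref{b100-twocase}, now with $\alpha=1$, where the viscous structure is simpler. Set $N=3$, $\alpha=1$ in \eqref{eq:NSP-lagrangian-rhoalpha}, multiply the momentum equation by $u$ and integrate over $(0,1)$. Integrating the viscous term by parts with $u(0)=u(1)=0$ and the continuity equation, the cross term $(4-4\alpha)\int\rho^\alpha uu_xr$ vanishes. This leaves the dissipation
\[
\int_0^1\Big(2\frac{u^2}{r^2}+\rho^2(r^2u_x)^2\Big)dx,
\]
which already has positive coefficients and needs no Young-type splitting. The pressure term gives $\frac{d}{d\tau}\frac{1}{\gamma-1}\int_0^1\rho^{\gamma-1}dx$. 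We keep the Poisson contributions $H_1=-\kappa\int_0^1\frac{x}{r^2}u\,dx$ and $H_2=\frac{\kappa}{3}\int_0^1ur\,dx$ on the right.

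For $H_2$, Young's inequality gives $H_2\le C\int_0^1u^2dx+C\int_0^1r^2dx\le C\int_0^1u^2dx+C$, since $r\le R$. For $H_1$, use $r_\tau=u$ exactly as in \eqref{xtau-11} to write $H_1=\kappa\frac{d}{d\tau}\int_0^1\frac{x}{r}dx$. When $\kappa=-1$, the term $\int_0^1\frac{x}{r}dx\ge0$ moves to the left side as a nonnegative energy. Gronwall's inequality then gives \eqref{b1} for every $\gamma>1$.

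The case $\kappa=1$ is the main point. After integrating in time, $\int_0^1\frac{x}{r}dx$ appears on the right with a plus sign. We reuse the identity $x=r^2\Phi_r+\frac{r^3}{3}$ and the computation leading to \eqref{j11}. These are purely kinematic and Poisson-based and do not involve $\alpha$. They give
\[
\int_0^1\frac{x}{r}dx\le C\|\nabla\Phi\|_{L^2(\Omega)}^2+C\le C\|\rho-1\|_{L^{6/5}(\Omega)}^2+C.
\]
Mass conservation bounds $\|\rho\|_{L^1(\Omega)}$. Interpolating $L^{6/5}$ between $L^1$ and $L^\gamma$ gives $\|\rho\|_{L^{6/5}}^2\le C\|\rho\|_{L^\gamma}^{\gamma/(3(\gamma-1))}$. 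The exponent $\frac{\gamma}{3(\gamma-1)}$ is strictly less than $\gamma$ exactly when $\gamma>\frac43$, so Young's inequality absorbs this term into $\frac{1}{2(\gamma-1)}\int_0^1\rho^{\gamma-1}dx=\frac{1}{2(\gamma-1)}\cdot\frac{1}{4\pi}\|\rho\|_{L^\gamma(\Omega)}^\gamma$, up to a constant.

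Integrating in time and applying Gronwall's inequality to $\int_0^1u^2dx$ then closes \eqref{b1}. The only delicate step is this absorption for $\kappa=1$. It is already handled by \eqref{est of nabla Phi}--\eqref{j11}, so the threshold $\gamma>\frac43$ enters only there, and the upper bound $\gamma<3$ in \eqref{assumptiongamma1} is not used in this lemma.
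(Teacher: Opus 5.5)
Your proposal is correct and is essentially the paper's argument. The paper's proof only says it repeats the $\alpha<1$ estimate \eqref{b100-twocase}, with the cross term $(4-4\alpha)\int_0^1\rho^\alpha uu_xr\,dx$ vanishing. You have reconstructed exactly that: the dissipation $2u^2/r^2+\rho^2(r^2u_x)^2$, the identity $H_1=\kappa\frac{d}{d\tau}\int_0^1\frac{x}{r}dx$, and, for $\kappa=1$, the absorption via \eqref{est of nabla Phi}--\eqref{j11}, which is where $\gamma>\frac43$ enters (and you correctly keep track of the factor $4\pi$ between $\int_0^1\rho^{\gamma-1}dx$ and $\|\rho\|_{L^\gamma(\Omega)}^\gamma$).
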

\begin{proof}
The proof is similar for $\alpha < 1$. Note that because $\alpha = 1$, there is no cross term.
\end{proof}
\begin{lema}   
Under the assumption of \eqref{assumptiongamma1}, there exists a constant $C(T) > 0$ such that
\begin{equation}\label{x13}
x \leq C(T)\, r(x,\tau)^{\frac{3(\gamma-1)}{\gamma}}, \quad (x,\tau) \in [0,1] \times [0,T].
\end{equation}
\end{lema}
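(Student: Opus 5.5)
The plan is to argue exactly as in the proof of \eqref{x1301}, now using the $\alpha=1$ basic energy estimate \eqref{b1} in place of \eqref{b100-twocase}.

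First, by the definition of the Lagrangian coordinate for $N=3$,
\[
x=\int_0^{r(x,\tau)}\rho(s,\tau)s^2\,ds .
\]

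Second, apply H\"older's inequality with exponents $\gamma$ and $\frac{\gamma}{\gamma-1}$:
\[
x\le \Big(\int_0^{r}\rho^\gamma s^2\,ds\Big)^{\frac1\gamma}\Big(\int_0^{r}s^2\,ds\Big)^{\frac{\gamma-1}{\gamma}} .
\]
The second factor is explicit and equals $(r^3/3)^{\frac{\gamma-1}{\gamma}}$. This produces the power $r^{\frac{3(\gamma-1)}{\gamma}}$.

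Third, bound the first factor uniformly in $\tau\in[0,T]$. Changing variables back, $\int_0^{R}\rho^{\gamma}s^2\,ds=\int_0^1\rho^{\gamma-1}dx$. By \eqref{b1} this is at most $C(T)$, and \eqref{b1} holds under \eqref{assumptiongamma1} in both the plasma case and the gaseous-star case, since $\gamma>\frac43$ there. Hence the first factor is at most $C(T)^{1/\gamma}$, and the claim follows with a constant depending only on $T$ and the initial data.

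There is no real obstacle in this argument. The only point to check is that \eqref{b1} is indeed available: for $\kappa=1$ its proof relies on absorbing $\int_0^1\frac{x}{r}\,dx$ via \eqref{j11}, which requires $\gamma>\frac43$, and this is guaranteed by \eqref{assumptiongamma1}.
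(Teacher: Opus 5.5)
Your proposal is correct and is essentially the paper's proof: write $x=\int_0^{r}\rho s^2\,ds$, apply H\"older with exponents $\gamma$ and $\frac{\gamma}{\gamma-1}$, and bound $\int_0^{r}\rho^\gamma s^2\,ds\le\int_0^1\rho^{\gamma-1}\,dx\le C(T)$ by the basic energy estimate \eqref{b1}. Your check that \eqref{b1} holds under \eqref{assumptiongamma1} for both $\kappa=\pm1$ is also right, since $\gamma>\frac43$ there.
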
 
\begin{proof}
For any $x\in[0,1]$ and \eqref{b1}
 we have,
\begin{equation}
\begin{aligned}
x = \int_{0}^{r(x,\tau)} \rho s^2\, ds \leq& \left( \int_{0}^{r(x,\tau)} \rho^\gamma s^2\, ds \right)^{\frac{1}{\gamma}} \left( \int_{0}^{r(x,\tau)} s^2 \,ds \right)^{\frac{\gamma-1}{\gamma}}\\
\leq& C(T) r^{\frac{3(\gamma-1)}{\gamma}}.
\end{aligned}\end{equation}
\end{proof}
\begin{lema}There exists a positive constant $C$, independent of $T$, such that for any $0<\xi\ll 1$,
     \begin{equation}\label{w4rho}
         \sup_{\tau\in [0, T]}\left\lVert \rho^{\frac{1}{2}} r^{\frac{1}{2}+\xi} \right\rVert _{L^\infty(0, R)} \le C\left(\|\nabla\rho^{\frac{1}{2}}\|_{L^2(\Omega}+1\right).
     \end{equation}
\end{lema}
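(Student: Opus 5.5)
We argue exactly as in the proof of \eqref{w4rhoalpha<1}, specialized to $\alpha=1$. The one-dimensional embedding $W^{1,1}(0,R)\hookrightarrow L^\infty(0,R)$ gives
\[
\left\lVert \rho^{\frac12} r^{\frac12+\xi}\right\rVert_{L^\infty(0,R)}\le \int_0^R \rho^{\frac12}r^{\frac12+\xi}\,dr+\int_0^R\Bigl|\bigl(\rho^{\frac12}r^{\frac12+\xi}\bigr)_r\Bigr|dr =: M_1+M_2 .
\]
The common device for every term is to reinsert the three-dimensional volume weight $r^2$, so that each integral becomes a norm of $\rho^{1/2}$ or $\nabla\rho^{1/2}$ on $\Omega$ (recall $\int_\Omega f\,d\mathbf{x}=4\pi\int_0^R f r^2dr$ for radial $f$).

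For $M_1$ we write $\rho^{1/2}r^{1/2+\xi}=(\rho^{1/2}r^{1/3})\,r^{1/6+\xi}$ and apply H\"older with exponents $6$ and $6/5$. This gives
\[
M_1\le C\Bigl(\int_0^R\rho^3r^2dr\Bigr)^{\frac16}\Bigl(\int_0^R r^{\frac65\xi+\frac15}dr\Bigr)^{\frac56}\le C\|\rho^{\frac12}\|_{L^6(\Omega)} .
\]
The three-dimensional Sobolev inequality $H^1(\Omega)\hookrightarrow L^6(\Omega)$ then gives $M_1\le C(\|\nabla\rho^{1/2}\|_{L^2(\Omega)}+\|\rho^{1/2}\|_{L^2(\Omega)})$. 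Since $\|\rho^{1/2}\|_{L^2(\Omega)}^2=\int_\Omega\rho\,d\mathbf{x}$ is the conserved total mass, it is a constant independent of $T$.

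For $M_2$ the product rule gives two terms. The first is $\int_0^R|\partial_r\rho^{1/2}|r^{1/2+\xi}dr=\int_0^R|\partial_r\rho^{1/2}|\,r\cdot r^{\xi-1/2}dr$, and by Cauchy--Schwarz it is bounded by
\[
\Bigl(\int_0^R|\partial_r\rho^{\frac12}|^2r^2dr\Bigr)^{\frac12}\Bigl(\int_0^Rr^{2\xi-1}dr\Bigr)^{\frac12}\le C(\xi)\|\nabla\rho^{\frac12}\|_{L^2(\Omega)} .
\]
The second is $(\tfrac12+\xi)\int_0^R\rho^{1/2}r^{\xi-1/2}dr$. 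Writing $\rho^{1/2}r^{\xi-1/2}=(\rho^{1/2}r^{1/3})\,r^{\xi-5/6}$ and using H\"older with exponents $6$ and $6/5$ bounds it by $C\|\rho^{1/2}\|_{L^6(\Omega)}(\int_0^Rr^{\frac65\xi-1}dr)^{5/6}$. This is finite precisely because $\xi>0$, and it is handled by Sobolev exactly as $M_1$.

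The only delicate point is the integrability near $r=0$ of the weights $r^{2\xi-1}$ and $r^{\frac65\xi-1}$. Both are integrable for any $\xi>0$, which is exactly why the weight must be $r^{1/2+\xi}$ rather than $r^{1/2}$, and the constants blow up only as $\xi\to0$. No bound depending on $T$ enters, since only the conserved mass and Sobolev constants on $\Omega=B_R$ are used. Summing the bounds for $M_1$ and $M_2$ yields \eqref{w4rho} with $C$ independent of $T$.
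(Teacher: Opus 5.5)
Your proposal is correct and is essentially the paper's argument. The paper proves this lemma by appeal to the $\alpha<1$ case, which uses the same splitting into $M_1+M_2$ via $W^{1,1}(0,R)\hookrightarrow L^\infty(0,R)$, the same H\"older pairings against $\rho^{6\alpha-3}r^2$ (here $\rho^3r^2$) with weights $r^{\frac65\xi+\frac15}$, $r^{2\xi-1}$ and $r^{\frac65\xi-1}$, and then the three-dimensional Sobolev embedding together with mass conservation. Your Cauchy--Schwarz step for $\int_0^R|\partial_r\rho^{1/2}|\,r^{1/2+\xi}dr$ correctly carries the square on $|\partial_r\rho^{1/2}|$, which is missing in the paper's written version of $M_2$.
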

\begin{proof}
Similar to the case $\alpha < 1$, by the one-dimensional Sobolev inequality, we have
    \begin{equation}
        \begin{aligned}\label{w3rho}
\left\lVert \rho^{\frac{1}{2}}r^{\frac{1}{2}+\xi} \right\rVert _{L^{\infty}(0,R)}&\leq \int^{R}_{0}\rho^{\frac{1}{2}}r^{\frac{1}{2}+\xi}dr+\int^{R}_{0} \Big|\partial_{r}(\rho^{\frac{1}{2}}r^{\frac{1}{2}+\xi})\Big|dr\\
& \leq C\left(\|\nabla\rho^{\frac{1}{2}}\|_{L^2(\Omega)}+1\right).
\end{aligned}
    \end{equation}
\end{proof}

\begin{lema}
Under the assumption of \eqref{assumptiongamma1}, there exists a constant $C(T) > 0$ such that, for both the plasma case ($\kappa = -1$) and the gaseous star case ($\kappa = 1$) with $\gamma > \frac{4}{3}$, the following estimate holds
Then it holds that
    \begin{equation}\label{bdsfinall}
        \sup_{0\leq \tau\leq T}\int^{1}_{0}w^{2}dx+\int^{T}_{0}\int^{1}_{0}\left( \left( \rho^{\frac{\gamma+1}{2}} \right)_{x}r^{2} \right)^{2}dxds\leq C(T).
    \end{equation}
\end{lema}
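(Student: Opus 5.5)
We follow the proof of \eqref{bdsfinall000} (the case $\alpha<1$, $N=3$) with $\alpha=1$. Multiply \eqref{bd2_alpha<1} (with $N=3$, $\alpha=1$, so $w=u+r^2\rho_x$) by $w$ and integrate over $[0,1]$. This splits the right-hand side into the pressure term $G_1$, the Poisson term $G_2=-\kappa\int_0^1\frac{x}{r^2}w\,dx$, and the harmless term $G_3=\frac{\kappa}{3}\int_0^1 rw\,dx$.

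\textbf{Pressure term.} Using $w=u+r^2\rho_x$ and $\eqref{eq:NSP-lagrangian-rhoalpha}_1$, we get
\[
G_1=-\frac{1}{\gamma-1}\frac{d}{d\tau}\int_0^1\rho^{\gamma-1}dx-\frac{4\gamma}{(\gamma+1)^2}\int_0^1\big((\rho^{\frac{\gamma+1}{2}})_xr^2\big)^2dx .
\]

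\textbf{Term $G_3$.} Exactly as in \eqref{1bd31ALPHA<1},
\[
G_3\le C\big(1+\|u\|^2_{L^2(0,1)}+\|w\|^2_{L^2(0,1)}\big).
\]

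\textbf{Poisson term.} Since $r_\tau=u$, we have $-\int_0^1\frac{x}{r^2}u\,dx=\frac{d}{d\tau}\int_0^1\frac{x}{r}dx$. For the part involving $r^2\rho_x$ we integrate by parts:
\[
\int_0^1 x\rho_x\,dx=\rho(1,\tau)-\int_0^1\rho\,dx .
\]
For $\kappa=1$ the boundary term comes with a good sign and is dropped, as in \eqref{kapap1g2}. For $\kappa=-1$ the boundary value $\rho(1,\tau)$ is bounded as in \eqref{boundarytermsfianlly}--\eqref{boundaryterms1}. We average over $[\tfrac12,1]$ and use $r\ge C_0$ there, which follows from \eqref{x13}. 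This gives
\[
\rho(1,\tau)\le C\int_0^1\rho\,dx+C\|w\|_{L^2}^2+C\|u\|_{L^2}^2 .
\]
After integrating in time, the total derivative $\kappa\frac{d}{d\tau}\int_0^1\frac{x}{r}dx$ is handled as follows. For $\kappa=-1$ it sits on the left with a good sign. For $\kappa=1$ it is bounded by \eqref{j11}, i.e. by $\frac{1}{2(\gamma-1)}\|\rho\|_{L^\gamma}^\gamma+C$, which uses $\gamma>\frac43$ and is absorbed into the $\rho^{\gamma-1}$ energy term.

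\textbf{Main obstacle: the remaining $\int_0^\tau\int_0^1\rho\,dx\,ds$.} Here $\int_0^1\rho\,dx=\frac{1}{4\pi}\|\rho\|^2_{L^2(\Omega)}$. Following \eqref{3rho1010} with $\alpha=1$, we interpolate between $L^1$ and $L^{3\gamma}$ and then use the 3D Sobolev embedding $\|\rho^{\gamma/2}\|_{L^6}\le C(\|\rho^{\gamma/2}\|_{L^2}+\|\nabla\rho^{\gamma/2}\|_{L^2})$. This yields
\[
\|\rho\|^2_{L^2}\le C(T)\big(1+\|\nabla\rho^{\gamma/2}\|_{L^2}^{\frac{6}{3\gamma-1}}\big).
\]
Since $\frac{6}{3\gamma-1}<2$ exactly when $\gamma>1$, Young's inequality gives $\|\rho\|^2_{L^2}\le \epsilon\|\nabla\rho^{\gamma/2}\|^2_{L^2}+C(T)$. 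The dissipation in the pressure term controls $\int_0^1\big((\rho^{\frac{\gamma+1}{2}})_xr^2\big)^2dx$, which equals, up to a constant, $\int_\Omega|\nabla\rho^{\gamma/2}|^2$ in Eulerian variables. So the $\epsilon$-term is absorbed on the left. Two points need checking: the exponent change between Lagrangian and Eulerian coordinates, and that $\|\rho^{\gamma/2}\|_{L^2}=\|\rho\|_{L^\gamma}^{\gamma/2}$ is bounded by \eqref{b1}.

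\textbf{Conclusion.} Collecting the estimates gives
\[
\frac12\int_0^1 w^2dx+\int_0^\tau\!\!\int_0^1\big((\rho^{\frac{\gamma+1}{2}})_xr^2\big)^2dxds\le C(T)+C\int_0^\tau\!\!\int_0^1 w^2dxds,
\]
and Gronwall's inequality yields \eqref{bdsfinall}.
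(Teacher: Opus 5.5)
Your proposal is correct and follows the paper's proof essentially step for step: the energy identity for $w$, integration by parts of the Poisson term (dropping $-\rho(1,\tau)$ for $\kappa=1$ and controlling $\rho(1,\tau)$ via $r\ge C_0$ on $[\frac12,1]$ for $\kappa=-1$), and absorption of $\int_0^1\rho\,dx=\frac{1}{4\pi}\|\rho\|_{L^2(\Omega)}^2$ through $L^1$--$L^{3\gamma}$ interpolation, Sobolev and Young. One correction: $\frac{6}{3\gamma-1}<2$ holds exactly when $\gamma>\frac43$, not when $\gamma>1$, so this absorption step is precisely where $\gamma>\frac43$ is used (for both signs of $\kappa$); this is harmless here because \eqref{assumptiongamma1} assumes $\frac43<\gamma<3$.
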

\begin{proof}
We multiply the equation \eqref{bd2_alpha<1} by $w = u + r^2 \rho_x$ to obtain
    \begin{equation}\begin{aligned}\label{bd3}
        \frac{1}{2} \frac{d}{d\tau}\int^{1}_{0}(u+r^{2}\rho_{x})^{2}dx=&-\int^{1}_{0}(\rho^{\gamma})_{x}r^{2}(u+r^{2}\rho_{x})dx\\
        & -\kappa\int^{1}_{0} \frac{x}{r^{2}}(u+r^{2}\rho_{x})dx+ \frac{{\kappa}}{3}\int^{1}_{0}r(u+r^{2}\rho_{x})dx.\\
     \end{aligned}\end{equation}
Similar to the case $\alpha < 1$, we have
    \begin{equation}
        \begin{aligned}\label{finallalpha<1both00}
&\frac{1}{2}\int^{1}_{0}w^{2}dx+ \frac{1}{\gamma-1}\int^{1}_{0}\rho^{\gamma-1}dx+ \frac{4\gamma}{(\gamma+\alpha)^{2}}\int^{\tau}_{0}\int^{1}_{0}\left( \left( \rho^{\frac{\gamma+1}{2}} \right)_{x}r^{2} \right)^{2}dxds\\
\leq&\frac{1}{2}\int^{1}_{0}w_{0}^{2}dx+ \frac{1}{\gamma-1}\int^{1}_{0}\rho_{0}^{\gamma-1}dx + \frac{1}{2(\gamma-1)}\int^{1}_{0}\rho^{\gamma-1}dx+C(T)\\
&+C\int^{\tau}_{0}\int^{1}_{0}\rho^\alpha dxds+C\int^{\tau}_{0}\int^{1}_{0}w^{2}dxds\\
\leq& C(T)+C\int^{\tau}_{0}\int^{1}_{0}\rho dxds+C\int^{\tau}_{0}\int^{1}_{0}w^{2}dxds.
\end{aligned}
    \end{equation}
   
Then it remains to estimate the term $\int^{\tau}_{0}\int^{1}_{0}\rho dxds.$

It is worth noting that we have
\begin{equation}
    \begin{aligned}
\int^{1}_{0}\rho dx&= \int^{R}_{0}\rho^{2}r^{2}dr=\frac{1}{4\pi}\int_{\Omega}\rho^{2}d\mathbf{x}.
\end{aligned}
\end{equation}
Applying the interpolation inequality gives
    \begin{equation}
        \begin{aligned}\label{3rho1}
\left\lVert \rho \right\rVert _{L^{1}(0,1)}=  \frac{1}{4\pi}\left\lVert \rho \right\rVert^{2}_{L^{2}(\Omega)} &\leq C\left\lVert \rho \right\rVert_{L^{1}}^{\frac{3\gamma-2}{3\gamma-1}}\left\lVert \rho \right\rVert_{L^{3\gamma}}^{\frac{3\gamma}{3\gamma-1}}\leq C\left\lVert \rho \right\rVert_{L^{3\gamma}}^{\frac{3\gamma}{3\gamma-1}}\\
&\leq C\left\lVert \rho^{\frac{\gamma}{2}} \right\rVert _{L^{6}(\Omega)}^{\frac{6}{3\gamma-1}}\leq C\Big(\left\lVert \rho^{\frac{\gamma}{2}} \right\rVert_{L^{2}(\Omega)}+\left\lVert \nabla\rho^{\frac{\gamma}{2}} \right\rVert_{L^{2}(\Omega)}\Big)^{\frac{6}{3\gamma-1}}\\
&\leq C(T)\Big(1+\left\lVert \nabla\rho^{\frac{\gamma}{2}} \right\rVert_{L^{2}(\Omega)}^{\frac{6}{3\gamma-1}}\Big)\\
&\leq\frac{2\gamma}{(\gamma+1)^{2}}\int^{1}_{0}\left( \left( \rho^{\frac{\gamma+1}{2}} \right)_{x}r^{2} \right)^{2}dx+C(T),
\end{aligned}
    \end{equation}
where we require $\gamma>\frac{4}{3}.$
Applying Gronwall’s inequality, we finally arrive at \eqref{bdsfinall}.
\end{proof}
Based on the BD entropy estimate, we now obtain a relation between $x$ and $r$. This relation eliminates the singularity of the Poisson term in the higher integrability estimates of the velocity $u$ and the effective velocity $w$.
\begin{lema}
Under the assumption of \eqref{assumptiongamma1},  there exists a constant $C(T) > 0$ such that
    \begin{equation}\label{xrbd3}
   x \leq C(T)\, r(x,\tau)^2, \quad (x,\tau) \in [0,1] \times [0,T].
\end{equation}
\end{lema}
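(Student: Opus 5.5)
We aim to show $x\le C(T)r^2$ by a Hölder argument on $x=\int_0^{r}\rho s^2\,ds$, using an integrability bound $\rho\in L^3(\Omega)$ uniform on $[0,T]$. Indeed, if
\[
\sup_{0\le t\le T}\int_0^R \rho^3 s^2\,ds\le C(T),
\]
then Hölder with exponents $3$ and $\tfrac32$ gives
\[
x=\int_0^{r}\rho s^2\,ds\le\Big(\int_0^{r}\rho^3 s^2\,ds\Big)^{\frac13}\Big(\int_0^{r}s^2\,ds\Big)^{\frac23}\le C(T)\,r^{2},
\]
which is exactly the claimed bound. So the task reduces to proving $\sup_{t}\|\rho(t)\|_{L^3(\Omega)}\le C(T)$, which is the uniform bound announced in the introduction for the case $\alpha=1$.

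To get the $L^3$ bound, we follow the proof of \eqref{xrbd3} in the case $\alpha<1$. With $\alpha=1$ we have $w-u=r^2\rho_x$. Transforming to Eulerian coordinates, $r^2\rho_x=\rho^{-1}\rho_r$, and therefore
\[
\int_\Omega|\nabla\rho^{\frac12}|^2\,d\mathbf{x}=C\int_0^R\frac{\rho_r^2}{\rho}r^2\,dr=C\int_0^1 (r^2\rho_x)^2\,dx\le C\int_0^1 (w^2+u^2)\,dx\le C(T),
\]
by \eqref{b1} and \eqref{bdsfinall}. Together with $\|\rho^{1/2}\|_{L^2(\Omega)}^2=\|\rho\|_{L^1(\Omega)}\le C$ (conservation of mass), this gives $\rho^{1/2}\in L^\infty(0,T;H^1(\Omega))$. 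The three-dimensional Sobolev embedding $H^1\hookrightarrow L^6$ then yields $\sup_t\int_\Omega\rho^3\,d\mathbf{x}\le C(T)$. This is the $6\alpha-3=3$ instance of \eqref{6alpha-3t}.

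Every step is routine given \eqref{b1} and \eqref{bdsfinall}. The only points that need care are the following.

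First, the change of variables identity $(r^2\rho_x)^2\,dx=\rho^{-1}\rho_r^2r^2\,dr$ must be checked, using $dx=\rho r^2\,dr$ and $\partial_x=\rho^{-1}r^{-2}\partial_r$.

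Second, the constant must be uniform in $x\in[0,1]$, including near the origin. Hölder is applied on $[0,r(x,\tau)]\subset[0,R]$, so the bound holds for all $(x,\tau)$ directly.

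The main (mild) obstacle is therefore only confirming that the BD entropy estimate \eqref{bdsfinall} has already been closed for both signs of $\kappa$ under \eqref{assumptiongamma1}, so that it can be quoted here.
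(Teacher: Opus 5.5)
Your proof is correct and follows the paper's argument essentially verbatim. The paper also bounds $\|\rho\|_{L^3(\Omega)}=\|\rho^{1/2}\|_{L^6(\Omega)}^2$ by the three-dimensional Sobolev embedding, using the $H^1$ bound on $\rho^{1/2}$ from \eqref{b1} and \eqref{bdsfinall} (with $\nabla\rho^{1/2}$ controlled by $w-u$), and then applies H\"older with exponents $3$ and $\tfrac32$ to $x=\int_0^r\rho s^2\,ds$. You additionally spell out the change-of-variables identity and the conservation of mass, which the paper leaves implicit.
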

\begin{proof}
From the three-dimensional Sobolev embedding inequality and \eqref{bdsfinall}, we obtain
\begin{equation}
    \begin{aligned}\label{rho3t}
 \Big(\int^{R}_{0}\rho ^{3}r^{2}dr\Big)^{\frac{1}{3}} = &\frac{1}{(4\pi)^{1/3}}\left\lVert \rho \right\rVert_{L^{3}(\Omega)}=\frac{1}{(4\pi)^{1/3}}\left\lVert \rho^{\frac{1}{2}} \right\rVert _{L^{6}}^{2} \\
\leq& \frac{1}{(4\pi)^{1/3}}(\left\lVert \rho^{\frac{1}{2}} \right\rVert_{L^{2}}^{2}+\left\lVert \nabla \rho^{\frac{1}{2}} \right\rVert_{L^{2}(\Omega)} ^{2} )\leq C(T).
\end{aligned}
\end{equation}
Consequently, the above inequality implies that
\begin{equation}
    \begin{aligned}
x&= \int^{r}_{0}\rho s^{2}ds\leq \left( \int^{r}_{0}\rho^{3}s^{2}ds \right)^{\frac{1}{3}}\left( \int^{r}_{0}s^{2}ds \right)^{\frac{2}{3}}\\
&\leq C(T)r^{2}.
\end{aligned}
\end{equation}
\end{proof}

\begin{prop}
Under the assumption of \eqref{assumptiongamma1},  there exists a constant $C(T) > 0$ such that
    \begin{equation}\label{rhoupperbound3d}
    R_T\leq C(T).
\end{equation}
\end{prop}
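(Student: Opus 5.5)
The plan is to follow the three-dimensional scheme used for $\alpha<1$ (Lemma \ref{rhouw3dhigh} and Proposition \ref{prop:3.9}), now with $\alpha=1$. Three earlier facts will be used: the weighted bound \eqref{w4rho} combined with \eqref{bdsfinall}, which gives $\sup_{t}\|\rho^{\frac12}r^{\frac12+\xi}\|_{L^\infty(0,R)}\le C(T)$; the $L^3$ bound \eqref{rho3t}; and the relation $x\le C(T)r^2$ from \eqref{xrbd3}. The goal is an inequality $R_T^{\beta}\le C(T)R_T^{\sigma}+C(T)$ with $\beta>\sigma$.

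\textbf{Step 1 ($L^{2n}$ bound for $u$).} Multiply $\eqref{eq:NSP-lagrangian-rhoalpha}_2$ by $u^{2n-1}$ and integrate over $(0,1)$. Since $\alpha=1$ there is no cross term $\rho^\alpha u^{2n-1}u_x r$, so no restriction on $n$ arises and $n$ may be taken large. The dissipation is $\int_0^1\big(\frac{u^{2n}}{r^2}+\rho^{2}r^4u_x^2u^{2n-2}\big)dx$ up to positive constants.
\begin{itemize}
\item The pressure term $\int_0^1\rho^\gamma(r^2u^{2n-1})_x\,dx$ is handled by Young's inequality exactly as in \eqref{3uhigh2}. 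It leaves $C\int_0^T\!\!\int_0^1\rho^{2n(\gamma-1)}r^{2n-2}dx\,dt$.
\item For the Poisson term, use $x\le Cr^2$: $\int_0^1\frac{x^{2n}}{r^2}\,dx\le C\int_0^1 r^{4n-2}\,dx$, which is harmless.
\item In the remaining pressure integral, split $\rho^{2n(\gamma-1)+1}=\rho^{2n\sigma}\rho^{2n(\gamma-1-\sigma)+1}$ after passing to Eulerian variables. Bound the first factor by $R_T^{2n\sigma}$ and the second by the weighted estimate, $\rho\le Cr^{-1-2\xi}$.
\end{itemize}
This gives
\[
\sup_t\int_0^R\rho u^{2n}r^2dr+\int_0^T\!\!\int_0^R\rho u^{2n}\,dr\,dt\le C(T)R_T^{2n\sigma},
\]
provided the exponent of $r$ after the splitting is larger than $-1$. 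Roughly, this requires
\[
2n+2-(1+2\xi)\big(2n(\gamma-1-\sigma)+1\big)>-1,
\]
that is, $\gamma<2+\sigma+O(1/n)$ up to small corrections. I expect this bookkeeping to be the main obstacle. If the crude bound $\rho\le Cr^{-1-2\xi}$ is not sharp enough in the range $2\le\gamma<3$, I would first try interpolating with the $L^3$ bound \eqref{rho3t} (Hölder in $r^2dr$) before invoking $R_T$. The requirement $\gamma<3$ should be exactly what makes some admissible $\sigma$ small compared to the gain $\beta$ chosen in Step 3.

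\textbf{Step 2 ($L^{2n}$ bound for $w$).} Multiply \eqref{bd2_alpha<1} (with $\alpha=1$) by $w^{2n-1}$. Write $(\rho^\gamma)_xr^2=\gamma\rho^{\gamma-1}(w-u)$. This produces the damping term $\gamma\int_0^1\rho^{\gamma-1}w^{2n}dx$ minus $C\int_0^1\rho^{\gamma-1}u^{2n}dx$. The latter is controlled by Step 1 after writing $\rho^{\gamma-1}u^{2n}=\rho^{\gamma-2}\cdot\rho u^{2n}$ and again using the weighted bound together with $R_T^{2n\sigma}$. The Poisson term is bounded by $\int_0^1 w^{2n}dx+\int_0^1(x/r^2)^{2n}dx$, and $\int_0^1(x/r^2)^{2n}dx\le C$ by \eqref{xrbd3}. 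Gronwall then gives $\sup_t\int_0^1 w^{2n}dx\le C(T)R_T^{2n\sigma}$. Since $r^2\rho_x=w-u$, we obtain
\[
\sup_t\int_0^R\big|\partial_r\rho^{\frac1{2n}}\big|^{2n}r^2dr\le C(T)R_T^{2n\sigma}.
\]

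\textbf{Step 3 (closing).} Apply the one-dimensional Sobolev inequality to $\rho^\beta$:
\[
\|\rho^\beta\|_{L^\infty(0,R)}\le\int_0^R\rho^\beta dr+\int_0^R|\partial_r\rho^\beta|\,dr .
\]
The first term is bounded by Hölder's inequality and \eqref{rho3t} when $\beta<1$. For the second, write $|\partial_r\rho^\beta|=C\,|\partial_r\rho^{\frac1{2n}}|\,\rho^{\beta-\frac1{2n}}$ and apply Hölder with weight $r^2$. This gives a bound
\[
C(T)R_T^{\sigma}\Big(\int_0^R\rho^{(\beta-\frac1{2n})\frac{2n}{2n-1}}r^{-\frac{2}{2n-1}}dr\Big)^{\frac{2n-1}{2n}}.
\]
The last integral is finite by the weighted bound once $\beta<\tfrac12$ (for $n$ large and $\xi$ small). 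Choosing $\sigma<\beta<\tfrac12$ yields $R_T^\beta\le C(T)+C(T)R_T^\sigma$, and hence $R_T\le C(T)$.
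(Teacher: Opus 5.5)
Your three steps follow the paper's own route. The paper likewise estimates $u^{2n}$ by splitting $\rho^{2n(\gamma-1)+1}$ into $R_T^{2n\sigma}$ times a power controlled by $\|\rho^{1/2}r^{1/2+\xi}\|_{L^\infty(0,R)}$, then estimates $w^{2n}$, then closes the one-dimensional Sobolev bound on $\rho^\beta$ using $\beta>\sigma$. However, the exponent bookkeeping you flag as the crux does not close as you wrote it.

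In Step 1, $\int_0^1\rho^{2n(\gamma-1)}r^{2n-2}\,dx=\int_0^R\rho^{2n(\gamma-1)+1}r^{2n}\,dr$, since $dx=\rho r^2\,dr$ gives $r^{2n}$, not $r^{2n+2}$. The integrability condition is therefore $2n-(1+2\xi)\bigl(2n(\gamma-1-\sigma)+1\bigr)>-1$, which is exactly $\sigma>\gamma-2$. This part is harmless and needs no interpolation with the $L^3$ bound. The real problem is Step 3: you close with $\sigma<\beta<\tfrac12$, which contradicts $\sigma>\gamma-2$ as soon as $\gamma\ge\tfrac52$. As stated, your argument gives $R_T\le C(T)$ only for $\tfrac43<\gamma<\tfrac52$, not on the full range $\tfrac43<\gamma<3$.

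The fix is that your Step 3 threshold $\tfrac12$ is too pessimistic. With $\rho\le Cr^{-1-2\xi}$,
\[
\int_0^R\rho^{\frac{2n\beta-1}{2n-1}}r^{-\frac{2}{2n-1}}\,dr\le C\int_0^R r^{-\frac{2}{2n-1}-(1+2\xi)\frac{2n\beta-1}{2n-1}}\,dr ,
\]
and this is finite for some small $\xi>0$ if and only if $\beta<1-\frac1n$. You also need $\beta>\frac1{2n}$, so that the weighted bound is applied to a positive power of $\rho$. So:
\begin{itemize}
\item fix $n$ with $\gamma<3-\frac1n$;
\item take $\sigma\ge0$ with $\gamma-2<\sigma<\min\{1-\frac1n,\gamma-1+\frac1{2n}\}$, the last bound keeping the power left for the weighted estimate positive;
\item take $\beta\in\bigl(\max\{\sigma,\frac1{2n}\},1-\frac1n\bigr)$.
\end{itemize}
This is the paper's choice, and $\gamma<3$ is exactly what makes this window nonempty.

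The other place $\gamma<3$ enters is Step 2, which you leave vague. Write $\int_0^1\rho^{\gamma-1}u^{2n}\,dx=\int_0^R(\rho^{\gamma-1}r^2)\,\rho u^{2n}\,dr$ and note that $\rho^{\gamma-1}r^2\le C\,r^{2-(1+2\xi)(\gamma-1)}\le C$ when $\gamma<3$ and $\xi$ is small. Then the time-integrated dissipation $\int_0^T\!\int_0^R\rho u^{2n}\,dr\,dt\le C(T)R_T^{2n\sigma}$ from Step 1 controls this term with no extra power of $R_T$.
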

\begin{proof}
Let $n\geq 4$ be fixed such that $\gamma<3-\frac{1}{n}$, and let $\sigma$ be a constant with $0\leq\sigma<1-\frac{1}{n}$ and satisfying

\begin{equation}\label{gammasigma}
1-\frac{1}{2n}+\sigma<\gamma<2+\sigma.
\end{equation}
   By multiplying $\eqref{eq:NSP-lagrangian-rhoalpha}_2$ by $u^{2n-1}$ and then integrating by parts, we obtain
   \begin{equation}
        \begin{aligned}\label{3uhigh1}
            \frac{1}{2n} \frac{d}{d\tau}\int^{1}_{0}u^{2n}dx+& 2\int^{1}_{0} \frac{u^{2n}}{r^{2}}dx+(2n-1)\int^{1}_{0}\rho^{2}u^{2n-2}u_{x}^{2}r^{4}dx\\
           &=-\int^{1}_{0}\rho^{\gamma}(r^{2}u^{2n-1})_{x}dx -\kappa\int^{1}_{0} \frac{x}{r^{2}}u^{2n-1}dx +\frac{\kappa}{3}\int^{1}_{0}u^{2n-1}rdx\\
           &\leq C\int^{R}_{0}\rho^{2n(\gamma-1)+1}r^{2n}dr+\frac{1}{2}\int_{0}^{1}\frac{u^{2n}}{r^2}dx+C(T)\\
&\leq C\left\lVert \rho^{\frac{1}{2}}r^{\frac{1}{2}+\xi} \right\rVert ^{2(2n(\gamma-1)+1-2n\sigma)}_{L^{\infty}}R_{T}^{2n\sigma}\int^{R}_{0}r^{2n-2(\frac{1}{2}+\xi)[2n(\gamma-1)+1-2n\sigma]}dr\\
&\quad+\frac{1}{2}\int_{0}^{1}\frac{u^{2n}}{r^2}dx+C(T)\\
&\leq CR_{T}^{2n\sigma}\int^{R}_{0}r^{2n-(1+2\xi)[2n(\gamma-1)+1-2n\sigma]}dr+C(T)\\
&\leq C(T)R_{T}^{2n\sigma}+\frac{1}{2}\int_{0}^{1}\frac{u^{2n}}{r^2}dx,
        \end{aligned}
    \end{equation}
 where $\xi$ is taken sufficiently small so that
\begin{equation}
    2n-(1+2\xi)[2n(\gamma-1)+1-2n\sigma]>-1\implies \xi<\frac{n(2-\gamma+\sigma)}{[2n(\gamma-1)+1-2n\sigma]}.
\end{equation}
The existence of $\xi$ is ensured by \eqref{gammasigma}.
Integrating \eqref{3uhigh1} over $[0,\tau]$ gives
\begin{equation}\begin{aligned}
\frac{1}{2n}\int^{1}_{0}u^{2n}dx+& \int^{\tau}_{0}\int^{1}_{0} \frac{u^{2n}}{r^{2}}dxds+ \frac{2n-1}{2}\int^{\tau}_{0}\int^{1}_{0} \rho^{2}u^{2n-2}(r^{2}u_{x})^{2}dxds\\
&\leq \frac{1}{2n}\int^{R}_{0}\rho u_{0}^{2n}r^{2}dr+ C(T)R_{T}^{2n\sigma}.
\end{aligned}\end{equation}
In Eulerian coordinates, we have
\begin{equation}\begin{aligned}\label{uhighintegratibility}
&\int^{R}_{0}\rho u^{2n}r^{2}dr+2n\int^{\tau}_{0}\int^{R}_{0}\rho u^{2n}drds+ \frac{(2n-1)(2n)}{2}\int^{\tau}_{0}\int^{R}_{0}\rho u^{2n-2}(ru_{r})^{2}drds\\
\leq& \int^{R}_{0}\rho u_{0}^{2n}r^{2}dr+2nC(T)R_{T}^{2n\sigma}\leq 2nC(T)R_{T}^{2n\sigma}.
\end{aligned}\end{equation}
 By multiplying \eqref{bd2_alpha<1} by $w^{2n-1}$ we obtain 
\begin{equation}\begin{aligned}\label{bdw}
    \frac{1}{2n} \frac{d}{d\tau} \int^{1}_{0}w^{2n}dx=&- \int^{1}_{0} (\rho^{\gamma})_{x}r^{2}w^{2n-1}dx  -\kappa\int^{1}_{0} \frac{x}{r^{2}}w^{2n-1}dx +  \frac{1}{3}\int^{1}_{0}w^{2n-1}rdx.\\
\end{aligned}\end{equation}
Similar to the estimates for the case $\alpha < 1$, we have
\begin{equation}\begin{aligned}
    \frac{1}{2n} \frac{d}{d\tau}\int^{1}_{0}w^{2n}dx+ \frac{\gamma}{2}\int^{1}_{0}\rho^{\gamma-1}w^{2n}dx&\leq C\int^{1}_{0}w^{2n}dx+C(T)+C\int^{1}_{0}\rho^{\gamma-1}u^{2n}dx\\
&\leq C\int^{1}_{0}w^{2n}dx+C(T)+C\int^{R}_{0}\rho^{\gamma}u^{2n}r^{2}dr \\
&\leq C\int^{1}_{0}w^{2n}dx+C(T)+C\left\lVert \rho^{\frac{1}{2}}r^{\frac{1}{2}+\xi} \right\rVert _{L^{\infty}}^{2(\gamma-1)}\int^{R}_{0}\rho u^{2n}dr\\
&\leq C\int^{1}_{0}w^{2n}dx+C(T)+C(T)\int^{R}_{0}\rho u^{2n}dr,
\end{aligned}\end{equation}
where taking $\xi$ small enough such that
\begin{equation}
    2- 2\left( \frac{1}{2}+\xi \right)(\gamma-1)>0\implies \xi< \frac{3-\gamma}{2(\gamma-1)}.
\end{equation}
The condition $\gamma < 3$ guarantees the existence of $\xi$.
We apply Gronwall's inequality and  \eqref{uhighintegratibility} to deduce that
\begin{equation}
    \begin{aligned}\label{whighintegratibility}
\sup_{0\leq t\leq T}\int^{R}_{0}\rho w^{2n}r^{2}dr+ \int^{T}_{0}\int^{1}_{0}\rho^{\gamma}w^{2n}r^{2}drds&\leq 2nC(T)+2nC(T)\int^{T}_{0}\int^{R}_{0}\rho u^{2n}drds\\
&\leq 2nC(T)+2nC(T)R_{T}^{2n\sigma}\\
&\leq 2nC(T)R_{T}^{2n\sigma}.
\end{aligned}
\end{equation}
We obtain from \eqref{uhighintegratibility} and \eqref{whighintegratibility} that
\begin{equation}
    \begin{aligned}\label{rrhoxn}
\int^{1}_{0}(\rho_{x}r^{2})^{2n}dx&= \int^{R}_{0}\rho\left( \frac{\rho_{r}}{\rho} \right)^{2n}r^{2}dr\\
&= \int^{R}_{0}\left({2n} \partial_{r}\rho^{\frac{1}{2n}} \right)^{2n}r^{2}dr. \\
&\leq (2n)^{2n} C(T)R_{T}^{2n\sigma}.
\end{aligned}
\end{equation}
By the Sobolev embedding theorem in one dimension, we have
\begin{equation}
    \begin{aligned}
\left\lVert \rho^{\beta} \right\rVert _{L^{\infty}(0,R)}&\leq \int^{R}_{0}\rho^{\beta}dr+\int^{R}_{0}\partial_{r}\rho^{\beta}dr:=G_{1}+G_{2}.
\end{aligned}
\end{equation}
Using \eqref{rho3t} and H\"older's inequality yields
\begin{equation}\begin{aligned}\label{rhobeta}
G_{1}&= \int^{R}_{0}\rho^{\beta}dr=\int^{R}_{0}(\rho^{3}r^{2})^{\frac{\beta}{3}}r^{- \frac{2}{3}\beta}dr\\
&\leq\left( \int^{R}_{0}(\rho^{3}r^{2})^{\frac{\beta}{3} \frac{3}{\beta}}dr \right)^{\frac{\beta}{3}}\left( \int^{R}_{0}r^{- \frac{2\beta}{3} \frac{3}{3-\beta}}dr \right)^{\frac{3-\beta}{3}}\\
&\leq C(T)\left( \int^{R}_{0}r^{- \frac{2\beta}{3-\beta}}dr \right)^{\frac{3-\beta}{3}}\\
&\leq C(T),
\end{aligned}\end{equation}
where we need
\begin{equation}
    - \frac{2\beta}{3-\beta}>-1\implies \frac{3\beta-3}{3-\beta}<0\implies \beta<1.
\end{equation}
H\"older's inequality and \eqref{rrhoxn} gives that
\begin{equation}\begin{aligned}\label{rhobeta2}
G_{2}&= \int^{R}_{0} \beta \rho^{\beta-1}\rho_{r}dr\leq \beta\left( \int^{R}_{0}\rho^{\frac{1}{2n}-1}\rho_{r}\rho^{1- \frac{1}{2n}}\rho^{\beta-1} \right)dr\\
&\leq \beta \left( \int^{R}_{0}|2n\partial_{r}\left( \rho^{\frac{1}{2n}} \right)|^{2n}r^{2}dr \right)^{\frac{1}{2n}}\left( \int^{R}_{0}\left( \rho^{\beta- \frac{1}{2n}}r^{- \frac{1}{n}} \right)^{\frac{2n}{2n-1}} \right)^{\frac{2n-1}{2n}}\\
&\leq \beta C(T)(R_{T}^{2n \sigma})^{\frac{1}{2n}}\left( \int^{R}_{0}\rho^{\frac{2n\beta-1}{2n-1}}r^{- \frac{2}{2n-1}}dr \right)^{\frac{2n-1}{2n}}\\
&\leq \beta C(T)R_{T}^{\sigma}\left\lVert \rho^{\frac{1}{2}}r^{\frac{1}{2}+\xi} \right\rVert _{L^{\infty}}^{\frac{2(2n\beta-1)}{2n-1}}\left( \int^{R}_{0}r^{- \frac{2}{2n-1}- (1+2\xi) \frac{2n\beta-1}{2n-1}}dr \right)^{\frac{2n-1}{2n}}\\
&\leq \beta C(T) R_{T}^{\sigma}\left( \int^{R}_{0}r^{- \frac{2}{2n-1}- (1+2\xi) \frac{2n\beta-1}{2n-1}}dr \right)^{\frac{2n-1}{2n}},
\end{aligned}\end{equation} 
under which we require
\begin{equation}
    - \frac{2}{2n-1}- (1+2\xi) \frac{2n\beta-1}{2n-1}> -1\implies \xi< \frac{n-1- n\beta}{2n\beta-1},
\end{equation}
that is,
\begin{equation}
    \begin{cases}
n-1-n\beta>0\implies\beta< 1- \frac{1}{n}. \\
2n\beta-1\implies \beta> \frac{1}{2n}.
\end{cases}
\end{equation}
We deduce from \eqref{rhobeta2} and \eqref{rhobeta} that
\begin{equation}
    R_{T}^{\beta}\leq C(T)+C(T)R_{T}^{\sigma}\implies R_{T}\leq C(T),\beta>\sigma.
\end{equation}
\end{proof}
\begin{lema}
\label{lem3.90}

Under the assumption of \eqref{assumptiongamma1}, there exists a constant $C(T) > 0$ such that 
\begin{equation}\label{2uinfty0}
\sup_{0\leq \tau \leq T}\| u(\cdot,\tau)\|_{L^{\infty}(0,1)}\leq C(T),
\end{equation}
and
\begin{equation}\label{2winfty}
\sup_{0\leq \tau \leq T}\| w(\cdot,\tau)\|_{L^{\infty}(0,1)}\leq C(T).
\end{equation}
\end{lema}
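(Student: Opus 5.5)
We follow the strategy of Lemma \ref{lem3.9}: derive a coupled differential inequality for $\|u\|_{L^{2n}}^{2n}+\|w\|_{L^{2n}}^{2n}$ whose constants grow at most geometrically in $n$, apply Gronwall, take the $2n$-th root, and let $n\to\infty$. The difference from the 2D case is that the Poisson term now involves $\frac{x}{r^2}$ instead of $\frac{x}{r}$, and the pressure term carries the weight $r^2$. Both are now harmless. The relation \eqref{xrbd3} gives $\frac{x}{r^2}\le C(T)$ uniformly on $[0,1]\times[0,T]$. Proposition \ref{rhoupperbound3d} gives $\rho\le C(T)$, and hence $r\le C(T)$ is trivially bounded.

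\textbf{Step 1: the $u$-equation.} Multiply $\eqref{eq:NSP-lagrangian-rhoalpha}_2$ (with $N=3$, $\alpha=1$) by $u^{2n-1}$ and integrate by parts as in \eqref{3uhigh1}. On the left this produces the dissipation $2\int_0^1 \frac{u^{2n}}{r^2}dx+(2n-1)\int_0^1\rho^2u^{2n-2}u_x^2r^4dx$. We write the pressure term as
\[
-\int_0^1(\rho^\gamma)_xr^2u^{2n-1}dx=-\gamma\int_0^1\rho^{\gamma-1}(w-u)u^{2n-1}dx,
\]
using $r^2\rho_x=w-u$. Since $\rho\le C(T)$, Young's inequality bounds this by $C(T)\big(\|u\|_{L^{2n}}^{2n}+\|w\|_{L^{2n}}^{2n}\big)$. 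This avoids any integration by parts that would generate singular weights.

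The Poisson term satisfies
\[
\Big|\kappa\int_0^1\frac{x}{r^2}u^{2n-1}dx\Big|\le C(T)\int_0^1|u|^{2n-1}dx\le C(T)\|u\|_{L^{2n}}^{2n}+C(T).
\]
The term $\frac{\kappa}{3}\int_0^1 u^{2n-1}r\,dx$ is handled in the same way. All constants here are independent of $n$, or at worst of the form $C(T)^{2n}$ after Young's inequality.

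\textbf{Step 2: the $w$-equation.} Multiply \eqref{bd2_alpha<1} by $w^{2n-1}$. The pressure term is
\[
-\gamma\int_0^1\rho^{\gamma-1}(w-u)w^{2n-1}dx=-\gamma\int_0^1\rho^{\gamma-1}w^{2n}dx+\gamma\int_0^1\rho^{\gamma-1}uw^{2n-1}dx.
\]
The first piece is a good sign term, and the second is bounded by $C(T)\big(\|u\|_{L^{2n}}^{2n}+\|w\|_{L^{2n}}^{2n}\big)$. The Poisson and linear terms are bounded exactly as in Step 1 by $C\|w\|_{L^{2n}}^{2n}+C(T)^{2n}$, using $\frac{x}{r^2}\le C(T)$ and $r\le R$.

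\textbf{Step 3: Gronwall and the limit.} Summing Steps 1 and 2 gives
\[
\frac{1}{2n}\frac{d}{d\tau}\big(\|u\|_{L^{2n}}^{2n}+\|w\|_{L^{2n}}^{2n}\big)\le C(T)\big(\|u\|_{L^{2n}}^{2n}+\|w\|_{L^{2n}}^{2n}\big)+C(T)^{2n}.
\]
Gronwall then yields
\[
\sup_{\tau}\big(\|u\|^{2n}_{L^{2n}}+\|w\|^{2n}_{L^{2n}}\big)\le e^{2nC(T)T}\big(\|u_0\|^{2n}_{L^{2n}}+\|w_0\|^{2n}_{L^{2n}}+2nTC(T)^{2n}\big).
\]
The initial data are controlled in $L^\infty$: $u_0\in H^3$, and $w_0=u_0+r^2(\rho_0)_x=u_0+\frac{(\rho_0)_r}{\rho_0}$ is bounded by the regularity and positivity of $\rho_0$. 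Taking the $2n$-th root and letting $n\to\infty$ gives \eqref{2uinfty0} and \eqref{2winfty}.

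\textbf{Main obstacle.} The delicate point is ensuring that every constant grows at most like $C^{2n}$, so that it survives the $2n$-th root. In particular, the Young splittings must not introduce factors such as $n^{n}$; if they do, these are absorbed using the left-hand dissipation, or by noting that $(2n)^{1/(2n)}\to1$. A second point is that the boundedness of $\frac{x}{r^2}$ relies on \eqref{xrbd3}, which is available thanks to the $L^3$ bound \eqref{rho3t}. Without it, the three-dimensional singularity would prevent the argument from closing.
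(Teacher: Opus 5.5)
Your proposal is correct and is essentially the argument the paper intends. Lemma \ref{lem3.90} is stated there without proof as the three-dimensional analogue of Lemma \ref{lem3.9}, namely coupled $L^{2n}$ estimates for $u$ and $w$ using $r^2\rho_x=w-u$, the density upper bound and the boundedness of $x/r^2$, followed by Gronwall, taking $2n$-th roots and letting $n\to\infty$ — exactly your Steps 1--3. One minor remark: once $\rho\le C(T)$ is available, the bound $x/r^2\le C(T)\,r$ follows directly from $x=\int_0^r\rho s^2\,ds$, so the $L^3$ bound \eqref{rho3t} is not actually essential at this stage, contrary to your closing comment.
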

\begin{prop}
\label{prop3.50}
Under the assumption of  \eqref{assumptiongamma1}, there exists a constant $C(T) > 0$ such that
\begin{equation}\begin{aligned}\label{lower bound of density00}
V_T\leq C(T). 
\end{aligned}\end{equation}
\end{prop}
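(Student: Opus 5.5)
The plan mirrors the two-dimensional argument of Proposition \ref{prop3.5}, with the weights adapted to $N=3$. First, combining Lemma \ref{lem3.90} with the definition $w=u+r^2\rho_x$ (here $\alpha=1$) gives
\begin{equation*}
\sup_{0\le t\le T}\|r^2\partial_x\rho\|_{L^\infty(0,1)}\le \sup_{0\le t\le T}\big(\|w\|_{L^\infty(0,1)}+\|u\|_{L^\infty(0,1)}\big)\le C(T).
\end{equation*}
Next, set $v=1/\rho$. From $\eqref{eq:NSP-lagrangian-rhoalpha}_1$ we have $v_\tau=(r^2u)_x$, and since $u(0,\tau)=u(1,\tau)=0$ this yields $\int_0^1 v(x,\tau)\,dx=\int_0^1 v_0\,dx\le C$.

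For a small $\beta\in(0,1)$ to be fixed later, the one-dimensional embedding $W^{1,1}(0,1)\hookrightarrow L^\infty(0,1)$ gives
\begin{equation*}
v^\beta(x,\tau)\le \Big(\int_0^1 v\,dx\Big)^\beta+\beta\int_0^1 |\partial_x\rho|\,v^{\beta+1}\,dx
\le C+\beta C(T)\int_0^1 r^{-2}v^{\beta+1}\,dx .
\end{equation*}
The remaining task is to control $\int_0^1 r^{-2}v^{\beta+1}dx$ by $C(T)V_T^{\beta}$. Write $v^{\beta+1}=v^{\theta}\cdot v^{\beta+1-\theta}$ and apply Hölder's inequality with a factor $\int_0^1 v\,r^{-p}dx$. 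In Eulerian variables, $dx=\rho r^2dr$, so
\begin{equation*}
\int_0^1 v\,r^{-p}\,dx=\int_0^R r^{2-p}\,dr,
\end{equation*}
which is finite whenever $p<3$. This is what makes the singularity at $r=0$ harmless. Concretely, take exponents $3/2$ and $3$:
\begin{equation*}
\int_0^1 r^{-2}v^{\beta+1}dx\le \Big(\int_0^1 v\,r^{-3}\cdots\Big),
\end{equation*}
but exponent $3$ on $r$ is borderline. So choose $p=\tfrac{2}{1-\delta}<3$ with weight exponent $1-\delta$, and estimate
\begin{equation*}
\int_0^1 r^{-2}v^{\beta+1}dx\le \Big(\int_0^1 v\,r^{-\frac{2}{1-\delta}}dx\Big)^{1-\delta}\Big(\int_0^1 v^{\frac{\beta+\delta}{\delta}}dx\Big)^{\delta}.
\end{equation*}
For $\delta<1/3$ the first factor is bounded by $C$. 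The second factor is bounded by $\big(V_T^{\frac{\beta+\delta}{\delta}-1}\int_0^1 v\,dx\big)^{\delta}\le C\,V_T^{\beta}$.

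Taking the supremum over $(x,\tau)\in[0,1]\times[0,T]$ then gives $V_T^\beta\le C+\beta C(T)V_T^\beta$. Choosing $\beta\le\min\{1/2,\,1/(2C(T))\}$ absorbs the last term and yields $V_T\le C(T)$. The constant $C(T)$ does not depend on $\beta$, because $\delta$ is fixed independently of $\beta$ and $\|r^2\rho_x\|_{L^\infty}$ is bounded a priori.

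The main obstacle is the first step, the $L^\infty$ bound on $r^2\rho_x$ supplied by Lemma \ref{lem3.90}. It requires closing the coupled $L^{2n}$ estimates for $u$ and $w$ uniformly in $n$. In three dimensions the Poisson term $\int_0^1 (x/r^2)^{2n}dx$ must be handled with the relation $x\le C(T)r^2$ from \eqref{xrbd3} together with the upper bound \eqref{rhoupperbound3d}, and the term $\int\rho^{\gamma-1}u^{2n}dx$ must be controlled via $R_T\le C(T)$, so that the Gronwall constants grow at most geometrically in $n$. Since that lemma is available, the remaining argument is the Hölder bookkeeping above, where the only point needing care is keeping the exponent $p$ on $r$ strictly below $3$.
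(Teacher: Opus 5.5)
Your proposal is correct and is essentially the paper's own proof. The paper also bounds $\|r^2\rho_x\|_{L^\infty(0,1)}$, uses conservation of $\int_0^1 v\,dx$, and applies $W^{1,1}\hookrightarrow L^\infty$ to $v^\beta$. It then splits $r^{-2}v^{\beta+1}$ by H\"older with exponents $4/3$ and $4$, which is your parametrization with $\delta=\tfrac14$, giving $\bigl(\int_0^1 v\,r^{-8/3}dx\bigr)^{3/4}\bigl(\int_0^1 v^{4\beta+1}dx\bigr)^{1/4}\le C\,V_T^{\beta}$ before absorbing with small $\beta$.

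Your appeal to Lemma~\ref{lem3.90} for the $L^\infty$ bound on $r^2\rho_x=w-u$ is the right source. The paper instead cites \eqref{rrhoxn}, which for a fixed $n$ only yields an $L^{2n}$ bound. Your $v_\tau=(r^2u)_x$ also corrects the paper's typo $(ru)_x$.
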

\begin{proof}
Using \eqref{rrhoxn}, \eqref{rhoupperbound3d} yields
\begin{equation}\begin{aligned}
\sup_{0\leq t\leq T}\| r^2\rho_x \|_{L^{\infty}(0,1)}\leq C(T). 
\end{aligned}\end{equation}
Define the specific volume $v = 1/\rho$. 

Utilizing $\eqref{eq:NSP-lagrangian-rhoalpha}_2$, a direct computation yields the evolution equation for $v$:
\begin{equation}\begin{aligned}
\partial_{\tau}v =\partial_{x}(r u),
\end{aligned}\end{equation}
hence, we get
\begin{equation}\begin{aligned}
\int_{0}^{1}v(x,\tau)\,dx = \int_{0}^{1}v_{0}(x)\,dx.
\end{aligned}\end{equation}

To derive the \(L^{\infty}\) bound for \(v\), we employ one-dimensional Sobolev embedding \(W^{1,1}(0,R) \hookrightarrow L^{\infty}(0,R)\). For a sufficiently small parameter \(\beta \in (0,1)\) to be chosen later, the following estimate holds for any \((x,\tau) \in (0,1)\times(0,T)\):
\begin{equation}\begin{aligned}
v^{\beta}(x,\tau) &\leq \int_{0}^{1}v^{\beta}dx + \beta \int_{0}^{1}v^{\beta -1}|v_x|\,dx \\
&\leq \left(\int_{0}^{1}v\,dx\right)^{\beta} + \beta \int_{0}^{1}|\rho_x |\,v^{\beta +1}\,dx\\
& \leq \left(\int_{0}^{1}v\,dx\right)^{\beta} + C\beta\|r^2\rho_x\|_{L^\infty(0,1)} \left(\int_{0}^{1} v r^{-\frac{8}{3}}\,dx\right)^{\frac{3}{4}} \left(\int_{0}^{1} v^{4\beta+1}\,dx\right)^{\frac{1}{4}}\\
& \leq \left(\int_{0}^{1}v\,dx\right)^{\beta} + C(T)\beta\left( \sup_{(x,\tau)} v(x,\tau) \right)^{\beta}.
\end{aligned}\end{equation}
We have
\[V_T^{\beta} \leq C(\beta) + C(T)\beta V_T^{\beta}.
\]
We can now select \(\beta= \min\{1/(2C(T)), 1/2\}\)  yielding \eqref{lower bound of density00}.
\end{proof}
\subsection{A priori estimates: Higher-Order estimates}
The higher-order estimates can be obtained in a manner similar to that for $\alpha < 1$.

\subsection{Proof of the Theorem \ref {thm:global-classical-2d} and \ref{thm:global-classical-3d}}
The proof for the case $\alpha=1$ is similar to that for $\alpha<1.$




\vspace{1cm}
\noindent\textbf{Data availability statement.} Data sharing is not applicable to this article.

\vspace{0.3cm}
\noindent\textbf{Conflict of interest.} The authors declare that they have no conflict of interest.

\section*{Ackonwledgments}
X Huang is partially supported by Chinese Academy of Sciences Project for Young Scientists in Basic Research (Grant No. YSBR-031), National Natural Science Foundation of China (Grant Nos. 12494542, 11688101). J Fan was supported by the China Postdoctoral Science Foundation (Grant 2025M783153).

\end{document}